\def\Y_#1{y_{\!#1}}
\def\cay{\mathrm{cay}}
\def\E{\mathrm{E}}
\def\cov{\operatorname{cov}}
\def\Var{\operatorname{Var}}
\def\tr{\operatorname{trace}}
\def\Exp{\operatorname{Exp}}
\newtheorem{theorem}{Theorem}[section]
\newtheorem{lemma}[theorem]{Lemma}
\newtheorem{remark}[theorem]{Remark}
\newtheorem{prop}[theorem]{Proposition}
\theoremstyle{definition}
\newtheorem{algorithm}{Algorithm}[section]
\theoremstyle{remark}
\definecolor{darkred}{rgb}{0.9,0.1,0.1}
\begin{document}

\title{Cayley Splitting for Second-Order Langevin Stochastic Partial Differential Equations}
\author{Nawaf Bou-Rabee}\thanks{ {\tt nawaf.bourabee@rutgers.edu}}

\begin{abstract}
We give accurate and ergodic numerical methods for semilinear, second-order Langevin stochastic partial differential equations (SPDE).  As a byproduct, we also give good geometric numerical methods for their infinite-dimensional Hamiltonian counterpart.  These methods are suitable for Hamiltonian Monte Carlo on Hilbert spaces without preconditioning the underlying Hamiltonian dynamics.  A key tool in our approach is Krein's theory on strong stability of symplectic maps, which gives us sufficient conditions for stability of symplectic splitting schemes in highly oscillatory Hamiltonian problems.   

\end{abstract}


\begin{keyword}[class=MSC]
\kwd[Primary ]{60H15}
\kwd[; Secondary ]{60H35, 62D05, 60H30, 82C80, 65C05, 65P10}
\end{keyword}

\begin{keyword}
\kwd{semilinear, second-order Langevin stochastic partial differential equations}
\kwd{semilinear Hamiltonian PDEs}
\kwd{Hamiltonian Monte Carlo on Hilbert Spaces}
\kwd{highly oscillatory Hamiltonian ODEs}
\kwd{ergodicity}
\kwd{long-time simulation}
\kwd{splitting methods}
\kwd{Cayley transform}
\kwd{geometric integrators}
\kwd{symplectic integrators}
\kwd{volume-preserving integrators}
\kwd{reversible integrators}
\end{keyword}

\maketitle


\section{Introduction}


Our main interest lies in the numerical solution of 
\begin{equation} \label{eq:langevin_spde_intro}
\begin{aligned}
\partial_t u(t,s) &= p(t,s)  \;, \\
\partial_t p(t,s) &= \partial_{s}^2 u(t,s) - \nabla V( u(t,s)) - \gamma p(t,s)  + \sqrt{2 \gamma \beta^{-1}} \partial_t W(t,s) \;, 
\end{aligned} 
\end{equation}
for $(t,s) \in [0, \infty) \times [0, S]$, where
\begin{itemize}
\item $(t,s)$ are independent variables and $S>0$ is the spatial domain size;
\item $(u(t,s), p(t,s)) \in \mathbb{R}^{2 d}$ are (unknown) spacetime random processes;
\item $V: \mathbb{R}^d \to \mathbb{R}$ is a potential energy function;
\item $W$ is a $d$-dimensional spacetime, cylindrical Wiener process and $ \partial_t W$ is a spacetime white noise; 
\item $\gamma \ge 0$ is a friction parameter, and $\beta \ge 0$ is an inverse temperature parameter.
\end{itemize}
Here, and in the sequel, $\partial_t u$ denotes the partial derivative $\partial u / \partial t$ (the notation $\partial_t p$ and $\partial_{s}^2 u$ is defined similarly) and 
 $\nabla V$ denotes the standard gradient of the $d$-dimensional function $V$. These equations are semilinear, second-order Langevin stochastic partial differential equations (SPDE).  Along with \eqref{eq:langevin_spde_intro}, one needs to specify initial conditions at the initial time $t=0$, and boundary conditions at the endpoints of $[0, S]$, which can be Dirichlet, Neumann, periodic, or mixed.  With these conditions, the solutions to these equations are well-defined (in a mild sense) and are furthermore ergodic \cite{DaZa1996,DaZa2014}.      In analogy with mechanics, we refer to the components of their solution $(u(t,s), p(t,s))$ as position and momentum, respectively.   The Langevin SPDE \eqref{eq:langevin_spde_intro} arises in an analysis of the long-time dynamics of the nonlinear wave equation  \cite{NeVa2016}.     
 
 
 The simulation of Langevin SPDEs presents several challenges.  One is due to the presence of fast frequencies in the linear part of the dynamics when $\gamma=0$.  This issue becomes clear once we transform \eqref{eq:langevin_spde_intro} from the original coordinates to spectral coordinates by using the eigenfunctions of $\partial_{s}^2 u$ endowed with suitable boundary conditions.  In these coordinates, the dynamics of the higher modes is highly oscillatory, and designing stable explicit integrators for this type of dynamics is a hard problem in numerical analysis.  This difficulty would be a bit moot if these higher modes were not being excited, but that is exactly the effect of the spacetime white noise entering \eqref{eq:langevin_spde_intro}.  A second issue is related to the long time stability of schemes, and their capability to capture the invariant distribution of the SPDE.  For this reason, the numerical solution must not only be finite-time accurate, but also approximately preserve the invariant measure of the SPDE.  However, to our knowledge, the only numerical methods that meet these two requirements are limited to {\em first-order} Langevin SPDEs \cite{BeRoStVo2008}. The idea behind these schemes comes from finite-dimensional MCMC and numerical SDE theory, and basically involves combining a $\theta$-integrator for a semidiscrete approximation of the SPDE with a Metropolis-Hastings accept-reject step which sets the invariant distribution of the scheme \cite{RoTw1996A, RoTw1996B, BoVa2010,BoVa2012,BoHa2013,BoDoVa2014, Bo2014, Fa2014, FaHoSt2015}.  This numerical method is known as the Metropolis Adjusted Langevin Algorithm (MALA).  However, unless one chooses $\theta=1/2$ (corresponding to a Crank-Nicholson time discretization), the acceptance rate of MALA may deteriorate with decreasing spatial step size \cite{BeRoStVo2008}.    There are also methods available for so-called {\em preconditioned} first-order Langevin SPDE problems, where the random fluctuations entering \eqref{eq:langevin_spde_intro} are modeled as colored noise \cite{BeRoStVo2008}.  Although preconditioning the dynamics does bypass the numerical stability issue associated to highly oscillatory dynamics, it completely alters the dynamics of the problem, and therefore, preconditioning the dynamics is not a tool we can use to construct good integrators for \eqref{eq:langevin_spde_intro}.


Also of interest to us is the Hamiltonian counterpart of these equations \begin{equation} \label{eq:hamiltonian_pde_intro}
\begin{aligned}
\partial_t u(t,s) = p(t,s)  \;, \quad \partial_t p(t,s) = \partial_{s}^2 u(t,s)  - \nabla V( u(t,s))  \;,
\end{aligned}
\end{equation}
for all  $(t,s) \in [0, \infty) \times [0, S]$.   In finite dimensions, it is well known that Hamiltonian systems are often not ergodic, because their solutions are confined to level sets of the Hamiltonian function.  Accuracy and stability concepts for numerical approximations of finite-dimensional Hamiltonian systems strongly rely on this property of their solutions \cite{SaCa1994, LeRe2004, HaLuWa2010}.   The situation is a bit different in infinite dimensions.  Indeed, for the initial conditions of interest the energy is infinite, and therefore, new stability and accuracy analyses are needed \cite{BePiSaSt2011}.  The numerical solution of \eqref{eq:hamiltonian_pde_intro} is relevant for constructing good integrators for Hamiltonian Monte Carlo (HMC) methods \cite{BePiSaSt2011}, and as we describe next, it is also a key ingredient to our numerical method for \eqref{eq:langevin_spde_intro}.


\section{Main Results} \label{sec:main_results}

For simplicity's sake, we use a finite difference method to discretize the second derivatives appearing in \eqref{eq:langevin_spde_intro} yielding
\begin{equation} \label{eq:semidiscrete_intro}
\begin{aligned}
d \boldsymbol{u}(t) &= \boldsymbol{p}(t)  dt \;, \\
d \boldsymbol{p}(t) &= \boldsymbol{L} \boldsymbol{u}(t)  dt + \boldsymbol{F}( \boldsymbol{u}(t)) dt - \gamma \boldsymbol{p}(t) dt + \sqrt{\frac{2 \gamma \beta^{-1}}{\Delta s}} d \boldsymbol{W}(t) \;,  
\end{aligned}
\end{equation}
where $\Delta s$ is a spatial step size parameter, $(\boldsymbol{u}(t),\boldsymbol{p}(t))$ is the semi-discrete (continuous in time $t$ and discrete in space $s$) solution, $\boldsymbol{L}$ is a (symmetric) discretization matrix, $\boldsymbol{F}$ is a vectorized form of $-\nabla V$, and $\boldsymbol{W}$ is a finite-dimensional Wiener process.   Note that we use bold symbols to indicate finite-dimensional vectors and matrices.   This finite difference discretization can accommodate Dirichlet, periodic, Neumann, or mixed boundary conditions.  The main requirement is that these boundary conditions are incorporated in the spatial discretization in such a way that $\boldsymbol{L}$ is a symmetric matrix.    This requirement ensures that the equations obtained by setting $\gamma=0$ in \eqref{eq:semidiscrete_intro}, i.e., \begin{equation} \label{eq:semidiscrete_hamiltonian_intro}
\begin{aligned}
 \boldsymbol{\dot u}(t) &= \boldsymbol{p}(t)  \;, \\
\boldsymbol{\dot p}(t) &= \boldsymbol{L} \boldsymbol{u}(t)  + \boldsymbol{F}( \boldsymbol{u}(t))   \;,  
\end{aligned}
\end{equation}
are Hamiltonian.  These equations are a semi-discrete analog of \eqref{eq:hamiltonian_pde_intro}.

We discretize \eqref{eq:semidiscrete_intro} in time by using a Strang splitting method \cite{strang1968construction, mclachlan2002splitting, BoSaActaN2018}.  In the case of second-order Langevin SDEs, a natural splitting is given by splitting \eqref{eq:semidiscrete_intro} into a deterministic Hamiltonian part and an Ornstein-Uhlenbeck part in momentum \cite{BuPa2007, BoOw2010, BoVa2010, BePiSaSt2011, Bo2014, alamo2016technique},
\begin{equation} \label{eq:semidiscrete_intro_H}
\tag{H}
\begin{bmatrix}   \dot{\boldsymbol{u}}(t) \\ \dot{\boldsymbol{p}}(t)  \end{bmatrix} =  
\boldsymbol{A}  \begin{bmatrix} \boldsymbol{u}(t) \\ \boldsymbol{p}(t) \end{bmatrix}  +  \begin{bmatrix} \boldsymbol{0} \\ \boldsymbol{F}( \boldsymbol{u}(t)) \end{bmatrix} 
\;,  \quad \text{where} \quad \boldsymbol{A} = \begin{bmatrix} \boldsymbol{0}  & \boldsymbol{I} \\ \boldsymbol{L} & \boldsymbol{0} \end{bmatrix} \;,
\end{equation}
\begin{equation} \label{eq:semidiscrete_intro_O}
\tag{O}
\begin{bmatrix}   \dot{\boldsymbol{u}}(t) \\ d \boldsymbol{p}(t) \vphantom{ \sqrt{\dfrac{\gamma \beta^{-1}}{\Delta s}}}  \end{bmatrix} =  \begin{bmatrix} \boldsymbol{0} \\  - \gamma \boldsymbol{p}(t) dt + \sqrt{\dfrac{2 \gamma \beta^{-1}}{\Delta s}} d \boldsymbol{W}(t) \end{bmatrix} \;.
\end{equation}
Though other splittings of Langevin SDEs are available \cite{RiCi2003,LeMa2013,LeMaSt2015,RiCi2003}, this particular splitting has nice properties including: (i) invariant distribution accuracy \cite[Theorem 3.7]{BoOw2010}; and (ii) it is straightforward to Metropolize \cite[\S 5.2]{BoVa2010}.   The latter property is important for long time simulation since it allows one to set the invariant distribution of the integrator.  The exact flow of \eqref{eq:semidiscrete_intro_O}  over a time interval of length $t$ is given in law by \[
\varphi^{(O)}_{t}( \boldsymbol{u}, \boldsymbol{p} )  \overset{d}{=} \left(\boldsymbol{u}, e^{- \gamma t} \boldsymbol{p} + \sqrt{\dfrac{\beta^{-1}}{\Delta s}} \sqrt{1 - e^{-2 \gamma t}} \boldsymbol{\xi}  \right)
\] where $\boldsymbol{\xi}$ is a standard normal vector, i.e., its components are i.i.d.~standard normal random variables \cite[Chapter 5]{Ev2013}.

Unfortunately, the exact flow of the Hamiltonian system \eqref{eq:semidiscrete_intro_H} can rarely be solved analytically, and a numerical method is needed to approximate its solution.  However, a difficulty with numerically solving these Hamiltonian equations by an explicit symplectic integrator is that the spectral radius of $\boldsymbol{L}$ typically grows like $\Delta s^{-\kappa}$ for some $\kappa > 1/2$.  In other words, as $\Delta s$ decreases, the Hamiltonian dynamics becomes highly oscillatory.   For example, numerical stability of a Verlet integrator applied to \eqref{eq:semidiscrete_hamiltonian_intro} with $\boldsymbol{F}=0$ requires that its time step size $\Delta t$ satisfy $(\Delta t) \Delta s^{-\kappa/2} \le 2$.  To avoid this restrictive stability requirement, and preserve some of the geometric properties of the Verlet integrator, we proceed as follows.

Strongly inspired by the geometric numerical integrators developed in Ref.~\cite{BePiSaSt2011}, we approximate the flow of \eqref{eq:semidiscrete_intro_H} by splitting it into \begin{equation} \label{eq:semidiscrete_intro_A}
\begin{aligned}
\begin{bmatrix} \dot{\boldsymbol{u}}(t) \\  \dot{\boldsymbol{p}}(t) \end{bmatrix} = \boldsymbol{A} \begin{bmatrix} \boldsymbol{u}(t) \\ \boldsymbol{p}(t) \end{bmatrix}  \;, 
\end{aligned}
\tag{A}
\end{equation}
\begin{equation} \label{eq:semidiscrete_intro_B}
\tag{B}
\begin{bmatrix} \dot{\boldsymbol{u}}(t) \\  \dot{\boldsymbol{p}}(t) \end{bmatrix} = \begin{bmatrix} \boldsymbol{0} \\ \boldsymbol{F}( \boldsymbol{u}(t)) \end{bmatrix} \;,
\end{equation}
where \eqref{eq:semidiscrete_intro_A} and \eqref{eq:semidiscrete_intro_B} are deterministic Hamiltonian equations, whose exact flows over a time interval of length $t$ are given by \[
\varphi^{(A)}_{t}( \boldsymbol{u}, \boldsymbol{p} )  = ( \boldsymbol{u}', \boldsymbol{p}' )  \;, \quad \begin{bmatrix} \boldsymbol{u}' \\ \boldsymbol{p}' \end{bmatrix} =  \exp(t \boldsymbol{A} )  \begin{bmatrix} \boldsymbol{u} \\ \boldsymbol{p} \end{bmatrix}   \;,
\]
\[
\varphi^{(B)}_{t}( \boldsymbol{u}, \boldsymbol{p} )  = \left(\boldsymbol{u},  \boldsymbol{p} + t \boldsymbol{F}( \boldsymbol{u}) \right) \;,
\]
where $\exp( \cdot)$ is the matrix exponential.

Let $\Delta t>0$ be a time step size parameter.  To obtain a weak approximation to the Langevin SPDE, we use a palindromic\footnote{Other authors use the terms `symmetric' or `self-adjoint.'  See \S2 of Ref.~\cite{campos2017palindromic} or \S3 of Ref.~\cite{BoSaActaN2018} for more about palindromic integrators.} composition of these flow maps \begin{equation} \label{eq:exact_splitting}
  \varphi^{(O)}_{(1/2) \Delta t} \circ \varphi^{(B)}_{(1/2) \Delta t} \circ \varphi^{(A)}_{\Delta t} \circ  \varphi^{(B)}_{(1/2) \Delta t} \circ \varphi^{(O)}_{(1/2) \Delta t} \;.
\end{equation}  When $\boldsymbol{F}=0$ and $\gamma=0$, this exact splitting is exact, and hence, overcomes the restrictive stability requirement of a Verlet integrator.
However, in the infinite-dimensional context, this exact splitting is still not satisfactory because the map \begin{equation}  \label{eq:exact_splitting_hamiltonian}
\varphi^{(B)}_{(1/2) \Delta t} \circ \varphi^{(A)}_{\Delta t} \circ  \varphi^{(B)}_{(1/2) \Delta t} 
\end{equation} is prone to linear resonance instabilities.  This limits the performance of the exact splitting in nonlinear problems as Figure~\ref{fig:nonlinear_hamiltonian_system_energy} illustrates. 


This instability stems from the fact that the matrix  $\exp(t \boldsymbol{A})$ associated to the exact flow of \eqref{eq:semidiscrete_intro_A}  is not always a strongly stable symplectic matrix.   A symplectic matrix is said to be {\em strongly stable} if all sufficiently close symplectic matrices are stable.   A sufficient condition for this to hold is that the symplectic matrix has a simple spectrum on the unit circle in the complex plane, i.e., all eigenvalues of the matrix are distinct and each have unit modulus. This sufficient condition is part of a theory due to Krein which also provides necessary and sufficient conditions for strong stability of symplectic matrices \cite{krein1950generalization}. For a graphical illustration of this sufficient condition see Figure~\ref{fig:strongly_stable}, and for an expository introduction to this concept of strong stability of symplectic matrices see \cite[\S25 \& \S42]{arnol2013mathematical}.  The right panel of Figure~\ref{fig:eigenvalues_cayley_exp} illustrates why the matrix associated to \eqref{eq:semidiscrete_intro_A} is not strongly stable in the presence of fast frequencies in the dynamics.  Krein's theorem motivates replacing the exponential map in \eqref{eq:exact_splitting_hamiltonian} by a strongly stable map.  

As illustrated in the left panel of Figure~\ref{fig:eigenvalues_cayley_exp}, one such map is given by a Cayley approximation.  Indeed, as the figure illustrates, the eigenvalues of a Cayley approximation fulfill Krein's sufficient condition for strong stability. Thus, resonance instabilities can be avoided by replacing the matrix exponential in the exact flow of \eqref{eq:semidiscrete_intro_A} by a Cayley approximation \begin{equation} \label{eq:cayley_splitting_hamiltonian}
\varphi^{(B)}_{(1/2) \Delta t} \circ \phi^{(A)}_{\Delta t} \circ  \varphi^{(B)}_{(1/2) \Delta t}  \;.
\end{equation}
Here $\phi^{(A)}_{\Delta t}$ is defined as the linear transformation with matrix $\cay((\Delta t) \boldsymbol{A})$ where $\cay( \cdot )$ is the Cayley transform which inputs a matrix $\boldsymbol{X}$ and outputs the matrix \begin{equation} \label{eq:cayley}
\cay( \boldsymbol{X} ) = \left( \mathbf{I} - \frac{1}{2} \mathbf{X} \right)^{-1}  \left( \mathbf{I} + \frac{1}{2} \mathbf{X} \right) \;.
\end{equation}
Since $( \boldsymbol{I} - \frac{1}{2} \boldsymbol{X}) ( \boldsymbol{I} + \frac{1}{2} \boldsymbol{X})  =  ( \boldsymbol{I} + \frac{1}{2} \boldsymbol{X}) ( \boldsymbol{I} -\frac{1}{2} \boldsymbol{X}) $, we can equally write \[
\cay( \boldsymbol{X}) =  \left( \boldsymbol{I} + \frac{1}{2} \boldsymbol{X} \right) \left( \boldsymbol{I} - \frac{1}{2} \boldsymbol{X} \right)^{-1} \;.
\]  If the input matrix is a Hamiltonian matrix, then the Cayley transform outputs a symplectic matrix with unit determinant \cite[\S2.5]{MaRa1999}.  Additionally, if the input matrix is reversible with respect to $(\boldsymbol{u}, \boldsymbol{p}) \mapsto (\boldsymbol{u}, -\boldsymbol{p})$, then the Cayley transform outputs a matrix that is also reversible \cite[\S2.4]{BoSaActaN2018}.  For proofs of these statements see Lemmas~\ref{lemma:cayley_symplectic} and~\ref{lemma:cayley_reversible}, respectively.  These two properties of the Cayley transform imply that the map $\phi^{(A)}_{\Delta t}$ is volume-preserving and reversible just like the exact flow of \eqref{eq:semidiscrete_intro_A}.\footnote{A map $\boldsymbol{\varphi}$  is {\em reversible} with respect to a linear involution $\boldsymbol{\rho}$  if $\boldsymbol{\varphi} \circ \boldsymbol{\rho} \circ \boldsymbol{\varphi} = \boldsymbol{\rho}$, and {\em volume-preserving} if  $\abs{\det( D \boldsymbol{\varphi})} = 1$.} The Langevin counterpart of \eqref{eq:cayley_splitting_hamiltonian} is given by \begin{equation} \label{eq:cayley_splitting}
  \varphi^{(O)}_{(1/2) \Delta t} \circ \varphi^{(B)}_{(1/2) \Delta t} \circ \phi^{(A)}_{\Delta t}  \circ  \varphi^{(B)}_{(1/2) \Delta t} \circ \varphi^{(O)}_{(1/2) \Delta t} \;.
\end{equation}
Hereafter we call \eqref{eq:cayley_splitting} and \eqref{eq:cayley_splitting_hamiltonian} {\em Cayley splittings}, and we call \eqref{eq:exact_splitting} and \eqref{eq:exact_splitting_hamiltonian} {\em exact splittings}.

Note that for the matrix $\boldsymbol{A}$ defined in \eqref{eq:semidiscrete_intro_H}, the following formula for $\cay((\Delta t) \boldsymbol{A} )$ is better for computations than \eqref{eq:cayley} \begin{equation} \label{eq:cayley_for_computations}
\cay((\Delta t) \boldsymbol{A}) = \begin{bmatrix} \left(  \mathbf{I} - \frac{\Delta t^2}{4} \mathbf{L} \right)^{-1} \left( \mathbf{I} + \frac{\Delta t^2}{4} \mathbf{L} \right) & \Delta t  \left(  \mathbf{I} - \frac{\Delta t^2}{4} \mathbf{L} \right)^{-1}   \\  \Delta t \left(  \mathbf{I} - \frac{\Delta t^2}{4} \mathbf{L} \right)^{-1}  \mathbf{L} &  \left(  \mathbf{I} - \frac{\Delta t^2}{4} \mathbf{L} \right)^{-1}  \left( \mathbf{I} + \frac{\Delta t^2}{4} \mathbf{L} \right) \end{bmatrix} \;.
\end{equation}
In particular, since $\boldsymbol{L}$ is typically a sparse tridiagonal matrix, the action of the matrix $\cay((\Delta t) \boldsymbol{A} )$ on a vector can be performed in $O(\Delta s^{-1})$ operations using the Thomas algorithm from numerical linear algebra.   In contrast, since the matrix exponential of a large sparse matrix is generally a full matrix, the action of the matrix $\exp( (\Delta t) \boldsymbol{A} )$ on a vector would take $O(\Delta s^{-2})$ operations \cite{al2011computing}. Also, from \eqref{eq:cayley_for_computations}, it is clear that accuracy at least requires that $| \Delta t^2 | \| \mathbf{L}  \| < 4$ where $\| \cdot \|$ is a matrix norm.  In particular, this condition is necessary to ensure that the corresponding series representation of the Cayley transform converges.  For a detailed proof see Lemma~\ref{lemma:cayley_series}.

In this paper, we only consider Dirichlet boundary conditions.  However, our results are relevant to other boundary conditions.  In this setting, the main results of the paper are the following.
\begin{description}
\item[1. Stability:] For a linear Hamiltonian PDE where the initial momentum is spatial Gaussian white noise, we prove that the Cayley splitting in \eqref{eq:cayley_splitting_hamiltonian} is stable; see Proposition~\ref{prop:Cayley_Splitting_Stability}.  In contrast, the exact splitting in \eqref{eq:exact_splitting_hamiltonian} is not stable under the same conditions; see the counterexample in Figure~\ref{fig:linear_hamiltonian_system_energy} and Figure~\ref{fig:linear_hamiltonian_system_spectral_energy}.   Related to this, we show how one can use Cayley integrators to avoid linear resonance instabilities in highly oscillatory Hamiltonian ODEs; see Remark~\ref{rmk:multiple_time_step_integrators}.   More generally, exact splittings are prone to linear resonance instabilities when applied to second order Langevin SPDEs, Hamiltonian PDEs or highly oscillatory Hamiltonian ODEs.   We also provide sufficient conditions so that the Cayley splitting in \eqref{eq:cayley_splitting} is stable when applied to a linear Langevin SPDE; see Proposition~\ref{prop:Cayley_Splitting_Stability_Langevin}.   These conditions turn out to be stronger than in the Hamiltonian case because we require that the drift part of the Cayley splitting inherits the asymptotically stability of the drift part of the Langevin dynamics.
\item[2. Accuracy:] In the same model problem, we quantify the global error of the Cayley splitting in \eqref{eq:cayley_splitting} in representing position, momentum and energy; see Propositions~\ref{prop:Cayley_Splitting_strong_accuracy},~\ref{prop:Cayley_Splitting_mean_dH}, and~\ref{prop:Cayley_Splitting_nu_mean_dH}.   Note that we estimate energy errors from initial distributions at equilibrium and out-of-equilibrium.  We also provide sufficient conditions so that the Cayley splitting in \eqref{eq:cayley_splitting} is weakly accurate when applied to a linear Langevin SPDE; see Prop.~\ref{prop:Cayley_Splitting_weak_accuracy_langevin}. 
\item[3. Non-Preconditioned HMC:]  The Cayley splitting in \eqref{eq:cayley_splitting_hamiltonian} enables doing infinite-dimensional HMC with non-preconditioned Hamiltonian dynamics.  In particular, to obtain an acceptance probability that converges to a nontrivial limit for a global move in state space as the spatial step size $\Delta s$ tends to zero, we prove it is sufficient to select the time step size as $\Delta t \lesssim \Delta s^{1/4}$; see Prop.~\ref{prop:hmc_ap}.   We also link this Cayley-based HMC with non-preconditioned MALA \cite{BeRoStVo2008}; see Prop.~\ref{prop:mala_hmc}.  Related to this, we show how to set the invariant distribution of the Cayley splitting \eqref{eq:cayley_splitting} by combining \eqref{eq:cayley_splitting_hamiltonian} with Metropolis-Hastings Monte-Carlo \cite{MeRoRoTeTe1953,Ha1970}.  In particular, it suffices to Metropolize the approximation of the Hamiltonian part of the splitting \cite{BoVa2010, BoVa2012, Bo2014}.  This Metropolized Cayley splitting gives an accurate and measure-preserving numerical method for \eqref{eq:langevin_spde_intro}.    
\end{description}

\paragraph{Organization of Paper}  In \S\ref{sec:why} we explain our rationale for using the Cayley splitting method for solving infinite-dimensional Hamiltonian systems and their 
second-order Langevin counterparts.  As an application, in \S\ref{sec:diffusion_bridges} we consider a class of second-order Langevin SPDEs whose marginal invariant measure 
in position is the law of a multidimensional diffusion bridge \cite{ReVa2005}.   We end the paper with a conclusion.

\begin{figure}
\begin{center}
\includegraphics[width=0.65\textwidth]{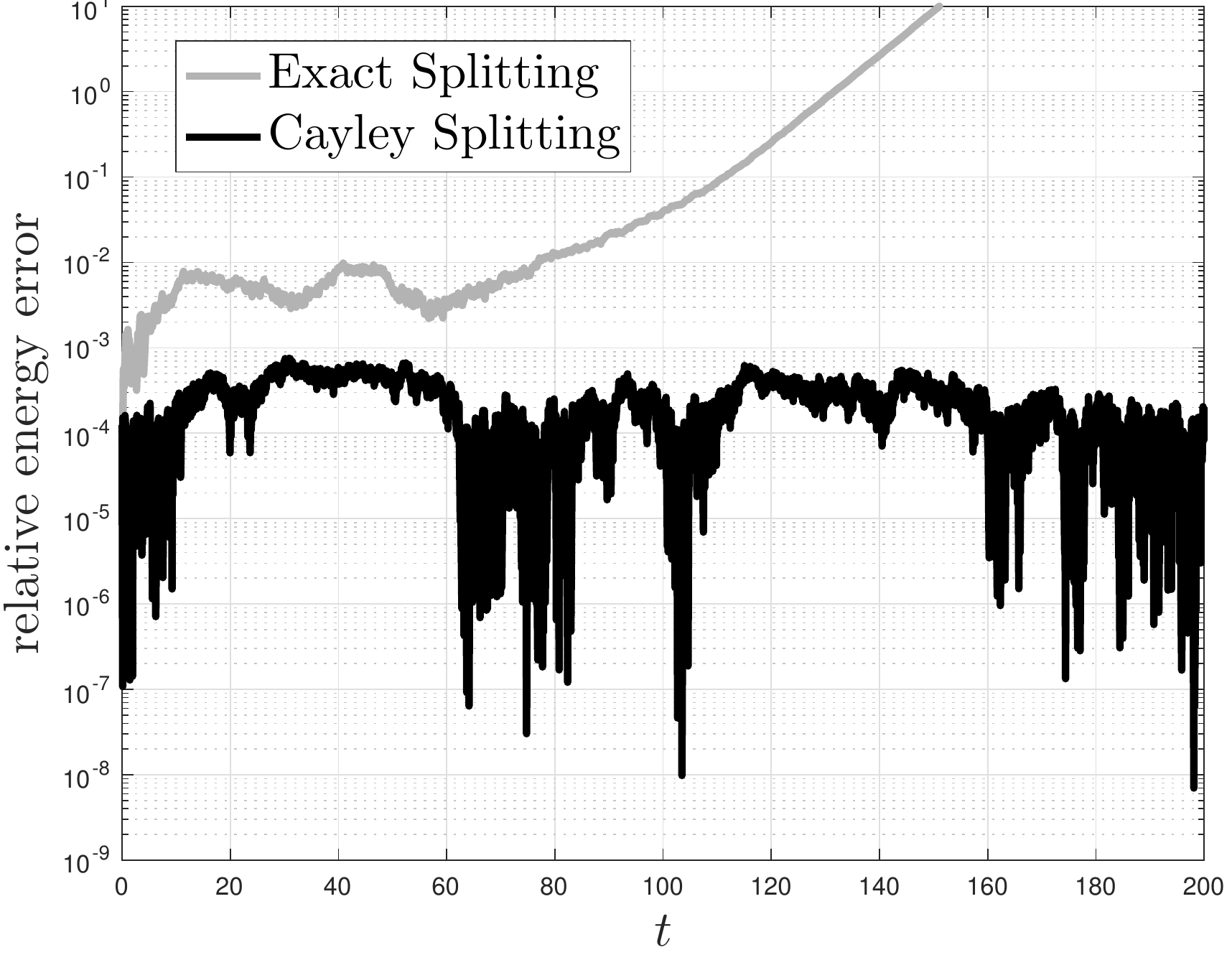}
\end{center}
\caption{\small  {\bf Stability of Cayley Splitting \& Resonance Instability of Exact Splitting.}  This figure shows the relative energy error as a function of time $t$ along trajectories
produced by the exact and Cayley splittings applied to a semilinear Hamiltonian PDE.  These splittings are given in \eqref{eq:exact_splitting_hamiltonian} and \eqref{eq:cayley_splitting_hamiltonian}, respectively.
The underlying potential energy function is a so-called path potential function associated to a two-dimensional diffusion bridge described in
\S\ref{sec:three_well_example}.   We discretize the spatial domain $[0,1]$ using an evenly spaced grid with $n=400$ grid points. 
The time step size in both splittings is $\Delta t = 0.00625$.   
}
  \label{fig:nonlinear_hamiltonian_system_energy}
\end{figure}

\begin{figure}
\begin{center}
\includegraphics[width=0.45\textwidth]{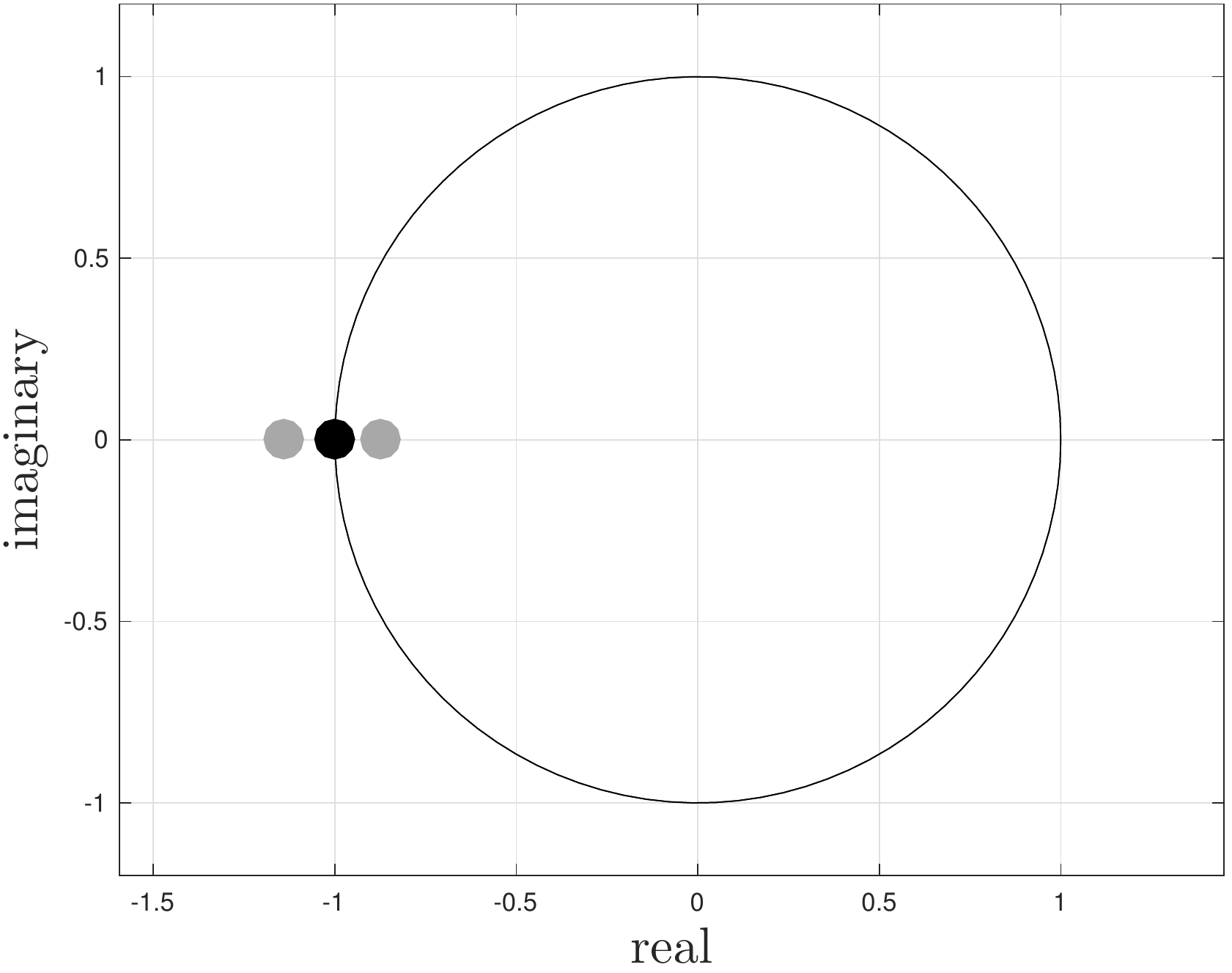} \hspace{0.1in}
\includegraphics[width=0.45\textwidth]{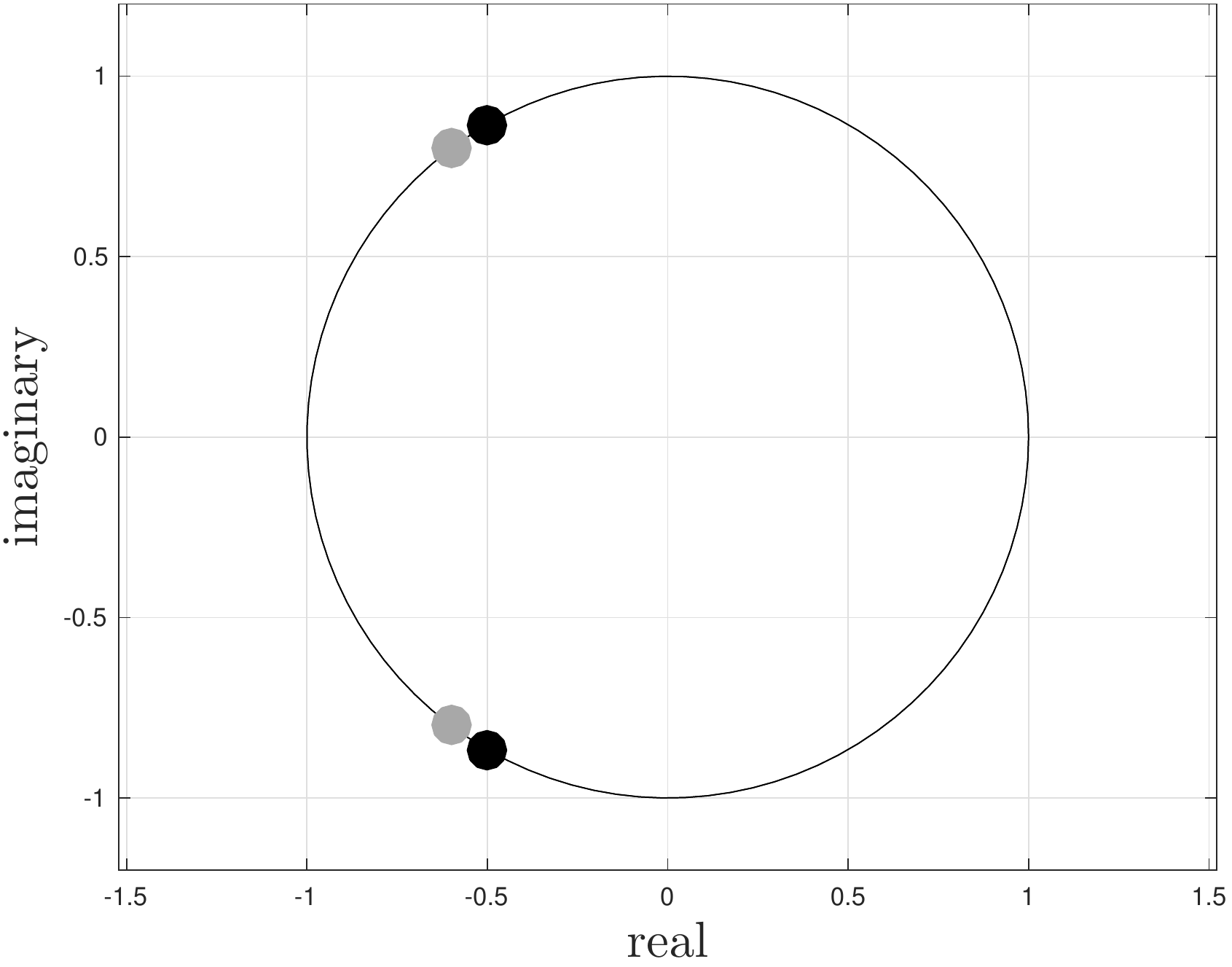} 
\end{center}
\caption{\small  {\bf Eigenvalues of Planar Symplectic Maps.}  
Eigenvalues of a symplectic matrix (black dots) along with a Hamiltonian perturbation of this matrix (grey dots) are plotted in the complex plane.  If the eigenvalues of a symplectic matrix are not distinct and lie on the unit circle (left panel), then the perturbed symplectic matrix may have an eigenvalue with modulus greater than one, and hence, the perturbed matrix loses stability.  However, if the eigenvalues are distinct and lie on the unit circle (right panel), then the eigenvalues of the perturbed map lie on the unit circle.  In the latter case, the symplectic matrix is said to be strongly stable.
}
  \label{fig:strongly_stable}
\end{figure}

\begin{figure}
\begin{center}
\includegraphics[width=0.45\textwidth]{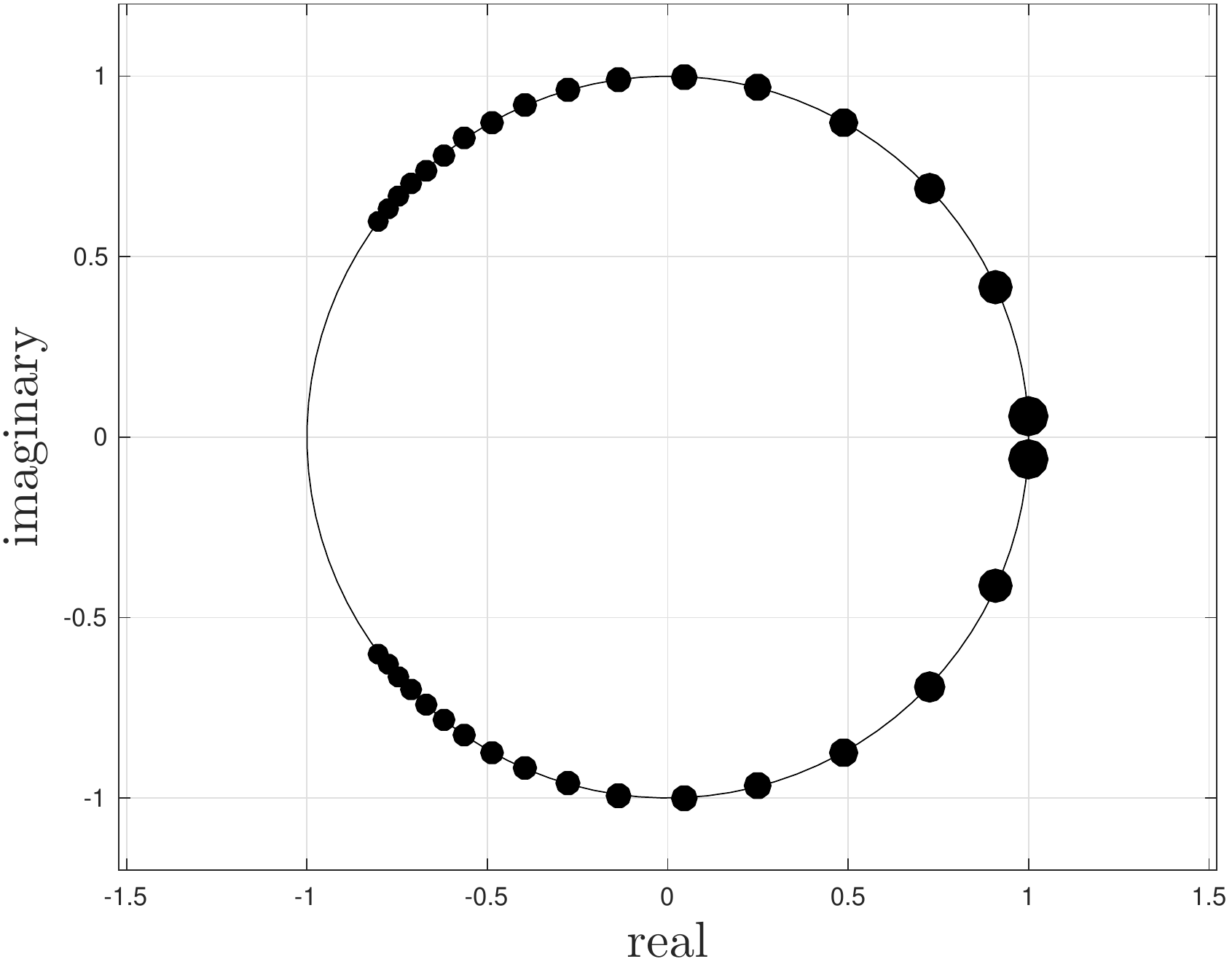} \hspace{0.1in}
\includegraphics[width=0.45\textwidth]{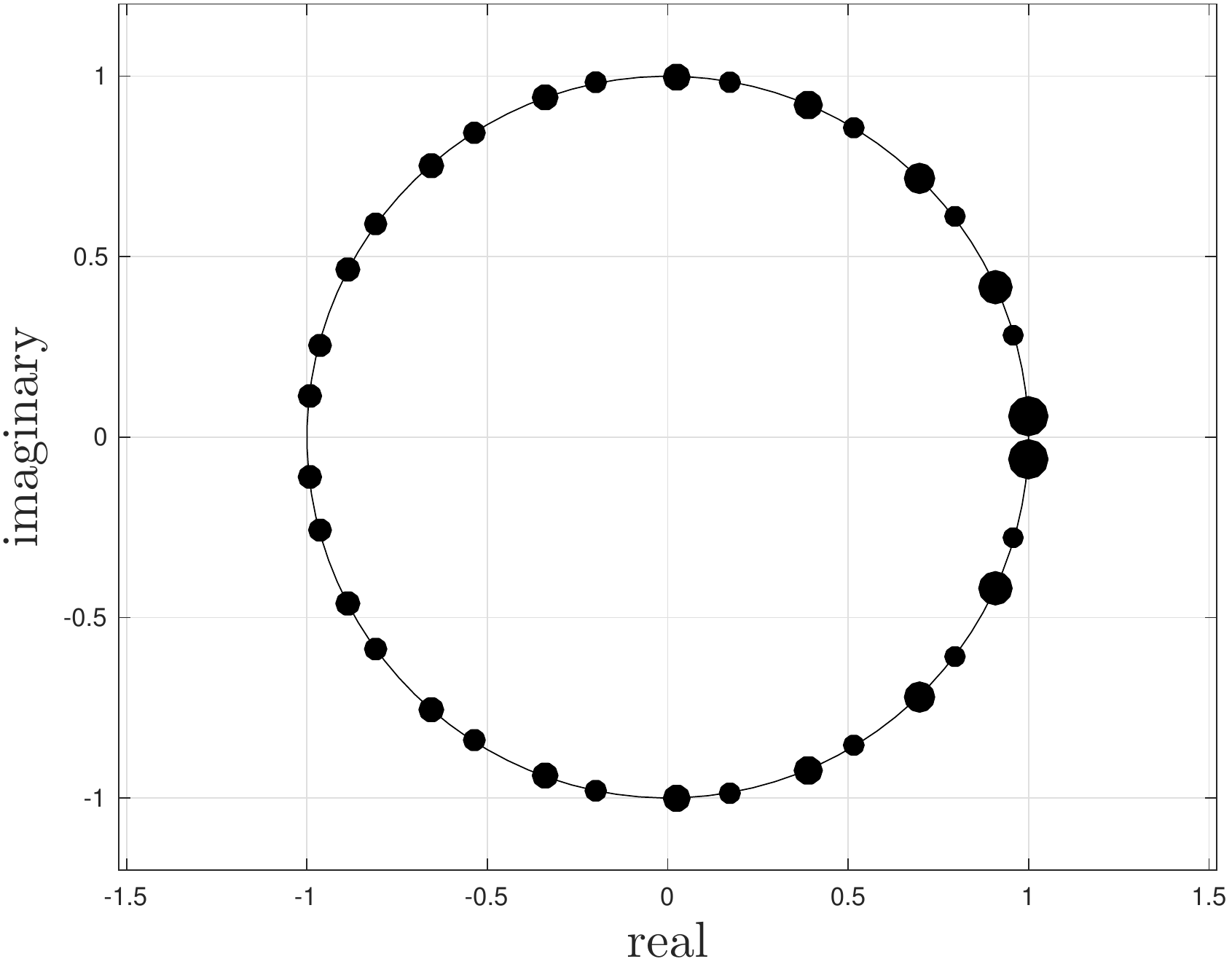}
\end{center}
\caption{\small  {\bf Eigenvalues of Cayley vs.~Exponential Maps.}  
Eigenvalues of the Cayley (left) and the exponential (right) maps applied to $H(q,p) = (1/2) (p^2 + \omega^2 q^2)$ at 
$17$ different snapshots in time and with $\omega=3$.     The size of the dots is related to time: 
dots corresponding to larger values of time have smaller size.  For the exact splitting, the eigenvalues rotate around
the unit circle multiple times.  However, for the Cayley splitting, the eigenvalues start near $(1,0)$, but never reach $(-1,0)$.  Since the
eigenvalues of the Cayley map are always distinct, the Cayley splitting is a strongly stable symplectic map, whereas the exponential
map loses strong stability every time the eigenvalues hit the horizontal axis.  The presence of high frequencies in the dynamics can
induce such linear resonance instabilities. 
}
  \label{fig:eigenvalues_cayley_exp}
\end{figure}


\section{Rationale for Cayley Splitting} \label{sec:why}

Here we assess the numerical stability and accuracy of the Cayley splittings in \eqref{eq:cayley_splitting_hamiltonian}
and \eqref{eq:cayley_splitting}.  We check that \eqref{eq:cayley_splitting_hamiltonian} is stable when applied to a linear 
Hamiltonian PDE with random initial conditions whose energy is almost surely infinite.   In this linear Hamiltonian PDE context, we also check that \eqref{eq:cayley_splitting_hamiltonian} 
is (i) strongly accurate for a point mass initial distribution; and (ii) accurate in representing the mean energy from equilibrium and non-equilibrium initial distributions.  
In a linear Langevin SPDE context, we formulate sufficient conditions for stability and check that \eqref{eq:cayley_splitting} is weakly accurate.  To assess the method's ergodic 
properties, we consider a HMC algorithm based on the Cayley splitting in \eqref{eq:cayley_splitting_hamiltonian}.  


\subsection{Preliminaries}

Here we gather some basic facts about: (i) the Cayley transform of a matrix which is an important ingredient of the Cayley splitting method; and (ii) the 
Cayley splittings applied to one-dimensional, linear Hamiltonian and second-order Langevin equations.   Depending on the context, the notation $\| \cdot \|$ 
denotes either the standard Euclidean norm of a vector or the Frobenius norm of a matrix.

\paragraph{Basic Properties of Cayley Transform}

The Cayley transform in \eqref{eq:cayley} is a matrix generalization of the rational function \begin{equation} \label{eq:rational_function}
f(x) = \frac{2+x}{2-x} \;.
\end{equation}
It has useful properties which we briefly review.   {\em Throughout this part we assume that the matrix $\boldsymbol{A}$ is such that 
$( \boldsymbol{I} - (1/2) \boldsymbol{A})$ is invertible, and hence, the Cayley transform of $\boldsymbol{A}$ is well-defined. This assumption 
is essential to every result that follows.}

\begin{lemma}
A matrix $\boldsymbol{A}$ and its Cayley transform $\cay( \boldsymbol{A})$ commute.
\end{lemma}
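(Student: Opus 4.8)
The plan is to reduce everything to the elementary fact that a matrix commutes with any polynomial in itself, and then to upgrade this to commutation with $(\boldsymbol{I} - \frac{1}{2}\boldsymbol{A})^{-1}$. Concretely, I would first observe that $\boldsymbol{A}(\boldsymbol{I} + \frac{1}{2}\boldsymbol{A}) = (\boldsymbol{I} + \frac{1}{2}\boldsymbol{A})\boldsymbol{A}$ and $\boldsymbol{A}(\boldsymbol{I} - \frac{1}{2}\boldsymbol{A}) = (\boldsymbol{I} - \frac{1}{2}\boldsymbol{A})\boldsymbol{A}$, both of which are immediate since $\boldsymbol{A}$ trivially commutes with $\boldsymbol{I}$ and with $\boldsymbol{A}$ itself.

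Next I would establish that $\boldsymbol{A}$ commutes with $(\boldsymbol{I} - \frac{1}{2}\boldsymbol{A})^{-1}$, which is well-defined by the standing assumption recalled just before the lemma. Starting from $(\boldsymbol{I} - \frac{1}{2}\boldsymbol{A})\boldsymbol{A} = \boldsymbol{A}(\boldsymbol{I} - \frac{1}{2}\boldsymbol{A})$, I would left-multiply and right-multiply by $(\boldsymbol{I} - \frac{1}{2}\boldsymbol{A})^{-1}$ to obtain $\boldsymbol{A}(\boldsymbol{I} - \frac{1}{2}\boldsymbol{A})^{-1} = (\boldsymbol{I} - \frac{1}{2}\boldsymbol{A})^{-1}\boldsymbol{A}$.

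Finally, I would chain these commutations:
\[
\boldsymbol{A}\,\cay(\boldsymbol{A}) = \boldsymbol{A}\left(\boldsymbol{I} - \tfrac{1}{2}\boldsymbol{A}\right)^{-1}\left(\boldsymbol{I} + \tfrac{1}{2}\boldsymbol{A}\right) = \left(\boldsymbol{I} - \tfrac{1}{2}\boldsymbol{A}\right)^{-1}\boldsymbol{A}\left(\boldsymbol{I} + \tfrac{1}{2}\boldsymbol{A}\right) = \left(\boldsymbol{I} - \tfrac{1}{2}\boldsymbol{A}\right)^{-1}\left(\boldsymbol{I} + \tfrac{1}{2}\boldsymbol{A}\right)\boldsymbol{A} = \cay(\boldsymbol{A})\,\boldsymbol{A},
\]
which is the claim. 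Alternatively, one could invoke the series representation of the Cayley transform (Lemma~\ref{lemma:cayley_series}) when it converges, in which case $\cay(\boldsymbol{A})$ is a limit of polynomials in $\boldsymbol{A}$ and commutation is immediate, but the argument above is cleaner since it needs no convergence hypothesis. I do not anticipate any real obstacle here; the only point requiring a moment's care is the commutation with the inverse factor, and even that is routine once the standing invertibility assumption is invoked.
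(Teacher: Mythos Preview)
Your proof is correct and follows essentially the same route as the paper: both rely on the elementary commutation $\boldsymbol{A}(\boldsymbol{I} - \tfrac{1}{2}\boldsymbol{A}) = (\boldsymbol{I} - \tfrac{1}{2}\boldsymbol{A})\boldsymbol{A}$, pass to the inverse, and then chain through the definition of $\cay(\boldsymbol{A})$. The paper's version is simply more terse, compressing the intermediate steps into a single displayed line.
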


\begin{proof}
Since $\boldsymbol{A} (\boldsymbol{I} - \frac{1}{2}  \boldsymbol{A}) = (\boldsymbol{I} - \frac{1}{2}  \boldsymbol{A}) \boldsymbol{A}$, it follows that
\begin{align*}
\boldsymbol{A} \cay( \boldsymbol{A}) &=   ( \boldsymbol{I} - \frac{1}{2} \boldsymbol{A})^{-1}  \boldsymbol{A}  ( \boldsymbol{I} + \frac{1}{2} \boldsymbol{A}) = \cay( \boldsymbol{A}) \boldsymbol{A}  
\end{align*}
as required. 
\end{proof}

\medskip
The following result is key to proving the Cayley splitting in \eqref{eq:cayley_splitting_hamiltonian} is reversible.

\begin{lemma}  \label{lemma:cayley_reversible}
Let $\boldsymbol{A}$ be reversible with respect to an involutory matrix $\boldsymbol{S}$, i.e., $\boldsymbol{S} \boldsymbol{A}  = - \boldsymbol{A} \boldsymbol{S}$. Then $\cay(\boldsymbol{A})$ satisfies \[
 \boldsymbol{S}  \cay(\boldsymbol{A})  \boldsymbol{S}   = \cay(\boldsymbol{A})^{-1} \;.
\]
\end{lemma}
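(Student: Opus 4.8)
The plan is to reduce the whole statement to the single algebraic identity $\boldsymbol{S}\boldsymbol{A}\boldsymbol{S} = -\boldsymbol{A}$. This follows immediately from the hypothesis $\boldsymbol{S}\boldsymbol{A} = -\boldsymbol{A}\boldsymbol{S}$ together with $\boldsymbol{S}$ being involutory, so that $\boldsymbol{S}^2 = \boldsymbol{I}$ and $\boldsymbol{S}^{-1} = \boldsymbol{S}$. Conjugating by $\boldsymbol{S}$ then simply flips the sign of the linear term, so it interchanges the two factors appearing in the Cayley transform:
\[
\boldsymbol{S}\Big(\boldsymbol{I} - \tfrac{1}{2}\boldsymbol{A}\Big)\boldsymbol{S} = \boldsymbol{I} + \tfrac{1}{2}\boldsymbol{A}, \qquad \boldsymbol{S}\Big(\boldsymbol{I} + \tfrac{1}{2}\boldsymbol{A}\Big)\boldsymbol{S} = \boldsymbol{I} - \tfrac{1}{2}\boldsymbol{A}.
\]

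First I would check that $\cay(\boldsymbol{A})$ is invertible under the standing hypotheses, since otherwise the right-hand side is not even defined. We assume throughout that $\boldsymbol{I} - \tfrac{1}{2}\boldsymbol{A}$ is invertible; combining this with the first identity above and the invertibility of $\boldsymbol{S}$ shows that $\boldsymbol{I} + \tfrac{1}{2}\boldsymbol{A} = \boldsymbol{S}(\boldsymbol{I} - \tfrac{1}{2}\boldsymbol{A})\boldsymbol{S}$ is a product of invertible matrices, hence invertible. Therefore $\cay(\boldsymbol{A}) = (\boldsymbol{I} - \tfrac{1}{2}\boldsymbol{A})^{-1}(\boldsymbol{I} + \tfrac{1}{2}\boldsymbol{A})$ is invertible with $\cay(\boldsymbol{A})^{-1} = (\boldsymbol{I} + \tfrac{1}{2}\boldsymbol{A})^{-1}(\boldsymbol{I} - \tfrac{1}{2}\boldsymbol{A})$.

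Next I would compute $\boldsymbol{S}\cay(\boldsymbol{A})\boldsymbol{S}$ directly: insert $\boldsymbol{S}^2 = \boldsymbol{I}$ between the two factors of $\cay(\boldsymbol{A})$ and use $(\boldsymbol{S}\boldsymbol{M}\boldsymbol{S})^{-1} = \boldsymbol{S}\boldsymbol{M}^{-1}\boldsymbol{S}$ to obtain
\[
\boldsymbol{S}\cay(\boldsymbol{A})\boldsymbol{S} = \Big[\boldsymbol{S}\Big(\boldsymbol{I} - \tfrac{1}{2}\boldsymbol{A}\Big)\boldsymbol{S}\Big]^{-1}\Big[\boldsymbol{S}\Big(\boldsymbol{I} + \tfrac{1}{2}\boldsymbol{A}\Big)\boldsymbol{S}\Big] = \Big(\boldsymbol{I} + \tfrac{1}{2}\boldsymbol{A}\Big)^{-1}\Big(\boldsymbol{I} - \tfrac{1}{2}\boldsymbol{A}\Big),
\]
which is exactly the formula for $\cay(\boldsymbol{A})^{-1}$ from the previous step, finishing the proof. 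There is essentially no obstacle here; the only point deserving a moment of care is not to assume invertibility of $\boldsymbol{I} + \tfrac{1}{2}\boldsymbol{A}$ (equivalently of $\cay(\boldsymbol{A})$) but to derive it from the reversibility relation, as above. One could alternatively invoke the commuting form $\cay(\boldsymbol{A}) = (\boldsymbol{I} + \tfrac{1}{2}\boldsymbol{A})(\boldsymbol{I} - \tfrac{1}{2}\boldsymbol{A})^{-1}$ recorded earlier and argue symmetrically, but that is not needed.
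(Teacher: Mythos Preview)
Your proof is correct and follows essentially the same route as the paper's: conjugate the Cayley transform by $\boldsymbol{S}$, use $\boldsymbol{S}\boldsymbol{A}\boldsymbol{S}=-\boldsymbol{A}$ to swap the two factors, and recognize the result as $\cay(\boldsymbol{A})^{-1}$. Your version is slightly more careful in explicitly deriving the invertibility of $\boldsymbol{I}+\tfrac{1}{2}\boldsymbol{A}$ from the reversibility relation, a point the paper takes for granted.
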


\begin{proof}
Since $ \boldsymbol{S}$ is involutory and $\boldsymbol{A}$ is reversible with respect to $ \boldsymbol{S}$, it follows that \begin{align*}
 \boldsymbol{S}  \cay(\boldsymbol{A})  \boldsymbol{S} &= (  \boldsymbol{S} - \frac{1}{2}   \boldsymbol{A} \boldsymbol{S})^{-1} (  \boldsymbol{S} + \frac{1}{2} \boldsymbol{A}  \boldsymbol{S} ) \\
 &= (  \boldsymbol{I} + \frac{1}{2}   \boldsymbol{A})^{-1}  \boldsymbol{S}^2 (  \boldsymbol{I} - \frac{1}{2} \boldsymbol{A}) = \cay(\boldsymbol{A})^{-1}
\end{align*}
as required.
\end{proof}

\medskip
The following result is key to proving the Cayley splitting in \eqref{eq:cayley_splitting_hamiltonian} is symplectic, 
and hence, volume-preserving.

\begin{lemma}[{{\cite[\S2.5]{MaRa1999}}}] \label{lemma:cayley_symplectic}
A $2N \times 2N$ matrix $\boldsymbol{A}$ is Hamiltonian if and only if $\cay(\boldsymbol{A})$ is a symplectic matrix.
\end{lemma}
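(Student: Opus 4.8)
The plan is to package both implications into a single matrix identity, so that there is really nothing to prove beyond an expansion and a commutation. Write $\boldsymbol{U} = \boldsymbol{I} - \tfrac{1}{2}\boldsymbol{A}$ and $\boldsymbol{V} = \boldsymbol{I} + \tfrac{1}{2}\boldsymbol{A}$, so that $\cay(\boldsymbol{A}) = \boldsymbol{U}^{-1}\boldsymbol{V}$ (the standing assumption that $\boldsymbol{I} - \tfrac{1}{2}\boldsymbol{A}$ is invertible is exactly what makes $\boldsymbol{U}^{-1}$ available), and let $\boldsymbol{J}$ be the canonical structure matrix, with $\boldsymbol{J}^{T} = -\boldsymbol{J}$ and $\boldsymbol{J}^{2} = -\boldsymbol{I}$. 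I would use the standard characterizations that $\boldsymbol{A}$ is Hamiltonian if and only if $\boldsymbol{A}^{T}\boldsymbol{J} + \boldsymbol{J}\boldsymbol{A} = \boldsymbol{0}$, and that $\boldsymbol{M}$ is symplectic if and only if $\boldsymbol{M}^{T}\boldsymbol{J}\boldsymbol{M} = \boldsymbol{J}$.

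The first step is a bookkeeping expansion, which gives
\[
\boldsymbol{V}^{T}\boldsymbol{J}\boldsymbol{V} - \boldsymbol{U}^{T}\boldsymbol{J}\boldsymbol{U} = \boldsymbol{A}^{T}\boldsymbol{J} + \boldsymbol{J}\boldsymbol{A}\;,
\]
because the $\boldsymbol{J}$ terms and the $\tfrac{1}{4}\boldsymbol{A}^{T}\boldsymbol{J}\boldsymbol{A}$ terms cancel while the two cross terms add. The second step is to rewrite the symplecticity condition in terms of $\boldsymbol{U}$ and $\boldsymbol{V}$ alone. Since $\boldsymbol{U}$, $\boldsymbol{V}$ and $\boldsymbol{U}^{-1}$ are rational functions of $\boldsymbol{A}$ they commute with one another, and their transposes are rational functions of $\boldsymbol{A}^{T}$ and likewise commute; moving $\boldsymbol{U}^{-T} := (\boldsymbol{U}^{T})^{-1}$ to the left and $\boldsymbol{U}^{-1}$ to the right therefore yields
\[
\bigl(\cay(\boldsymbol{A})\bigr)^{T}\boldsymbol{J}\,\cay(\boldsymbol{A}) = \boldsymbol{V}^{T}\boldsymbol{U}^{-T}\boldsymbol{J}\boldsymbol{U}^{-1}\boldsymbol{V} = \boldsymbol{U}^{-T}\bigl(\boldsymbol{V}^{T}\boldsymbol{J}\boldsymbol{V}\bigr)\boldsymbol{U}^{-1}\;.
\]
Because $\boldsymbol{U}$ is invertible, conjugating by $\boldsymbol{U}^{-1}$ is a bijection on the space of matrices, so the left-hand side equals $\boldsymbol{J}$ if and only if $\boldsymbol{V}^{T}\boldsymbol{J}\boldsymbol{V} = \boldsymbol{U}^{T}\boldsymbol{J}\boldsymbol{U}$. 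Combining this with the first step, $\cay(\boldsymbol{A})$ is symplectic $\iff$ $\boldsymbol{A}^{T}\boldsymbol{J} + \boldsymbol{J}\boldsymbol{A} = \boldsymbol{0}$ $\iff$ $\boldsymbol{A}$ is Hamiltonian, which is both directions of the lemma simultaneously.

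I expect the only real obstacle to be resisting a clumsier route: the naive way to handle the "only if" direction is to invert the Cayley transform, $\boldsymbol{A} = 2(\cay(\boldsymbol{A}) - \boldsymbol{I})(\cay(\boldsymbol{A}) + \boldsymbol{I})^{-1}$, which forces one to first verify that $\cay(\boldsymbol{A}) + \boldsymbol{I} = 2\boldsymbol{U}^{-1}$ is invertible and then repeat an almost identical computation; the symmetric formulation above avoids that entirely. The one place care is genuinely needed is the commutation rearrangement in the second step — one must confirm that every factor being moved past another is a rational function of the same matrix ($\boldsymbol{A}$ on the right of $\boldsymbol{J}$, $\boldsymbol{A}^{T}$ on the left), which is precisely where the particular algebraic form of $\cay$ is used.
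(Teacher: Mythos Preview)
Your proof is correct and follows essentially the same route as the paper's. Both arguments reduce the symplecticity condition $\cay(\boldsymbol{A})^{T}\mathbb{J}\,\cay(\boldsymbol{A}) = \mathbb{J}$ to the equivalent statement $\boldsymbol{V}^{T}\mathbb{J}\boldsymbol{V} = \boldsymbol{U}^{T}\mathbb{J}\boldsymbol{U}$ (in your notation), then expand and cancel to obtain $\boldsymbol{A}^{T}\mathbb{J} + \mathbb{J}\boldsymbol{A} = \boldsymbol{0}$; the paper simply writes ``which is equivalent to'' for the first reduction and leaves the commutation of $\boldsymbol{U}$, $\boldsymbol{V}$ implicit (it was noted just before the lemma that $\cay(\boldsymbol{A}) = \boldsymbol{U}^{-1}\boldsymbol{V} = \boldsymbol{V}\boldsymbol{U}^{-1}$), whereas you spell out that step explicitly.
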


\begin{proof}
Let $\mathbb{J}$ be the standard symplectic matrix, which satisfies $\mathbb{J}^{\mathrm{T}}=-\mathbb{J}$ and $\mathbb{J}^2 = -\boldsymbol{I}$.
The matrix $ \cay(\boldsymbol{A})  $ is symplectic if and only if
\[ 
 \cay(\boldsymbol{A})^{\mathrm{T}}
\mathbb{J}  \cay(\boldsymbol{A})  =  \mathbb{J},
\] 
which is equivalent  to
\[ 
(\boldsymbol{I} + \frac{1}{2} \boldsymbol{A})^{\mathrm{T}} \mathbb{J} (\boldsymbol{I} + \frac{1}{2} \boldsymbol{A})  =  (\boldsymbol{I} -\frac{1}{2} \boldsymbol{A})^{\mathrm{T}} \mathbb{J} (\boldsymbol{I} -\frac{1}{2} \boldsymbol{A}) \;.
\] 
Expanding both sides of this equation,
\[		\mathbb{J} +
\frac{1}{2} \mathbb{J} \boldsymbol{A} + \frac{1}{2} \boldsymbol{A}^{\mathrm{T}}\mathbb{J} +\frac{1}{4} \boldsymbol{A}^{\mathrm{T}}\mathbb{J} \boldsymbol{A}  =  \mathbb{J} -
\frac{1}{2} \mathbb{J}\boldsymbol{A} - \frac{1}{2} \boldsymbol{A}^{\mathrm{T}}\mathbb{J} + \frac{1}{4} \boldsymbol{A}^{\mathrm{T}}\mathbb{J}\boldsymbol{A}\]  and then simplifying yields,
\[	\mathbb{J}\boldsymbol{A}  =  - \boldsymbol{A}^{\mathrm{T}}\mathbb{J} = (\mathbb{J} \boldsymbol{A}) ^{\mathrm{T}} \] which states that $\boldsymbol{A}$  is a Hamiltonian matrix.  

\end{proof}

\medskip
The next lemma relates the eigenvalues/eigenvectors of a matrix and its Cayley transform.

\begin{lemma} \label{lemma:spectrum_cayley}
Let $\boldsymbol{A}$ be an $N \times N$ matrix with $N$ linearly independent eigenvectors and with eigendecomposition $\boldsymbol{A} = \boldsymbol{V}\boldsymbol{\Lambda}\boldsymbol{V}^{-1}$
where $\boldsymbol{\Lambda}$ is a diagonal matrix of eigenvalues and $\boldsymbol{V}$ is a matrix whose columns are the corresponding eigenvectors.  
Then \[
\cay(\boldsymbol{A}) = \boldsymbol{V} \boldsymbol{\tilde \Lambda} \boldsymbol{V}^{-1}\;,  \quad \text{where} \quad  \boldsymbol{\tilde \Lambda} = \cay (  \boldsymbol{\Lambda}  ) \;.
\] \end{lemma}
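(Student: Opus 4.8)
The plan is to reduce everything to the defining formula \eqref{eq:cayley} and to exploit that similarity transformations commute with sums, products, and inverses. First I would write $\cay(\boldsymbol{A}) = (\boldsymbol{I} - \tfrac12\boldsymbol{A})^{-1}(\boldsymbol{I} + \tfrac12\boldsymbol{A})$ and substitute $\boldsymbol{A} = \boldsymbol{V}\boldsymbol{\Lambda}\boldsymbol{V}^{-1}$, using also $\boldsymbol{I} = \boldsymbol{V}\boldsymbol{I}\boldsymbol{V}^{-1}$, to obtain $\boldsymbol{I} \pm \tfrac12\boldsymbol{A} = \boldsymbol{V}(\boldsymbol{I} \pm \tfrac12\boldsymbol{\Lambda})\boldsymbol{V}^{-1}$. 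Next I would note that $(\boldsymbol{I} - \tfrac12\boldsymbol{\Lambda})$ is invertible: it is similar to $(\boldsymbol{I} - \tfrac12\boldsymbol{A})$, so the two matrices have the same determinant, and the standing assumption that $\cay(\boldsymbol{A})$ is well-defined means $\det(\boldsymbol{I} - \tfrac12\boldsymbol{A}) \neq 0$. Consequently $(\boldsymbol{I} - \tfrac12\boldsymbol{A})^{-1} = \boldsymbol{V}(\boldsymbol{I} - \tfrac12\boldsymbol{\Lambda})^{-1}\boldsymbol{V}^{-1}$.

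Then I would multiply the two factors: the inner $\boldsymbol{V}^{-1}\boldsymbol{V}$ cancels, leaving $\cay(\boldsymbol{A}) = \boldsymbol{V}(\boldsymbol{I} - \tfrac12\boldsymbol{\Lambda})^{-1}(\boldsymbol{I} + \tfrac12\boldsymbol{\Lambda})\boldsymbol{V}^{-1} = \boldsymbol{V}\,\cay(\boldsymbol{\Lambda})\,\boldsymbol{V}^{-1}$. Finally, since $\boldsymbol{\Lambda} = \diag(\lambda_1,\dots,\lambda_N)$, the matrices $(\boldsymbol{I} \pm \tfrac12\boldsymbol{\Lambda})$ are diagonal, hence so is $(\boldsymbol{I} - \tfrac12\boldsymbol{\Lambda})^{-1}(\boldsymbol{I} + \tfrac12\boldsymbol{\Lambda})$, which shows that $\boldsymbol{\tilde\Lambda} := \cay(\boldsymbol{\Lambda})$ is the diagonal matrix with entries $f(\lambda_j) = (2+\lambda_j)/(2-\lambda_j)$, with $f$ as in \eqref{eq:rational_function}. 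This is also the identity needed later for transferring the strong-stability criterion from the $\lambda_j$ to the Cayley eigenvalues.

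There is no genuine obstacle here; the only step meriting a word of care is the invertibility of $(\boldsymbol{I} - \tfrac12\boldsymbol{\Lambda})$, which is immediate from invariance of the determinant under similarity, and the rest is algebra on the closed form \eqref{eq:cayley}. An equivalent spectral phrasing would apply $\cay(\boldsymbol{A})$ to each eigenvector $\boldsymbol{v}_j$ and compute $\cay(\boldsymbol{A})\boldsymbol{v}_j = (\boldsymbol{I} - \tfrac12\boldsymbol{A})^{-1}(\boldsymbol{I} + \tfrac12\boldsymbol{A})\boldsymbol{v}_j = f(\lambda_j)\,\boldsymbol{v}_j$, but the matrix-factoring route is cleanest and makes the conjugation by $\boldsymbol{V}$ explicit.
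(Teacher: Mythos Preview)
Your proof is correct and follows essentially the same approach as the paper: substitute the eigendecomposition $\boldsymbol{A}=\boldsymbol{V}\boldsymbol{\Lambda}\boldsymbol{V}^{-1}$ (together with $\boldsymbol{I}=\boldsymbol{V}\boldsymbol{V}^{-1}$) into the definition of $\cay(\boldsymbol{A})$ and pull the $\boldsymbol{V}$ factors outside. Your version is slightly more explicit about the invertibility of $(\boldsymbol{I}-\tfrac12\boldsymbol{\Lambda})$ and the diagonal form of $\cay(\boldsymbol{\Lambda})$, but the argument is the same.
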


Note that $ \boldsymbol{\tilde \Lambda}$ is a diagonal matrix.

\begin{proof}
By substituting the eigendecomposition of $\boldsymbol{A}$, we obtain \begin{align*}
\cay(\boldsymbol{A}) &= \cay(\boldsymbol{V}\boldsymbol{\Lambda}\boldsymbol{V}^{-1}) \\
&=  (\boldsymbol{V}\boldsymbol{V}^{-1}  - \frac{1}{2} \boldsymbol{V}\boldsymbol{\Lambda}\boldsymbol{V}^{-1})^{-1}  ( \boldsymbol{V}\boldsymbol{V}^{-1} + \frac{1}{2} \boldsymbol{V}\boldsymbol{\Lambda}\boldsymbol{V}^{-1}) \\
&= \boldsymbol{V} \cay( \boldsymbol{\Lambda}) \boldsymbol{V}^{-1}
\end{align*}
as required.  
\end{proof}

\begin{lemma} \label{lemma:cayley_series}
In the situation of the preceding Lemma, and assuming that $\| \boldsymbol{A} \| < 2$, then \[
\cay( \boldsymbol{A} ) = \boldsymbol{I} + \sum_{k \ge 0} \frac{1}{2^k}  \boldsymbol{A}^{k+1} \;,
\] and for any $M \in \mathbb{N}$, \[
\left\| \sum_{k \ge M} \frac{1}{2^k}  \boldsymbol{A}^{k+1} \right\| \le  \| \boldsymbol{A} \|^{M+1} \frac{2^{1-M}}{2 - \| \boldsymbol{A} \|} \;.
\]
\end{lemma}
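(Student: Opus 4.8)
The plan is to reduce everything to the Neumann (geometric) series. Since $\|\boldsymbol{A}\| < 2$, the matrix $\tfrac12\boldsymbol{A}$ has norm strictly less than one, so in the Banach algebra of $N\times N$ matrices with the submultiplicative norm $\|\cdot\|$ the series $\sum_{j\ge 0}(\tfrac12\boldsymbol{A})^j$ converges absolutely and equals $(\boldsymbol{I}-\tfrac12\boldsymbol{A})^{-1}$. This is precisely the point at which the hypothesis $\|\boldsymbol{A}\|<2$ — and hence the invertibility of $\boldsymbol{I}-\tfrac12\boldsymbol{A}$ — is used; it also guarantees that $\cay(\boldsymbol{A})$ is well-defined.

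First I would write $\cay(\boldsymbol{A}) = (\boldsymbol{I}-\tfrac12\boldsymbol{A})^{-1}(\boldsymbol{I}+\tfrac12\boldsymbol{A})$ and substitute the Neumann series for the inverse. Because the right-hand factor $\boldsymbol{I}+\tfrac12\boldsymbol{A}$ is a polynomial, multiplying it into the absolutely convergent series may be done term by term, giving
\[
\cay(\boldsymbol{A}) = \sum_{j\ge 0}\frac{1}{2^j}\boldsymbol{A}^j + \sum_{j\ge 0}\frac{1}{2^{j+1}}\boldsymbol{A}^{j+1}.
\]
Reindexing the second sum as $\sum_{k\ge 1}\tfrac{1}{2^k}\boldsymbol{A}^k$ and combining with the first (which is $\boldsymbol{I}+\sum_{k\ge1}\tfrac{1}{2^k}\boldsymbol{A}^k$) produces $\boldsymbol{I} + \sum_{k\ge 1}\tfrac{1}{2^{k-1}}\boldsymbol{A}^k$; the index shift $k=\ell+1$ then yields $\cay(\boldsymbol{A}) = \boldsymbol{I} + \sum_{\ell\ge 0}\tfrac{1}{2^\ell}\boldsymbol{A}^{\ell+1}$, which is the first claimed identity.

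For the tail bound I would simply apply the triangle inequality and submultiplicativity, then sum the resulting scalar geometric tail (legitimate since $\|\boldsymbol{A}\|/2<1$):
\[
\left\| \sum_{k\ge M} \frac{1}{2^k}\boldsymbol{A}^{k+1} \right\|
\le \sum_{k\ge M} \frac{1}{2^k}\|\boldsymbol{A}\|^{k+1}
= \|\boldsymbol{A}\| \sum_{k\ge M}\Bigl(\tfrac{\|\boldsymbol{A}\|}{2}\Bigr)^{k}
= \|\boldsymbol{A}\|\cdot \frac{(\|\boldsymbol{A}\|/2)^M}{1-\|\boldsymbol{A}\|/2},
\]
and the last expression simplifies to $\|\boldsymbol{A}\|^{M+1}\,\dfrac{2^{1-M}}{2-\|\boldsymbol{A}\|}$, as desired.

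There is no deep obstacle here; the only two points meriting a word of justification are (i) the absolute convergence of the Neumann series in the chosen matrix norm, which is exactly where $\|\boldsymbol{A}\|<2$ enters, and (ii) that one may distribute the degree-one polynomial $\boldsymbol{I}+\tfrac12\boldsymbol{A}$ across the convergent series and reindex — both routine in a Banach algebra. One could alternatively route the computation through the eigendecomposition of Lemma~\ref{lemma:spectrum_cayley}, reducing to the scalar expansion $f(x)=\tfrac{2+x}{2-x}=1+\sum_{k\ge0}x^{k+1}/2^k$ valid for $|x|<2$; but the direct argument above needs no diagonalizability assumption, so that is the version I would present.
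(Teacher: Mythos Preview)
Your proof is correct and takes a genuinely different route from the paper. The paper's argument invokes Lemma~\ref{lemma:spectrum_cayley}: it diagonalizes $\boldsymbol{A}=\boldsymbol{V}\boldsymbol{\Lambda}\boldsymbol{V}^{-1}$, applies the scalar identity $\tfrac{2+x}{2-x}=1+\sum_{k\ge0}x^{k+1}/2^k$ (valid for $|x|<2$) entrywise to $\boldsymbol{\Lambda}$, and then conjugates back. Your approach bypasses the eigendecomposition entirely, working directly with the Neumann series $(\boldsymbol{I}-\tfrac12\boldsymbol{A})^{-1}=\sum_{j\ge0}(\tfrac12\boldsymbol{A})^j$ in the Banach algebra of matrices. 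The tail estimates are then handled identically in both proofs. Your version is arguably cleaner: it does not use the diagonalizability hypothesis inherited from ``the situation of the preceding Lemma,'' so it actually proves a slightly stronger statement. The paper's version, on the other hand, makes the connection to Lemma~\ref{lemma:spectrum_cayley} explicit and emphasizes the scalar rational function \eqref{eq:rational_function} underlying the Cayley transform. You anticipated this alternative in your final paragraph; either presentation is fine, and your remark that the direct argument avoids diagonalizability is well taken.
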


\begin{proof}
For any $x \in \mathbb{R}$ satisfying $|x|<2$, we have \begin{equation} \label{eq:cayley_series_scalar}
\frac{2+x}{2-x} = 1 + \sum_{k \ge 0} \frac{1}{2^k} x^{k+1} \;,
\end{equation}  and for any $M \in \mathbb{N}$, \begin{equation} \label{eq:cayley_series_remainder_scalar}
\sum_{k \ge M} \frac{1}{2^k} x^{k+1} = \frac{2^{1-M} x^{M+1}}{2-x} \;.
\end{equation}   By Lemma~\ref{lemma:spectrum_cayley},  \begin{align*}
\cay( \boldsymbol{A} ) &= \boldsymbol{V} \cay( \boldsymbol{\Lambda}) \boldsymbol{V}^{-1} \\
&= \boldsymbol{V} \left( \boldsymbol{I} + \sum_{k \ge 0} \frac{1}{2^k}  \boldsymbol{\Lambda}^{k+1}  \right)  \boldsymbol{V}^{-1} \\
&=  \boldsymbol{I} + \sum_{k \ge 0} \frac{1}{2^k}  \boldsymbol{A}^{k+1} 
\end{align*}
where we applied the series in \eqref{eq:cayley_series_scalar} to each eigenvalue of $\boldsymbol{A}$.  Moreover, \begin{align*}
\left\| \sum_{k \ge M} \frac{1}{2^k}  \boldsymbol{A}^{k+1}  \right\| &\le  \sum_{k \ge M} \frac{1}{2^k}  \| \boldsymbol{A} \|^{k+1} =  \frac{2^{1-M}  \| \boldsymbol{A} \|^{M+1}}{2- \| \boldsymbol{A} \|}  \;.
\end{align*} 
where we applied \eqref{eq:cayley_series_remainder_scalar} with $x = \| \boldsymbol{A} \|$, which is applicable since $ \| \boldsymbol{A} \| < 2$ by hypothesis.
\end{proof}

\paragraph{Cayley splitting for one-dimensional linear Hamiltonian} Using backward error analysis, one can show that symplectic integrators applied to Hamiltonian systems are {\em locally} interpolated by the solutions of modified equations that are themselves Hamiltonian; for a detailed exposition, see \cite{SaCa1994,LeRe2004,HaLuWa2010}.    In general, there is no global modified Hamiltonian for symplectic integrators.  However, in the linear context, there is a global modified Hamiltonian, which we can use to prove global numerical stability statements and derive rates of convergence.   The following part relies upon this modified Hamiltonian theory, particularly the results in \S4.1 of Ref.~\cite{BlCaSa2014}.   Later we extend these results to an infinite-dimensional Hamiltonian PDE.

Given $\omega>0$, consider the one-dimensional Hamiltonian function \begin{equation} \label{eq:hamiltonian_1D}
H(q,p) = \frac{1}{2} p^2 + \frac{1}{2} \omega^2 q^2 + \frac{1}{2} q^2 \;.
\end{equation} Hamilton's equations for $H(q,p)$ are \begin{equation} \label{eq:hamiltons_equations_1D}
\begin{bmatrix} \dot q(t) \\ \dot p(t) \end{bmatrix} = \begin{bmatrix} 0  &1\\ -\omega^2-1 & 0 \end{bmatrix} \begin{bmatrix} q(t) \\ p(t) \end{bmatrix}   \;.
\end{equation}
Let $\boldsymbol{A}$ and $\boldsymbol{B}$ be the following $2 \times 2$ matrices \[
\boldsymbol{A} = \begin{bmatrix} 0  &1\\ -\omega^2 & 0 \end{bmatrix}  \quad  \text{and} \quad 
\boldsymbol{B} = \begin{bmatrix} 0  & 0 \\ -1 & 0 \end{bmatrix}  \;.
\]
In this context, the Cayley integrator in \eqref{eq:cayley_splitting_hamiltonian} operated with time step size $\Delta t>0$ is the linear transformation with matrix \[
\boldsymbol{C} = \exp( (1/2) \Delta t \boldsymbol{B} ) \cay( \Delta t \boldsymbol{A} ) \exp( (1/2) \Delta t \boldsymbol{B} ) \;,
\] where $\exp( \cdot )$ is the matrix exponential and $\cay( \cdot )$ is the Cayley transform defined in \eqref{eq:cayley}.  
The entries of this matrix are given explicitly by \begin{equation} \label{eq:cayley_splitting_1D}
\boldsymbol{C} = \begin{bmatrix}  -1 + \dfrac{8 - 2 \Delta t^2}{4 + \Delta t^2 \omega^2} &  \dfrac{4 \Delta t}{4 + \Delta t^2 \omega^2}  \\
\dfrac{\Delta t (-4 + \Delta t^2) (1 + \omega^2  )}{4 + \Delta t^2 \omega^2} &    -1 + \dfrac{8 - 2 \Delta t^2}{4 + \Delta t^2 \omega^2} \end{bmatrix} \;.
\end{equation}
In all of the results that follow, one can readily rescale time by a factor in order to see the effect of a constant multiplying $\boldsymbol{B}$.

\medskip
As the next lemma shows, as long as $\Delta t < 2$, this Cayley splitting is stable uniformly in $\omega$, and 
one can bound $\left\| \boldsymbol{C}^m \right\|$ uniformly in $m$.

\begin{lemma} \label{lemma:Cayley_Splitting_Stability_1D}
For all $\omega >0$, and for any positive $\Delta t< 2$, the Cayley splitting in \eqref{eq:cayley_splitting_1D} is stable.  Moreover, \[
\left\| \boldsymbol{C}^m \right\| \lesssim 1+ \omega + \frac{1}{\sqrt{4 - \Delta t^2}} \;.
\] 
\end{lemma}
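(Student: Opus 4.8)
The plan is to diagonalize the $2\times 2$ symplectic matrix $\boldsymbol{C}$ in \eqref{eq:cayley_splitting_1D} and track how its eigenvectors degenerate as a function of $\omega$ and $\Delta t$, since stability is equivalent to $\boldsymbol{C}$ being diagonalizable with all eigenvalues on the unit circle. First I would observe that $\boldsymbol{C}$ has equal diagonal entries, call their common value $a = -1 + \frac{8 - 2\Delta t^2}{4 + \Delta t^2 \omega^2}$, so $\tr(\boldsymbol{C}) = 2a$ and (since $\det \boldsymbol{C} = 1$) the eigenvalues are $\lambda_\pm = a \pm \sqrt{a^2 - 1}$. The key algebraic fact to verify is that $|a| < 1$ for every $\omega > 0$ and every $\Delta t \in (0,2)$: writing $a + 1 = \frac{8 - 2\Delta t^2}{4 + \Delta t^2\omega^2} > 0$ (using $\Delta t^2 < 4$) and $1 - a = \frac{2 + 2\Delta t^2\omega^2 + \Delta t^2(\omega^2+1) \cdot(\text{something})}{4+\Delta t^2\omega^2}$ — more simply, $1 - a = 2 - \frac{8-2\Delta t^2}{4+\Delta t^2\omega^2} = \frac{2\Delta t^2\omega^2 + 2\Delta t^2}{4 + \Delta t^2\omega^2} > 0$ — one gets $-1 < a < 1$ strictly. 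Hence $\lambda_\pm$ are complex conjugates on the unit circle, distinct whenever $a^2 \ne 1$, which always holds here; so $\boldsymbol{C}$ is diagonalizable over $\mathbb{C}$ with unimodular spectrum, giving stability.

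For the quantitative bound, once $\boldsymbol{C} = \boldsymbol{P}\,\diag(\lambda_+,\lambda_-)\,\boldsymbol{P}^{-1}$, I would use $\|\boldsymbol{C}^m\| \le \|\boldsymbol{P}\|\,\|\boldsymbol{P}^{-1}\| = \operatorname{cond}(\boldsymbol{P})$, which is uniform in $m$ because $|\lambda_\pm| = 1$. It remains to bound the condition number of the eigenvector matrix. An eigenvector for $\lambda_\pm$ can be taken as $(C_{12},\, \lambda_\pm - a)^{\mathrm{T}} = (C_{12},\, \pm\sqrt{a^2-1})^{\mathrm{T}}$, where $C_{12} = \frac{4\Delta t}{4+\Delta t^2\omega^2}$ and $a^2 - 1 = (a-1)(a+1) = -\,\frac{(2\Delta t^2\omega^2 + 2\Delta t^2)(8-2\Delta t^2)}{(4+\Delta t^2\omega^2)^2}$. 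The conditioning of $\boldsymbol{P}$ is governed by the ratio $\sqrt{|a^2-1|}/|C_{12}|$ and its reciprocal; computing this ratio and its inverse and bounding each by $1 + \omega + (4-\Delta t^2)^{-1/2}$ up to an absolute constant is the core estimate. The off-diagonal entry $C_{21} = \frac{\Delta t(-4+\Delta t^2)(1+\omega^2)}{4+\Delta t^2\omega^2}$ can alternatively be used to build a symmetrized eigenvector basis; I would pick whichever makes the norm bookkeeping cleanest.

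The main obstacle I anticipate is not the stability claim (which is a short sign computation) but getting the explicit constant in $\|\boldsymbol{C}^m\| \lesssim 1 + \omega + (4-\Delta t^2)^{-1/2}$ with the right dependence on both $\omega$ (growing linearly) and $\Delta t$ (singular as $\Delta t \uparrow 2$). The linear-in-$\omega$ term reflects eigenvector degeneration at high frequency, where $C_{12} = O(\omega^{-2})$ shrinks while $\sqrt{|a^2-1|} = O(\omega^{-1})$, so the ratio is $O(\omega)$; the $(4-\Delta t^2)^{-1/2}$ term comes from the factor $\sqrt{8 - 2\Delta t^2} = \sqrt{2}\,\sqrt{4-\Delta t^2}$ appearing in $\sqrt{|a^2-1|}$ in the denominator of the reciprocal ratio. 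I would therefore split into cases ($\omega$ large vs.\ bounded, $\Delta t$ near $2$ vs.\ bounded away) only if a single uniform chain of inequalities proves unwieldy, but I expect a direct estimate of $\operatorname{cond}(\boldsymbol{P})$ via the two ratios above, followed by elementary bounds like $\frac{\Delta t}{4+\Delta t^2\omega^2} \le \frac{1}{2\omega}\cdot\frac{\Delta t\omega}{\sqrt{4+\Delta t^2\omega^2}} \le \frac{1}{2\omega}$, to suffice.
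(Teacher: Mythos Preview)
Your proposal is correct and reaches the same conclusion by the same underlying mechanism, though the packaging differs. For stability you argue exactly as the paper does: with $a = \tr(\boldsymbol{C})/2$ and $\det\boldsymbol{C}=1$, the sign checks $1+a>0$ and $1-a>0$ give $|a|<1$, hence complex-conjugate unimodular eigenvalues. For the norm bound, the paper does not diagonalize over $\mathbb{C}$ and bound by $\operatorname{cond}(\boldsymbol{P})$; instead it introduces $\theta$ via $\cos\theta = a$ and $\chi = C_{12}/\sin\theta$, writes
\[
\boldsymbol{C}^m = \begin{bmatrix} \cos(m\theta) & \chi\sin(m\theta) \\ -\chi^{-1}\sin(m\theta) & \cos(m\theta) \end{bmatrix},
\]
and bounds entrywise: $\|\boldsymbol{C}^m\| \le 2 + |\chi| + |\chi^{-1}|$. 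The ratio you isolate, $\sqrt{|a^2-1|}/|C_{12}|$, is precisely $\chi^{-1} = \tfrac{1}{2}\sqrt{(4-\Delta t^2)(1+\omega^2)}$, so your condition-number estimate and the paper's entrywise estimate are the same quantity in disguise. The advantage of the paper's $(\theta,\chi)$ parametrization is that it immediately yields the explicit closed form for $\boldsymbol{C}^m$ and the modified Hamiltonian $\tilde H(q,p) = \frac{\chi\theta}{2\Delta t}(p^2 + \chi^{-2}q^2)$, both of which are used repeatedly in the subsequent energy-error lemmas; your eigenvector route would require reconstructing these objects separately.
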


The proof that follows confirms Krein's theorem in this context \cite{krein1950generalization,arnol2013mathematical}.  

\begin{proof}
From \eqref{eq:cayley_splitting_1D}, a direct calculation shows that \[
\det(\boldsymbol{C}) = 1 \;, \quad   \tr(\boldsymbol{C}) =  -2 + \frac{4}{4 + \Delta t^2 \omega^2} (4 - \Delta t^2 )\;.
\]
Note that, for all $\omega>0$, and for any positive $\Delta t<2$, we have $|\tr(\boldsymbol{C})| < 2$.   Thus, the matrix $\boldsymbol{C}$
has complex conjugate eigenvalues of unit modulus, every matrix power of $\boldsymbol{C}$ is bounded, and hence, 
the Cayley splitting is numerically stable.  To get a more precise bound on $\boldsymbol{C}^m$, we follow \S4.1 of Ref.~\cite{BlCaSa2014} and introduce $\theta$ defined as $\cos(\theta) = \tr(\boldsymbol{C})/2$.
Since $|\tr(\boldsymbol{C})| < 2$, it follows that $\sin(\theta) \ne 0$ and we may define $\chi = \boldsymbol{C}_{12}/\sin(\theta)$.   
In terms of $\theta$ and $\chi$, one can write $\boldsymbol{C}$ as \begin{equation} \label{eq:C_theta_chi}
 \boldsymbol{C} = \begin{bmatrix} \cos(\theta) & \chi \sin(\theta) \\   - \chi^{-1} \sin(\theta) &  \cos(\theta)  \end{bmatrix} \;.
\end{equation} Comparing \eqref{eq:cayley_splitting_1D} to \eqref{eq:C_theta_chi}, we obtain \begin{equation} \label{eq:theta_chi}
\theta = 2 \arctan\left( \frac{\Delta t}{\sqrt{4-\Delta t^2}} \sqrt{1+\omega^2} \right) \;, \quad \chi = \frac{2}{\sqrt{4- \Delta t^2} \sqrt{1+\omega^2} }  \;.
\end{equation} Moreover, it is easy to check that  \[
 \boldsymbol{C}^m = \begin{bmatrix} \cos(m \theta) & \chi \sin(m \theta) \\   - \chi^{-1} \sin(m \theta) &  \cos(m \theta)  \end{bmatrix} 
\] for all $m \in \mathbb{N}$.    By the triangle inequality, \begin{align*}
\|  \boldsymbol{C}^m \| &\le  2 + | \chi^{-1} | + |\chi| \\
&\le 2 +  \frac{1}{2} \sqrt{ ( 4 - \Delta t^2) (1+\omega^2) } + \frac{2}{\sqrt{ ( 4 - \Delta t^2) (1+\omega^2) }} \\
&\le 2 +   \sqrt{ 1+\omega^2 } + \frac{2}{\sqrt{ 4 - \Delta t^2 }}
\end{align*}
as required.
\end{proof}

Next we quantify the global error of the Cayley splitting in preserving the Hamiltonian.
For all $(q,p) \in \mathbb{R}^2$ and for any $m \in \mathbb{N}$, define the global energy error after $m$ integration steps 
of the Cayley splitting as  \begin{equation} \label{eq:dH_1D}
\Delta(q,p) = H(q^m,p^m) - H(q,p)
\end{equation}
where we have defined \begin{equation} \label{eq:qm_pm}
\begin{bmatrix} q^m \\ p^m \end{bmatrix} =   \boldsymbol{C}^m \begin{bmatrix} q \\ p \end{bmatrix} \;.
\end{equation}
Prop.~4.1 of Ref.~\cite{BlCaSa2014} implies that the points $\{ (q^m, p^m) \mid m \in \mathbb{N} \}$ are interpolated by the solution of a so-called modified Hamiltonian system with modified Hamiltonian \begin{equation} 
\label{eq:modified_hamiltonian}
\tilde H(q,p) =  \frac{\chi}{2 \Delta t} \theta \left(   p^2 + \frac{1}{\chi^2} q^2 \right)  \end{equation}
where $\theta$ and $\chi$ are given in \eqref{eq:theta_chi}.   This modified Hamiltonian is globally defined. If $\Delta t<2$,  then $\chi^{-2} > 0$ and the level sets of $\tilde H$ are ellipses in the $(q,p)$-plane, 
as illustrated in Figure~\ref{fig:modified_hamiltonian_cayley}.

\medskip
The following lemma uses this modified Hamiltonian to obtain an upper bound on $\Delta(q,p)$ uniformly in $\omega$.

\begin{lemma} \label{lemma:Cayley_Splitting_Max_Energy_Error_1D}
For all $\omega >0$, for any positive $\Delta t<2$, for all $m \in \mathbb{N}$, and for all initial conditions $(q,p) \in \mathbb{R}^2$ we have \[
\Delta(q,p) \le  \frac{1}{2} \frac{\Delta t^2}{4-\Delta t^2} p^2   \;.
\] 
\end{lemma}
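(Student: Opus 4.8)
The plan is to exploit the global modified Hamiltonian recorded in \eqref{eq:modified_hamiltonian}: by Prop.~4.1 of Ref.~\cite{BlCaSa2014} applied to the $(\theta,\chi)$ normal form \eqref{eq:C_theta_chi} of $\boldsymbol{C}$ (which is available because $|\tr(\boldsymbol{C})|<2$ for $\Delta t<2$, as shown in Lemma~\ref{lemma:Cayley_Splitting_Stability_1D}), the entire discrete orbit $\{(q^m,p^m):m\in\mathbb{N}\}$ from \eqref{eq:qm_pm} lies on the single level set $\{\tilde H = E_0\}$ with $E_0:=\tilde H(q,p)$, which is an ellipse when $\Delta t<2$. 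Consequently the energy error $\Delta(q,p)=H(q^m,p^m)-H(q,p)$ is controlled by the oscillation of the physical Hamiltonian $H$ in \eqref{eq:hamiltonian_1D} along that ellipse, and it suffices to bound $\Delta(q,p)\le \max_{\tilde H = E_0}H - H(q,p)$ and compute the right-hand side explicitly in terms of $(q,p)$.

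To carry this out I would write $\tilde H=\tfrac12(a p^2+b q^2)$ with $a=\chi\theta/\Delta t$ and $b=\theta/(\chi\Delta t)$, so that $a/b=\chi^2$, and parametrize $\{\tilde H=E_0\}$ by $p=\sqrt{2E_0/a}\,\cos t$, $q=\sqrt{2E_0/b}\,\sin t$. Restricting $H$ to the ellipse gives
\[
H\big|_{\{\tilde H = E_0\}} = E_0\Big(\tfrac{1}{a}\cos^2 t + \tfrac{1+\omega^2}{b}\sin^2 t\Big) = E_0\Big(\tfrac{1}{a} + \big(\tfrac{1+\omega^2}{b}-\tfrac{1}{a}\big)\sin^2 t\Big),
\]
so, writing $t_0$ and $t_m$ for the parameters of $(q,p)$ and $(q^m,p^m)$,
\[
\Delta(q,p) = E_0\Big(\tfrac{1+\omega^2}{b}-\tfrac{1}{a}\Big)\big(\sin^2 t_m - \sin^2 t_0\big) \le E_0\Big(\tfrac{1+\omega^2}{b}-\tfrac{1}{a}\Big)\cos^2 t_0 .
\]
The quantitative heart of the argument is the identity $\chi^2(1+\omega^2) = 4/(4-\Delta t^2)$, which is immediate from the value of $\chi$ in \eqref{eq:theta_chi}: it guarantees $\tfrac{1+\omega^2}{b} = \chi^2(1+\omega^2)\,\tfrac{1}{a} > \tfrac{1}{a}$, which validates the last inequality (the maximum of $H$ on the ellipse is then at $\sin^2 t=1$, and $\sin^2 t_m-\sin^2 t_0\le \cos^2 t_0$), and it also makes the $\omega$-dependence cancel. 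Finally $E_0\cos^2 t_0=\tfrac12 a\,p^2$ from the parametrization, so
\[
\Delta(q,p)\le \tfrac12 a\,p^2\Big(\tfrac{1+\omega^2}{b}-\tfrac{1}{a}\Big) = \tfrac{p^2}{2}\big(\chi^2(1+\omega^2)-1\big) = \tfrac{p^2}{2}\Big(\tfrac{4}{4-\Delta t^2}-1\Big) = \tfrac12\,\tfrac{\Delta t^2}{4-\Delta t^2}\,p^2 ,
\]
which is the claim (the degenerate case $E_0=0$ forces $(q,p)=(0,0)$ and both sides vanish).

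Every computation here is routine; the only genuine subtlety is the sign bookkeeping. The bound is one-sided — an upper bound on $\Delta(q,p)$, not on $|\Delta(q,p)|$ — and it is essential that $\chi^2(1+\omega^2)>1$ so that it is the coefficient of $q^2$ on the ellipse, equivalently $\cos^2 t_0$, equivalently (via $E_0\cos^2 t_0 = \tfrac12 a p^2$) the quantity $p^2$, that enters the estimate, rather than the coefficient of $p^2$. I expect correctly identifying which of the two ellipse semi-axes controls the error — and checking the corresponding inequality $\chi^2(1+\omega^2)>1$ holds precisely for $0<\Delta t<2$ — to be the only point requiring care; invoking the global modified-Hamiltonian representation and the elementary trigonometric parametrization is otherwise straightforward.
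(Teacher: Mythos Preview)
Your proof is correct and follows essentially the same approach as the paper: both confine the discrete orbit to the level set $\{\tilde H=\tilde H(q,p)\}$ of the modified Hamiltonian, observe that for $\Delta t<2$ the inequality $\chi^2(1+\omega^2)=4/(4-\Delta t^2)>1$ forces the maximum of $H$ on that ellipse to occur where $p=0$, and compute $H$ at that point minus $H(q,p)$. The only cosmetic difference is that the paper identifies the maximizer directly as the intersection points $(\pm q_\star,0)$ with $q_\star^2=\chi^2 p^2+q^2$ and evaluates $H(q_\star,0)-H(q,p)$ in one line, whereas you reach the same conclusion via a trigonometric parametrization of the ellipse.
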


\begin{proof}
Since $\Delta t<2$, the Cayley splitting is numerically stable by Lemma~\ref{lemma:Cayley_Splitting_Stability_1D}. 
Moreover, it admits a global modified Hamiltonian given in \eqref{eq:modified_hamiltonian}.
Fix an initial condition $(q_0,p_0) \ne (0,0)$, to avoid a trivial solution.
By \eqref{eq:theta_chi}, we see that $\chi^{-2} = (1/4) (4-\Delta t^2) (1+\omega^2)$. 
The hypothesis $\Delta t<2$ implies that $\chi^{-2} < (1+\omega^2)$.
Therefore, on the ellipse $\{ (q,p) \mid \tilde H(q,p) = \tilde H(q_0, p_0) \}$, the Hamiltonian $H(q,p) = (1/2) (p^2 + (1+\omega^2) q^2)$ attains its maximum value at
the intersections of this ellipse with the line $p=0$.  These intersections occur at $(\pm q_{\star},0)$ where $\tilde H(q_{\star},0) = \tilde H(q_0, p_0)$ or $q_{\star}^2 = \chi^{2} p_0^2 + q_0^2$.  
 Hence, \[
 \Delta(q_0,p_0) \le H(q_{\star},0) - H(q_0, p_0) = \frac{1}{2} \frac{\Delta t^2}{4 - \Delta t^2} p_0^2
 \] as required.
\end{proof}

\medskip
The next lemma applies the modified Hamiltonian in \eqref{eq:modified_hamiltonian} to obtain a formula for the average energy error.
The parameter $\beta$ that appears in this lemma is an inverse temperature parameter.

\begin{lemma} \label{lemma:Cayley_Splitting_Mean_Energy_Error_1D}
For all $\omega >0$, for all $\beta>0$, for any positive $\Delta t<2$, and for all $m \in \mathbb{N}$, we have \[
\E ( \Delta ) =  \beta^{-1} \frac{\sin^2(m \theta)}{8} \frac{\Delta t^4}{4 -\Delta t^2 }
\] 
where the expected value is over random initial conditions with non-normalized density $e^{-\beta H(q,p)}$.
\end{lemma}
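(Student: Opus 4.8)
The plan is to compute $\Delta$ explicitly as a quadratic form in the initial data and then integrate against the Gaussian density $e^{-\beta H}$; no modified-Hamiltonian machinery is really needed here, although one could alternatively subtract off the globally conserved $\tilde H$ of \eqref{eq:modified_hamiltonian}.

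First I would use the closed form for $\boldsymbol{C}^m$ already derived in the proof of Lemma~\ref{lemma:Cayley_Splitting_Stability_1D}, so that
\[
\begin{bmatrix} q^m \\ p^m \end{bmatrix} = \begin{bmatrix} \cos(m\theta) & \chi \sin(m\theta) \\ -\chi^{-1}\sin(m\theta) & \cos(m\theta) \end{bmatrix} \begin{bmatrix} q \\ p \end{bmatrix},
\]
and substitute into $H(q,p) = \tfrac12 p^2 + \tfrac12 (1+\omega^2) q^2$ (this is \eqref{eq:hamiltonian_1D} rewritten). Expanding $H(q^m,p^m)$, the terms carrying $\cos^2(m\theta)$ recombine with $-H(q,p)$ using $\cos^2(m\theta) = 1 - \sin^2(m\theta)$, which leaves
\[
\Delta(q,p) = \frac{\sin^2(m\theta)}{2}\Bigl[\bigl(\chi^{-2} - (1+\omega^2)\bigr) q^2 + \bigl((1+\omega^2)\chi^2 - 1\bigr) p^2\Bigr] + \sin(m\theta)\cos(m\theta)\bigl((1+\omega^2)\chi - \chi^{-1}\bigr) q p .
\]

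Next I would take the expectation. Under the density proportional to $e^{-\beta H}$ the coordinates $q$ and $p$ are independent centered Gaussians with $\E(q^2) = \beta^{-1}(1+\omega^2)^{-1}$, $\E(p^2) = \beta^{-1}$ and $\E(qp) = 0$, so the cross term vanishes and
\[
\E(\Delta) = \frac{\sin^2(m\theta)}{2\beta}\left[\frac{\chi^{-2}}{1+\omega^2} + (1+\omega^2)\chi^2 - 2\right].
\]
Finally I would insert $\chi^2 = 4\big/\bigl((4-\Delta t^2)(1+\omega^2)\bigr)$ from \eqref{eq:theta_chi}, giving $\chi^{-2}/(1+\omega^2) = (4-\Delta t^2)/4$ and $(1+\omega^2)\chi^2 = 4/(4-\Delta t^2)$; writing $x = 4 - \Delta t^2$ and using $\tfrac{x}{4} + \tfrac{4}{x} - 2 = \tfrac{(x-4)^2}{4x} = \tfrac{\Delta t^4}{4(4-\Delta t^2)}$ produces exactly $\E(\Delta) = \beta^{-1}\tfrac{\sin^2(m\theta)}{8}\tfrac{\Delta t^4}{4-\Delta t^2}$. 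The only delicate point is bookkeeping in the expansion of $H(q^m,p^m)$ and verifying that the $\omega$-dependence cancels cleanly between the two quadratic coefficients after substituting $\chi$; there is no conceptual obstacle.
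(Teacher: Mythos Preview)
Your proposal is correct and is essentially the same computation as the paper's own proof: both write $(q^m,p^m)$ via the closed form $\boldsymbol{C}^m$ (the paper phrases this as the solution \eqref{eq:modified_solution} of the modified Hamiltonian system, but it is the same matrix), expand $2\Delta$ as a quadratic in $(q,p)$, take Gaussian expectations with $\E(q^2)=\beta^{-1}(1+\omega^2)^{-1}$, $\E(p^2)=\beta^{-1}$, $\E(qp)=0$, and simplify using $\chi^2=4/((4-\Delta t^2)(1+\omega^2))$ to get the stated $\Delta t^4/(4-\Delta t^2)$ factor. Your remark that the modified-Hamiltonian interpretation is not strictly needed here is accurate; the paper invokes it only as a framing device for the same formula.
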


Comparing Lemma~\ref{lemma:Cayley_Splitting_Max_Energy_Error_1D} to Lemma~\ref{lemma:Cayley_Splitting_Mean_Energy_Error_1D}, note that the order of accuracy in $\E ( \Delta )$ is twice the order of accuracy in $\Delta(q,p)$; more on this point below.

\begin{proof}
Since $\Delta t<2$, the Cayley splitting is numerically stable by Lemma~\ref{lemma:Cayley_Splitting_Stability_1D}. Moreover, it admits a global modified Hamiltonian given in \eqref{eq:modified_hamiltonian}.
Set $\tau = m \Delta t$.  With the shorthand $c=\cos(m \theta)$ and $s=\sin(m \theta)$, the exact solution of the modified Hamiltonian system at time $\tau$ 
with initial condition $(q(0),p(0)) = (q, p)$ is given by: \begin{equation} \label{eq:modified_solution}
\tilde q(\tau) = c q + \chi s p  \;, \quad \tilde p(\tau) = c p - \chi^{-1} s q   \;.
\end{equation}  Hence, \begin{align*}
2 \Delta(q, p) =& \tilde p(\tau)^2 + (1+\omega^2) \tilde q(\tau)^2 - p^2 - (1+\omega^2) q^2  \\
=& s^2 ( (1+\omega^2) \chi^2 - 1) p^2 - s^2 ( 1+\omega^2  - \chi^{-2}   ) q^2  \\
&+ 2 c s ( - \chi^{-1} + (1+\omega^2) \chi ) q p 
\end{align*}
Now let $(q,p)$ be random with non-normalized density $e^{-\beta H(q,p)}$.
Since $\E ( p^2 ) = \beta^{-1}$, $\E ( q^2 ) = \beta^{-1} (1 + \omega^2 )^{-1}$, and $\E( q p ) = 0$, we have by \eqref{eq:theta_chi} \begin{align*}
\E (\Delta) = \beta^{-1} \frac{s^2}{2}  ( (1+\omega^2) \chi^2  +  (1+\omega^2 )^{-1} \chi^{-2}  - 2)  = \beta^{-1} \frac{s^2}{8} \frac{\Delta t^4}{4 -\Delta t^2 }
\end{align*}
as required.
\end{proof}

\medskip
We can similarly derive a formula for the variance of the energy error.

\begin{lemma} \label{lemma:Cayley_Splitting_Variance_Energy_Error_1D}
For all $\omega >0$, for all $\beta>0$, for any positive $\Delta t<2$, and for all $m \in \mathbb{N}$, we have \[
\Var ( \Delta ) = \beta^{-2}  \frac{\sin^2(m \theta)}{64} \frac{\Delta t^4}{(4 -\Delta t^2 )^2} \left( (8-\Delta t^2)^2 - \Delta t^4 \cos(2 m \theta) \right) 
\] 
where the variance is over random initial conditions with non-normalized density $e^{-\beta H(q,p)}$.  
\end{lemma}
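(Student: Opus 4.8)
The plan is to reuse the quadratic-form expression for $\Delta$ that already appears in the proof of Lemma~\ref{lemma:Cayley_Splitting_Mean_Energy_Error_1D} and then compute the variance of that quadratic form under the Gaussian initial law. Writing $c=\cos(m\theta)$ and $s=\sin(m\theta)$, that proof gives
\[
2\,\Delta(q,p) = a\,p^2 + b\,q^2 + e\,qp, \qquad a = s^2\big((1+\omega^2)\chi^2-1\big), \quad b = -s^2\big(1+\omega^2-\chi^{-2}\big), \quad e = 2cs\big(-\chi^{-1}+(1+\omega^2)\chi\big).
\]
Substituting $\chi$ from \eqref{eq:theta_chi}, i.e.\ $\chi^2 = 4/\big((4-\Delta t^2)(1+\omega^2)\big)$, these reduce to
\[
a = s^2\,\frac{\Delta t^2}{4-\Delta t^2}, \qquad b = -s^2\,(1+\omega^2)\,\frac{\Delta t^2}{4}, \qquad e = cs\,\sqrt{1+\omega^2}\;\frac{\Delta t^2}{\sqrt{4-\Delta t^2}}.
\]

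Next I would use that under the non-normalized density $e^{-\beta H(q,p)}$ the pair $(q,p)$ is a centered Gaussian with independent coordinates, $\E(p^2)=\beta^{-1}$ and $\E(q^2)=\beta^{-1}(1+\omega^2)^{-1}$. The standard Gaussian moment identities then give $\Var(p^2)=2\,\E(p^2)^2$, $\Var(q^2)=2\,\E(q^2)^2$, $\Var(qp)=\E(q^2)\E(p^2)$, while the three pairwise covariances $\cov(p^2,q^2)$, $\cov(p^2,qp)$, $\cov(q^2,qp)$ all vanish since each involves an odd moment of one of the centered coordinates. Hence
\[
4\,\Var(\Delta) = 2a^2\,\E(p^2)^2 + 2b^2\,\E(q^2)^2 + e^2\,\E(q^2)\E(p^2).
\]

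Finally I would substitute the explicit $a,b,e$ and the moments above: each term becomes $\beta^{-2}\Delta t^4$ times an elementary rational expression in $s^2$, $c^2$ and $4-\Delta t^2$, independent of $\omega$. Placing everything over the common denominator $8(4-\Delta t^2)^2$ and eliminating $c^2$ via $c^2=1-s^2$, the numerator collapses, using $16+(4-\Delta t^2)^2-8(4-\Delta t^2)=\Delta t^4$, to $s^2\Delta t^4 + 8(4-\Delta t^2)$; matching this against the claimed expression then reduces to the double-angle identity $2s^2\Delta t^4 = \Delta t^4\big(1-\cos(2m\theta)\big)$, i.e.\ $1-\cos(2m\theta)=2\sin^2(m\theta)$. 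The only genuine work here is bookkeeping — tracking the fourth-order Gaussian moments and the several vanishing covariances correctly, and not dropping a factor in the final trigonometric simplification; there is no conceptual obstacle beyond this.
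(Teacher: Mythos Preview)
Your proposal is correct and is precisely the computation the paper has in mind: the paper does not write out a proof for this lemma but simply says ``We can similarly derive a formula for the variance of the energy error,'' pointing back to the quadratic-form expression for $\Delta$ from the proof of Lemma~\ref{lemma:Cayley_Splitting_Mean_Energy_Error_1D}. Your explicit bookkeeping of the Gaussian fourth moments and vanishing cross-covariances, and the final algebraic check via $16+(4-\Delta t^2)^2-8(4-\Delta t^2)=\Delta t^4$ together with $1-\cos(2m\theta)=2\sin^2(m\theta)$, is exactly what that ``similarly'' entails.
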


It follows from Lemmas~\ref{lemma:Cayley_Splitting_Mean_Energy_Error_1D} and~\ref{lemma:Cayley_Splitting_Variance_Energy_Error_1D}
that both the mean and variance of $\Delta$ have the same order of accuracy, namely $O(\Delta t^4)$.   This is a general property of 
volume-preserving and reversible integrators \cite{BePiRoSaSt2013,BlCaSa2014}.  

\medskip 
Later in order to invoke the Lyapunov central limit theorem, we also require the following formula.

\begin{lemma} \label{lemma:Cayley_Splitting_4th_moment_Energy_Error_1D}
For all $\omega >0$, for all $\beta>0$, for any positive $\Delta t<2$, and for all $m \in \mathbb{N}$, we have \begin{align*}
\E & \left( \left( \Delta - \E ( \Delta) \right)^4 \right) =   \beta^{-4} \frac{3 \Delta t^8 \sin^4(m \theta)}{ 8192 (4 -\Delta t^2 )^4}   \left( 24576 - 12288 \Delta t^2 +2816 \Delta t^4  \right.  \\
& \left. -320 \Delta t^6 + 15 \Delta t^8 - 20 (8 - \Delta t^2)^2 \Delta t^4 \cos(2 m \theta) + 5  \Delta t^8 \cos(4 m \theta) \right) 
\end{align*}
where the expected value is over random initial conditions with non-normalized density $e^{-\beta H(q,p)}$.  
\end{lemma}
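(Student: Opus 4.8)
The plan is to recognize the energy error $\Delta$ as a quadratic form in a two-dimensional Gaussian vector and to obtain its fourth central moment from the cumulants of that form. Recall from the proof of Lemma~\ref{lemma:Cayley_Splitting_Mean_Energy_Error_1D} that, with $c=\cos(m\theta)$ and $s=\sin(m\theta)$,
\[
2\Delta(q,p) = s^2\bigl((1+\omega^2)\chi^2 - 1\bigr)p^2 - s^2\bigl(1+\omega^2 - \chi^{-2}\bigr)q^2 + 2cs\bigl(-\chi^{-1} + (1+\omega^2)\chi\bigr)qp ,
\]
with $\theta$ and $\chi$ as in \eqref{eq:theta_chi}. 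First I would set $X=\sqrt{\beta}\,p$ and $Y=\sqrt{\beta(1+\omega^2)}\,q$, so that $Z=(X,Y)^{\mathrm T}$ is a standard Gaussian vector on $\R^2$ (the density $e^{-\beta H}$ makes $p,q$ independent centered Gaussians with the matching variances) and $\beta\,\Delta = Z^{\mathrm T}MZ$ for an explicit symmetric $2\times 2$ matrix $M$ read off from the coefficients above. It then suffices to compute $\E\bigl((Z^{\mathrm T}MZ - \tr(M))^4\bigr)$ and divide by $\beta^4$.

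Next, diagonalizing $M=V\,\diag(\lambda_1,\lambda_2)\,V^{\mathrm T}$ with $V$ orthogonal and using the rotational invariance of $Z$ gives $Z^{\mathrm T}MZ \overset{d}{=} \lambda_1 G_1 + \lambda_2 G_2$ with $G_1,G_2$ i.i.d.\ $\chi^2_1$. Since the $n$-th cumulant of a $\chi^2_1$ variable equals $2^{n-1}(n-1)!$, the cumulants of $Z^{\mathrm T}MZ$ are $\kappa_n = 2^{n-1}(n-1)!\,\tr(M^n)$; in particular $\kappa_2 = 2\tr(M^2)$ and $\kappa_4 = 48\tr(M^4)$, so the fourth central moment is $\kappa_4 + 3\kappa_2^2 = 48\tr(M^4) + 12\,(\tr(M^2))^2$. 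Because $M$ is $2\times 2$, its eigenvalues satisfy $\lambda_1^2+\lambda_2^2=\tr(M^2)$ and $\lambda_1\lambda_2=\det M$, hence $\tr(M^4)=(\tr(M^2))^2-2(\det M)^2$, and therefore
\[
\E\bigl((\Delta - \E\Delta)^4\bigr) = \beta^{-4}\Bigl(60\,(\tr(M^2))^2 - 96\,(\det M)^2\Bigr).
\]

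I would then eliminate $M$ in favour of quantities already computed. Since $\E(\beta\Delta)=\tr(M)$ and $\Var(\beta\Delta)=\kappa_2=2\tr(M^2)$, we get $\tr(M^2)=\tfrac{1}{2}\beta^2\Var\Delta$ and $\det M=\tfrac{1}{2}\bigl((\tr M)^2-\tr(M^2)\bigr)=\tfrac{1}{2}\beta^2\bigl((\E\Delta)^2-\tfrac{1}{2}\Var\Delta\bigr)$. Substituting these into the previous display and simplifying yields the closed identity
\[
\E\bigl((\Delta - \E\Delta)^4\bigr) = 9\,(\Var\Delta)^2 + 24\,(\E\Delta)^2\,\Var\Delta - 24\,(\E\Delta)^4 ,
\]
valid for any quadratic form in a planar Gaussian. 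Finally I would plug in $\E\Delta = \beta^{-1}\frac{s^2}{8}\frac{\Delta t^4}{4-\Delta t^2}$ from Lemma~\ref{lemma:Cayley_Splitting_Mean_Energy_Error_1D} and $\Var\Delta = \beta^{-2}\frac{s^2}{64}\frac{\Delta t^4}{(4-\Delta t^2)^2}\bigl((8-\Delta t^2)^2 - \Delta t^4\cos(2m\theta)\bigr)$ from Lemma~\ref{lemma:Cayley_Splitting_Variance_Energy_Error_1D}, factor out $\beta^{-4}\frac{3\,\Delta t^8\, s^4}{8192\,(4-\Delta t^2)^4}$, and reduce the surviving powers of $s=\sin(m\theta)$ using $s^2=\tfrac{1}{2}(1-\cos(2m\theta))$ and $s^4=\tfrac{1}{8}(3-4\cos(2m\theta)+\cos(4m\theta))$.

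The main obstacle is precisely this last bookkeeping step: expanding $\bigl((8-\Delta t^2)^2 - \Delta t^4\cos(2m\theta)\bigr)^2$ together with the cross terms carrying $s^2\Delta t^4$ and $s^4\Delta t^8$, and then separately collecting the constant, $\cos(2m\theta)$, and $\cos(4m\theta)$ contributions to match the degree-eight polynomial $24576 - 12288\Delta t^2 + 2816\Delta t^4 - 320\Delta t^6 + 15\Delta t^8$, the coefficient $-20(8-\Delta t^2)^2\Delta t^4$ of $\cos(2m\theta)$, and the coefficient $5\Delta t^8$ of $\cos(4m\theta)$. This is routine but tedious and is cleanest with computer algebra; a useful consistency check is the limit $\Delta t\to 0$, where only $9(\Var\Delta)^2$ survives at order $\Delta t^8$ and its leading constant $9$ agrees with $\tfrac{3}{8192}\cdot 24576$. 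No idea beyond Lemmas~\ref{lemma:Cayley_Splitting_Mean_Energy_Error_1D} and~\ref{lemma:Cayley_Splitting_Variance_Energy_Error_1D} and the Gaussian quadratic-form cumulant formula is needed.
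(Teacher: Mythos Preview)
Your approach is correct. The paper states this lemma without proof, presumably leaving it as a direct (computer-assisted) calculation in the spirit of the proofs of Lemmas~\ref{lemma:Cayley_Splitting_Mean_Energy_Error_1D} and~\ref{lemma:Cayley_Splitting_Variance_Energy_Error_1D}; your route via cumulants of a Gaussian quadratic form is a clean way to carry it out, and the closed identity
\[
\E\bigl((\Delta - \E\Delta)^4\bigr) = 9\,(\Var\Delta)^2 + 24\,(\E\Delta)^2\,\Var\Delta - 24\,(\E\Delta)^4
\]
for a quadratic form in a planar standard Gaussian is exactly the right device. I checked the final bookkeeping: with $b=(8-\Delta t^2)^2$, $s=\sin(m\theta)$, $c_2=\cos(2m\theta)$, $c_4=\cos(4m\theta)$, factoring out $\beta^{-4}s^4\Delta t^8/(4096(4-\Delta t^2)^4)$ from your three terms and using $s^2=(1-c_2)/2$, $c_2^2=(1+c_4)/2$ gives constant part $9b^2+12b\Delta t^4+\tfrac{3}{2}\Delta t^8$, $c_2$-coefficient $-30b\Delta t^4$, and $c_4$-coefficient $\tfrac{15}{2}\Delta t^8$, which matches $\tfrac{3}{2}$ times the bracket in the statement term by term.
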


\medskip 
To obtain our infinite-dimensional strong accuracy results, we need the following estimate for the global error of 
the Cayley splitting.

\begin{lemma} \label{lemma:Cayley_Splitting_global_error_1D}
For all $\omega >0$, for any positive $\Delta t<\sqrt{3}$, for any initial condition $(q,p) \in \mathbb{R}^2$ 
and for all $T>0$, there exist positive constants $C_1, C_2$ such that \begin{equation}
\begin{aligned}
\left| q^{m} - q(m \Delta t) \right|  &\le  C_1 T \Delta t^2 ( (1+\omega^3) |q| + (1+\omega^2) |p| ) \\
 \left| p^{m} - p(m \Delta t) \right| &\le  C_2 T \Delta t^2 ( (1+\omega^4) |q| + (1+\omega^3) |p|) 
 \end{aligned}
\end{equation}
where $m = \lfloor T/\Delta t \rfloor$ and $(q^{m},p^{m} )$ is defined in \eqref{eq:qm_pm}.
\end{lemma}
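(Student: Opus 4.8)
The plan is to compare the numerical trajectory with the exact one directly, exploiting the closed forms available in this linear, one-dimensional setting. Set $\Omega := \sqrt{1+\omega^2}$, the angular frequency of the harmonic oscillator \eqref{eq:hamiltons_equations_1D}, so that the exact solution through $(q,p)$ is $q(t) = q\cos(\Omega t) + \Omega^{-1} p \sin(\Omega t)$ and $p(t) = p\cos(\Omega t) - \Omega\, q \sin(\Omega t)$. On the numerical side, the formula for $\boldsymbol{C}^m$ established in the proof of Lemma~\ref{lemma:Cayley_Splitting_Stability_1D} gives, via \eqref{eq:qm_pm}, $q^m = q\cos(m\theta) + \chi\, p\sin(m\theta)$ and $p^m = p\cos(m\theta) - \chi^{-1} q \sin(m\theta)$, with $\theta$ and $\chi$ as in \eqref{eq:theta_chi}; equivalently, $(q^m,p^m)$ is the time-$m\Delta t$ flow of the modified Hamiltonian \eqref{eq:modified_hamiltonian}, i.e.\ a harmonic oscillator of angular frequency $\theta/\Delta t$ and aspect ratio $\chi$. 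Hence the entire global error is produced by two mismatches: the frequency error $|\theta/\Delta t - \Omega|$ and the shape errors $|\chi - \Omega^{-1}|$ and $|\chi^{-1} - \Omega|$.

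First I would record three scalar estimates, valid for $0 < \Delta t < \sqrt{3}$. From the identity $\tfrac{2}{\sqrt{4-\Delta t^2}} - 1 = \tfrac{\Delta t^2}{\sqrt{4-\Delta t^2}\,(2+\sqrt{4-\Delta t^2})}$ and $\chi = \Omega^{-1}\tfrac{2}{\sqrt{4-\Delta t^2}}$ (from \eqref{eq:theta_chi}), one gets $|\chi - \Omega^{-1}| \le C\,\Delta t^2\,\Omega^{-1}$ and $|\chi^{-1} - \Omega| \le C\,\Delta t^2\,\Omega$. For the frequency error, write $\theta = 2\arctan(a)$ with $a = \Delta t\,\Omega/\sqrt{4-\Delta t^2}$, so that $|\theta/\Delta t - \Omega| = \tfrac{2}{\Delta t}\bigl|\arctan a - a\sqrt{1-\Delta t^2/4}\bigr|$. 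When $a \le 1$, Taylor expansion of $\arctan$ and of $\sqrt{1-\Delta t^2/4}$ together with $a \le \Delta t\,\Omega$ yields $|\theta/\Delta t - \Omega| \le C\,\Delta t^2\,\Omega^3$; when $a > 1$ one has $\Omega > 1/\Delta t$, so the crude bounds $\theta/\Delta t < \pi/\Delta t < \pi\Omega$ and $\Omega < \Delta t^2\Omega^3$ again give $|\theta/\Delta t - \Omega| \le C\,\Delta t^2\,\Omega^3$. Thus $|\theta/\Delta t - \Omega| \le C\,\Delta t^2\,\Omega^3$ in all cases.

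The conclusion then follows by a routine decomposition. Using $|\cos x - \cos y| \le |x-y|$, $|\sin x - \sin y| \le |x-y|$, the bound $m\,\Delta t \le T$ (since $m = \lfloor T/\Delta t\rfloor$), and the frequency estimate, one has $|\cos(m\theta) - \cos(m\Omega\Delta t)| \le m|\theta - \Omega\Delta t| = (m\Delta t)\,|\theta/\Delta t - \Omega| \le C\,T\,\Delta t^2\,\Omega^3$, and likewise for the sines. Writing
\[
q^m - q(m\Delta t) = \bigl(\cos m\theta - \cos m\Omega\Delta t\bigr)q + \bigl(\chi - \Omega^{-1}\bigr)\sin(m\theta)\,p + \Omega^{-1}\bigl(\sin m\theta - \sin m\Omega\Delta t\bigr)p
\]
and bounding each term gives $|q^m - q(m\Delta t)| \le C\,T\,\Delta t^2\,\Omega^3\,|q| + C\,\Delta t^2\bigl(\Omega^{-1} + T\,\Omega^2\bigr)|p|$; the analogous decomposition of $p^m - p(m\Delta t)$ (with $\Omega^{-1}$ replaced by $\Omega$ and $\chi - \Omega^{-1}$ by $-(\chi^{-1}-\Omega)$) gives $|p^m - p(m\Delta t)| \le C\,T\,\Delta t^2\,\Omega^3\,|p| + C\,\Delta t^2\bigl(\Omega + T\,\Omega^4\bigr)|q|$. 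For $m = 0$ both left-hand sides vanish; for $m \ge 1$ one has $T \ge \Delta t$, and substituting $\Omega = \sqrt{1+\omega^2}$ together with the elementary inequalities $\Omega^k \le 2^{k/2}(1+\omega^k)$ for $k = 2,3,4$ collapses the frequency dependence into the stated polynomials in $\omega$, producing constants $C_1, C_2$ as claimed.

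The main obstacle is the frequency-error bound $|\theta/\Delta t - \Omega| \le C\,\Delta t^2\,\Omega^3$: the naive Taylor expansion of $2\arctan(\Delta t\,\Omega/\sqrt{4-\Delta t^2})/\Delta t$ is only legitimate when $\Delta t\,\Omega = O(1)$, so for high modes (where $\Omega$ is large compared with $1/\Delta t$) one must argue separately, as above, that the now $O(\Omega)$ error is still dominated by $\Delta t^2\Omega^3$. Lining up these $\Omega$-powers with the asserted $(1+\omega^3)$ and $(1+\omega^4)$ factors is the only place where any real care is needed; everything else is bookkeeping with elementary trigonometric inequalities.
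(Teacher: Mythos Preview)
Your proof is correct and follows essentially the same route as the paper: both use the explicit forms of the exact and modified-Hamiltonian trajectories, bound the cosine/sine differences via the Lipschitz property, and reduce everything to the frequency error $|\theta/\Delta t - \Omega|$ and the shape errors $|\chi^{\pm 1} - \Omega^{\mp 1}|$. The only minor difference is that the paper handles the frequency error uniformly via the identity $\arctan(x) = x - \int_0^x t^2/(1+t^2)\,dt$ and the bound $\bigl|\int_0^x t^2/(1+t^2)\,dt\bigr| \le x^3/3$ (valid for all $x>0$), which avoids your case split $a\le 1$ versus $a>1$.
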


These error bounds show that the Cayley splitting is second-order accurate on finite time intervals, and that
its global error depends linearly on the time interval of simulation $T$.  This result is not generic for numerical integrators, 
and as we see in the proof that follows, relies strongly upon the existence of a global modified Hamiltonian of the 
numerical solution.

\begin{proof}
Since $\Delta t<2$, by Lemma~\ref{lemma:Cayley_Splitting_Stability_1D}, the Cayley splitting is stable.  Moreover, its modified Hamiltonian in \eqref{eq:modified_hamiltonian} is globally well-defined.
Set $m = \lfloor T / \Delta t \rfloor$ and $\tau = m \Delta t$.  To estimate the global error, we compare the exact solution of \eqref{eq:hamiltons_equations_1D} \begin{align*}
q(\tau) & =  \cos(\tau \sqrt{1+\omega^2}   ) q +  \frac{1}{\sqrt{1+\omega^2}} \sin(\tau \sqrt{1+\omega^2}  ) p \\
p(\tau) & = - \sqrt{1+\omega^2}   \sin( \tau \sqrt{1+\omega^2} ) q +  \cos(\tau \sqrt{1+\omega^2}  ) p 
\end{align*} to the solution of the modified Hamiltonian system in \eqref{eq:modified_solution}. The global error in position is given by \begin{align*}
  |\tilde q(\tau) - q(\tau)|  & \le   \left| (\cos(m \theta) - \cos(m  \Delta t \sqrt{1+\omega^2} ) ) q \right| \\
 & \qquad  +  \left| \left( \chi - \frac{1}{\sqrt{1+\omega^2}}  \right)  \sin(m \theta) p \right| \\
 & \qquad +  \left| \frac{1}{\sqrt{1+\omega^2}} \left(  \sin(m \theta) - \sin(m  \Delta t \sqrt{1+\omega^2}) ) \right) p \right|  \\  
&\le   \left| \frac{\theta}{\Delta t} -  \sqrt{1+\omega^2} \right|  ( |q| + \frac{| p |}{\sqrt{1+\omega^2}}  ) m \Delta t  + \left|  \chi - \frac{1}{\sqrt{1+\omega^2}}  \right|  |p|    
\end{align*}
where we used the fact that sine and cosine are Lipschitz with Lipschitz constant $1$.  (Alternatively, we could have used Taylor's integral formula to fourth-order in place of a Lipschitz constant,
but the price of that estimate is a global error that grows faster than linearly with $T$.)  These differences can be estimated using   \begin{equation} \label{eq:phase_error}
\begin{aligned}
\frac{\theta}{\Delta t} -  \sqrt{1+\omega^2}  &=   g(\Delta t, \omega) \Delta t^2 \\
\chi - \frac{1}{\sqrt{1+\omega^2}}  &=  \frac{1}{\sqrt{1+\omega^2}} \frac{\Delta t^2}{4 - \Delta t^2 + 2 \sqrt{4 - \Delta t^2}} 
\end{aligned}
\end{equation}
where \[
| g(\Delta t, \omega)| \le \frac{\sqrt{1+\omega^2}}{\sqrt{4 - \Delta t^2} (2+\sqrt{4-\Delta t^2})} + \frac{2}{3} \frac{(1+\omega^2)^{3/2}}{ (4 - \Delta t^2)^{3/2} } \;,
\]
which follows from \eqref{eq:theta_chi}, the identity $\arctan(x) = x - \int_0^x t^2 / (1+t^2) dt$ and the bound $| \int_0^x t^2 / (1+t^2) dt | \le x^3 / 3$, which hold for all positive $x$.

Similarly, the global error in momentum is bounded by \begin{align*}
  |\tilde p(\tau) - p(\tau)|  &\le   \left| (  \sin(m \theta) - \sin(m  \Delta t \sqrt{1+\omega^2}) ) ) q \right| \sqrt{1+\omega^2}   \\
 & \qquad + \left| ( \chi^{-1} - \sqrt{1+\omega^2}  )  \sin(m \theta) q \right| \\
 & \qquad + \left| (\cos(m \theta) - \cos(m  \Delta t \sqrt{1+\omega^2}) ) p \right|   \\  
 &\le  \left| \frac{\theta}{\Delta t} -  \sqrt{1+\omega^2} \right|  ( \sqrt{1+\omega^2}  |q| +|p| ) m \Delta t   \\
 & \qquad +  \left|  \chi^{-1} - \sqrt{1+\omega^2} \right|  |p|    
\end{align*}
where the first difference can be estimated using \eqref{eq:phase_error}, and the second difference can be estimated via
\begin{align*}
\chi^{-1} - \sqrt{1+\omega^2}  &=  \sqrt{1+\omega^2} \frac{-\Delta t^2}{4 + 2 \sqrt{4 - \Delta t^2}} \;.
\end{align*}
\end{proof}

\paragraph{Cayley splitting for one-dimensional linear Langevin}  Next we consider the Langevin counterpart of \eqref{eq:hamiltons_equations_1D}  \begin{equation} \label{eq:langevin_equations_1D}
\begin{bmatrix} d q(t) \\ d p(t) \end{bmatrix} = \boldsymbol{K} \begin{bmatrix} q(t) \\ p(t) \end{bmatrix} dt + \sqrt{2 \gamma \beta^{-1}}  \begin{bmatrix} 0 \\  1 \end{bmatrix} dW(t)  \;, ~~ \boldsymbol{K} = \begin{bmatrix} 0  &1\\ -\omega^2-1 & -\gamma \end{bmatrix}
\end{equation}
where $W$ is a one-dimensional Wiener process, $\beta>0$ is an inverse temperature parameter, and $\gamma \ge 0$ is a friction parameter.  Given an initial condition $(q, p) \in \mathbb{R}^2$ and time $T>0$, the solution $(q(T), p(T))$ of \eqref{eq:langevin_equations_1D} is a Gaussian vector with mean vector and covariance matrix given respectively by \begin{equation} \label{eq:law_langevin_1D}
\boldsymbol{\mu}(T) = \boldsymbol{\Phi}_{\gamma}(T)  \begin{bmatrix} q \\ p \end{bmatrix} \;, \quad \boldsymbol{\Sigma}(T) = 2 \gamma \beta^{-1} \int_0^{\mathrm{T}} \boldsymbol{\Phi}_{\gamma}(s)  \begin{bmatrix} 0 & 0 \\ 0 & 1 \end{bmatrix} \boldsymbol{\Phi}_{\gamma}(s)^{\mathrm{T}} ds
\end{equation}
where we have introduced $\boldsymbol{\Phi}_{\gamma}(t) = \exp\left(t \boldsymbol{K}  \right)$.  


Given $\Delta t>0$ and $(q^0, p^0) \in \mathbb{R}^2$, the one step update of the Cayley splitting in \eqref{eq:cayley_splitting} can be written as  \begin{equation} \label{eq:cayley_splitting_langevin_1D}
\begin{bmatrix} q^1 \\ p^1 \end{bmatrix}  = \boldsymbol{O} \boldsymbol{C}  \boldsymbol{O} \begin{bmatrix} q^0 \\ p^0 \end{bmatrix} + \sqrt{\beta^{-1}} \sqrt{1-e^{-\gamma \Delta t}} \left(  \boldsymbol{O} \boldsymbol{C} \begin{bmatrix} 0 \\  1 \end{bmatrix} \xi^0 +   \begin{bmatrix} 0 \\  1 \end{bmatrix} \eta^0 \right)
\end{equation} where $\xi^0, \eta^0$ are independent standard normal random variables, $\boldsymbol{C}$ is the matrix defined in \eqref{eq:cayley_splitting_1D}, and $\boldsymbol{O}$ is the matrix defined as \[
 \boldsymbol{O} = \exp\left( \boldsymbol{\Gamma} \frac{\Delta t}{2} \right) \;, \quad  \boldsymbol{\Gamma} = \begin{bmatrix} 0 & 0 \\ 0 & -\gamma \end{bmatrix}  \;.
\]
Given an initial state $(q,p) \in \mathbb{R}^2$ and a number $m$ of integration steps, the numerical solution $(q^m, p^m)$ is a Gaussian vector with mean vector and covariance matrix given respectively by  \begin{equation} \label{eq:law_cayley_splitting_langevin_1D}
\boldsymbol{\mu}^m =  (\boldsymbol{O} \boldsymbol{C}  \boldsymbol{O})^m \begin{bmatrix} q \\ p \end{bmatrix} \;, \quad 
\boldsymbol{\Sigma}^m = \sum_{k=0}^m  (\boldsymbol{O} \boldsymbol{C}  \boldsymbol{O})^k \boldsymbol{Q}  (\boldsymbol{O} \boldsymbol{C}^{\mathrm{T}}  \boldsymbol{O})^k \;,
\end{equation}
where  \[
 \boldsymbol{Q} =\beta^{-1} (  1-e^{-\gamma \Delta t} )  \left( \boldsymbol{O} \boldsymbol{C}  \begin{bmatrix} 0 & 0 \\ 0 & 1 \end{bmatrix} \boldsymbol{C}^{\mathrm{T}} \boldsymbol{O} +   \begin{bmatrix} 0 & 0 \\ 0 & 1 \end{bmatrix}  \right) \;.
\]

\medskip
The following stability requirement is a bit stronger than in the Hamiltonian case.

\begin{lemma} \label{lemma:Cayley_Splitting_Stability_langevin_1D}
For all $\omega>0,\gamma>0$, and for any positive $\Delta t< 2$ satisfying \begin{equation} \label{eq:gam_stability}
g(\Delta t, \omega, \gamma) := 1 - \cos^2(\theta) \cosh^2\left(\frac{\gamma \Delta t}{2}\right) >0
\end{equation}
the matrix $\boldsymbol{O} \boldsymbol{C}  \boldsymbol{O}$ is asymptotically stable, and
for any $m \in \mathbb{N}$, \[
\| (\boldsymbol{O} \boldsymbol{C}  \boldsymbol{O})^m \| \le \frac{2 e^{- \frac{\gamma \Delta t (m-1)}{2} }}{\sqrt{g(\Delta t, \omega, \gamma)  }}  \left( \| \boldsymbol{C} \| + \sqrt{2} e^{-\frac{\gamma \Delta t}{2}} \right) \;.
\]
\end{lemma}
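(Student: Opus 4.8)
The plan is to reduce the entire statement to the spectral analysis of the single $2\times 2$ matrix $\boldsymbol{M}:=\boldsymbol{O}\boldsymbol{C}\boldsymbol{O}$ that drives the mean recursion in \eqref{eq:law_cayley_splitting_langevin_1D}. Writing $a:=e^{-\gamma\Delta t/2}$, so that $\boldsymbol{O}=\diag(1,a)$, and inserting the normal form \eqref{eq:C_theta_chi} of $\boldsymbol{C}$, a direct computation gives
\[
\boldsymbol{M}=\begin{bmatrix}\cos\theta & a\chi\sin\theta\\ -a\chi^{-1}\sin\theta & a^2\cos\theta\end{bmatrix},\qquad \tr\boldsymbol{M}=(1+a^2)\cos\theta,\qquad \det\boldsymbol{M}=a^2,
\]
with $\theta,\chi$ as in \eqref{eq:theta_chi}; the value $\det\boldsymbol{M}=a^2$ is also immediate from $\det\boldsymbol{O}=a$ and $\det\boldsymbol{C}=1$. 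The first step is to record the algebraic identity $\tfrac{1+a^2}{2a}=\cosh(\gamma\Delta t/2)$, so that $\tr\boldsymbol{M}=2a\cos\phi$ where I set $\cos\phi:=\cosh(\gamma\Delta t/2)\cos\theta$; the characteristic polynomial of $\boldsymbol{M}$ is then $\lambda^2-2a\cos\phi\,\lambda+a^2$, whose discriminant equals $4a^2(\cos^2\phi-1)=-4a^2\,g(\Delta t,\omega,\gamma)$.

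Hence the hypothesis $g>0$ of \eqref{eq:gam_stability} is exactly the assertion that $|\cos\phi|<1$, i.e.\ that $\phi\in(0,\pi)$ is a genuine angle with $\sin^2\phi=g$; in that case the eigenvalues of $\boldsymbol{M}$ are the complex conjugate pair $ae^{\pm i\phi}$, both of modulus $a=e^{-\gamma\Delta t/2}<1$ because $\gamma>0$, which already yields the asymptotic stability of $\boldsymbol{O}\boldsymbol{C}\boldsymbol{O}$. For the quantitative estimate I would iterate the Cayley--Hamilton relation $\boldsymbol{M}^2=2a\cos\phi\,\boldsymbol{M}-a^2\boldsymbol{I}$ to get the closed form
\[
\boldsymbol{M}^m=a^{m-1}\,\frac{\sin(m\phi)}{\sin\phi}\,\boldsymbol{M}\;-\;a^m\,\frac{\sin((m-1)\phi)}{\sin\phi}\,\boldsymbol{I},
\]
the matrix version of the Chebyshev identity $U_{m-1}(\cos\phi)=\sin(m\phi)/\sin\phi$ already invoked in the proof of Lemma~\ref{lemma:Cayley_Splitting_Stability_1D}. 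Then I would take Frobenius norms, bound $|\sin k\phi|\le1$ and $|\sin\phi|=\sqrt g$, use $\|\boldsymbol{I}\|=\sqrt2$ together with the crude submultiplicative estimate $\|\boldsymbol{M}\|\le\|\boldsymbol{O}\|^2\|\boldsymbol{C}\|\le2\|\boldsymbol{C}\|$ (since $\|\boldsymbol{O}\|=\sqrt{1+a^2}\le\sqrt2$), collect the powers of $a$ into $e^{-\gamma\Delta t(m-1)/2}$, and absorb the residual $\sqrt2\,a^m/\sqrt g$ into the $\sqrt2\,e^{-\gamma\Delta t/2}$ summand, arriving at the stated bound.

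The main obstacle is the clean identification in the previous paragraph of the hypothesis $g>0$ with ``the eigenvalues of $\boldsymbol{O}\boldsymbol{C}\boldsymbol{O}$ form a complex conjugate pair of modulus $e^{-\gamma\Delta t/2}$''; once the auxiliary angle $\phi$ with $\cos\phi=\cosh(\gamma\Delta t/2)\cos\theta$ is available, the rest is routine bookkeeping. It is worth contrasting this with the Hamiltonian Lemma~\ref{lemma:Cayley_Splitting_Stability_1D}: there $\cos\theta\in(-1,1)$ automatically forces the eigenvalues onto the unit circle, whereas here the amplification factor $\cosh(\gamma\Delta t/2)>1$ can push $\cosh(\gamma\Delta t/2)\cos\theta$ outside $[-1,1]$, which turns the eigenvalue pair real and degrades the per-step contraction, so that the stated $1/\sqrt g$ bound ceases to apply --- this is the precise sense in which \eqref{eq:gam_stability} asks that the discrete drift $\boldsymbol{O}\boldsymbol{C}\boldsymbol{O}$ inherit the oscillatory, subcritically damped behaviour of the continuous drift $\exp(t\boldsymbol{K})$. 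Two minor points should be checked along the way: that under $g>0$ the angle $\phi$ is real with $\sin\phi\ne0$ (so the closed form for $\boldsymbol{M}^m$ and the equality $|\sin\phi|=\sqrt g$ are legitimate), and that the degenerate case $\cos\theta=0$ is harmless, giving $\phi=\pi/2$ and $g=1$.
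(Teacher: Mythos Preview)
Your proposal is correct and follows essentially the same route as the paper: both compute $\tr(\boldsymbol{O}\boldsymbol{C}\boldsymbol{O})$ and $\det(\boldsymbol{O}\boldsymbol{C}\boldsymbol{O})$, identify the hypothesis $g>0$ with the discriminant condition $4\det>\tr^2$ that forces a complex conjugate eigenvalue pair of modulus $e^{-\gamma\Delta t/2}$, and then invoke Cayley--Hamilton to express $(\boldsymbol{O}\boldsymbol{C}\boldsymbol{O})^m$ as a linear combination of $\boldsymbol{O}\boldsymbol{C}\boldsymbol{O}$ and $\boldsymbol{I}$ before taking Frobenius norms. The only cosmetic difference is that the paper works directly with the eigenvalues $\lambda_\pm$ and the partial-fraction form $\lambda_+^m(\lambda_+-\lambda_-)^{-1}(\boldsymbol{M}-\lambda_-\boldsymbol{I})+\lambda_-^m(\lambda_--\lambda_+)^{-1}(\boldsymbol{M}-\lambda_+\boldsymbol{I})$, whereas you introduce the auxiliary angle $\phi$ and the equivalent Chebyshev representation; substituting $\lambda_\pm=ae^{\pm i\phi}$ into the paper's formula recovers yours.
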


Note that $\theta$ in this lemma is the one we defined in \eqref{eq:theta_chi}.


\begin{proof}
Since $\Delta t <2$, Lemma~\ref{lemma:Cayley_Splitting_Stability_1D}  implies that \[
\boldsymbol{O} \boldsymbol{C}  \boldsymbol{O} = e^{-\frac{\gamma \Delta t}{2}} \begin{bmatrix} \cos(\theta) e^{\frac{\gamma \Delta t}{2}}  & \chi \sin(\theta) \\ \chi^{-1} \sin(\theta) & \cos(\theta) e^{-\frac{\gamma \Delta t}{2}} \end{bmatrix}
\]  where $\theta$ and $\chi$ are defined in \eqref{eq:theta_chi}.   Since \[
\det(\boldsymbol{O} \boldsymbol{C}  \boldsymbol{O}) = e^{-\gamma \Delta t} \;, \quad   \tr(\boldsymbol{O} \boldsymbol{C}  \boldsymbol{O}) = 2  e^{-\frac{\gamma \Delta t}{2}} \cos(\theta) \cosh(\frac{\gamma \Delta t}{2})  \;,
\] the function $g$ defined in \eqref{eq:gam_stability} satisfies \begin{equation} \label{eq:g_discriminant}
4 e^{-\gamma \Delta t} g(\Delta t, \omega, \gamma) = 4 \det(\boldsymbol{O} \boldsymbol{C}  \boldsymbol{O}) - \tr(\boldsymbol{O} \boldsymbol{C}  \boldsymbol{O})^2 \;,
\end{equation} and hence, our hypothesis on $\Delta t$ implies that $4 \det(\boldsymbol{O} \boldsymbol{C}  \boldsymbol{O}) > \tr(\boldsymbol{O} \boldsymbol{C}  \boldsymbol{O})^2$.  Thus, under this hypothesis, $\boldsymbol{O} \boldsymbol{C}  \boldsymbol{O}$ has a complex conjugate pair of eigenvalues: \[
\lambda_{\pm} =  \frac{\tr(\boldsymbol{O} \boldsymbol{C}  \boldsymbol{O}) \pm i \sqrt{ 4 \det(\boldsymbol{O} \boldsymbol{C}  \boldsymbol{O}) - \tr(\boldsymbol{O} \boldsymbol{C}  \boldsymbol{O})^2}}{2}
\] with $|\lambda_{\pm}| = e^{-\frac{\gamma \Delta t}{2}}$.  Hence, the matrix $\boldsymbol{O} \boldsymbol{C}  \boldsymbol{O}$ is asymptotically stable.

By the Cayley-Hamilton Theorem for $2 \times 2$ matrices, one can write \[
(\boldsymbol{O} \boldsymbol{C}  \boldsymbol{O})^m = \lambda_+^m \frac{1}{\lambda_+ - \lambda_-} ( \boldsymbol{O} \boldsymbol{C}  \boldsymbol{O} - \lambda_- \boldsymbol{I} ) 
+ \lambda_-^m \frac{1}{\lambda_- - \lambda_+} ( \boldsymbol{O} \boldsymbol{C}  \boldsymbol{O} - \lambda_+ \boldsymbol{I} ) 
\] and by the triangle inequality, \begin{align*}
\| (\boldsymbol{O} \boldsymbol{C}  \boldsymbol{O})^m \| &\le \frac{ |\lambda_+|^m + |\lambda_-|^m}{|\lambda_+ - \lambda_-|} \left(  \|\boldsymbol{O} \boldsymbol{C}  \boldsymbol{O}\| +  \sqrt{2} (  |\lambda_+| +  |\lambda_-| ) \right) \\
&\le  \frac{ 4 e^{-\frac{\gamma \Delta t}{2} m}}{ \sqrt{ 4 \det(\boldsymbol{O} \boldsymbol{C}  \boldsymbol{O}) - \tr(\boldsymbol{O} \boldsymbol{C}  \boldsymbol{O})^2}} \left( \| \boldsymbol{C} \| + \sqrt{2} e^{-\frac{\gamma \Delta t}{2}} \right) 
\end{align*}
from which the bound follows by substituting \eqref{eq:g_discriminant}.
\end{proof}

\medskip
The following lemma shows that the Cayley splitting applied to the one dimensional linear Langevin equation is second-order accurate in a weak or distributional sense.

\begin{lemma} \label{lemma:Cayley_Splitting_global_error_langevin_1D}
For all $\omega >0$, $\gamma \ge 0$ and $\beta>0$, and for any positive $\Delta t<\min(\sqrt{3}, 1/ \| \boldsymbol{K} \|)$ satisfying \eqref{eq:gam_stability}, and for all $T>0$, 
there exist positive constants $C_1, C_2$ such that 
\begin{align*}
\left\| (\boldsymbol{O} \boldsymbol{C} \boldsymbol{O})^m  -\boldsymbol{\Phi}_{\gamma}(m \Delta t) \right\|  \le&  \exp\left(  C_1  \| \boldsymbol{K} \| T  \right)  (1+ \| \boldsymbol{K} \|^3 ) \Delta t^2 \\
 \left\| \boldsymbol{\Sigma}^m  - \boldsymbol{\Sigma}(m \Delta t) \right\| \le&  \beta^{-1} \exp\left(  C_2  \|  \boldsymbol{K} \| T \right)  (1+ \| \boldsymbol{K} \|^2 ) \Delta t^2 
 \end{align*}
where $m = \lfloor T/\Delta t \rfloor$.  
\end{lemma}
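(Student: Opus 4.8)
The plan is to reduce both estimates to a one-step local-error bound combined with the stability estimates already in hand, via a telescoping (``Lady Windermere's fan'') argument --- first for the mean propagator, then for the covariance. The case $\gamma=0$ is the Hamiltonian one: the covariance statement is then vacuous, and the mean statement reduces to Lemma~\ref{lemma:Cayley_Splitting_global_error_1D} (a componentwise bound of the form $T\Delta t^2(1+\omega^4)$ dominates $\|\boldsymbol{C}^m-\boldsymbol{\Phi}_0(m\Delta t)\|$, and $1+\omega^4\le 1+\|\boldsymbol{K}\|^3$, $T\le e^{\|\boldsymbol{K}\|T}$), so one may assume $\gamma>0$; note also that $\|\boldsymbol{K}\|\ge\sqrt2$, so $\Delta t<1/\|\boldsymbol{K}\|$ already implies $\Delta t<\sqrt3<2$ and thus the hypotheses of the earlier lemmas.

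\textbf{Mean propagator.} Write $\boldsymbol{M}=\boldsymbol{O}\boldsymbol{C}\boldsymbol{O}$ for the one-step map of the Cayley splitting and $\boldsymbol{N}=\boldsymbol{\Phi}_\gamma(\Delta t)=\exp(\Delta t\boldsymbol{K})$ for the exact one-step flow, so that $\boldsymbol{\Phi}_\gamma(m\Delta t)=\boldsymbol{N}^m$ and
\[
\boldsymbol{M}^m-\boldsymbol{N}^m=\sum_{k=0}^{m-1}\boldsymbol{M}^k(\boldsymbol{M}-\boldsymbol{N})\boldsymbol{N}^{m-1-k}\;.
\]
Three ingredients feed this sum. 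First, a local-error bound $\|\boldsymbol{M}-\boldsymbol{N}\|\lesssim(1+\|\boldsymbol{K}\|^3)\Delta t^3$: since $\boldsymbol{K}=\boldsymbol{A}+\boldsymbol{B}+\boldsymbol{\Gamma}$ with $\|\boldsymbol{A}\|,\|\boldsymbol{B}\|,\|\boldsymbol{\Gamma}\|\le\|\boldsymbol{K}\|$ (an elementary comparison of Frobenius norms), the palindromic product $\exp(\tfrac12\Delta t\boldsymbol{\Gamma})\exp(\tfrac12\Delta t\boldsymbol{B})\exp(\Delta t\boldsymbol{A})\exp(\tfrac12\Delta t\boldsymbol{B})\exp(\tfrac12\Delta t\boldsymbol{\Gamma})$ differs from $\exp(\Delta t\boldsymbol{K})$ by $O(\Delta t^3\|\boldsymbol{K}\|^3)$ (Baker--Campbell--Hausdorff bookkeeping, with $\Delta t\|\boldsymbol{K}\|<1$ controlling the remainder), and replacing the middle factor $\exp(\Delta t\boldsymbol{A})$ by $\cay(\Delta t\boldsymbol{A})$ costs a further $O(\Delta t^3\|\boldsymbol{A}\|^3)$ because $\cay(x)-e^{x}=\tfrac{1}{12}x^3+O(x^4)$ (subtract the series of Lemma~\ref{lemma:cayley_series} from that of $\exp$; here $\|\Delta t\boldsymbol{A}\|<1<2$). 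Second, a bound $\|\boldsymbol{M}^k\|\le B$ uniform in $k$: this is Lemma~\ref{lemma:Cayley_Splitting_Stability_langevin_1D} under hypothesis \eqref{eq:gam_stability}, with $B$ depending on $\omega,\gamma,\Delta t$ essentially through $(\|\boldsymbol{C}\|+1)/\sqrt{g(\Delta t,\omega,\gamma)}$. Third, the crude bound $\|\boldsymbol{N}^{j}\|=\|\exp(j\Delta t\boldsymbol{K})\|\le e^{j\Delta t\|\boldsymbol{K}\|}\le e^{T\|\boldsymbol{K}\|}$ (submultiplicativity of the Frobenius norm). Multiplying and summing the $m=\lfloor T/\Delta t\rfloor$ terms loses one factor of $\Delta t$, so $\|\boldsymbol{M}^m-\boldsymbol{N}^m\|\lesssim B\,T(1+\|\boldsymbol{K}\|^3)e^{T\|\boldsymbol{K}\|}\Delta t^2$; since $\|\boldsymbol{K}\|\ge\sqrt2$ one absorbs $B\,T$ (and any stray polynomial factor) into $\exp(C_1\|\boldsymbol{K}\|T)$, which is the first estimate. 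Alternatively, since everything is $2\times2$, one may diagonalize $\boldsymbol{M}$ and $\boldsymbol{N}$ directly via \eqref{eq:C_theta_chi}--\eqref{eq:theta_chi} and compare eigenvalues and eigenvectors.

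\textbf{Covariance.} By the Markov/semigroup property the exact covariance \eqref{eq:law_langevin_1D} telescopes as $\boldsymbol{\Sigma}(m\Delta t)=\sum_{j=0}^{m-1}\boldsymbol{N}^j\boldsymbol{S}_{\Delta t}(\boldsymbol{N}^j)^{\mathrm T}$ with one-step innovation covariance $\boldsymbol{S}_{\Delta t}=2\gamma\beta^{-1}\int_0^{\Delta t}\boldsymbol{\Phi}_\gamma(s)\diag(0,1)\boldsymbol{\Phi}_\gamma(s)^{\mathrm T}\,ds$, while by \eqref{eq:law_cayley_splitting_langevin_1D} the numerical covariance is $\boldsymbol{\Sigma}^m=\sum_j\boldsymbol{M}^j\boldsymbol{Q}(\boldsymbol{M}^j)^{\mathrm T}$ (using that $\boldsymbol{O}$ is symmetric). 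Subtracting term by term and inserting $\boldsymbol{N}^j\boldsymbol{Q}(\boldsymbol{M}^j)^{\mathrm T}$ and $\boldsymbol{N}^j\boldsymbol{S}_{\Delta t}(\boldsymbol{M}^j)^{\mathrm T}$,
\begin{align*}
\boldsymbol{M}^j\boldsymbol{Q}(\boldsymbol{M}^j)^{\mathrm T}-\boldsymbol{N}^j\boldsymbol{S}_{\Delta t}(\boldsymbol{N}^j)^{\mathrm T}={}&(\boldsymbol{M}^j-\boldsymbol{N}^j)\boldsymbol{Q}(\boldsymbol{M}^j)^{\mathrm T}+\boldsymbol{N}^j(\boldsymbol{Q}-\boldsymbol{S}_{\Delta t})(\boldsymbol{M}^j)^{\mathrm T}\\
&+\boldsymbol{N}^j\boldsymbol{S}_{\Delta t}\bigl((\boldsymbol{M}^j)^{\mathrm T}-(\boldsymbol{N}^j)^{\mathrm T}\bigr)\;.
\end{align*}
Here $\|\boldsymbol{Q}\|,\|\boldsymbol{S}_{\Delta t}\|\lesssim\beta^{-1}\gamma\Delta t\,\mathrm{poly}(\|\boldsymbol{K}\|)$ (both scale with $1-e^{-\gamma\Delta t}\le\gamma\Delta t$, resp.\ with $\gamma\int_0^{\Delta t}\!\cdots\le\gamma\Delta t\,e^2$, times $\|\boldsymbol{C}\|^2$-type factors bounded via Lemma~\ref{lemma:Cayley_Splitting_Stability_1D}); the factors $\|\boldsymbol{M}^j-\boldsymbol{N}^j\|$ are controlled by the mean estimate with $j\Delta t\le T$; and $\|\boldsymbol{M}^j\|,\|\boldsymbol{N}^j\|$ are bounded as above. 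The last ingredient is one-step covariance consistency $\|\boldsymbol{Q}-\boldsymbol{S}_{\Delta t}\|\lesssim\beta^{-1}\Delta t^3\,\mathrm{poly}(\|\boldsymbol{K}\|)$: expanding both in $\Delta t$, the $O(\Delta t)$ terms ($2\gamma\beta^{-1}\Delta t\,\diag(0,1)$) agree and so do the $O(\Delta t^2)$ terms, the $-\tfrac12\gamma^2\Delta t^2$ from expanding $1-e^{-\gamma\Delta t}$ in $\boldsymbol{Q}$ cancelling against the contribution of the half-step of $\boldsymbol{\Gamma}$ inside $\boldsymbol{O}\boldsymbol{C}$. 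Summing $m\approx T/\Delta t$ terms and collecting the $T$-, $B$-, and $\|\boldsymbol{K}\|$-dependent factors into $\beta^{-1}\exp(C_2\|\boldsymbol{K}\|T)(1+\|\boldsymbol{K}\|^2)$ gives the second estimate.

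\textbf{Main obstacle.} The delicate point is the one-step covariance consistency: one must verify that the $O(\Delta t^2)$ coefficients of $\boldsymbol{Q}$ and $\boldsymbol{S}_{\Delta t}$ coincide despite the asymmetric placement of the exact half-Ornstein--Uhlenbeck flow in \eqref{eq:cayley_splitting_langevin_1D} --- mere $O(\Delta t^2)$ agreement would only yield first-order weak accuracy. A secondary nuisance is tracking the powers of $\|\boldsymbol{K}\|$ so that the polynomial prefactors come out as stated (any excess being harmlessly absorbed into $\exp(C_i\|\boldsymbol{K}\|T)$ since $\|\boldsymbol{K}\|\ge\sqrt2$), and carrying the stability constant $B$, which depends on the margin $g(\Delta t,\omega,\gamma)$ in \eqref{eq:gam_stability}, through the estimate.
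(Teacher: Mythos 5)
Your proposal is correct and follows essentially the same route as the paper: a one-step local error bound of order $\Delta t^3\|\boldsymbol{K}\|^3$ obtained by matching the expansions of $\boldsymbol{O}\boldsymbol{C}\boldsymbol{O}$ and $\exp(\Delta t\boldsymbol{K})$ through second order (the paper uses the commutativity of $\boldsymbol{\Gamma}$ and $\boldsymbol{B}$ together with Lemma~\ref{lemma:cayley_series} rather than BCH, but the content is identical), combined with the stability bounds of Lemma~\ref{lemma:Cayley_Splitting_Stability_langevin_1D} in a telescoping/Gr\"onwall recurrence. The only cosmetic difference is in the covariance argument, where you compare $\boldsymbol{Q}$ against the exact telescoped one-step innovation covariance while the paper inserts a trapezoidal quadrature $\boldsymbol{\tilde\Sigma}^m$ as the intermediate object; both hinge on the same second-order one-step consistency $\|\boldsymbol{Q}-\boldsymbol{\tilde Q}\|\lesssim\Delta t^3\|\boldsymbol{K}\|^2$ that you correctly flag as the delicate point.
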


Note that unlike Lemma~\ref{lemma:Cayley_Splitting_global_error_1D} the error constants here depend exponentially on the time interval of simulation.  This type of dependence is more typical of estimates for the global error of numerical solutions of ODEs and SDEs.   By definition of $\boldsymbol{K}$ in \eqref{eq:langevin_equations_1D}, note also that for fixed $\gamma>0$ \begin{equation} \label{eq:K_scaling}
 \|  \boldsymbol{K} \|  \lesssim (1+\omega^2) \;,  \quad \|  \boldsymbol{K} \|^2  \lesssim (1+\omega^2) \;, \quad \|  \boldsymbol{K} \|^3  \lesssim (1+\omega^4) \;.
\end{equation}
The following proof is a bit more involved than the proof of Lemma~\ref{lemma:Cayley_Splitting_global_error_1D}, because when $\gamma > 0$ we can no longer leverage the modified Hamiltonian structure of the numerical solution. 

\begin{proof}


We first prove the following preliminary estimates. \begin{align} 
& \| \boldsymbol{O} \boldsymbol{C}  \boldsymbol{O} - \boldsymbol{\Phi}_{\gamma}(\Delta t) \| \lesssim  \Delta t^3 \| \boldsymbol{K} \|^3   \;, \quad  \Delta t <1/ \| \boldsymbol{K} \|  \label{eq:local_error}  \\
& \| \boldsymbol{\Phi}_{\gamma}(t) \| \le \sqrt{2} \exp( \| \boldsymbol{K} \| t ) \;, \quad  t \ge 0   \label{eq:global_bound_on_Phi}  
\end{align}
Both global error estimates in Lemma~\ref{lemma:Cayley_Splitting_global_error_langevin_1D} rely on these bounds.  The latter inequality come from writing \begin{align*}
\boldsymbol{\Phi}_{\gamma}(t)  &= \boldsymbol{I} +\boldsymbol{K} \int_0^t \boldsymbol{\Phi}_{\gamma}(s) ds 
\end{align*} then taking norms to obtain,\begin{align*}
\| \boldsymbol{\Phi}_{\gamma}(t) \|  &\le \sqrt{2}  +\| \boldsymbol{K} \| \int_0^t \| \boldsymbol{\Phi}_{\gamma}(s) \|  ds 
\end{align*}
and then applying Gronwall's Lemma.   

To prove the local error estimate in \eqref{eq:local_error}, we use Taylor's theorem with a third order remainder term to write \begin{equation} \label{eq:taylor_formula_phi}
\boldsymbol{\Phi}_{\gamma}(\Delta t) = \boldsymbol{I} + \Delta t \boldsymbol{K} + \frac{\Delta t^2}{2} \boldsymbol{K}^2 + \boldsymbol{R} \;, \quad \| \boldsymbol{R} \| \le \frac{\Delta t^3}{6} \| \boldsymbol{K} \|^3 \exp( \Delta t \| \boldsymbol{K} \|) \;.
\end{equation}
On the other hand, since $\boldsymbol{\Gamma}$ and $\boldsymbol{B}$ commute, \[
\boldsymbol{O} \boldsymbol{C}  \boldsymbol{O}  = \exp\left( (1/2) \Delta t ( \boldsymbol{\Gamma} +  \boldsymbol{B}) \right) \cay( \Delta t \boldsymbol{A})  \exp\left( (1/2) \Delta t ( \boldsymbol{\Gamma} +  \boldsymbol{B}) \right) \;,
\] and by Lemma~\ref{lemma:cayley_series}, \[
\cay( \Delta t \boldsymbol{A})  = \boldsymbol{I} + \Delta t \boldsymbol{A} + \frac{\Delta t^2}{2} \boldsymbol{A}^2 + \boldsymbol{R}^{cay}  \;, \quad \| \boldsymbol{R}^{cay} \| \le  \frac{\Delta t^3}{2} \| \boldsymbol{A} \|^{3} \frac{1}{2 - \Delta t \| \boldsymbol{A}  \|}
\]
Since $\| \boldsymbol{A} \| \le \| \boldsymbol{K} \|$, and using our hypothesis on $\Delta t$, we obtain
\begin{equation}  \label{eq:taylor_formula_oco}
\boldsymbol{O} \boldsymbol{C}  \boldsymbol{O} = \boldsymbol{I} + \Delta t \boldsymbol{K} + \frac{\Delta t^2}{2} \boldsymbol{K}^2 + \boldsymbol{R}^{oco}  \;, \quad \| \boldsymbol{R}^{oco} \| \lesssim \Delta t^3 \| \boldsymbol{K} \|^3 
\end{equation}
The explicit bounds on the remainders in \eqref{eq:taylor_formula_phi} and \eqref{eq:taylor_formula_oco} show that \eqref{eq:local_error} holds. 


To bound the first global error, we define \begin{equation} \label{eq:first_global_error}
\epsilon_{k} = \left\| (\boldsymbol{O} \boldsymbol{C}  \boldsymbol{O} )^k -\boldsymbol{\Phi}_{\gamma}( k \Delta t) \right\| \;, \quad k \in \mathbb{N} \;.
\end{equation} Then \begin{align*}
\epsilon_{m} &\le  \left\| \boldsymbol{\Phi}_{\gamma}(\Delta t) \left( (\boldsymbol{O} \boldsymbol{C}  \boldsymbol{O} )^{m-1} -\boldsymbol{\Phi}_{\gamma}( (m-1) \Delta t) \right) \right\| \\
& \quad \quad +  \left\|  (\boldsymbol{O} \boldsymbol{C}  \boldsymbol{O} - \boldsymbol{\Phi}_{\gamma}(\Delta t) ) (\boldsymbol{O} \boldsymbol{C}  \boldsymbol{O} )^{m-1} \right\| \\
&\le (1+ 2 \Delta t \| \boldsymbol{K} \|  ) \epsilon_{m-1}  +  C_3 \Delta t^3  \|  \boldsymbol{K} \|^3
\end{align*}
where $C_3$ is a positive constant, and in the last step we used \eqref{eq:local_error}, \eqref{eq:global_bound_on_Phi} and Lemma~\ref{lemma:Cayley_Splitting_Stability_langevin_1D}.  Unraveling this linear recurrence inequality gives our estimate on $\epsilon_m$.


To bound the second global error, note that a second-order accurate trapezoidal discretization of the time integral in \eqref{eq:law_langevin_1D} yields \begin{equation} \label{eq:trapezoidal_covariance}
\boldsymbol{\Sigma}(m \Delta t) \approx \boldsymbol{\tilde \Sigma}^m: = \sum_{k=0}^m  \boldsymbol{\Phi}_{\gamma}(k \Delta t)  \boldsymbol{\tilde Q}   \boldsymbol{\Phi}_{\gamma}(k \Delta t)^{\mathrm{T}}
\end{equation} where  \begin{equation} \label{eq:tilde_Q}
 \boldsymbol{\tilde Q} =\Delta t \gamma \beta^{-1}  \left(  \boldsymbol{\Phi}_{\gamma}(\Delta t) \begin{bmatrix} 0 & 0 \\ 0 & 1 \end{bmatrix} \boldsymbol{\Phi}_{\gamma}(\Delta t)^{\mathrm{T}} +   \begin{bmatrix} 0 & 0 \\ 0 & 1 \end{bmatrix}  \right) \;.
\end{equation} In particular, it is well known that \begin{align*}
\| \boldsymbol{\Sigma}(m \Delta t ) -  \boldsymbol{\tilde \Sigma}^m \| &\lesssim  \gamma \beta^{-1} \Delta t^2  \sup_{s \in [0, m \Delta t]} \left\|\frac{d^2}{ds^2} \boldsymbol{\Phi}_{\gamma}(s) \begin{bmatrix} 0 & 0 \\ 0 & 1 \end{bmatrix}  \boldsymbol{\Phi}_{\gamma}(s)^{\mathrm{T}} \right\| \\
&\lesssim \gamma \beta^{-1}  \Delta t^2 \| \boldsymbol{K} \|^2 \exp(\| \boldsymbol{K} \| T)\;,
\end{align*}
where in the last line we used \eqref{eq:global_bound_on_Phi}.  By the triangle inequality, it suffices to bound $\epsilon_{m}^{\cov} = \left\|  \boldsymbol{\Sigma}^m - \boldsymbol{\tilde \Sigma}^m \right\|$.  By \eqref{eq:trapezoidal_covariance}, we obtain  \[
 \boldsymbol{\tilde \Sigma}^{m}  -  \boldsymbol{\tilde \Sigma}^{m-1} =  \boldsymbol{\Phi}_{\gamma}(m \Delta t) \boldsymbol{\tilde Q}  \boldsymbol{\Phi}_{\gamma}(m \Delta t)^{\mathrm{T}} \;.
 \]  Similarly, by \eqref{eq:law_cayley_splitting_langevin_1D}, we obtain \[
 \boldsymbol{\Sigma}^{m}  -  \boldsymbol{\Sigma}^{m-1} = ( \boldsymbol{O} \boldsymbol{C}  \boldsymbol{O} )^m \boldsymbol{Q} ( \boldsymbol{O} \boldsymbol{C}^{\mathrm{T}} \boldsymbol{O} )^m  \;.
 \]
 Taking the difference between these equations yields,
 \begin{align*}
\epsilon_{m}^{\cov} & \le \epsilon_{m-1}^{\cov} +  \left\|   \boldsymbol{\Phi}_{\gamma}(m \Delta t) \boldsymbol{\tilde Q}  \boldsymbol{\Phi}_{\gamma}(m \Delta t)^{\mathrm{T}}  - ( \boldsymbol{O} \boldsymbol{C}  \boldsymbol{O} )^m \boldsymbol{Q} ( \boldsymbol{O} \boldsymbol{C}^{\mathrm{T}}  \boldsymbol{O} )^m \right\| \\
&\le \epsilon_{m-1}^{\cov} +  \left\|   ( \boldsymbol{\Phi}_{\gamma}(m \Delta t)  - ( \boldsymbol{O} \boldsymbol{C}  \boldsymbol{O} )^m ) \boldsymbol{\tilde Q} ( \boldsymbol{O} \boldsymbol{C}^{\mathrm{T}}  \boldsymbol{O} )^m \right\| \\
& \quad \quad  +  \left\|   \boldsymbol{\Phi}_{\gamma}(m \Delta t) \boldsymbol{\tilde Q}  ( \boldsymbol{\Phi}_{\gamma}(m \Delta t)^{\mathrm{T}}  - ( \boldsymbol{O} \boldsymbol{C}^{\mathrm{T}}  \boldsymbol{O} )^m )\right\| \\
& \quad \quad  +  \left\|    ( \boldsymbol{O} \boldsymbol{C}  \boldsymbol{O} )^m ( \boldsymbol{\tilde Q}  - \boldsymbol{Q} ) ( \boldsymbol{O} \boldsymbol{C}^{\mathrm{T}}  \boldsymbol{O} )^m \right\|  \\
&\le \epsilon_{m-1}^{\cov} + \left( \left\|   \boldsymbol{\Phi}_{\gamma}(m \Delta t) \boldsymbol{\tilde Q}  \right\| +  \left\| \boldsymbol{\tilde Q} ( \boldsymbol{O} \boldsymbol{C}^{\mathrm{T}}  \boldsymbol{O})^m \right\| \right) \epsilon_m + C_4 \Delta t^3  \|  \boldsymbol{K} \|^2
\end{align*}
where $C_4$ is a positive constant, $\epsilon_m$ is the first global error defined in \eqref{eq:first_global_error}, and in the last step we used \eqref{eq:global_bound_on_Phi} and \[
\left\| \boldsymbol{\tilde Q} - \boldsymbol{Q} \right\| \lesssim \Delta t^3 \| \boldsymbol{K} \|^2  \;.
\]
By Lemma~\ref{lemma:Cayley_Splitting_Stability_langevin_1D}, \eqref{eq:global_bound_on_Phi}, our previous estimate on $\epsilon_m$, and noting that the matrix $\boldsymbol{\tilde Q}$ defined in \eqref{eq:tilde_Q} is  $O(\Delta t)$, one can unravel this linear recurrence inequality to obtain our estimate on $\epsilon_{m}^{\cov}$.
\end{proof}

\begin{figure}
\begin{center}
\includegraphics[width=0.45\textwidth]{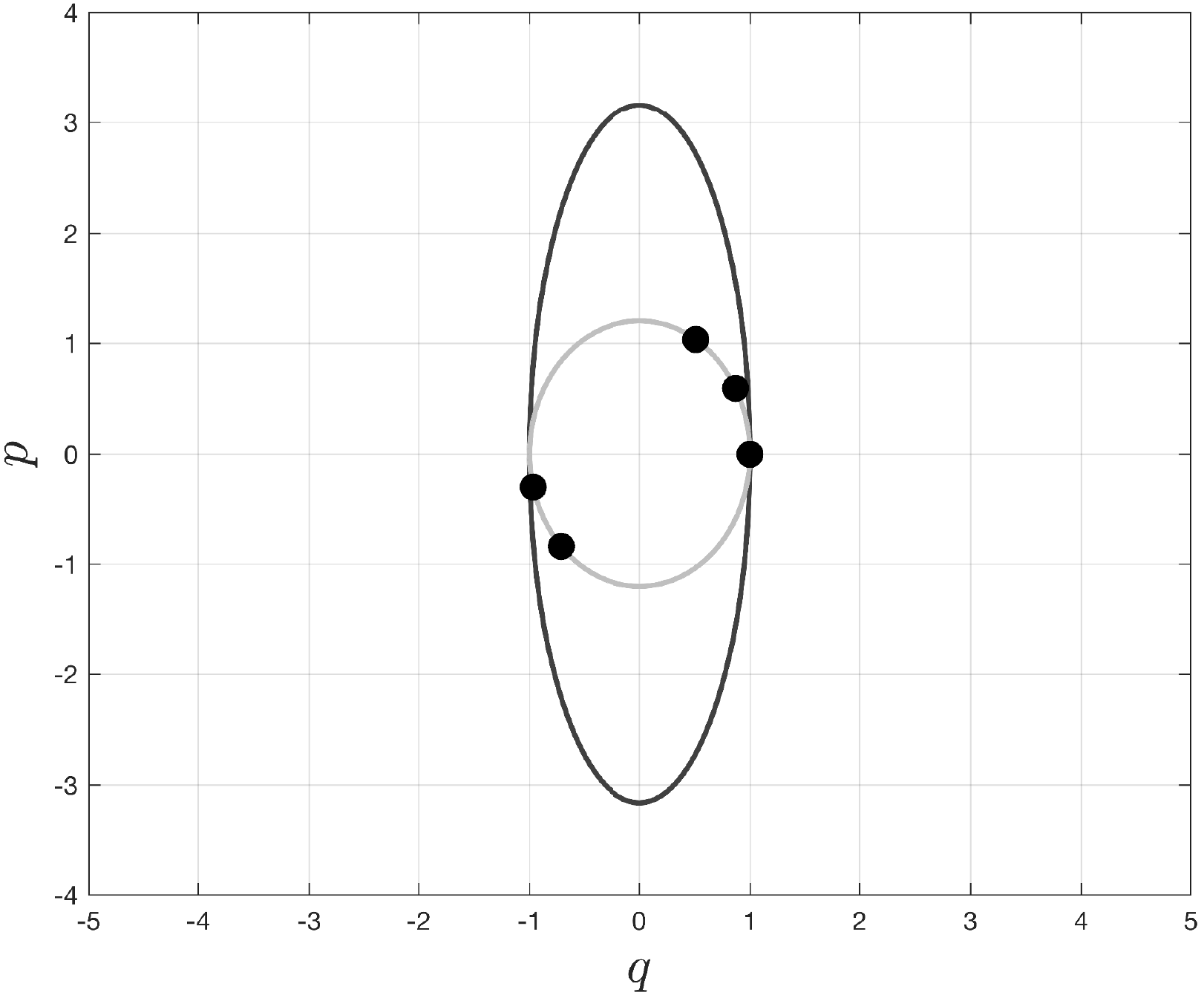} \hspace{0.1in}
\includegraphics[width=0.45\textwidth]{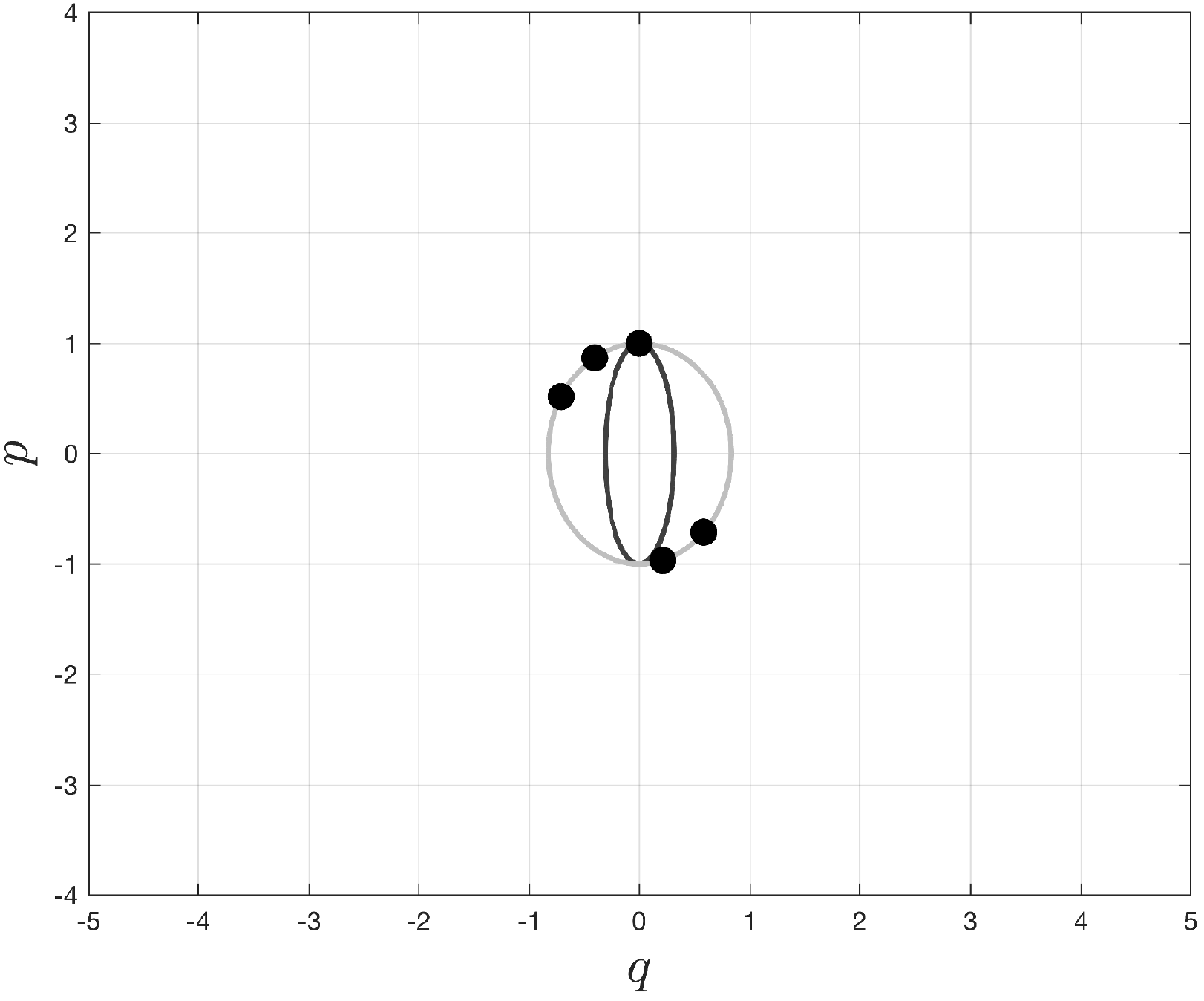}
\end{center}
\caption{\small  {\bf Modified Hamiltonian of Cayley Splitting.}  
This figure illustrates two level sets of the modified Hamiltonian for the Cayley splitting with $\omega = 3$ and $\Delta t=1.85$. The dots are the first five points along a discrete orbit of the Cayley splitting  initiated at $(q, p) = (1, 0)$ (left panel) and $(q,p) = (0,1)$ (right panel) and the grey lines provide the associated level set of the modified Hamiltonian. 
For comparison, the black contour lines show the level sets of the exact Hamiltonian that contain the initial conditions of the Cayley splitting.
}
  \label{fig:modified_hamiltonian_cayley}
\end{figure}


\subsection{Linear Hamiltonian PDE}

Consider an infinite-dimensional, linear Hamiltonian system \begin{equation} \label{eq:linear_hamiltonian_pde}
\partial_t u(t,s) = p(t,s) , ~ \partial_t p(t,s) = \partial^2_s u(t,s) -  u(t,s) , ~~ (t,s) \in [0, \infty) \times [0, S],
\end{equation}
with homogeneous Dirichlet boundary conditions \begin{equation}
u(t,0) = 0 \;, \quad  u(t,S) = 0 \;, \quad  t>0 \;,
\end{equation}
and initial conditions \begin{equation} \label{eq:initial_condition_linear_hamiltonian_system}
u(0,s) \overset{d}{=} \sum_{k \ge 1} \eta_k  \sqrt{ \frac{1}{1-\mu_k} } e_k(s) \;, \quad  p(0,s) \overset{d}{=} \sum_{k \ge 1} \xi_k e_k(s) \;, \quad  s \in [0,S] \;,
\end{equation}
where  $\{ \eta_k \}$ and $\{ \xi_k \}$ are i.i.d.~standard normal random variables, $\{ e_k \}$ are eigenfunctions of the operator $\partial^2_s$ endowed with Dirichlet boundary conditions, and $\{ \mu_k \}$ are their associated eigenvalues.  For any $k \in \mathbb{N}$, these eigenfunctions and eigenvalues are given explicitly by \[
e_k(s) = \sqrt{\frac{2}{S}} \sin\left( \frac{k \pi s}{S} \right) \;,  \quad \mu_k = - \frac{k^2 \pi^2}{S^2} \;,
\] with $\mu_{k} \ge \mu_{k+1}$.  The Hamiltonian functional associated to \eqref{eq:linear_hamiltonian_pde} is given by \begin{equation} \label{eq:hamiltonian_linear_hamiltonian_system}
\mathcal{H}( u, p ) =  \frac{1}{2} \int_0^S |p(s)|^2 ds + \frac{1}{2} \int_0^S |\partial_s u(s)|^2 ds + \frac{1}{2} \int_0^S |u(s)|^2 ds 
\end{equation}
Note that the initial position is a second-order Gaussian process, whereas the initial momentum is spatial Gaussian white noise whose mean-squared norm diverges.  
Moreover, the distribution of these initial conditions is preserved under the Hamiltonian dynamics.  Indeed, for all $t \ge 0$ (including $t=0$), the position component $u(t,s)$ is a 
second-order Gaussian process with zero mean and the same spatial covariance  \[
\E \{ u(t,s) u(t,s') \} = \sum_{k \ge 1} e_k(s) e_k(s') \frac{1}{1 - \mu_k} \;,   \quad  s,s' \in [0,S] \;.
\]
This series is easily seen to converge because the eigenvalues $\mu_k$ grow quadratically with $k$. 
Similarly, the momentum component $p(t,s)$ is a spatial Gaussian white noise for all $t \ge 0$.  In this section, we aim to approximate $u(t,s)$ in a strong sense.


\subsection{Semidiscrete Equations}

As shown in Figure~\ref{fig:grid}, given $n \in \mathbb{N}$ and $S>0$, we discretize the spatial domain $[0,S]$ using a uniform grid with $n+1$ grid points \begin{equation} \label{eq:uniform_grid}
\{ s_i = i \Delta s~ \mid~  0 \le i \le n \}  
\end{equation} where $\Delta s = S/n$ is the spatial step size.  We approximate the second derivative $\partial^2_s$ in \eqref{eq:linear_hamiltonian_pde} by using a central difference formula evaluated on this grid.  The Dirichlet boundary conditions imply that there are only $2 \times (n-1)$ unknown functions of time $(\boldsymbol{u}(t),\boldsymbol{p}(t)) \in \mathbb{R}^{2 (n-1)}$ which satisfy  \begin{equation} \label{eq:semidiscrete_linear_hamiltonian}
\dot{\boldsymbol{u}}(t) = \boldsymbol{p}(t) \;, \quad \dot{\boldsymbol{p}}(t) = \boldsymbol{L} \boldsymbol{u}(t) -  \boldsymbol{u}(t) \;,
\end{equation} 
where $ \boldsymbol{L}$ is the $(n-1) \times (n-1)$ discrete Laplacian matrix with Dirichlet boundary conditions \[
\boldsymbol{L} = \frac{1}{\Delta s^2} \begin{bmatrix} 
-2 & 1 &  & \\
1 & \ddots & \ddots &    \\
& \ddots & \ddots &  1 \\
&  & 1 &  -2 \end{bmatrix} \;.
\] Since $\boldsymbol{L}$ is symmetric, \eqref{eq:semidiscrete_linear_hamiltonian} is Hamiltonian and its solutions preserve the Hamiltonian function \begin{equation}
\label{eq:hamiltonian_semidiscrete_linear_hamiltonian_system}
H( \boldsymbol{u}, \boldsymbol{p}) = \frac{1}{2} \| \boldsymbol{p} \|^2 - \frac{1}{2} \boldsymbol{u}^{\mathrm{T}} \boldsymbol{L} \boldsymbol{u} + \frac{1}{2} \| \boldsymbol{u} \|^2 \;.
\end{equation} To approximate the initial conditions, write $ \boldsymbol{L} = \boldsymbol{V} \boldsymbol{\Lambda} \boldsymbol{V}^{\mathrm{T}}$ where $ \boldsymbol{\Lambda}$ is a diagonal
matrix of eigenvalues of $\boldsymbol{L} $ and $\boldsymbol{V}$ are its corresponding orthonormal eigenvectors.
The semidiscrete analog of \eqref{eq:initial_condition_linear_hamiltonian_system} is given by \begin{equation} \label{eq:initial_condition_linear_hamiltonian_system_semidiscrete}
\boldsymbol{u}(0) \overset{d}{=} \frac{1}{\sqrt{\Delta s}} \boldsymbol{V} (-\boldsymbol{\Lambda} + \boldsymbol{I})^{-1/2} \boldsymbol{\eta}  \;, \quad  
\boldsymbol{p}(0) \overset{d}{=}  \frac{1}{\sqrt{\Delta s}} \boldsymbol{\xi}  
\end{equation}
where $\boldsymbol{\eta}$ and $\boldsymbol{\xi}$ are independent standard normal vectors.  The law of
these random initial conditions has non-normalized density $e^{-(\Delta s) H(\boldsymbol{u}, \boldsymbol{p})}$.  Note that the factor $\Delta s$
ensures that the sums in $(\Delta s) H(\boldsymbol{u}, \boldsymbol{p})$ converge to the integrals appearing in \eqref{eq:hamiltonian_linear_hamiltonian_system}
as $\Delta s \to 0$.


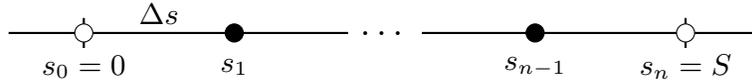
\begin{figure}
\begin{center}
\begin{tikzpicture}[scale=1.0,align=center]
\draw[-, thick](0,0.0) -- (10,0.0);
\draw[-, thick](1,-0.2) -- (1,0.2);
\draw[-, thick](9,-0.2) -- (9,0.2);
\node[black,scale=1.25] at (2,0.25) {$\Delta s$};
\node[black,scale=1.25] at (1,-0.45) {$s_0=0$};
\node[black,scale=1.25] at (9,-0.45) {$s_n=S$};
\node[black,scale=1.25] at (3,-0.45) {$s_1$};
\node[black,scale=1.25] at (7,-0.45) {$s_{n-1}$};
\node[black, scale=1.5,fill=white] at (5.0,0.0) {$\dotsc$};
\filldraw[color=black,fill=white] (1,0) circle (0.12);
\filldraw[color=black,fill=black] (3,0) circle (0.12);
\filldraw[color=black,fill=black] (7,0) circle (0.12);
\filldraw[color=black,fill=white] (9,0) circle (0.12);
\end{tikzpicture}
\end{center}
\caption{\small  {\bf Uniform Grid.}  This figure shows the evenly spaced grid we use to derive our spatial discretization.  There are a total of $n+1$ grid points.  The black dots mark the interior grid points, and the white dots mark the boundary grid points.  
}
  \label{fig:grid}
\end{figure}


\subsection{Strang Splittings}

We next apply a Strang splitting to discretize \eqref{eq:semidiscrete_linear_hamiltonian} in time.  To this end, define the $2 (n-1) \times 2 (n-1)$ Hamiltonian matrices \[
\boldsymbol{A} = \begin{bmatrix} \boldsymbol{0}  & \boldsymbol{I} \\ \boldsymbol{L} & \boldsymbol{0} \end{bmatrix}  \quad  \text{and} \quad 
\boldsymbol{B} = \begin{bmatrix} \boldsymbol{0}  & \boldsymbol{0} \\ -\boldsymbol{I} & \boldsymbol{0} \end{bmatrix} \;.
\]
and split \eqref{eq:semidiscrete_linear_hamiltonian} into \[
\tag{A}  \begin{bmatrix} \dot{\boldsymbol{u}}(t) \\  \dot{\boldsymbol{p}}(t) \end{bmatrix} = \boldsymbol{A} \begin{bmatrix} \boldsymbol{u}(t) \\ \boldsymbol{p}(t) \end{bmatrix}  \;, 
\]
\[
\tag{B}  \begin{bmatrix} \dot{\boldsymbol{u}}(t) \\  \dot{\boldsymbol{p}}(t) \end{bmatrix} = \boldsymbol{B} \begin{bmatrix} \boldsymbol{u}(t) \\ \boldsymbol{p}(t) \end{bmatrix}  \;.
\]
Define the exact splitting as the linear transformation with matrix  \begin{equation} \label{eq:exact_splitting_linear_hamiltonian}
\boldsymbol{E} =\exp( (1/2) \Delta t \boldsymbol{B} ) \exp( \Delta t \boldsymbol{A} ) \exp( (1/2) \Delta t \boldsymbol{B} ) \;,
\end{equation} and define the Cayley splitting as the linear transformation with matrix  \begin{equation} \label{eq:cayley_splitting_linear_hamiltonian}
\boldsymbol{C} = \exp( (1/2) \Delta t \boldsymbol{B} ) \cay( \Delta t \boldsymbol{A} ) \exp( (1/2) \Delta t \boldsymbol{B} ) \;,
\end{equation}
where $\cay$ is the Cayley transform defined in \eqref{eq:cayley}. 
The only difference between these splittings is that $\boldsymbol{E}$ uses the matrix exponential to compute the exact flow of (A), while
$\boldsymbol{C} $ uses an approximation of the matrix exponential given by the Cayley transform.  
Both of these matrices have determinant equal to $1$, since they are both symplectic matrices.


\subsection{Stability of Cayley Splitting \& Instability of Exact Splitting} \label{sec:stability}

Figure~\ref{fig:linear_hamiltonian_system_energy} illustrates the relative energy error as a function of time  along trajectories produced by the exact and Cayley splittings with initial condition given in \eqref{eq:initial_condition_linear_hamiltonian_system_semidiscrete}, domain size $S=10$, $n=10^3$ grid points (or spatial grid size $\Delta s = 10^{-2}$), and time step size $\Delta t = 0.2$.    This figure shows that the energy along the Cayley splitting stays roughly constant as one would expect from a geometric integrator. However, the energy along the exact splitting increases, which is a bit unexpected because one would think that the exact splitting is more accurate than the Cayley splitting.

The reason for this difference becomes obvious when we transform to spectral coordinates by using the eigenvectors of the matrix $\boldsymbol{L}$.  Recall that these eigenvectors are the restriction to the grid of the eigenfunctions of the operator $\partial_s^2$ endowed with Dirichlet boundary conditions.   In these coordinates the linear Hamiltonian system decouples into $n-1$ oscillators.    Since time discretization commutes with this change of variables, it suffices to consider exponential and Cayley splittings applied to the $i$th oscillator.  We define the natural frequency $\omega_i$ of the $i$th oscillator as the square root of negative the $i$th largest eigenvalue of $\boldsymbol{L}$, i.e.,  \begin{equation} \label{eq:omi2}
\omega_i^2 = \frac{4}{\Delta s^2} \sin^2 \left( \frac{i \pi}{2 n} \right) \;, \quad 1 \le i \le n-1 \;.
\end{equation}  The dynamics of the $i$th oscillator in the spectral domain is governed by \[
\begin{bmatrix} \dot U_i \\ \dot P_i \end{bmatrix} = \left( \boldsymbol{A}_i  + \boldsymbol{B}_i \right) \begin{bmatrix} U_i \\ P_i \end{bmatrix} 
\] where we have introduced the $2 \times 2$ matrices \[
\boldsymbol{A}_i = \begin{bmatrix} 0  &1\\ -\omega_i^2 & 0 \end{bmatrix}  \quad  \text{and} \quad 
\boldsymbol{B}_i = \begin{bmatrix} 0  & 0 \\ -1 & 0 \end{bmatrix} \;.
\]
For this oscillator, we consider an exact splitting  \[
\boldsymbol{E}_i =\exp( (1/2) \Delta t \boldsymbol{B}_i ) \exp( \Delta t \boldsymbol{A}_i ) \exp( (1/2) \Delta t \boldsymbol{B}_i ) \;,
\] and a Cayley splitting \[
\boldsymbol{C}_i = \exp( (1/2) \Delta t \boldsymbol{B}_i ) \cay( \Delta t \boldsymbol{A}_i ) \exp( (1/2) \Delta t \boldsymbol{B}_i ) \;.
\]

\medskip
For the Cayley splitting, Lemma~\ref{lemma:Cayley_Splitting_Stability_1D} implies the following holds.

\begin{prop} \label{prop:Cayley_Splitting_Stability}
For all $S>0$, for all $\Delta s>0$, and for any positive $\Delta t<2$, the Cayley splitting in \eqref{eq:cayley_splitting_linear_hamiltonian} is a stable numerical method for the infinite-dimensional, linear Hamiltonian system \eqref{eq:linear_hamiltonian_pde}.
\end{prop}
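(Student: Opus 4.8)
The plan is to reduce the infinite-dimensional claim to the scalar estimate of Lemma~\ref{lemma:Cayley_Splitting_Stability_1D} by passing to spectral coordinates, and then to control the expected discrete $L^2$-norm of the position component of the numerical solution uniformly in the number of time steps $m$ and in the spatial step size $\Delta s = S/n$. Throughout, ``stable'' is understood in this sense, since for the initial data \eqref{eq:initial_condition_linear_hamiltonian_system_semidiscrete} the momentum component has infinite mean-squared norm and only the position can be controlled, consistent with the stated goal of approximating $u(t,s)$ in a strong sense.

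\textbf{Reduction to uncoupled oscillators.} Write $\boldsymbol{L} = \boldsymbol{V}\boldsymbol{\Lambda}\boldsymbol{V}^{\mathrm{T}}$ with $\boldsymbol{V}$ orthogonal and $\boldsymbol{\Lambda} = \diag(-\omega_1^2,\dots,-\omega_{n-1}^2)$, the $\omega_i$ being the frequencies in \eqref{eq:omi2}. The orthogonal change of variables $\boldsymbol{W} = \diag(\boldsymbol{V},\boldsymbol{V})$ conjugates $\boldsymbol{A}$, up to a fixed permutation of coordinates, into the $2\times2$ blocks $\boldsymbol{A}_i$, and, since $\boldsymbol{B}$ acts as $-\boldsymbol{I}$ on the lower block and trivially on the upper one, it conjugates $\boldsymbol{B}$ into the blocks $\boldsymbol{B}_i$. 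Using Lemma~\ref{lemma:spectrum_cayley} to commute $\cay(\cdot)$ past this conjugation, and the analogous elementary fact for $\exp(\cdot)$, one gets that $\boldsymbol{W}^{\mathrm{T}}\boldsymbol{C}\boldsymbol{W}$ is block-diagonal with blocks $\boldsymbol{C}_i = \exp((1/2)\Delta t\,\boldsymbol{B}_i)\,\cay(\Delta t\,\boldsymbol{A}_i)\,\exp((1/2)\Delta t\,\boldsymbol{B}_i)$, which is exactly the one-dimensional Cayley splitting \eqref{eq:cayley_splitting_1D} with $\omega = \omega_i$. Since $\boldsymbol{W}$ is orthogonal, the discrete $L^2$-norm is preserved: $\|\boldsymbol{u}^m\|^2 = \sum_{i=1}^{n-1}(U_i^m)^2$, where $(U_i^m,P_i^m)^{\mathrm{T}} = \boldsymbol{C}_i^m(U_i^0,P_i^0)^{\mathrm{T}}$ and $(U_i^0,P_i^0)$ are the spectral coordinates of the initial data.

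\textbf{Per-mode estimate and summation.} For every $i$, since $0 < \Delta t < 2$, Lemma~\ref{lemma:Cayley_Splitting_Stability_1D} applies and yields the explicit power $\boldsymbol{C}_i^m = \left[\begin{smallmatrix}\cos(m\theta_i) & \chi_i\sin(m\theta_i)\\ -\chi_i^{-1}\sin(m\theta_i) & \cos(m\theta_i)\end{smallmatrix}\right]$ with $\theta_i,\chi_i$ from \eqref{eq:theta_chi} evaluated at $\omega = \omega_i$; in particular $\chi_i^2 = 4/((4-\Delta t^2)(1+\omega_i^2))$. Hence $U_i^m = \cos(m\theta_i)U_i^0 + \chi_i\sin(m\theta_i)P_i^0$. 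From \eqref{eq:initial_condition_linear_hamiltonian_system_semidiscrete} the coordinates $U_i^0$ and $P_i^0$ are independent centred Gaussians with $\E[(U_i^0)^2] = (\Delta s\,(1+\omega_i^2))^{-1}$ and $\E[(P_i^0)^2] = (\Delta s)^{-1}$, so $\E[(U_i^m)^2] \le (\Delta s\,(1+\omega_i^2))^{-1}\bigl(1 + 4/(4-\Delta t^2)\bigr)$, uniformly in $m$. Summing over $i$ and multiplying by $\Delta s$,
\[
\E\bigl[(\Delta s)\|\boldsymbol{u}^m\|^2\bigr] \;\le\; \frac{8-\Delta t^2}{4-\Delta t^2}\sum_{i=1}^{n-1}\frac{1}{1+\omega_i^2},
\]
and the bound $\sin(x)\ge(2/\pi)x$ on $[0,\pi/2]$ applied to \eqref{eq:omi2} gives $\omega_i^2 \ge i^2/S^2$, so the sum is dominated by $\sum_{i\ge1}(1+i^2/S^2)^{-1} < \infty$ independently of $n$ (its tail matching the convergent continuum series $\sum_k(1-\mu_k)^{-1}$). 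This produces a bound on $\E[(\Delta s)\|\boldsymbol{u}^m\|^2]$ uniform in both $m$ and $\Delta s$, which is the asserted stability.

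\textbf{Where the work is.} Everything downstream of the spectral decomposition is bookkeeping once Lemma~\ref{lemma:Cayley_Splitting_Stability_1D} is in hand; the one point requiring care is the simultaneous block-diagonalization in the first step, namely that both $\exp((1/2)\Delta t\,\boldsymbol{B})$ and $\cay(\Delta t\,\boldsymbol{A})$ are diagonalized by the \emph{same} orthogonal transformation that diagonalizes $\boldsymbol{L}$ — this is exactly where Lemma~\ref{lemma:spectrum_cayley} is used — together with checking that $(\boldsymbol{I}-(1/2)\Delta t\,\boldsymbol{A}_i)$ is invertible for every mode, which is immediate since $\det(\boldsymbol{I}-(1/2)\Delta t\,\boldsymbol{A}_i) = 1 + (1/4)\Delta t^2\omega_i^2 > 0$, so that each block-wise Cayley transform is well defined. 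A secondary point, should a norm stronger than $L^2$ be desired, is that no uniform bound on the discrete $H^1$-seminorm $\boldsymbol{u}^{\mathrm{T}}(-\boldsymbol{L})\boldsymbol{u}$ can hold, mirroring the fact that $\partial_s u$ is almost surely not square-integrable in the continuum; so the $L^2$ control above is the natural and essentially optimal statement.
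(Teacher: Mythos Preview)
Your proof is correct and follows the same route as the paper: pass to spectral coordinates so that $\boldsymbol{C}$ block-diagonalizes into the one-dimensional Cayley splittings $\boldsymbol{C}_i$, and then invoke Lemma~\ref{lemma:Cayley_Splitting_Stability_1D} mode by mode. The paper's argument is in fact just the single sentence preceding the proposition; your version is more explicit, and by summing the per-mode bounds against the decay $1/(1+\omega_i^2)$ you actually establish a stronger, mesh-uniform control on $\E[(\Delta s)\|\boldsymbol{u}^m\|^2]$ than the paper's terse appeal to the lemma spells out.
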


However, for the exact splitting, we have \[
\boldsymbol{E}_i = \begin{bmatrix}  \cos(\Delta t \omega_i) - \frac{\Delta t}{2 \omega_i} \sin(\Delta t \omega_i)  &  \frac{1}{\omega_i} \sin(\Delta t \omega_i) \\
- \Delta t \cos(\Delta t \omega_i) + \frac{-4 \omega_i^2 + \Delta t^2}{4 \omega_i} \sin(\Delta t \omega_i) &   \cos(\Delta t \omega_i) - \frac{\Delta t}{2 \omega_i} \sin(\Delta t \omega_i)   \end{bmatrix}
\] and in particular, if $\Delta t \cdot \omega_i$ is an odd integer multiple of $\pi$, we get \[
\boldsymbol{E}_i = \begin{bmatrix} - 1  &  0 \\
 \Delta t  &   -1  \end{bmatrix}
\] which implies that the exact splitting is weakly unstable; and similarly if $\Delta t \cdot \omega_i$ is an even integer multiple of $\pi$.   For the infinite energy solutions of interest, numerical experiments show that these resonance instabilities can be triggered if $\Delta t \cdot \omega_i$ is approximately an integer multiple of $\pi$.

Figure~\ref{fig:linear_hamiltonian_system_spectral_energy} shows plots of the energy in each of the $n-1$ oscillators in the spectral domain at four different snapshots in time, as indicated in the figure titles.  Note that we have taken $n=1000$ in Figure~\ref{fig:linear_hamiltonian_system_energy}, so that there are a total of $999$ oscillators. In this figure, we sort the oscillators in ascending order according to their natural frequency.  The energy in each oscillator for the exact splitting is shown in grey.  Note that the oscillator with maximum energy corresponds to the index $i=50$ with natural frequency approximately equal to $\sqrt{246}$, and hence, $\Delta t \cdot \omega_{50} = 0.2 \sqrt{246} = 3.13688... \approx \pi$.  Moreover, since the trace of $\boldsymbol{E}_{50}$ is greater than two in magnitude, the growth in the energy of this oscillator is exponential in time, as Figure~\ref{fig:linear_hamiltonian_system_spectral_energy} illustrates.    As illustrated in the right panel of Figure~\ref{fig:eigenvalues_cayley_exp}, this instability is not surprising in view of the fact that $\exp( \Delta t \boldsymbol{A}_i )$ is not a strongly stable symplectic map for all $i$.    Because of this limitation of the exact splitting, we will only consider the Cayley splitting in the rest of the paper.

\begin{remark} \label{rmk:multiple_time_step_integrators}
This resonance instability of the exact splitting is very reminiscent of what happens when we apply a multiple-time-step integrator to a high-dimensional Hamiltonian ODE, e.g., the popular RESPA algorithm from molecular dynamics \cite{LeMaOrWe2003,TuBeMa1992}.  RESPA is a variant of Verlet that relaxes a time-step restriction imposed by rapidly changing or fast potential forces. The basic idea in RESPA is to split the Hamiltonian into fast and slow parts, and then combine their flows using a palindromic splitting to ensure reversibility.  For the fast parts, one integrates with such a tiny time-step that essentially the exact flow is used, while one uses a large time step for the slow parts. A speedup is achieved if the slow force is computationally costly to evaluate compared to the fast forces.  Unfortunately, this speedup is not dramatic because of resonance instabilities \cite{FoDaLe2008}. This resonance occurs between the slow potential forces and the fast Hamiltonian systems. These numerical instabilities also appear in generalizations of RESPA to Langevin SDEs, though they are not as severe  \cite{IzCaWoSk2001, MaIzSk2003, Fo2009}.    The proof of Proposition~\ref{prop:Cayley_Splitting_Stability} immediately implies that we can resolve linear resonance instabilities by using a Cayley integrator for the fast forces, instead of the exact flow, as Figure~\ref{fig:cayley_respa} illustrates.  
\end{remark}


\begin{figure}
\begin{center}
\includegraphics[width=0.65\textwidth]{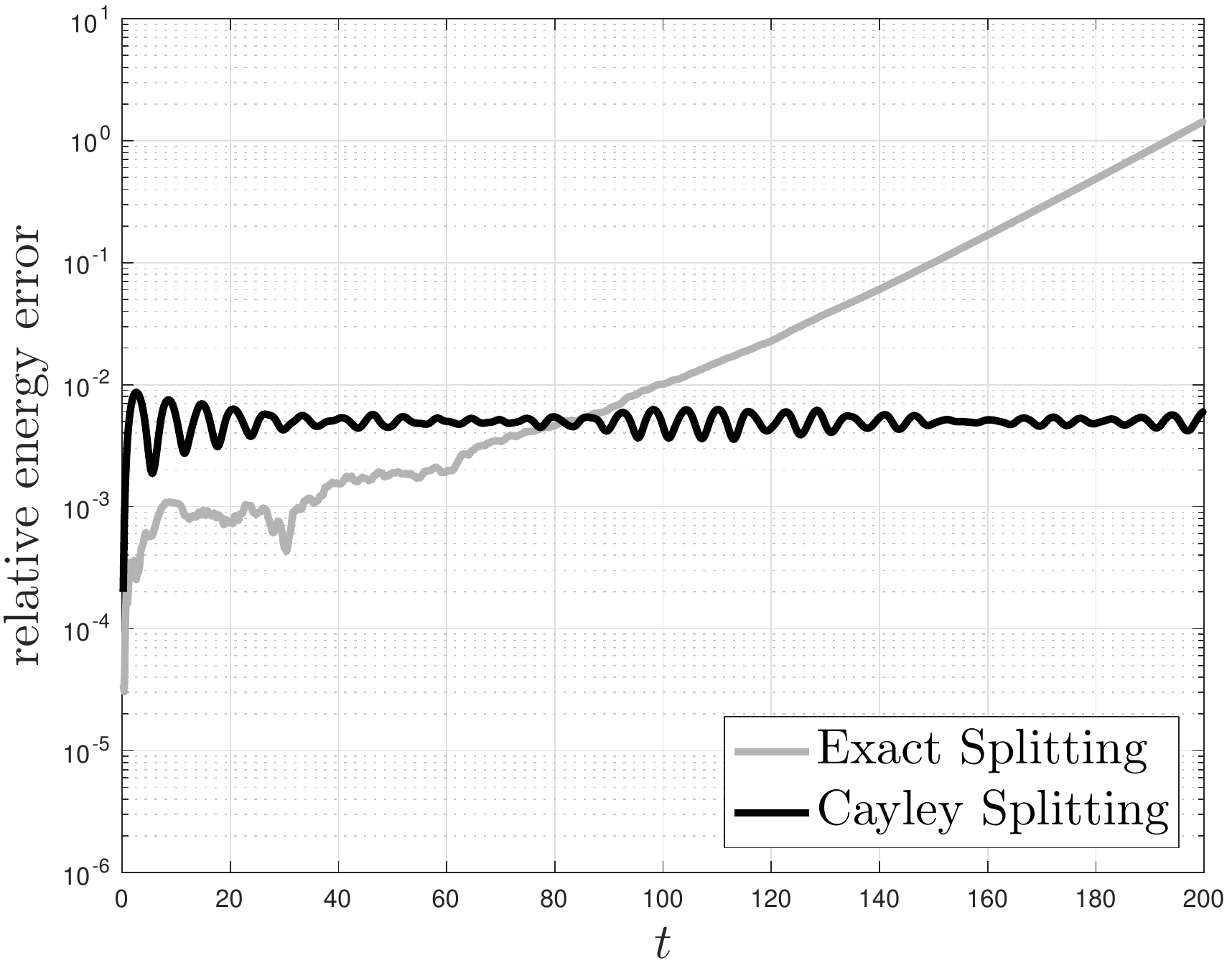}
\end{center}
\caption{\small  {\bf Stability of Cayley Splitting \& Instability of Exact Splitting.}  This figure shows the relative energy error along trajectories
produced by the exact and Cayley splittings given in \eqref{eq:exact_splitting_linear_hamiltonian} and \eqref{eq:cayley_splitting_linear_hamiltonian}, respectively.
We discretize the spatial domain $[0,10]$ using an evenly spaced grid with $n=10^3$ grid points, as shown in Figure~\ref{fig:grid}. 
The time step size is set equal to $\Delta t = 0.2$.   The initial condition is given in \eqref{eq:initial_condition_linear_hamiltonian_system}.  
}
  \label{fig:linear_hamiltonian_system_energy}
\end{figure}

\begin{figure}
\begin{center}
\includegraphics[width=0.45\textwidth]{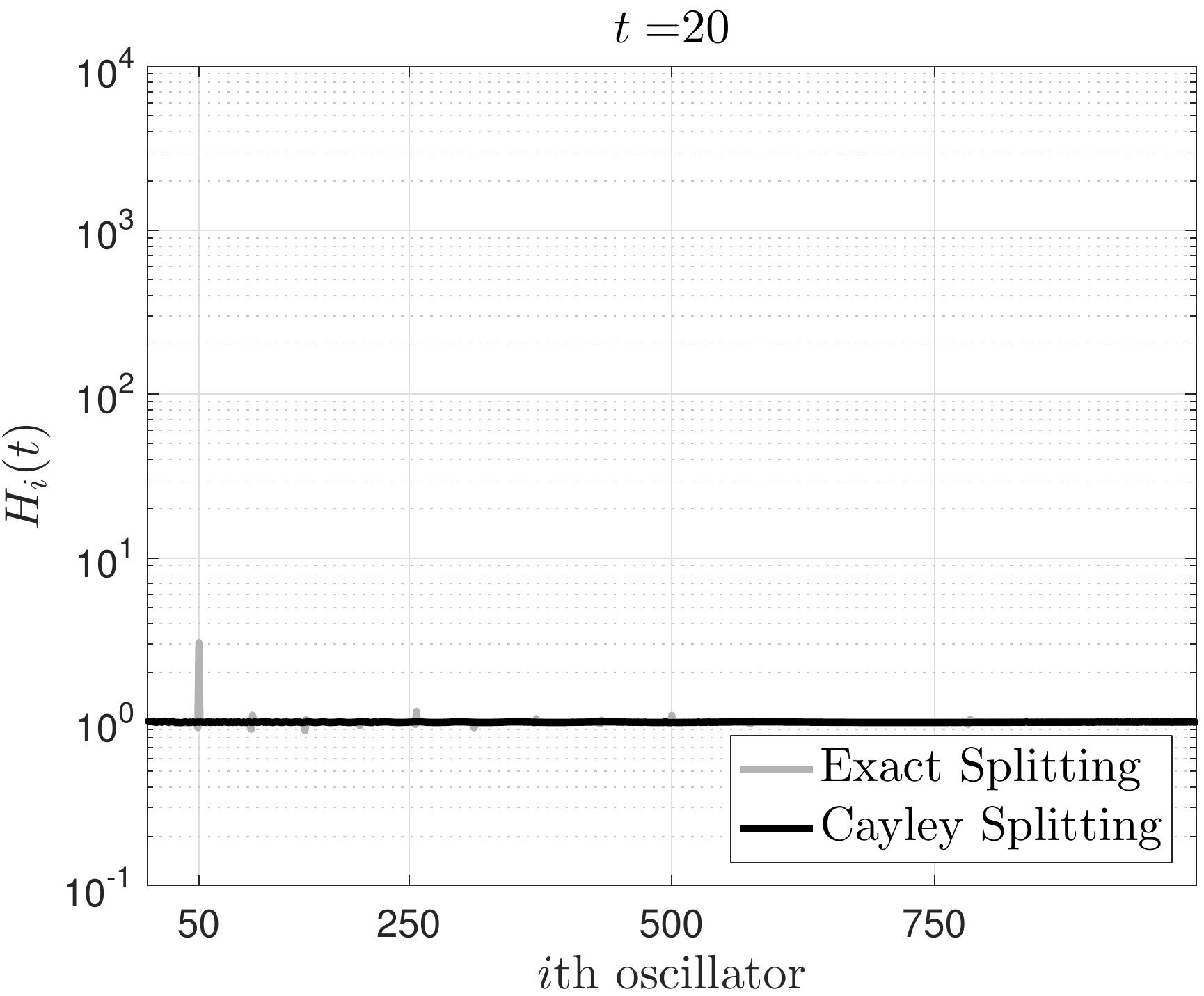}  \hspace{0.15in}
\includegraphics[width=0.45\textwidth]{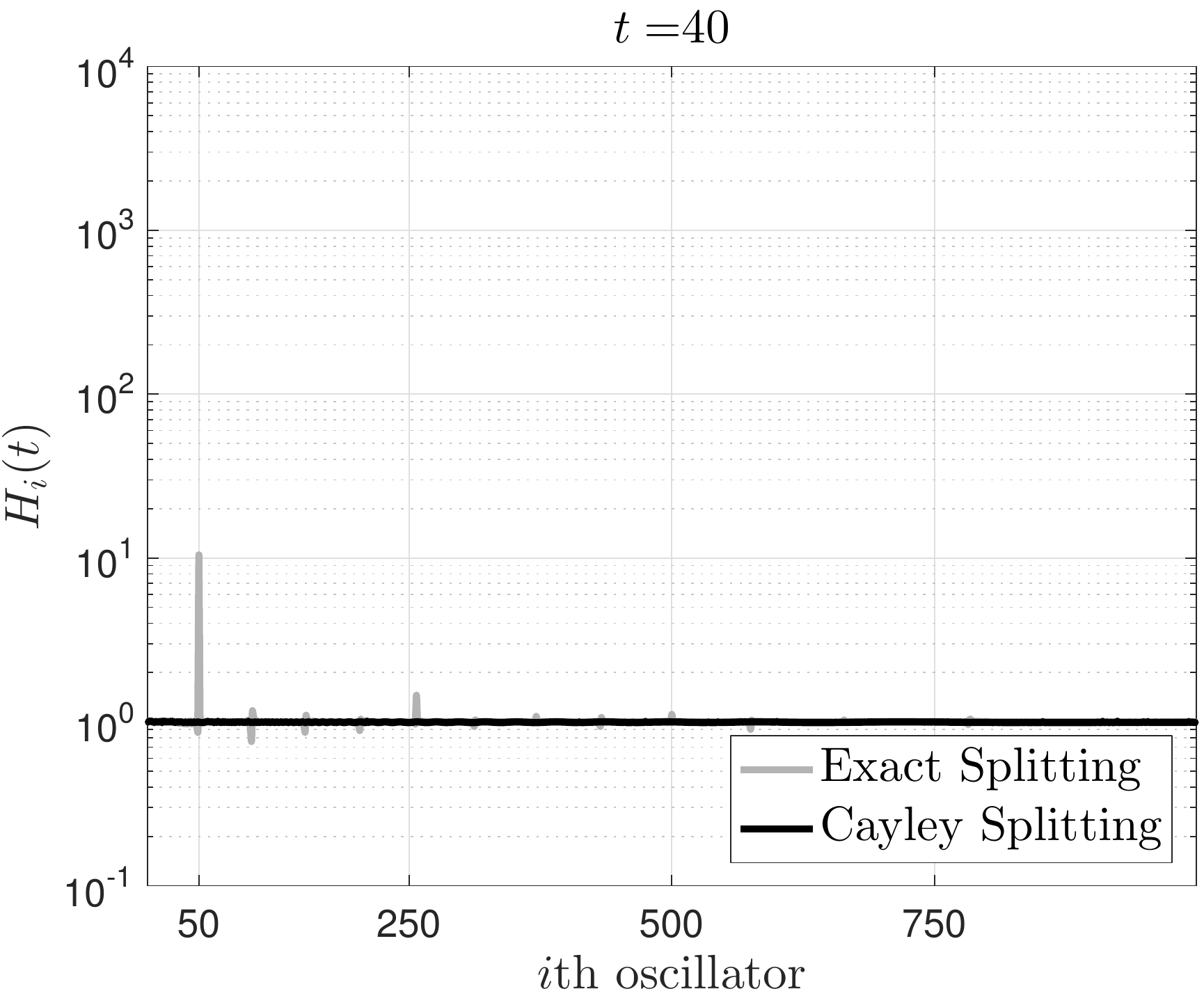}  \\
\vspace{0.15in}
\includegraphics[width=0.45\textwidth]{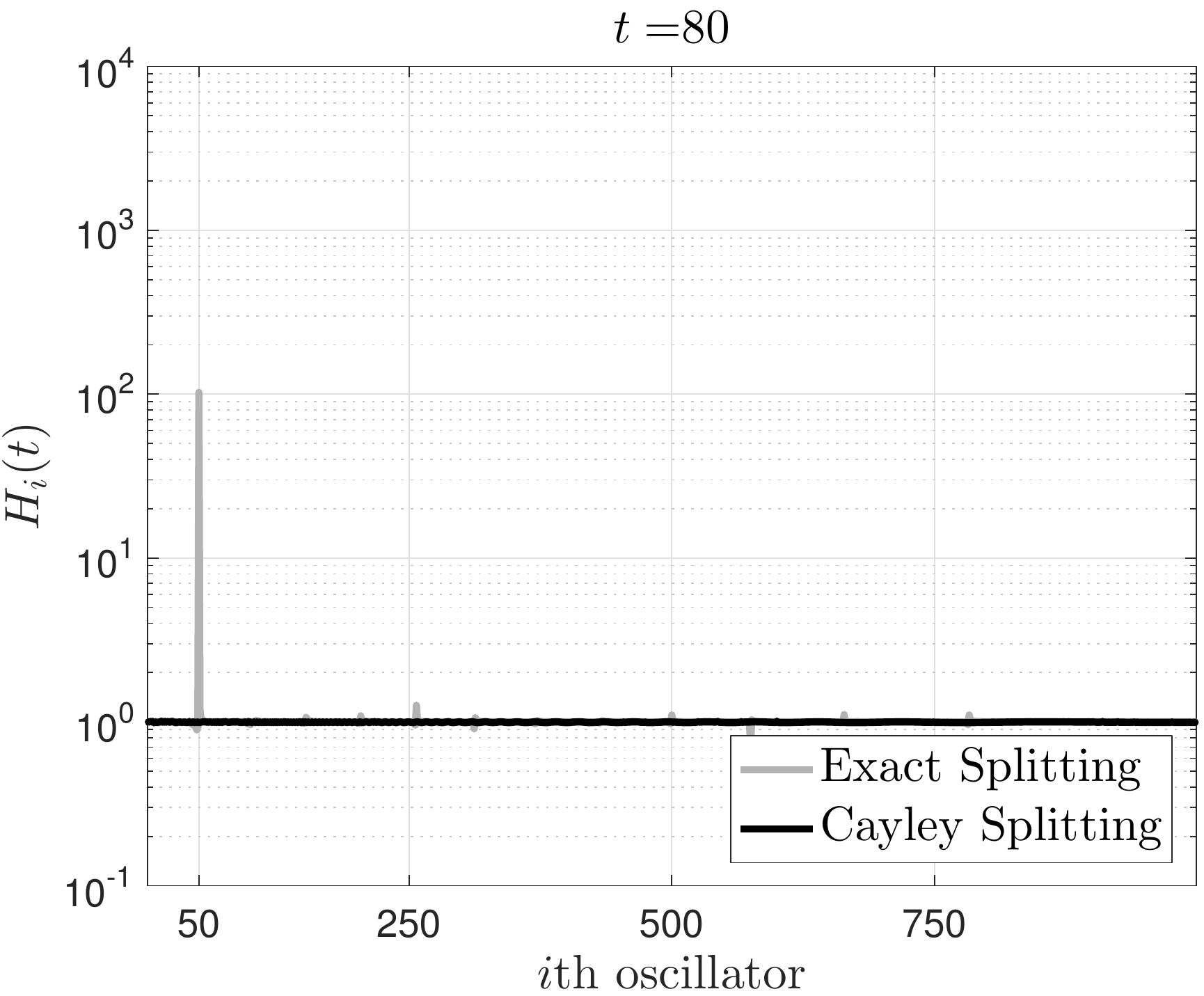}  \hspace{0.15in}
\includegraphics[width=0.45\textwidth]{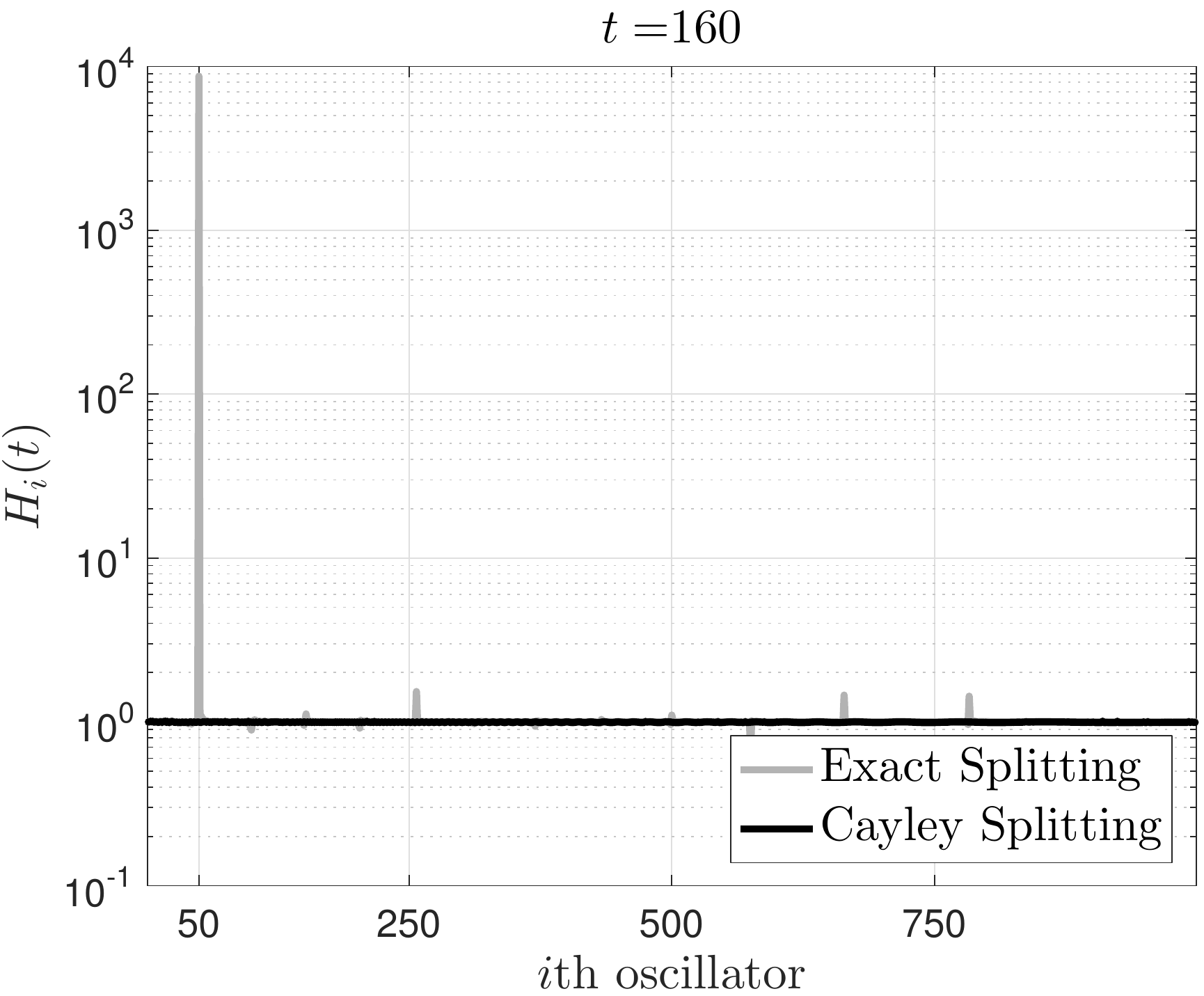} 
\end{center}
\caption{\small  {\bf Stability of Cayley Splitting \& Instability of Exact Splitting.}  The plots show the energy in each spectral coefficient at four different
snapshots in time (as indicated in the figure titles) for the Cayley splitting (black) and the exact splitting (grey).   
The oscillators are sorted by their natural frequencies in ascending order.  For the Cayley splitting, the normalized energy is essentially
constant for the duration of the simulation.  However, for the exact splitting, the oscillator with maximum (normalized) energy corresponds to $i=50$ 
as labelled on the horizontal axes, and its energy grows exponentially with time.
}
  \label{fig:linear_hamiltonian_system_spectral_energy}
\end{figure}

\begin{figure}
\begin{center}
\includegraphics[width=0.65\textwidth]{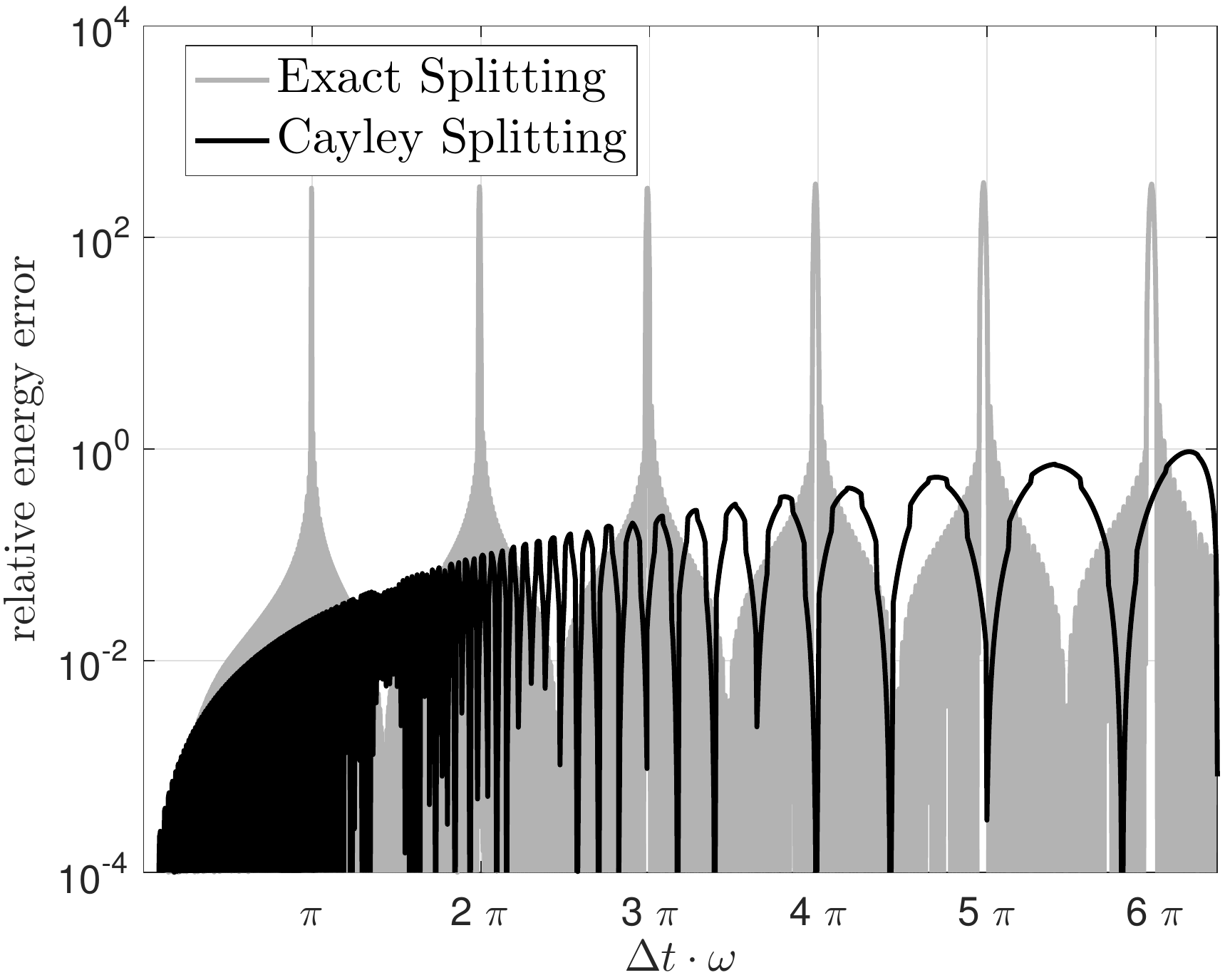}  
\end{center}
\caption{\small  {\bf Cayley Splitting for a Highly Oscillatory Hamiltonian ODE.}   
The plot shows the relative energy error as a function of $\Delta t \cdot \omega$  for the Cayley splitting (black) and the exact splitting (grey).  
The underlying Hamiltonian system is an oscillator with Hamiltonian $H(q,p) = (1/2) p^2 + (1/2) q^2 + (1/2)  \omega^2  q^2$.  We split this Hamiltonian into 
$H_{\text{fast}}(q,p) = (1/2) p^2 + (1/2) \omega^2 q^2$ and $H_{\text{slow}}(q,p) = (1/2) q^2$ with exact flows $\varphi_{t}^{\text{fast}}$ and $\varphi_{t}^{\text{slow}}$, respectively.   
We then combine these flows using an exact splitting $\varphi_{(1/2) \Delta t}^{\text{slow}} \circ \varphi_{\Delta t}^{\text{fast}} \circ \varphi_{(1/2) \Delta t}^{\text{slow}}$ 
or the Cayley splitting $\varphi_{(1/2) \Delta t}^{\text{slow}} \circ \phi_{\Delta t}^{\text{fast}} \circ \varphi_{(1/2) \Delta t}^{\text{slow}}$.   In this simulation we take $\omega=10$, the duration of the simulation to be $100$ times the period of the oscillator $2 \pi/\omega$, and the initial condition $(q,p) = (1,0)$.  This result is consistent with Lemma~\ref{lemma:Cayley_Splitting_Stability_1D}.
}
  \label{fig:cayley_respa}
\end{figure}


\subsection{Mean Energy Errors of Cayley Splitting} \label{sec:mean_dH}

In finite dimensions, it is known that the global energy error $\Delta$ under a volume-preserving, reversible integrator satisfies \[
0 \le \E ( \Delta ) \le \E (\Delta^2)
\] where the expected value is over initial conditions distributed according to an equilibrium measure  \cite{BePiRoSaSt2013,BlCaSa2014}.  The upper bound implies that the accuracy of this integrator in approximating the average change in energy is actually twice the accuracy of the method.  The lower bound states that the average change in energy is strictly positive.  Recall that the Cayley splitting is volume-preserving and reversible. {\em Then to what extent does this result hold for the Cayley splitting applied to infinite-dimensional Hamiltonian systems?}

\medskip
For any $m \in \mathbb{N}$, define the global energy error of the Cayley splitting as \begin{equation} \label{eq:global_energy_error_cayley}
\Delta(\boldsymbol{u}, \boldsymbol{p}) = H(\boldsymbol{u}^m, \boldsymbol{p}^m) -  H(\boldsymbol{u}, \boldsymbol{p}) 
\end{equation}
where $(\boldsymbol{u}^m, \boldsymbol{p}^m)$ is the output of the Cayley splitting after $m$ integration steps with input $(\boldsymbol{u}, \boldsymbol{p})$, i.e., \[
\begin{bmatrix}
\boldsymbol{u}^m \\
\boldsymbol{p}^m \end{bmatrix} = \boldsymbol{C}^m \begin{bmatrix}
\boldsymbol{u} \\
\boldsymbol{p} \end{bmatrix}  \;.
\]
The following proposition quantifies the average energy error of the Cayley splitting.

\begin{prop} \label{prop:Cayley_Splitting_mean_dH}
The following hold for $\E(\Delta)$ defined as the expected value of the global energy error \eqref{eq:global_energy_error_cayley} of the Cayley splitting
over random initial conditions with non-normalized density $e^{-(\Delta s) H(\boldsymbol{u}, \boldsymbol{p})}$.  
\begin{enumerate}
\item
For all $S>0$, for all $\Delta s>0$,  for all $m \in \mathbb{N}$, and for any positive $\Delta t<2$,  \begin{equation}
\label{eq:mean_dH}
0 \le \E( \Delta ) \le \frac{S}{8} \frac{\Delta t^4}{\Delta s^2} \frac{1}{4 - \Delta t^2} \;.
\end{equation}
\item
Let $\Delta t = \Delta s^{1/2}$ and set $m = \lfloor T/ \Delta t \rfloor$.  Then for all $T>0$ and for all $S>0$,  \begin{equation} \label{eq:asymptotic_mean_dH}
\lim_{\Delta s \to 0} \E( \Delta ) = 0 \quad \text{and} \quad \lim_{\Delta s \to 0} \Var( \Delta ) = 0 \;.
\end{equation}
\end{enumerate}
\end{prop}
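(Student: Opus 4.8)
The plan is to diagonalize the semidiscrete Laplacian, which decouples the scheme into $n-1$ independent one-dimensional oscillators, and then to reduce to the one-dimensional formulas of Lemmas~\ref{lemma:Cayley_Splitting_Mean_Energy_Error_1D} and~\ref{lemma:Cayley_Splitting_Variance_Energy_Error_1D}. Write $\boldsymbol{L}=\boldsymbol{V}\boldsymbol{\Lambda}\boldsymbol{V}^{\mathrm{T}}$ with $\boldsymbol{V}$ orthonormal and $\boldsymbol{\Lambda}=\diag(-\omega_1^2,\dots,-\omega_{n-1}^2)$, the $\omega_i$ as in \eqref{eq:omi2}, and pass to spectral coordinates $\boldsymbol{U}=\boldsymbol{V}^{\mathrm{T}}\boldsymbol{u}$, $\boldsymbol{P}=\boldsymbol{V}^{\mathrm{T}}\boldsymbol{p}$. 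Then $\boldsymbol{A}$ and $\boldsymbol{B}$ become block diagonal with the $2\times 2$ blocks $\boldsymbol{A}_i,\boldsymbol{B}_i$ of \S\ref{sec:stability}, so $\boldsymbol{C}$ and every power $\boldsymbol{C}^m$ are block diagonal with blocks equal to the one-dimensional Cayley matrix \eqref{eq:cayley_splitting_1D} at $\omega=\omega_i$; simultaneously $H(\boldsymbol{u},\boldsymbol{p})=\sum_{i=1}^{n-1}H_i(U_i,P_i)$ with $H_i(q,p)=\frac12 p^2+\frac12(1+\omega_i^2)q^2$, which is \eqref{eq:hamiltonian_1D} at $\omega=\omega_i$, and the initial law with density $e^{-(\Delta s)H}$ factorizes over modes, the $i$th marginal having non-normalized density $e^{-(\Delta s)H_i}$, i.e.\ inverse temperature $\beta=\Delta s$. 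Consequently the energy error splits as a sum of independent contributions $\Delta=\sum_{i=1}^{n-1}\Delta_i$, each $\Delta_i$ being exactly the one-dimensional error \eqref{eq:dH_1D} at frequency $\omega_i$, temperature $\Delta s$, and $m$ steps.

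Part (1) is then immediate. By independence and Lemma~\ref{lemma:Cayley_Splitting_Mean_Energy_Error_1D} with $\beta=\Delta s$, $\omega=\omega_i$,
\[
\E(\Delta)=\sum_{i=1}^{n-1}\E(\Delta_i)=\frac{\Delta t^4}{8(4-\Delta t^2)\,\Delta s}\sum_{i=1}^{n-1}\sin^2(m\theta_i)\ \ge\ 0,
\]
with $\theta_i$ the angle \eqref{eq:theta_chi} at $\omega=\omega_i$ and nonnegativity from $\Delta t<2$. Bounding $\sin^2(m\theta_i)\le 1$ and the number of interior modes by $n-1<n=S/\Delta s$ yields the upper bound in \eqref{eq:mean_dH}.

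For part (2), take $\Delta t=\Delta s^{1/2}$, so $\Delta t^4/\Delta s=\Delta s$ and $\E(\Delta)=\frac{\Delta s}{8(4-\Delta s)}\sum_{i=1}^{n-1}\sin^2(m\theta_i)$ with $m=\lfloor T\Delta s^{-1/2}\rfloor$; likewise, by independence and Lemma~\ref{lemma:Cayley_Splitting_Variance_Energy_Error_1D} with $\beta=\Delta s$, $\Var(\Delta)=\sum_{i=1}^{n-1}\Var(\Delta_i)$ reduces to a spectral sum of $\sin^2(m\theta_i)\bigl((8-\Delta t^2)^2-\Delta t^4\cos(2m\theta_i)\bigr)$ carrying the same $(\Delta s)^{-1}\Delta t^4=\Delta s$ prefactor. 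Both limits thus come down to showing $\Delta s\sum_{i=1}^{n-1}\sin^2(m\theta_i)\to 0$ and the analogous $\cos(2m\theta_i)$-weighted statement. I would split at a fixed frequency cutoff $K$: the low modes $\omega_i\le K$ are only $O(1)$ in number, and there $\theta_i\sim\Delta t\sqrt{1+\omega_i^2}$, so $m\theta_i\to T\sqrt{1+\omega_i^2}$ is bounded and each contributes $O(\Delta s)$ — this is the expansion already exploited in the proof of Lemma~\ref{lemma:Cayley_Splitting_global_error_1D} — hence their total is $O(\Delta s)\to 0$. The high modes $\omega_i>K$ must be handled with an estimate sharper than $\sin^2\le 1$, extracting cancellation among the rotation angles so that the normalized spectral sum is negligible against the $\Delta s\,n=S$ prefactor.

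This high-frequency estimate is the main obstacle. The bound $\sin^2\le 1$ only reproduces the bounded quantity of part (1), so one genuinely needs to control the exponential sum $\bigl|\sum_{i=1}^{n-1}e^{2m\theta_i\sqrt{-1}}\bigr|$, using the explicit dependence $\cos\theta_i=-1+\frac{8-2\Delta t^2}{4+\Delta t^2\omega_i^2}$ with $\omega_i^2=\frac{4}{\Delta s^2}\sin^2\!\big(\frac{i\pi}{2n}\big)$. Once that is in hand, $\E(\Delta)\to 0$ follows, and $\Var(\Delta)\to 0$ follows as well since its integrand differs from $\sin^2(m\theta_i)$ only by the bounded weight $(8-\Delta t^2)^2-\Delta t^4\cos(2m\theta_i)$ and one further $\cos(2m\theta_i)$ term, both absorbed by the same cancellation.
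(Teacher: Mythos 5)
Your part (1) is correct and is exactly the paper's argument: diagonalize $\boldsymbol{L}$, observe that $\Delta=\sum_i\Delta_i$ with each $\Delta_i$ the one-dimensional energy error at frequency $\omega_i$ and inverse temperature $\beta=\Delta s$, apply Lemma~\ref{lemma:Cayley_Splitting_Mean_Energy_Error_1D}, and finish with $\sin^2(m\theta_i)\le1$ and $n-1<S/\Delta s$. Nothing to add there.

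Part (2), however, is not a proof: you explicitly leave the high-frequency estimate as ``the main obstacle,'' and that estimate is the entire content of the statement. Moreover, the route you sketch for closing it aims at the wrong target. You propose to control the exponential sum $\left|\sum_{i}e^{2m\theta_i\sqrt{-1}}\right|$ by ``extracting cancellation among the rotation angles.'' But cancellation, i.e.\ equidistribution of the phases $2m\theta_i$ modulo $2\pi$, would give $\frac1n\sum_i\cos(2m\theta_i)\to0$, hence $\frac1n\sum_i\sin^2(m\theta_i)\to\frac12$ and $\E(\Delta)\to S/64\neq0$ --- the opposite of what must be shown. To get $\E(\Delta)\to0$ you need the phases to \emph{concentrate} rather than cancel: $\cos(2m\theta_i)$ must be close to $1$ for all but a vanishing fraction of modes, so that $\frac1n\sum_i\cos(2m\theta_i)\to1$. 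That is precisely the mechanism the paper invokes: it rewrites $\E(\Delta)=\frac{1}{16}\frac{S}{4-\Delta s}\bigl(1-\frac1n-\frac1n\sum_{i=1}^{n-1}\cos(2m\theta_i)\bigr)$ and argues that for the overwhelming majority of indices $\theta_i$ is within $\epsilon$ of $\pi$ and $\cos(2m\theta_i)$ is within $\epsilon$ of $1$, the variance limit being handled the same way via Lemma~\ref{lemma:Cayley_Splitting_Variance_Energy_Error_1D}. One may debate how airtight that concentration argument is (it requires control of $m|\theta_i-\pi|$, not merely of $|\theta_i-\pi|$, with $m\sim T\Delta s^{-1/2}\to\infty$), but it is at least directed at the correct limit; your sketch, if completed as written, would contradict the proposition rather than prove it. So part (2) stands as a genuine gap.
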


Note that the first asymptotic result in \eqref{eq:asymptotic_mean_dH} sharpens the bound in \eqref{eq:mean_dH} in the critical case when $\Delta t = \Delta s^{1/2}$.  The left panel of Figure~\ref{fig:mean_dH_hamiltonian_pde} confirms the upper bound given in \eqref{eq:mean_dH}.  The figure graphs the mean global energy error of the Cayley splitting as a function of the spatial step size $\Delta s$ at time $T=1$ with an initial distribution that has non-normalized density $e^{-(\Delta s) H(\boldsymbol{u}, \boldsymbol{p})}$.  This distribution is an invariant measure for the exact semidiscrete Hamiltonian dynamics in \eqref{eq:semidiscrete_linear_hamiltonian}.   We run the Cayley splitting with time step size $\Delta t = \Delta s^{3/4}$ and $\Delta t = \Delta s$.  According to \eqref{eq:mean_dH}, the mean energy error $\E( \Delta )$ should scale like $O( \Delta s)$ with the former choice of time step size, and like $O(\Delta s^2)$ with the latter choice, as the graphs confirm.

\begin{proof}
We transform to spectral coordinates by using the eigenvectors of the matrix $\boldsymbol{L}$.  
In these coordinates the linear Hamiltonian system decouples into $n-1$ oscillators.   
Since time discretization commutes with this change of variables, \[
\Delta(\boldsymbol{u}, \boldsymbol{p}) = \sum_{i=1}^{n-1} \Delta_i(\boldsymbol{U}_i, \boldsymbol{P}_i) \;, \quad \begin{bmatrix} \boldsymbol{u} \\ \boldsymbol{p} \end{bmatrix} = \begin{bmatrix} \boldsymbol{V}^{\mathrm{T}} &  \\ & \boldsymbol{V}^{\mathrm{T}} \end{bmatrix} \begin{bmatrix} \boldsymbol{U} \\ \boldsymbol{P} \end{bmatrix} \;,
\] where $\Delta_i$ is the energy error in the Cayley splitting applied to the $i$th-oscillator and the matrix $\boldsymbol{V}$ is the orthogonal
matrix whose columns are the orthonormal eigenvectors of the discrete Laplacian $\boldsymbol{L}$.  
Using the formula in Lemma~\ref{lemma:Cayley_Splitting_Mean_Energy_Error_1D} with $\beta = \Delta s$ we obtain  \begin{equation} \label{eq:mean_dH_identity}
\E ( \Delta ) = \frac{1}{8 \Delta s}  \frac{\Delta t^4}{4 - \Delta t^2} \sum_{i=1}^{n-1} \sin^2( m \theta_i ) \;,
\end{equation} where \[
\theta_i = \arccos\left(-1 + \frac{8-2 \Delta t^2}{4 + \Delta t^2 \omega_i^2} \right) \;, \quad \omega_i^2 = \frac{4}{\Delta s^2} \sin^2 \left( \frac{i \pi}{2 n} \right) \;.
\] 
Applying the trivial bounds  $\sin^2(x) \le 1$ and $n-1 < S/\Delta s$ to $\E (\Delta)$ gives the bound in \eqref{eq:mean_dH}.

For the asymptotic result, we set $\Delta t = \Delta s^{1/2}$, $\Delta s=S/n$ and $m = \lfloor T/ \Delta t \rfloor$ in \eqref{eq:mean_dH_identity} to obtain \begin{align*}
\E( \Delta ) &=  \frac{1}{16} \frac{S}{4 - \Delta s} \left( 1-  \frac{1}{n} -  \frac{1}{n} \sum_{i=1}^{n-1}  \cos(2 m \theta_i ) \right) \;.
\end{align*}
The asymptotic result then follows from the limit  \begin{equation} \label{eq:asymptotic_mean_dH_limit}
\lim_{n \to \infty}  \frac{1}{n} \sum_{i=1}^{n-1}  \cos(2 m \theta_i ) = 1
\end{equation} which is valid 
because for any $\epsilon>0$, we can find $N$ large enough such that $n \ge N$, and for all $i \ge N$, $|  \theta_i  - \pi | < \epsilon$ and $|\cos(2 m \theta_i ) - 1| < \epsilon$; and then estimate \begin{align*}
 \left| \frac{1}{n} \sum_{i=1}^{n} \left( \cos(2 m \theta_i ) - 1 \right) \right| &\le  
 \frac{1}{n} \sum_{i=1}^{N} \left| \cos(2 m \theta_i ) - 1 \right| + \frac{1}{n}  \epsilon (n - N - 1) \;.
\end{align*}
Since $\epsilon$ can be made arbitrarily small in this estimate, passing to the limit as $n \to \infty$ implies the first asymptotic result in \eqref{eq:asymptotic_mean_dH_limit} holds, and the second asymptotic is obtained similarly using the formula in Lemma~\ref{lemma:Cayley_Splitting_Variance_Energy_Error_1D}.
\end{proof}

\medskip
The next lemma quantifies the average energy error for more general initial distributions.

\begin{prop} \label{prop:Cayley_Splitting_nu_mean_dH}
For all $S>0$, for all $\Delta s>0$,  for all $m \in \mathbb{N}$, and for any positive $\Delta t<2$, the Cayley splitting in \eqref{eq:cayley_splitting_linear_hamiltonian} satisfies \begin{equation} \label{eq:nu_mean_dH}
\E_{\nu}( \Delta ) \le \frac{1}{2} \frac{\Delta t^2}{4 - \Delta t^2}  \E_{\nu}( \| \boldsymbol{p} \|^2 ) \;,
\end{equation}
where $\E_{\nu}$ denotes an expected value over a given initial distribution $\nu$.
\end{prop}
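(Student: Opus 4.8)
The plan is to reduce the estimate to the one-dimensional bound of Lemma~\ref{lemma:Cayley_Splitting_Max_Energy_Error_1D} by diagonalizing the linear dynamics, exactly as in the proof of Proposition~\ref{prop:Cayley_Splitting_mean_dH}. First I would transform to spectral coordinates using the orthonormal eigenvectors $\boldsymbol{V}$ of the discrete Laplacian $\boldsymbol{L}$, writing $\boldsymbol{u} = \boldsymbol{V}\boldsymbol{U}$ and $\boldsymbol{p} = \boldsymbol{V}\boldsymbol{P}$. Since $\boldsymbol{L} = \boldsymbol{V}\boldsymbol{\Lambda}\boldsymbol{V}^{\mathrm{T}}$ with $\boldsymbol{\Lambda} = \diag(-\omega_1^2,\dots,-\omega_{n-1}^2)$, the semidiscrete Hamiltonian \eqref{eq:hamiltonian_semidiscrete_linear_hamiltonian_system} decomposes as $H(\boldsymbol{u},\boldsymbol{p}) = \sum_{i=1}^{n-1} H_i(U_i,P_i)$ with $H_i(q,p) = \tfrac{1}{2}p^2 + \tfrac{1}{2}(1+\omega_i^2)q^2$, which is precisely the one-dimensional Hamiltonian \eqref{eq:hamiltonian_1D} with $\omega = \omega_i$. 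Because $\boldsymbol{C}$ is built from $\boldsymbol{A}$ and $\boldsymbol{B}$, which are block-diagonalized by $\boldsymbol{V}$, the Cayley splitting commutes with this change of variables, so the $m$-step global energy error splits as $\Delta(\boldsymbol{u},\boldsymbol{p}) = \sum_{i=1}^{n-1}\Delta_i(U_i,P_i)$, where $\Delta_i$ is the $m$-step energy error of the Cayley splitting applied to the $i$th oscillator.

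Next I would apply Lemma~\ref{lemma:Cayley_Splitting_Max_Energy_Error_1D} to each oscillator: since $\Delta t < 2$, for every $i$ and every $(U_i,P_i)$ we have the pointwise bound $\Delta_i(U_i,P_i) \le \tfrac{1}{2}\tfrac{\Delta t^2}{4-\Delta t^2}P_i^2$. Summing over $i$ gives $\Delta(\boldsymbol{u},\boldsymbol{p}) \le \tfrac{1}{2}\tfrac{\Delta t^2}{4-\Delta t^2}\sum_{i=1}^{n-1}P_i^2 = \tfrac{1}{2}\tfrac{\Delta t^2}{4-\Delta t^2}\|\boldsymbol{P}\|^2$, and since $\boldsymbol{V}$ is orthogonal, $\|\boldsymbol{P}\|^2 = \|\boldsymbol{V}^{\mathrm{T}}\boldsymbol{p}\|^2 = \|\boldsymbol{p}\|^2$. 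Hence $\Delta(\boldsymbol{u},\boldsymbol{p}) \le \tfrac{1}{2}\tfrac{\Delta t^2}{4-\Delta t^2}\|\boldsymbol{p}\|^2$ holds pointwise on $\mathbb{R}^{2(n-1)}$. Taking expectation over an arbitrary initial distribution $\nu$ and using monotonicity of expectation yields \eqref{eq:nu_mean_dH}.

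Since every ingredient is already in place, I do not anticipate a genuine obstacle; this is essentially a one-line corollary of Lemma~\ref{lemma:Cayley_Splitting_Max_Energy_Error_1D} together with orthogonality of $\boldsymbol{V}$. The only points requiring care are the same two that were used in the proof of Proposition~\ref{prop:Cayley_Splitting_mean_dH}: that time discretization commutes with the spectral change of variables, and that the $+\tfrac{1}{2}\|\boldsymbol{u}\|^2$ term is correctly absorbed into the per-mode frequency $\sqrt{1+\omega_i^2}$ so that each mode matches \eqref{eq:hamiltonian_1D} exactly.
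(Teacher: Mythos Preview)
Your proposal is correct and follows essentially the same approach as the paper: diagonalize via the orthonormal eigenvectors of $\boldsymbol{L}$, apply the pointwise one-dimensional bound of Lemma~\ref{lemma:Cayley_Splitting_Max_Energy_Error_1D} to each mode, sum, use $\|\boldsymbol{P}\|^2=\|\boldsymbol{p}\|^2$, and then take expectation with respect to $\nu$. The paper's proof says precisely this, describing it as ``nearly identical to the first part of the proof of Prop.~\ref{prop:Cayley_Splitting_mean_dH}'' but invoking Lemma~\ref{lemma:Cayley_Splitting_Max_Energy_Error_1D} in place of Lemma~\ref{lemma:Cayley_Splitting_Mean_Energy_Error_1D}.
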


The right panel of Figure~\ref{fig:mean_dH_hamiltonian_pde} confirms this bound.    The figure graphs the mean global energy error of the Cayley splitting as a function of the spatial step size $\Delta s$ at time $T=1$ with an initial distribution $\nu$ that  is a point mass at zero in position and an $n-1$-dimensional, standard normal distribution in momentum, hence $\E_{\nu}( \| \boldsymbol{p} \|^2 ) = n-1$.  We run the Cayley splitting with time step size $\Delta t = \Delta s^{1/2}$ and $\Delta t = \Delta s$.  According to \eqref{eq:nu_mean_dH}, the mean energy error $\E_{\nu}( \Delta )$ should scale like $O(1)$ with the former choice of time step size, and like $O(\Delta s)$ with the latter choice, as the graphs show.

\begin{proof}
This proof is nearly identical to the first part of the proof of Prop.~\ref{prop:Cayley_Splitting_mean_dH} except one should invoke the cruder but more general estimate in Lemma~\ref{lemma:Cayley_Splitting_Max_Energy_Error_1D} instead of Lemma~\ref{lemma:Cayley_Splitting_Mean_Energy_Error_1D}, and then take expectations with 
respect to $\nu$ to obtain \eqref{eq:nu_mean_dH}.
\end{proof}

\begin{figure}
\begin{center}
\includegraphics[width=0.45\textwidth]{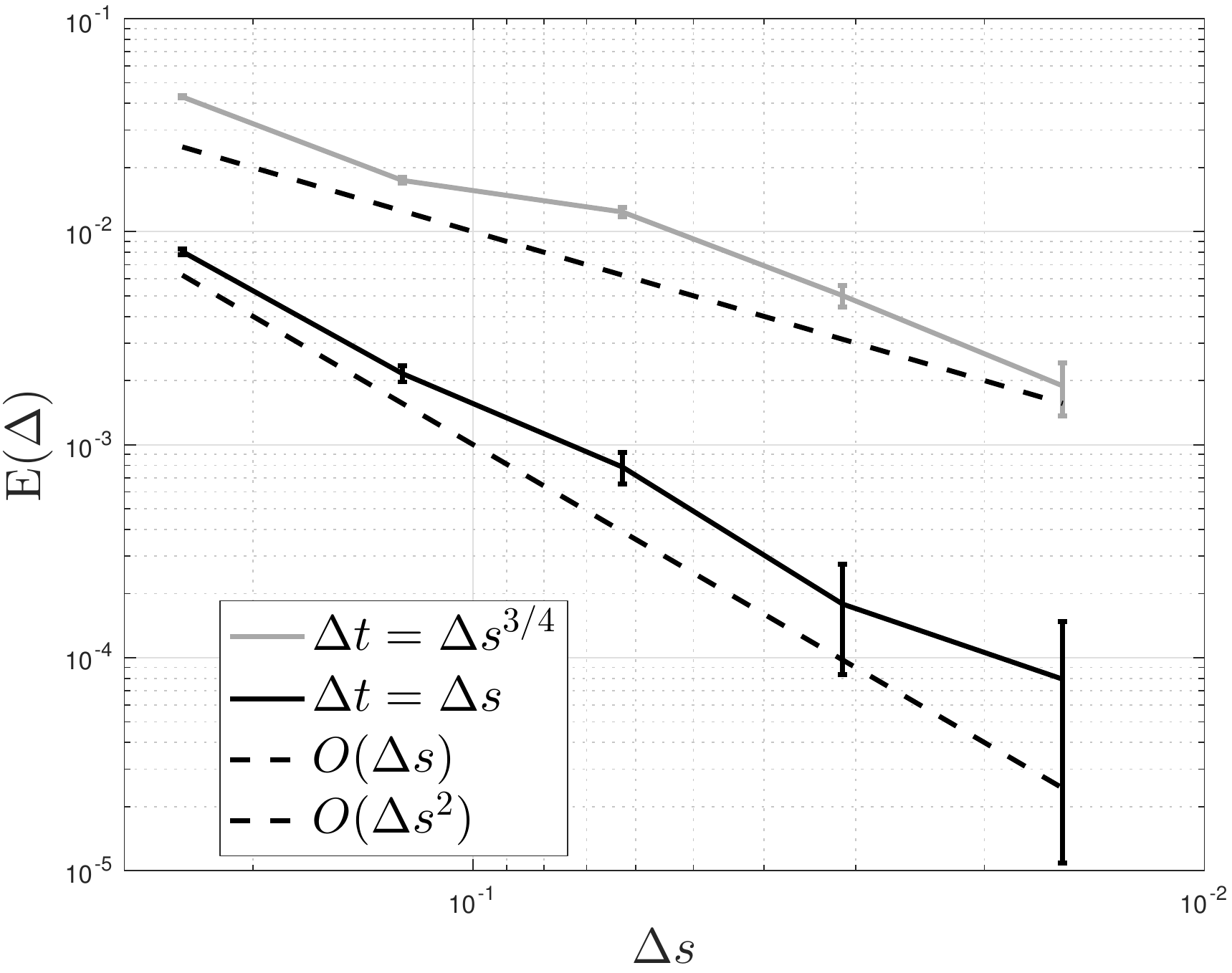}  \hspace{0.1in}
\includegraphics[width=0.45\textwidth]{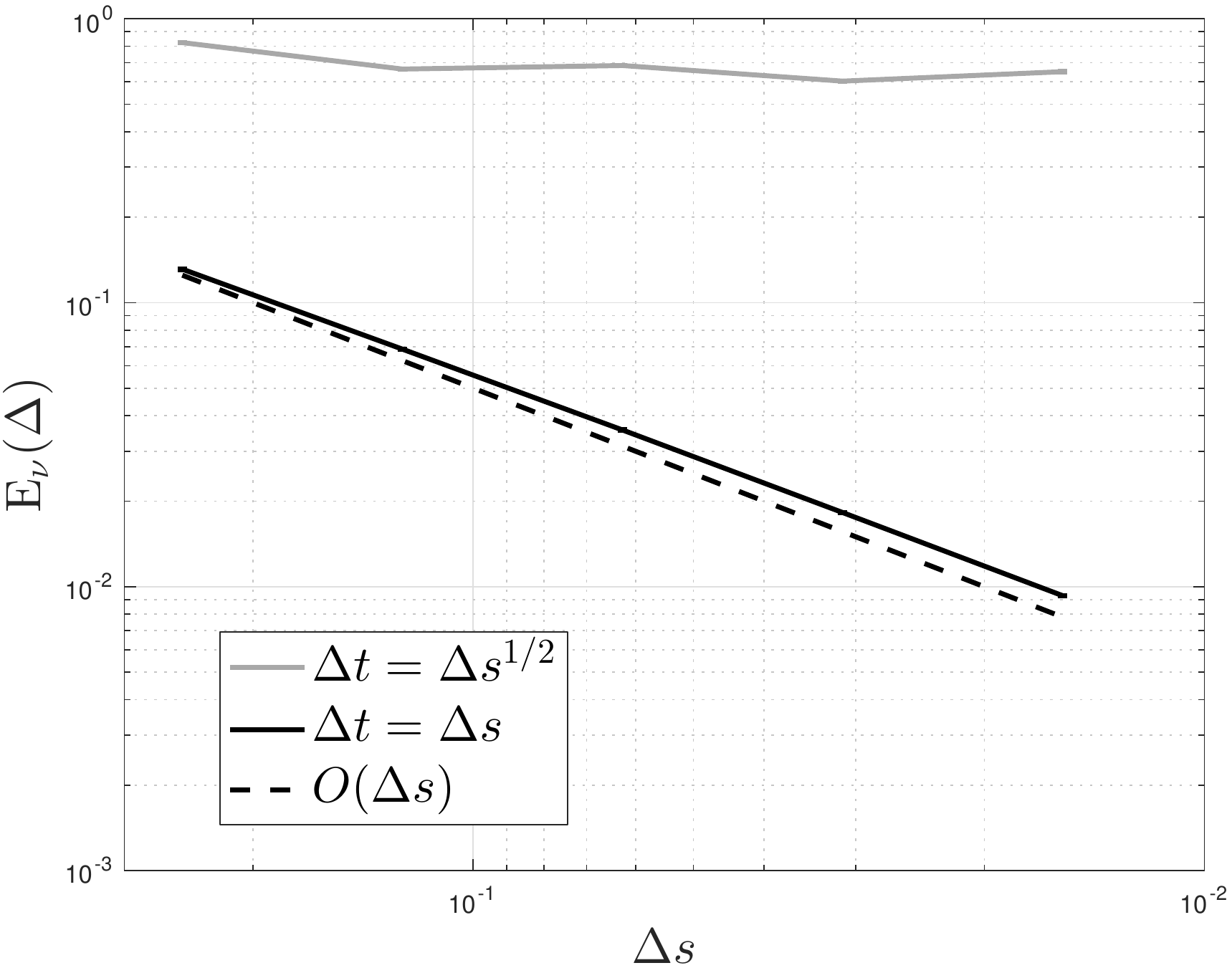}  
\end{center}
\caption{\small  {\bf Mean Energy Errors for Cayley Splitting.}   These figure graph the mean global energy error of the Cayley splitting 
as a function of the spatial step size $\Delta s$ at time $T=1$ with an initial distribution that has non-normalized
density $e^{-(\Delta s) H(\boldsymbol{u}, \boldsymbol{p})}$ (left panel); and an initial distribution that is a point mass at zero in position $\boldsymbol{u} = \boldsymbol{0}$ and 
a standard normal in momentum (right panel).  The time step sizes are related to the spatial step size as indicated in the figure legends.   
This figure is consistent with the bounds given in Prop.~\ref{prop:Cayley_Splitting_mean_dH} and Prop.~\ref{prop:Cayley_Splitting_nu_mean_dH}.  
}
  \label{fig:mean_dH_hamiltonian_pde}
\end{figure}


\subsection{Strong Accuracy of Cayley Splitting for Hamiltonian PDE} \label{eq:strong_accuracy_hamiltonian}

In this part we prove strong accuracy of the Cayley splitting.  We also a Metropolized Cayley splitting and 
estimate the effect on the dynamics of the rejections in the Metropolis-Hastings method \cite{BoVa2010}.

\medskip
The following is an infinite-dimensional analog of Lemma~\ref{lemma:Cayley_Splitting_global_error_1D}.

\begin{prop} \label{prop:Cayley_Splitting_strong_accuracy}
For all $T>0$, for all $S>0$, for all $\Delta s >0$, and for any positive $\Delta t < 2$, there exist positive constants $C_1, C_2$ such that the Cayley splitting in \eqref{eq:cayley_splitting_linear_hamiltonian} 
satisfies \begin{equation} 
\begin{aligned}
\E( \|  \boldsymbol{u}^m - \boldsymbol{u}(m \Delta t) \| )  &\le C_1 T \Delta t^2 \Delta s^{-7/2} \\
\E( \| \boldsymbol{p}^m  - \boldsymbol{p}(m \Delta t) \| )  &\le C_2 T \Delta t^2 \Delta s^{-4}
\end{aligned}
\end{equation}
where $\E$ denotes an expected value over random initial conditions with non-normalized density $e^{-(\Delta s)  H(\boldsymbol{u}, \boldsymbol{p})}$.  
\end{prop}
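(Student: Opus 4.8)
The plan is to reduce this finite-dimensional estimate (recall $\Delta s$ is fixed, so $(\boldsymbol{u},\boldsymbol{p})\in\mathbb{R}^{2(n-1)}$) to the per-mode bound of Lemma~\ref{lemma:Cayley_Splitting_global_error_1D} and then sum over modes. First I would diagonalize: passing to the orthonormal eigenbasis of $\boldsymbol{L}$ (as in the proof of Proposition~\ref{prop:Cayley_Splitting_mean_dH}), the semidiscrete system \eqref{eq:semidiscrete_linear_hamiltonian} decouples into $n-1$ scalar oscillators of frequency $\sqrt{1+\omega_i^2}$ with $\omega_i$ as in \eqref{eq:omi2}. This orthogonal change of variables commutes with the Cayley splitting and preserves the Euclidean norm, so $\|\boldsymbol{u}^m-\boldsymbol{u}(m\Delta t)\|=\|\boldsymbol{U}^m-\boldsymbol{U}(m\Delta t)\|$ and likewise in momentum, where $(\boldsymbol{U},\boldsymbol{P})$ are the spectral coordinates; and it turns the initial law \eqref{eq:initial_condition_linear_hamiltonian_system_semidiscrete} into a product of independent centered Gaussians with $\Var(U_i(0))=(\Delta s\,(1+\omega_i^2))^{-1}$ and $\Var(P_i(0))=(\Delta s)^{-1}$. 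Applying Lemma~\ref{lemma:Cayley_Splitting_global_error_1D} to the $i$th mode, realization by realization of $(U_i(0),P_i(0))$, with $\omega=\omega_i$ and $m=\lfloor T/\Delta t\rfloor$, gives
\begin{equation*}
|U_i^m-U_i(m\Delta t)|\le C\,T\Delta t^2\bigl((1+\omega_i^3)\,|U_i(0)|+(1+\omega_i^2)\,|P_i(0)|\bigr)
\end{equation*}
and the analogue with exponents $4$ and $3$ for $|P_i^m-P_i(m\Delta t)|$, with $C$ independent of $\omega_i$ and $m$.

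Next I would take expectations, using the Gaussian moments $\E|U_i(0)|\le(\Delta s\,(1+\omega_i^2))^{-1/2}$, $\E|P_i(0)|\le(\Delta s)^{-1/2}$, $\E|U_i(0)|^2=(\Delta s\,(1+\omega_i^2))^{-1}$, and $\E|P_i(0)|^2=(\Delta s)^{-1}$. For the position component I would sum the pointwise bounds through the triangle inequality, $\E\|\boldsymbol{u}^m-\boldsymbol{u}(m\Delta t)\|\le\sum_{i=1}^{n-1}\E|U_i^m-U_i(m\Delta t)|$; after absorbing $\tfrac{1+\omega_i^3}{\sqrt{1+\omega_i^2}}+(1+\omega_i^2)\lesssim 1+\omega_i^2$ this yields $\lesssim\tfrac{T\Delta t^2}{\sqrt{\Delta s}}\sum_{i=1}^{n-1}(1+\omega_i^2)$. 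For the momentum component the triangle inequality is too lossy to reach the stated exponent, so I would instead bound $\E\|\boldsymbol{p}^m-\boldsymbol{p}(m\Delta t)\|\le\bigl(\sum_{i=1}^{n-1}\E|P_i^m-P_i(m\Delta t)|^2\bigr)^{1/2}$, and with $(a+b)^2\le 2a^2+2b^2$ and $\tfrac{(1+\omega_i^4)^2}{1+\omega_i^2}+(1+\omega_i^3)^2\lesssim 1+\omega_i^6$ obtain $\lesssim\bigl(\tfrac{T^2\Delta t^4}{\Delta s}\sum_{i=1}^{n-1}(1+\omega_i^6)\bigr)^{1/2}$.

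Finally I would estimate the two spectral sums crudely from \eqref{eq:omi2}, using $\omega_i^2\le 4\,\Delta s^{-2}$ for every $i$ and $n-1<S/\Delta s$, which gives $\sum_{i=1}^{n-1}(1+\omega_i^2)\lesssim S\,\Delta s^{-3}$ and $\sum_{i=1}^{n-1}(1+\omega_i^6)\lesssim S\,\Delta s^{-7}$. Substituting yields $\E\|\boldsymbol{u}^m-\boldsymbol{u}(m\Delta t)\|\lesssim S\,T\Delta t^2\,\Delta s^{-7/2}$ and $\E\|\boldsymbol{p}^m-\boldsymbol{p}(m\Delta t)\|\lesssim\sqrt{S}\,T\Delta t^2\,\Delta s^{-4}$, which is the claim once the $S$-dependence (and the $\Delta t$-dependence inherited from Lemma~\ref{lemma:Cayley_Splitting_global_error_1D}) is folded into $C_1$ and $C_2$. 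The step I expect to be the main obstacle is precisely this choice of summation inequality per component: the per-mode errors grow like $\omega_i^3$ in position and $\omega_i^4$ in momentum, while the white-noise initial momentum excites all $n-1$ modes up to frequency $\omega_i\sim\Delta s^{-1}$, so an $\ell^1$ sum only barely reaches $\Delta s^{-7/2}$ in position and overshoots the target $\Delta s^{-4}$ in momentum, which forces the detour through $\E\|\cdot\|\le(\E\|\cdot\|^2)^{1/2}$ there; all remaining ingredients — the diagonalization, the Gaussian moments, the crude spectral sums — are routine bookkeeping, the substantive work (the linear-in-$T$, only-polynomial-in-$\omega$ per-mode error bound resting on the global modified Hamiltonian) having already been carried out in Lemma~\ref{lemma:Cayley_Splitting_global_error_1D}.
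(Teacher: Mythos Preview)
Your proof is correct and structurally identical to the paper's: diagonalize via the eigenbasis of $\boldsymbol{L}$, invoke Lemma~\ref{lemma:Cayley_Splitting_global_error_1D} mode by mode, take expectations using the Gaussian moments of the spectral coordinates, and sum. For the position component your argument matches the paper's line for line, including the use of the $\ell^1$ bound $\|\cdot\|\le\sum_i|\cdot|$ and the identity $\sum_i\omega_i^2=(2n-2)/\Delta s^2$.

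The one genuine difference is in the momentum component. The paper's proof simply says ``one can similarly obtain the bound on the global error in momentum,'' but repeating the $\ell^1$ argument verbatim yields $\Delta t^2\Delta s^{-1/2}\sum_i(1+\omega_i^3)\sim\Delta t^2\Delta s^{-9/2}$, half a power worse than the stated $\Delta s^{-4}$. You noticed this and switched to the $\ell^2$ route $\E\|\cdot\|\le(\E\|\cdot\|^2)^{1/2}$, which recovers the claimed exponent. So your proposal is in fact more careful here than the paper's writeup; the detour you flagged as the ``main obstacle'' is exactly the right fix. (Incidentally, applying your $\ell^2$ argument to the position component as well would give $\Delta s^{-3}$ rather than $\Delta s^{-7/2}$, slightly sharper than what either you or the paper state.) One minor point: Lemma~\ref{lemma:Cayley_Splitting_global_error_1D} is stated under $\Delta t<\sqrt{3}$ whereas the proposition assumes only $\Delta t<2$; this discrepancy is inherited from the paper and you may want to note it.
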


This result is numerically verified in Figure~\ref{fig:strong_accuracy_hamiltonian_pde}.

\begin{proof}
We transform to spectral coordinates by using the eigenvectors of the matrix $\boldsymbol{L}$.  
Given initial conditions $(\boldsymbol{u}, \boldsymbol{p})$, let $(\boldsymbol{U}, \boldsymbol{P})$ denote the corresponding
spectral variables.  By Lemma~\ref{lemma:Cayley_Splitting_global_error_1D}, \begin{align*}
\|  \boldsymbol{u}^m - \boldsymbol{u}(m \Delta t) \| &\le C_1 T \Delta t^2  \left( \sum_{i=1}^{n-1} (1+\omega_i^3) | \boldsymbol{U}_i |  + \sum_{i=1}^{n-1} (1+\omega_i^2) | \boldsymbol{P}_i |  \right)
\end{align*}
where $\{ \omega_i \}$ are given explicitly in \eqref{eq:omi2}.  
Taking expectations then yields \begin{align*}
\E ( \|  \boldsymbol{u}^m - \boldsymbol{u}(m \Delta t) \| ) &\le C_1 T \Delta t^2  \Delta s^{-1/2} \left( \sum_{i=1}^{n-1} \frac{1+\omega_i^3}{\sqrt{1+\omega_i^2}}   + \sum_{i=1}^{n-1} (1+\omega_i^2) \right)
\end{align*}
since Jensen's inequality implies that \[
\E( | \boldsymbol{U}_i |) \le  \Delta s^{-1/2} (1+\omega_i^2)^{-1/2} \;, \quad  \E( | \boldsymbol{P}_i |   ) \le \Delta s^{-1/2}  \;.
\]   By \eqref{eq:omi2},  \[
\sum_{i=1}^{n-1} \omega_i^2  = \frac{4}{\Delta s^2} \sum_{i=1}^{n-1} \sin^2 \left( \frac{i \pi}{2 n} \right) = \frac{1}{\Delta s^2} (2 n  - 2)
\]
by Lagrange's trigonometric identities.  The result then follows from applying this identity with the fact that $n = S / \Delta s$.   One can similarly obtain the bound on the 
global error in momentum.
\end{proof}

\medskip
Next we use a Metropolis method to set the invariant distribution of the integrator.  The following Metropolis method is a special case of Algorithm~5.1 in Ref.~\cite{BoSaActaN2018}.
It follows from Prop.~5.1 of Ref.~\cite{BoSaActaN2018} that this Metropolized Cayley method has an invariant distribution with non-normalized density $e^{-(\Delta s)  H(\boldsymbol{u}, \boldsymbol{p})}$.

\begin{algorithm}[Metropolized Cayley] \label{algo:metro_cayley_linear_hamiltonian_system}
The one-step $(\boldsymbol{u}^0, \boldsymbol{p}^0) \mapsto (\boldsymbol{u}^1, \boldsymbol{p}^1)$ update is given by.
\begin{description}
\item[(Step 1)] 
 Compute a proposal move $
 \begin{bmatrix}  \boldsymbol{\tilde u}^1 \\  \boldsymbol{\tilde p}_1 \end{bmatrix} = \boldsymbol{C} \begin{bmatrix} \boldsymbol{u}^0 \\ \boldsymbol{p}^0 \end{bmatrix} \;.
 $
 \medskip
\item[(Step 2)] 
 Take as actual update \[
(\boldsymbol{u}^1, \boldsymbol{\tilde p}_1) = \gamma ( \boldsymbol{\tilde u}^1, \boldsymbol{\tilde p}^1)  + (1-\gamma) (\boldsymbol{u}^0, - \boldsymbol{p}^0) 
\] where $\gamma$ is a Bernoulli random variable with parameter $
\alpha(\boldsymbol{u}^0, \boldsymbol{p}^0) = 1 \wedge e^{-(\Delta s) \left( H( \boldsymbol{\tilde u}^1,  \boldsymbol{\tilde p}^1) - H(\boldsymbol{u}^0, \boldsymbol{p}^0) \right) } 
$.
\end{description}
\end{algorithm}

Note that when a proposal move is rejected the momentum component is flipped and local accuracy is lost since
$\E (  \left\|  (\boldsymbol{u}^0, - \boldsymbol{p}^0) -  (\boldsymbol{u}^0,  \boldsymbol{p}^0) \right\| ) = \E( \|  \boldsymbol{p}^0 \|) $.   
Nevertheless, the probability of rejection turns out to be within the strong local order of accuracy of the Cayley splitting.


\begin{figure}
\begin{center}
\includegraphics[width=0.65\textwidth]{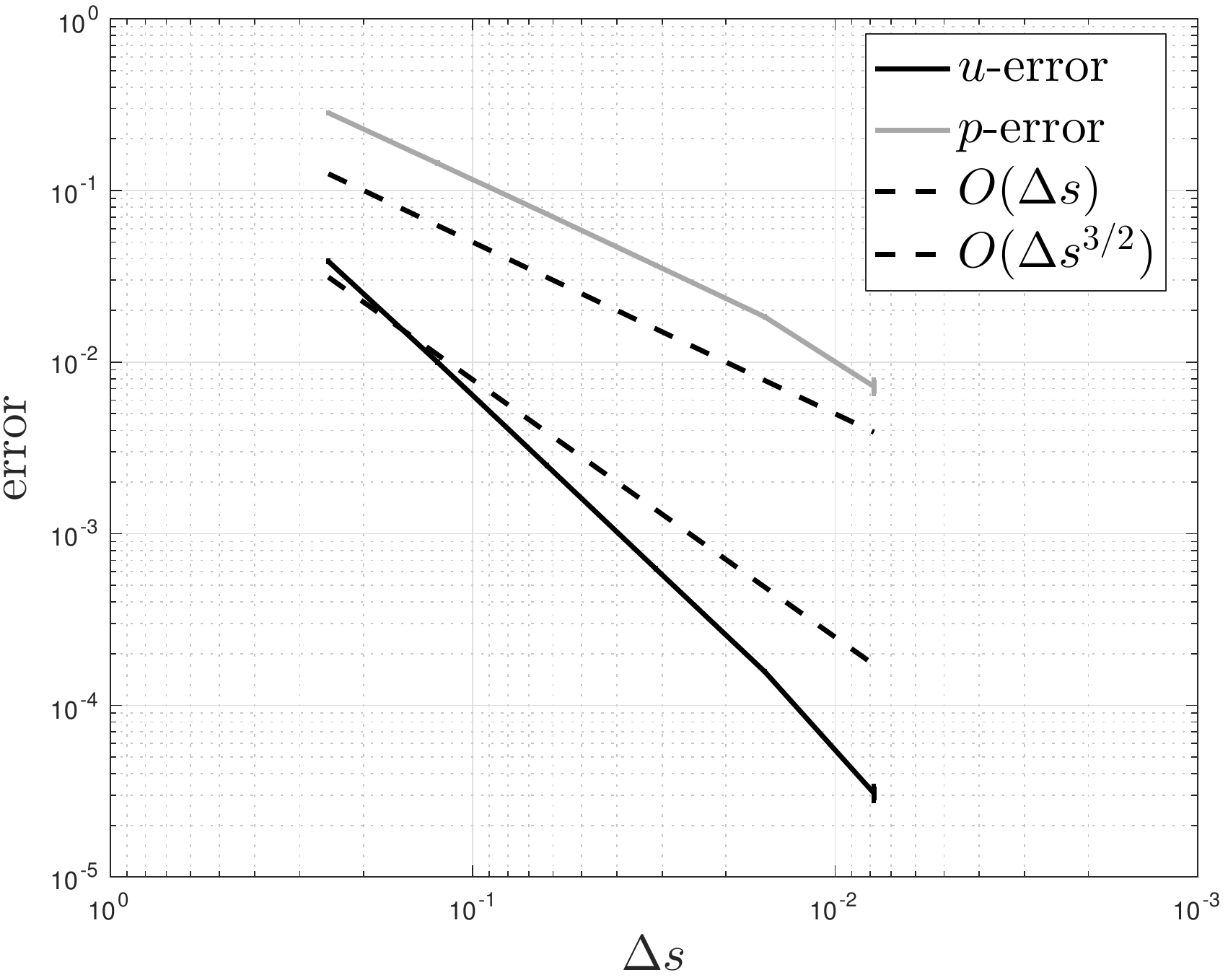}  
\end{center}
\caption{\small  {\bf Strong Accuracy of Cayley Splitting.}  This figure graphs the strong error of the Cayley splitting 
in position/momentum as a function of the spatial step size $\Delta s$ at time $T=1$ with an initial distribution that has non-normalized
density $e^{-(\Delta s) H(\boldsymbol{u}, \boldsymbol{p})}$.  The time step sizes are related to the spatial step size via $\Delta t = \Delta s^{5/2}$.
Note that the convergence rate of $O(\Delta s^{3/2})$ in position and $O(\Delta s)$ in momentum are consistent with those predicted 
in Prop.~\ref{prop:Cayley_Splitting_strong_accuracy}.
}
  \label{fig:strong_accuracy_hamiltonian_pde}
\end{figure}


\subsection{Cayley Splitting for Second Order Langevin SPDE}

Next we study weak accuracy of the Cayley splitting applied to the linear Langevin SPDE
\begin{equation} \label{eq:linear_langevin_spde}
\begin{aligned}
\partial_t u(t,s) &= p(t,s)  \;, \\
\partial_t p(t,s) &= \partial_{s}^2 u(t,s) - u(t,s) - \gamma p(t,s)  + \sqrt{2 \gamma} \partial_t W(t,s) \;, 
\end{aligned} 
\end{equation}
following here the same notation used in \eqref{eq:langevin_spde_intro} with $\beta=1$.  The corresponding semidiscrete equations are
given by: 
\begin{equation} \label{eq:linear_langevin_semidiscrete}
\begin{aligned}
d \boldsymbol{u}(t) &= \boldsymbol{p}(t)  dt \;, \\
d \boldsymbol{p}(t) &= \boldsymbol{L} \boldsymbol{u}(t)  dt - \boldsymbol{u}(t) dt - \gamma \boldsymbol{p}(t) dt + \sqrt{\frac{2 \gamma}{\Delta s}} d \boldsymbol{W}(t) \;,
\end{aligned}
\end{equation}
where we again follow the same notation used in \eqref{eq:semidiscrete_intro}.   This SDE is linear, and its solution at any time $T>0$ is given in law by \begin{align*}
\begin{bmatrix} \boldsymbol{u}(T) \\ \boldsymbol{p}(T) \end{bmatrix} \overset{d}{=} 
\boldsymbol{\Phi}(T) \begin{bmatrix} \boldsymbol{u}(0) \\ \boldsymbol{p}(0) \end{bmatrix} + \boldsymbol{\Gamma}(T)^{1/2} \begin{bmatrix} \boldsymbol{\xi} \\ \boldsymbol{\eta} \end{bmatrix}
\end{align*}
where $\boldsymbol{\xi}$ and $\boldsymbol{\eta}$ are independent standard normal $n-1$ dimensional vectors, the matrix $\boldsymbol{\Phi}(t)$ is defined as \[
\boldsymbol{\Phi}(t) =\exp\left( t \boldsymbol{K} \right)  \;, \quad \text{where} \quad \boldsymbol{K} = \begin{bmatrix} \boldsymbol{0}  & \boldsymbol{I} \\ \boldsymbol{L} -\boldsymbol{I}  & -\gamma \boldsymbol{I} \end{bmatrix}
\]
and the matrix $\boldsymbol{\Gamma}(t)$ is defined via the Lyapunov equation \[
 \boldsymbol{K}  \boldsymbol{\Gamma}(t) +  \boldsymbol{\Gamma}(t)  \boldsymbol{K}  = \frac{2 \gamma}{\Delta s} \left( \boldsymbol{\Phi}(t) \begin{bmatrix}~~ & \\ & \boldsymbol{I} \end{bmatrix} \boldsymbol{\Phi}(t)^{\mathrm{T}} -  \begin{bmatrix}~~ & \\ & \boldsymbol{I} \end{bmatrix} \right) \;.
\]
In other words, given an initial condition $(\boldsymbol{u}(0), \boldsymbol{p}(0))$, the exact solution of \eqref{eq:linear_langevin_semidiscrete} at any time $t>0$ is a $2 \times (n-1)$-dimensional Gaussian vector with mean vector \[
\boldsymbol{\mu}(t) = \boldsymbol{\Phi}(t) \begin{bmatrix} \boldsymbol{u}(0) \\ \boldsymbol{p}(0) \end{bmatrix} 
\] and covariance matrix $\boldsymbol{\Gamma}(t)$.

We approximate this solution by using the Cayley splitting given in \eqref{eq:cayley_splitting}.  
In the case of the linear Langevin SPDE in \eqref{eq:linear_langevin_spde}, the output of this  splitting can be obtained by completing the following steps.

\begin{algorithm}[Cayley Splitting for Second-order Langevin SPDE] \label{algo:cayley_linear_langevin_system}
The one-step $(\boldsymbol{u}^0, \boldsymbol{p}^0) \mapsto (\boldsymbol{u}^1, \boldsymbol{p}^1)$ update is given by.
\begin{description}
\item[(Step 1)] 
Half-step of Ornstein-Uhlenbeck flow in momentum \[
\boldsymbol{p}^{1/3} = e^{-  \frac{\gamma \Delta t}{2}} \boldsymbol{p}^0 + \sqrt{\dfrac{1}{\Delta s}} \sqrt{1 - e^{-\gamma \Delta t}} \boldsymbol{\xi}^0
\]
where $\boldsymbol{\xi}^0$ is a standard normal $(n-1)$-dimensional vector.
 \medskip
\item[(Step 2)]  
 Full-step of numerical Hamiltonian flow by Cayley splitting \[
 \begin{bmatrix}  \boldsymbol{u}^1 \\  \boldsymbol{p}^{2/3} \end{bmatrix} = \boldsymbol{C} \begin{bmatrix} \boldsymbol{u}^0 \\ \boldsymbol{p}^{1/3} \end{bmatrix} \;.
 \]
\item[(Step 3)] 
Half-step of Ornstein-Uhlenbeck flow in momentum \[
\boldsymbol{p}^{1} = e^{-  \frac{\gamma \Delta t}{2}} \boldsymbol{p}^{2/3} + \sqrt{\dfrac{1}{\Delta s}} \sqrt{1 - e^{-\gamma \Delta t}} \boldsymbol{\eta}^0
\]
where $\boldsymbol{\eta}^0$ is a standard normal $(n-1)$-dimensional vector.
\end{description}
\end{algorithm}

To set the invariant distribution of this method, one can replace (Step 2) with the following step.
\begin{description}
\item[(Step 2)$^\star$]  
 Full-step of numerical Hamiltonian flow by Metropolized Cayley splitting \[
 \begin{bmatrix}  \boldsymbol{u}^1 \\  \boldsymbol{p}^{2/3} \end{bmatrix} =\gamma \begin{bmatrix} \boldsymbol{\tilde u}^1 \\ \boldsymbol{\tilde p}^{2/3} \end{bmatrix}  + 
 (1-\gamma) \begin{bmatrix} \boldsymbol{u}^0 \\ - \boldsymbol{p}^{1/3} \end{bmatrix} \;.
 \] where $(\boldsymbol{\tilde u}^1, \boldsymbol{\tilde p}^{2/3})$ is defined as  \[
 \begin{bmatrix}  \boldsymbol{\tilde u}^1 \\  \boldsymbol{\tilde p}^{2/3} \end{bmatrix} = \boldsymbol{C} \begin{bmatrix} \boldsymbol{u}^0 \\ \boldsymbol{p}^{1/3} \end{bmatrix} 
 \] and $\gamma$ is a Bernoulli random variable with parameter \[
\alpha(\boldsymbol{u}^0, \boldsymbol{p}^{1/3}) = 1 \wedge e^{-(\Delta s) \left( H( \boldsymbol{\tilde u}^1,  \boldsymbol{\tilde p}^{2/3}) - H(\boldsymbol{u}^0, \boldsymbol{p}^{1/3}) \right) } \;.
\]
\end{description}

Note that after $m$ integration steps or iterations of Algorithm~\ref{algo:cayley_linear_langevin_system}, the Cayley splitting is itself a Gaussian vector with mean vector $\boldsymbol{\mu}^m$ and
covariance matrix $\boldsymbol{\Gamma}^m$.   Of course, this is no longer true for the Metropolized Cayley splitting after $m$ integration steps.  
More precisely, let $\boldsymbol{O}$ denote the matrix associated to the drift part of (Step 1) and (Step 3), i.e., \[
\boldsymbol{O} = \begin{bmatrix} \boldsymbol{I} & \\
& e^{- \frac{\gamma \Delta t}{2}} \boldsymbol{I} \end{bmatrix}  \;.
\] 

\medskip
Let $g$ be the function defined in \eqref{eq:gam_stability}.  The following statement gives sufficient conditions for asymptotic stability of the Cayley splitting.

\begin{prop} \label{prop:Cayley_Splitting_Stability_Langevin}
For all $T>0$, for all $S>0$, for any $\gamma>0$, for all $m \in \mathbb{N}$, for all $\Delta s >0$, and for any positive $\Delta t<2$ satisfying \begin{equation} \label{eq:gam_stability_spde}
g(\Delta t, \frac{2}{\Delta s}, \gamma) > 0
\end{equation}
the matrix $ (\boldsymbol{O} \boldsymbol{C} \boldsymbol{O})^m$ is asymptotically stable.
\end{prop}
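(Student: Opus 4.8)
The plan is to pass to the spectral basis of the discrete Laplacian and reduce the claim to the one–dimensional Langevin stability result, Lemma~\ref{lemma:Cayley_Splitting_Stability_langevin_1D}. Write $\boldsymbol{L} = \boldsymbol{V}\boldsymbol{\Lambda}\boldsymbol{V}^{\mathrm{T}}$ with $\boldsymbol{V}$ orthogonal, and conjugate $\boldsymbol{O}\boldsymbol{C}\boldsymbol{O}$ by $\mathrm{diag}(\boldsymbol{V},\boldsymbol{V})$ followed by the permutation that groups the position and momentum of each mode. Since the matrices $\boldsymbol{A}$, $\boldsymbol{B}$ defining $\boldsymbol{C}$, as well as $\boldsymbol{O}$, are built only from $\boldsymbol{L}$ and scalar multiples of $\boldsymbol{I}$, this similarity simultaneously block–diagonalizes $\cay(\Delta t\boldsymbol{A})$, $\exp(\tfrac12\Delta t\boldsymbol{B})$ and $\boldsymbol{O}$ into $2\times2$ blocks; consequently $(\boldsymbol{O}\boldsymbol{C}\boldsymbol{O})^m$ is orthogonally similar to $\bigoplus_{i=1}^{n-1}(\boldsymbol{O}_i\boldsymbol{C}_i\boldsymbol{O}_i)^m$, where $\boldsymbol{C}_i$ is the one–dimensional Cayley splitting matrix \eqref{eq:cayley_splitting_1D} at the frequency $\omega_i$ of \eqref{eq:omi2} and $\boldsymbol{O}_i = \mathrm{diag}(1,e^{-\gamma\Delta t/2})$. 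A block–diagonal matrix is asymptotically stable iff every block is, so it suffices to prove that $\boldsymbol{O}_i\boldsymbol{C}_i\boldsymbol{O}_i$ is asymptotically stable for each $i$.

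By Lemma~\ref{lemma:Cayley_Splitting_Stability_langevin_1D}, since $\Delta t<2$ is assumed, $\boldsymbol{O}_i\boldsymbol{C}_i\boldsymbol{O}_i$ is asymptotically stable provided $g(\Delta t,\omega_i,\gamma)>0$. Hence the proposition reduces to the elementary implication that $g(\Delta t,\tfrac{2}{\Delta s},\gamma)>0$ forces $g(\Delta t,\omega_i,\gamma)>0$ for all $i=1,\dots,n-1$. Recalling that $g(\Delta t,\omega,\gamma)=1-\cos^2\theta(\omega)\cosh^2(\gamma\Delta t/2)$ and, from \eqref{eq:theta_chi} (equivalently from the first entry of \eqref{eq:cayley_splitting_1D}), that $\cos\theta(\omega)=\bigl((4-2\Delta t^2)-\Delta t^2\omega^2\bigr)/(4+\Delta t^2\omega^2)$, the condition $g>0$ is equivalent to $\cos^2\theta(\omega)<1/\cosh^2(\gamma\Delta t/2)$, so only the size of $|\cos\theta(\omega)|$ matters. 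One checks that $\omega\mapsto\cos\theta(\omega)$ is strictly decreasing from $1-\tfrac12\Delta t^2$ (at $\omega=0$) down to $-1$ (as $\omega\to\infty$), and that $\omega_i^2=\tfrac{4}{\Delta s^2}\sin^2(\tfrac{i\pi}{2n})<\tfrac{4}{\Delta s^2}$, so every $\omega_i$ lies below $2/\Delta s$; the plan is to combine these two facts to conclude $\max_i\cos^2\theta(\omega_i)\le\cos^2\theta(2/\Delta s)$ and then invoke the hypothesis.

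The main obstacle is that $\cos^2\theta(\cdot)$ is \emph{not} monotone in $\omega$: it decreases while $\cos\theta>0$ and increases once $\cos\theta<0$, so one cannot merely assert that the fastest oscillator is the worst one. I would split into the case $\Delta t\ge\sqrt2$, where $\cos\theta(\omega)\le0$ for all $\omega>0$ so that $\cos^2\theta$ is genuinely increasing in $\omega$ and the binding frequency is $\omega_{n-1}<2/\Delta s$; and the case $\Delta t<\sqrt2$, where one must in addition bound the low–frequency contribution $\cos^2\theta(\omega_1)$ — which is $(1-\tfrac12\Delta t^2)^2$ corrected by the explicit term coming from $\omega_1^2=\tfrac{4}{\Delta s^2}\sin^2(\tfrac{\pi}{2n})$ — against $\cos^2\theta(2/\Delta s)$. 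Alternatively, and more robustly, for any block one can bypass Lemma~\ref{lemma:Cayley_Splitting_Stability_langevin_1D} and argue directly from the characteristic polynomial $\lambda^2-\tr(\boldsymbol{O}_i\boldsymbol{C}_i\boldsymbol{O}_i)\lambda+\det(\boldsymbol{O}_i\boldsymbol{C}_i\boldsymbol{O}_i)$: a short computation gives $\det(\boldsymbol{O}_i\boldsymbol{C}_i\boldsymbol{O}_i)=e^{-\gamma\Delta t}<1$ and $\tr(\boldsymbol{O}_i\boldsymbol{C}_i\boldsymbol{O}_i)=(1+e^{-\gamma\Delta t})\cos\theta_i$ with $|\cos\theta_i|<1$ whenever $\Delta t<2$ and $\omega_i>0$, so the Schur--Cohn criterion for $2\times2$ matrices places both roots strictly inside the unit disk, which is precisely asymptotic stability of the block.
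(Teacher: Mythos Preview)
Your reduction to the spectral basis and mode--by--mode analysis is exactly the paper's approach: the paper's proof invokes Lemma~\ref{lemma:Cayley_Splitting_Stability_langevin_1D} and then simply asserts the chain of inequalities
\[
g(\Delta t,\omega_i,\gamma) > g(\Delta t,\omega_{i+1},\gamma) \ge g\!\left(\Delta t,\tfrac{2}{\Delta s},\gamma\right),\qquad 1\le i\le n-2,
\]
i.e.\ precisely the monotonicity you flag as problematic. Your objection is well taken: since $\omega\mapsto\cos\theta(\omega)$ decreases from $1-\tfrac12\Delta t^2$ to $-1$, the map $\omega\mapsto\cos^2\theta(\omega)$ (and hence $g$) is monotone in $\omega$ only when $\Delta t\ge\sqrt2$, so the paper's displayed chain is not justified in the regime $\Delta t<\sqrt2$ that the paper itself says is the relevant one (``this stability condition requires that $\Delta t\lesssim\Delta s^2$''). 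Your proposed case--split does not obviously close this gap either, since for $\Delta t<\sqrt2$ there is no a~priori reason the low--frequency value $\cos^2\theta(\omega_1)$ is dominated by $\cos^2\theta(2/\Delta s)$.

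Your Schur--Cohn alternative, on the other hand, is both correct and cleaner than the paper's argument. With $d=\det(\boldsymbol{O}_i\boldsymbol{C}_i\boldsymbol{O}_i)=e^{-\gamma\Delta t}<1$ and $t=\tr(\boldsymbol{O}_i\boldsymbol{C}_i\boldsymbol{O}_i)=(1+e^{-\gamma\Delta t})\cos\theta_i$, the Jury conditions $|d|<1$ and $|t|<1+d$ reduce to $|\cos\theta_i|<1$, which holds for every $\omega_i>0$ as soon as $0<\Delta t<2$. This not only proves the proposition but shows that the hypothesis \eqref{eq:gam_stability_spde} is unnecessary for the bare conclusion of asymptotic stability; that hypothesis enters Lemma~\ref{lemma:Cayley_Splitting_Stability_langevin_1D} only to force complex eigenvalues and obtain the explicit norm bound on $(\boldsymbol{O}\boldsymbol{C}\boldsymbol{O})^m$, not for the spectral radius estimate itself.
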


Essentially, this stability condition requires that $\Delta t \lesssim \Delta s^2$.  Note that this condition is stronger than in the Hamiltonian case because we require that $\boldsymbol{O} \boldsymbol{C} \boldsymbol{O}$ is asymptotically stable like the drift part of the exact Langevin dynamics.

\begin{proof}
This proof is an application of Lemma~\ref{lemma:Cayley_Splitting_Stability_langevin_1D}.
In view of \eqref{eq:gam_stability}, we require that \[
\min_{1 \le i \le n-1} g(\Delta t, \omega_i, \gamma) > 0 
\] where $\omega_i$ is given in \eqref{eq:omi2}.   Note also that \[
g(\Delta t, \omega_{i}, \gamma) > g(\Delta t, \omega_{i+1}, \gamma) \ge g(\Delta t, \frac{2}{\Delta s} , \gamma)
\] for all $1 \le i \le n-2$, as required.
\end{proof}

\medskip
Under this stability condition, the Cayley splitting is accurate in the following weak or distributional sense.

\begin{prop} \label{prop:Cayley_Splitting_weak_accuracy_langevin}
For all $T>0$, for all $S>0$, for any $\gamma>0$, for all $\Delta s >0$, and for any positive $\Delta t$ satisfying \eqref{eq:gam_stability_spde}, there exist positive constants $C_1, C_2$ (depending on $\gamma$ and $S$) such that the Cayley splitting given in Algorithm~\ref{algo:cayley_linear_langevin_system} satisfies \begin{equation} 
\begin{aligned}
 \|  (\boldsymbol{O} \boldsymbol{C} \boldsymbol{O})^m - \boldsymbol{\Phi}(m \Delta t)  \|   &\le \exp(C_1 T) \Delta t^2 \Delta s^{-5} \\
 \| \boldsymbol{\Gamma}^m  - \boldsymbol{\Gamma}(m \Delta t) \|  &\le  \exp(C_2 T) \Delta t^2 \Delta s^{-4}
\end{aligned}
\end{equation}
\end{prop}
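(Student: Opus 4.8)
The plan is to imitate the proof of Proposition~\ref{prop:Cayley_Splitting_strong_accuracy} and reduce to the one-dimensional linear Langevin results by diagonalising $\boldsymbol{L}$. Writing $\boldsymbol{L}=\boldsymbol{V}\boldsymbol{\Lambda}\boldsymbol{V}^{\mathrm{T}}$ with $\boldsymbol{\Lambda}=\diag(-\omega_i^2)$ for the $\omega_i$ of \eqref{eq:omi2}, and conjugating by $\diag(\boldsymbol{V}^{\mathrm{T}},\boldsymbol{V}^{\mathrm{T}})$, each of $\boldsymbol{K}$, $\boldsymbol{A}$, $\boldsymbol{B}$, $\boldsymbol{C}$, $\boldsymbol{O}$, $\boldsymbol{\Phi}(t)$, $\boldsymbol{\Gamma}(t)$ and the one-step Cayley-splitting noise covariance becomes block diagonal, the $i$-th $2\times2$ block being \emph{exactly} the corresponding object of the scalar problem \eqref{eq:langevin_equations_1D} with natural frequency $\omega=\omega_i$ and inverse temperature $\beta=\Delta s$ (the prefactor $\sqrt{2\gamma/\Delta s}$ standing in for $\sqrt{2\gamma\beta^{-1}}$). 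In particular $(\boldsymbol{O}\boldsymbol{C}\boldsymbol{O})^m=\bigoplus_i(\boldsymbol{O}_i\boldsymbol{C}_i\boldsymbol{O}_i)^m$, $\boldsymbol{\Phi}(m\Delta t)=\bigoplus_i\boldsymbol{\Phi}_{\gamma,i}(m\Delta t)$, $\boldsymbol{\Gamma}(m\Delta t)=\Delta s^{-1}\bigoplus_i\boldsymbol{\Sigma}_i(m\Delta t)$ and $\boldsymbol{\Gamma}^m=\Delta s^{-1}\bigoplus_i\boldsymbol{\Sigma}_i^m$, in the notation of \eqref{eq:law_langevin_1D}--\eqref{eq:law_cayley_splitting_langevin_1D} with $\beta=1$ in the blocks. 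Since the Frobenius norm of a block-diagonal matrix is the $\ell^2$-aggregate of its blocks' Frobenius norms, it suffices to control $\sum_{i=1}^{n-1}\|(\boldsymbol{O}_i\boldsymbol{C}_i\boldsymbol{O}_i)^m-\boldsymbol{\Phi}_{\gamma,i}(m\Delta t)\|^2$ and $\Delta s^{-2}\sum_{i=1}^{n-1}\|\boldsymbol{\Sigma}_i^m-\boldsymbol{\Sigma}_i(m\Delta t)\|^2$.

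The natural per-block input is Lemma~\ref{lemma:Cayley_Splitting_global_error_langevin_1D}, whose hypotheses hold block-by-block: the condition \eqref{eq:gam_stability} at $\omega=\omega_i$ follows from \eqref{eq:gam_stability_spde} and the monotonicity $g(\Delta t,\omega_i,\gamma)\ge g(\Delta t,2/\Delta s,\gamma)$ already exploited in Proposition~\ref{prop:Cayley_Splitting_Stability_Langevin}, and the remaining smallness requirements on $\Delta t$ hold in the stability regime since $\|\boldsymbol{K}_i\|,\|\boldsymbol{L}\|\lesssim\Delta s^{-2}$ for every $i$ (cf.\ the remark following Proposition~\ref{prop:Cayley_Splitting_Stability_Langevin}). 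The lemma then bounds the $i$-th summands by $\exp(2C\|\boldsymbol{K}_i\|T)(1+\|\boldsymbol{K}_i\|^3)^2\Delta t^4$ and $\exp(2C\|\boldsymbol{K}_i\|T)(1+\|\boldsymbol{K}_i\|^2)^2\Delta t^4$ respectively; feeding in $\|\boldsymbol{K}_i\|\lesssim1+\omega_i^2$, $\omega_i\le2/\Delta s$ and $n-1<S/\Delta s$, and summing the powers of $\omega_i$ via the Lagrange trigonometric identities used in Proposition~\ref{prop:Cayley_Splitting_strong_accuracy}, one arrives at estimates of the claimed form $\exp(C_1T)\Delta t^2\Delta s^{-5}$ and $\exp(C_2T)\Delta t^2\Delta s^{-4}$, the extra power $\Delta s^{-1}$ in the covariance bound being precisely the white-noise prefactor $\beta^{-1}=\Delta s^{-1}$.

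The delicate point, which I expect to be the main obstacle, is the exponential prefactor: a literal use of Lemma~\ref{lemma:Cayley_Splitting_global_error_langevin_1D} per block leaves $\exp(2C\|\boldsymbol{K}_i\|T)$ with $\|\boldsymbol{K}_{n-1}\|\sim\omega_{n-1}^2\sim\Delta s^{-2}$, i.e.\ an exponent that still blows up with the mesh rather than the $\Delta s$-independent $C_1T$ asserted. To repair this one must abandon the crude growth bound \eqref{eq:global_bound_on_Phi}, $\|\boldsymbol{\Phi}_{\gamma}(t)\|\le\sqrt2\,e^{\|\boldsymbol{K}\|t}$, used inside the proof of that lemma, and instead use the genuine dissipativity available when $\gamma>0$: every $\boldsymbol{K}_i$ has spectral abscissa at most $-\gamma_\star$ for some $\gamma_\star=\gamma_\star(\gamma)>0$, so the Cayley--Hamilton representation of the $2\times2$ exponential gives $\|\boldsymbol{\Phi}_{\gamma,i}(t)\|\lesssim(1+\omega_i)e^{-\gamma_\star t}$ uniformly in $i$, while Lemma~\ref{lemma:Cayley_Splitting_Stability_langevin_1D} gives $\|(\boldsymbol{O}_i\boldsymbol{C}_i\boldsymbol{O}_i)^k\|\lesssim(1+\omega_i)\,g(\Delta t,2/\Delta s,\gamma)^{-1/2}e^{-\gamma\Delta t(k-1)/2}$. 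Inserting these contraction bounds --- together with the local errors $\|\boldsymbol{O}_i\boldsymbol{C}_i\boldsymbol{O}_i-\boldsymbol{\Phi}_{\gamma,i}(\Delta t)\|\lesssim\Delta t^3\|\boldsymbol{K}_i\|^3$ from \eqref{eq:local_error} and $\|\boldsymbol{\tilde Q}_i-\boldsymbol{Q}_i\|\lesssim\Delta t^3\|\boldsymbol{K}_i\|^2$ --- into the telescoping identity $X^m-Y^m=\sum_{k=0}^{m-1}X^{m-1-k}(X-Y)Y^k$ (for the covariance after first passing to the trapezoidal surrogate $\boldsymbol{\tilde\Sigma}_i^m$, as in the proof of Lemma~\ref{lemma:Cayley_Splitting_global_error_langevin_1D}), the two exponential factors combine so that the geometric sum over the $m\le T/\Delta t$ steps contributes only an $O(1/\Delta t)$ factor; this upgrades the $\Delta t^3$ local errors to $\Delta t^2$ and replaces $\exp(2C\|\boldsymbol{K}_i\|T)$ by a constant depending on $\gamma$ and $S$ alone. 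Summing the refined per-block bounds over the $n-1<S/\Delta s$ blocks exactly as in the previous paragraph then gives the proposition.
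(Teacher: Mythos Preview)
Your approach --- diagonalise $\boldsymbol{L}$, reduce to $2\times2$ blocks, and invoke Lemma~\ref{lemma:Cayley_Splitting_global_error_langevin_1D} per block --- is exactly what the paper does. The paper's proof is very short: it transforms to spectral coordinates, cites Lemma~\ref{lemma:Cayley_Splitting_global_error_langevin_1D} with $\beta=\Delta s$ and the scaling \eqref{eq:K_scaling} to obtain bounds $\exp(C_1T)\,\Delta t^2\sum_i(1+\omega_i^4)$ and $\exp(C_2T)\,\Delta t^2\,\Delta s^{-1}\sum_i(1+\omega_i^2)$, then bounds $\omega_i^2\le4/\Delta s^2$ uniformly and substitutes $n=S/\Delta s$.

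You are right to flag the exponential prefactor. A literal application of Lemma~\ref{lemma:Cayley_Splitting_global_error_langevin_1D} yields $\exp(C\|\boldsymbol{K}_i\|T)$ per block with $\|\boldsymbol{K}_{n-1}\|\sim\Delta s^{-2}$, not the mesh-independent $\exp(C_jT)$ asserted in the statement. The paper simply writes $\exp(C_1T)$ without comment, so in this respect your third paragraph is more careful than the published argument. Your proposed repair --- replacing the Gr\"onwall bound \eqref{eq:global_bound_on_Phi} by the genuine contraction $\|\boldsymbol{\Phi}_{\gamma,i}(t)\|\lesssim(1+\omega_i)e^{-\gamma_\star t}$, pairing it with Lemma~\ref{lemma:Cayley_Splitting_Stability_langevin_1D}, and running the telescope $X^m-Y^m=\sum X^{m-1-k}(X-Y)Y^k$ --- is the natural fix and does yield a mesh-independent exponent. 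Be aware, though, that the $(1+\omega_i)$ prefactors in both contraction bounds feed through the telescope and contribute extra powers of $\omega_i$ to each block; the resulting exponent of $\Delta s$ after summation may differ from the $-5$ and $-4$ stated in the proposition, so you may end up proving a slightly weaker (but honestly mesh-uniform) version of the bound.
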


This result is numerically verified in Figure~\ref{fig:weak_accuracy_langevin_spde}.

\begin{proof}
We transform to spectral coordinates by using the eigenvectors of the matrix $\boldsymbol{L}$.  
By Lemma~\ref{lemma:Cayley_Splitting_global_error_langevin_1D} with $\beta=\Delta s$, and in view of \eqref{eq:K_scaling}, we have \begin{align*}
\|  (\boldsymbol{O} \boldsymbol{C} \boldsymbol{O})^m - \boldsymbol{\Phi}(m \Delta t)  \|   &\le \exp(C_1 T)  \Delta t^2  \sum_{i=1}^{n-1} (1 + \omega_i^4) \\
\|  \boldsymbol{\Gamma}^m  - \boldsymbol{\Gamma}(m \Delta t) \|    &\le \exp(C_2 T)  \Delta t^2 \Delta s^{-1}  \sum_{i=1}^{n-1} (1 + \omega_i^2)
\end{align*}
The result follows by using \eqref{eq:omi2} to bound $\omega_i^2$ uniformly over $1 \le i \le n-1$ and then substituting $n = S / \Delta s$.
\end{proof}

%
%

\begin{figure}
\begin{center}
\includegraphics[width=0.65\textwidth]{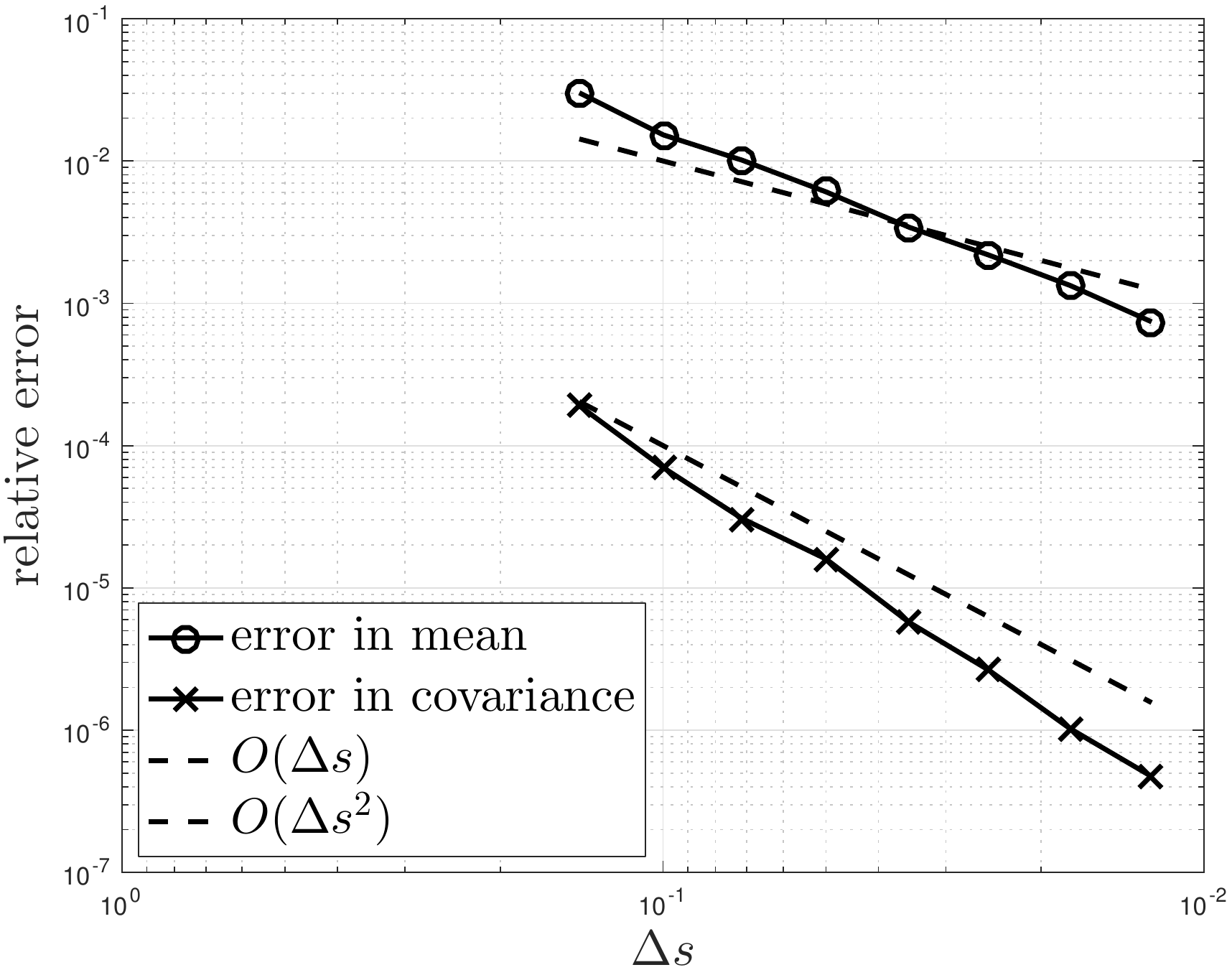}  
\end{center}
\caption{\small  {\bf Weak Accuracy of Cayley Splitting.}  This figure graphs the weak error of the Cayley splitting 
in representing the mean and covariance of the exact solution of \eqref{eq:linear_langevin_semidiscrete} as a function of the spatial step size $\Delta s$ 
at time $T=4$ and with $\gamma=0.8$.  The time step sizes are related to the spatial step size via $\Delta t = \Delta s^3$. Note that the convergence rate of 
$O(\Delta s)$ in the mean vector and $O(\Delta s^2)$ in the momentum matrix are consistent with those found in Prop.~\ref{prop:Cayley_Splitting_weak_accuracy_langevin}.
}
  \label{fig:weak_accuracy_langevin_spde}
\end{figure}


\subsection{HMC on Hilbert Spaces with Cayley Splitting} \label{sec:cayley_based_hmc}

The Hamiltonian Monte Carlo (HMC) algorithm is a tool for sampling from an absolutely continuous probability distribution (called the target distribution) given a function proportional to its probability density function (correspondingly called the target density).  HMC was first introduced in 1987 to study lattice models of quantum field theory, and about a decade later, popularized in data science \cite{DuKePeRo1987,Li2008,Ne2011,BoSaActaN2018}.   The idea in HMC is to sample from a product density whose first component is the target density and second component is a multivariate Gaussian of the same dimension.  The negative log of this product density then plays the role of a Hamiltonian function $H$.  This algorithm can also treat target distributions on Hilbert spaces \cite{BePiSaSt2011}.

The standard HMC algorithm consists of iterating Hamiltonian flows associated to $H$ for a fixed duration with initial conditions obtained from a sequence of momentum randomization steps in which all components of the momentum are randomized.  Since the exact Hamiltonian flow is typically unavailable, a numerical integrator is used instead.  The bias in the invariant measure due to time discretization is then removed by a Metropolis accept-reject step.  There are two key parameters in the algorithm: the time step size $\Delta t$ used by this numerical integrator and the number of integration steps $m$ between momentum randomizations.   Together these parameters set the durations of the Hamiltonian flows $T= m \Delta t$.  In infinite dimensions, there will be a third parameter: the spatial step size $\Delta s$.  Here we study an HMC algorithm whose target distribution (in phase space) has non-normalized density $e^{- (\Delta s) H(\boldsymbol{u}, \boldsymbol{p}) }$ where $H$ is the semidiscrete Hamiltonian function given in \eqref{eq:hamiltonian_semidiscrete_linear_hamiltonian_system}.  As a numerical integrator for the Hamiltonian dynamics in HMC, we use the Cayley splitting.

\medskip
The HMC algorithm consists of iterating the following steps.

\begin{algorithm}[Cayley-based HMC] \label{algo:hmc_linear_hamiltonian_system}
Given a \# of integration steps $m$, the one-step $\boldsymbol{u}^0 \mapsto \boldsymbol{u}^1$ update is given by.
\begin{description}
\item[(Step 1)] 
Draw a random vector $\boldsymbol{p}^0$ whose components are independent standard normal random variables.
\medskip
\item[(Step 2)] 
 Set $
 \begin{bmatrix}  \boldsymbol{\tilde u}^1 \\  \boldsymbol{\tilde p}^1 \end{bmatrix} = \boldsymbol{C}^m \begin{bmatrix} \boldsymbol{u}^0 \\ \boldsymbol{p}^0 \end{bmatrix} \;.
 $
 \medskip
\item[(Step 3)] 
 Take as actual update $
\boldsymbol{u}^1 = \gamma  \boldsymbol{\tilde u}^1 + (1-\gamma) \boldsymbol{ u}^0 
$ where $\gamma$ is a Bernoulli random variable with parameter \[
\alpha(\boldsymbol{u}^0, \boldsymbol{p}^0) = 1 \wedge e^{-(\Delta s) \left( H( \boldsymbol{\tilde u}^1,  \boldsymbol{\tilde p}^1) - H(\boldsymbol{u}^0, \boldsymbol{p}^0) \right) } \;.
\] 
\end{description}
\end{algorithm}

Note that in (Step 2) a proposal move is computed by numerically evolving the Hamiltonian flow for a fixed time duration of $T = m \Delta t$, and in (Step 3)
this proposal move is accepted with probability $\alpha(\boldsymbol{u}_0, \boldsymbol{p}_0)$.  
It is quite standard to show that this chain preserves the correct target distribution \cite{Li2008}; for a short proof see Theorem 5.2 of~\cite{BoSaActaN2018}.   The basic  idea behind the algorithm is:  if $m$ is sufficiently large and $\Delta t$ is sufficiently small, then the HMC algorithm produces states near the end of a Hamiltonian trajectory in (Step 2), and since the Hamiltonian is preserved, this state is likely to be accepted in (Step 3).  Note that if a move is rejected, then the updated state is the current state, the correlation in the chain increases and the cost of (Step 2) is wasted.   Thus, low acceptance rates are counterproductive.   Figure~\ref{fig:var_x_hmc_linear} illustrates the accuracy of this algorithm in computing the variance in each component of the $\boldsymbol{u}$-marginal of 
$e^{-(\Delta s) H( \boldsymbol{u},\boldsymbol{p} ) }$.

\medskip
The next proposition identifies a scaling of $\Delta t$ that gives an acceptance probability that converges to a nontrivial limit as $\Delta s \to 0$.

\begin{prop} \label{prop:hmc_ap}
Referring to the acceptance probability in (Step 3) of Algorithm~\ref{algo:hmc_linear_hamiltonian_system}, and using the same notation as Prop.~\ref{prop:Cayley_Splitting_mean_dH}, let $\Delta t = \Delta s^{1/4}$ and set $m = \lfloor T/ \Delta t \rfloor$.  Then for all $T>0$ and for all $S>0$ \begin{equation} \label{eq:asymptotic_mean_ap}
\lim_{\Delta s \to 0} \E( \alpha ) = 1 \;.
\end{equation}
\end{prop}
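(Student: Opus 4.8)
The plan is to recognize the acceptance probability in (Step~3) of Algorithm~\ref{algo:hmc_linear_hamiltonian_system} as $\alpha = 1\wedge e^{-(\Delta s)\Delta}$, where $\Delta = H(\boldsymbol{C}^m(\boldsymbol{u}^0,\boldsymbol{p}^0)) - H(\boldsymbol{u}^0,\boldsymbol{p}^0)$ is exactly the global energy error of the Cayley splitting after $m$ steps, and $\E(\alpha)$ is the expectation over the equilibrium pair $(\boldsymbol{u}^0,\boldsymbol{p}^0)$ with non-normalized density $e^{-(\Delta s)H(\boldsymbol{u},\boldsymbol{p})}$ (matching the notation of Prop.~\ref{prop:Cayley_Splitting_mean_dH}). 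Writing $x^{+}:=\max(x,0)$ and using the elementary inequality $1\wedge e^{-x}\ge 1-x^{+}$, valid for all $x\in\R$, with $x=(\Delta s)\Delta$, together with $0\le\alpha\le 1$, gives $0\le 1-\E(\alpha)\le(\Delta s)\,\E(\Delta^{+})\le(\Delta s)\,\E|\Delta|\le(\Delta s)\sqrt{\Var(\Delta)+\E(\Delta)^{2}}$, the last step by Jensen's inequality. Hence it suffices to prove $(\Delta s)^{2}\bigl(\Var(\Delta)+\E(\Delta)^{2}\bigr)\to 0$ along $\Delta t=\Delta s^{1/4}$, $m=\lfloor T/\Delta t\rfloor$, $n=S/\Delta s$.

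As in the proof of Prop.~\ref{prop:Cayley_Splitting_mean_dH}, the next step is to pass to spectral coordinates via the orthonormal eigenvectors of $\boldsymbol{L}$: the problem decouples into $n-1$ scalar oscillators with frequencies $\omega_i$ as in \eqref{eq:omi2}, and $\Delta=\sum_{i=1}^{n-1}\Delta_i$ is a sum of independent terms whose marginal laws are covered by Lemmas~\ref{lemma:Cayley_Splitting_Mean_Energy_Error_1D} and~\ref{lemma:Cayley_Splitting_Variance_Energy_Error_1D} with $\beta=\Delta s$. Writing $\theta_i$ as in \eqref{eq:theta_chi} (so $\cos\theta_i = -1 + \tfrac{2(4-\Delta t^{2})}{4+\Delta t^{2}\omega_i^{2}}$) and $\Sigma_m:=\sum_{i=1}^{n-1}\sin^{2}(m\theta_i)$, those lemmas yield $\E(\Delta) = \tfrac{\Delta t^{4}}{8(\Delta s)(4-\Delta t^{2})}\,\Sigma_m$ (as in \eqref{eq:mean_dH_identity}) and $\Var(\Delta)\le \tfrac{C\,\Delta t^{4}}{(\Delta s)^{2}(4-\Delta t^{2})^{2}}\,\Sigma_m$ with $C$ an absolute constant, using $(8-\Delta t^{2})^{2}-\Delta t^{4}\cos(2m\theta_i)\le 80$ for $0<\Delta t<2$. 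Substituting $\Delta t=\Delta s^{1/4}$ (so $\Delta t^{4}=\Delta s$ and $4-\Delta t^{2}\to 4$) makes $(\Delta s)^{2}\E(\Delta)^{2}=O\bigl(((\Delta s)\Sigma_m)^{2}\bigr)$ and $(\Delta s)^{2}\Var(\Delta)=O\bigl((\Delta s)\Sigma_m\bigr)$, so everything reduces to showing $(\Delta s)\,\Sigma_m\to 0$.

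To prove $(\Delta s)\Sigma_m\to 0$ I would refine the reasoning behind \eqref{eq:asymptotic_mean_dH_limit}. Put $\delta_i:=\pi-\theta_i\in(0,\pi)$; since $m\in\mathbb{N}$, $\sin^{2}(m\theta_i)=\sin^{2}(m\delta_i)\le(m\delta_i)^{2}$. From $\theta_i=2\arctan\!\bigl(\tfrac{\Delta t}{\sqrt{4-\Delta t^{2}}}\sqrt{1+\omega_i^{2}}\bigr)$ and $\arctan z+\arctan(1/z)=\pi/2$ one gets $\delta_i=2\arctan\!\bigl(\tfrac{\sqrt{4-\Delta t^{2}}}{\Delta t\sqrt{1+\omega_i^{2}}}\bigr)\le\tfrac{4}{\Delta t\,\omega_i}$, and since $\omega_i=\tfrac{2}{\Delta s}\sin\!\bigl(\tfrac{i\pi}{2n}\bigr)\ge\tfrac{2i}{S}$ for $1\le i\le n$ (from $\sin x\ge\tfrac{2}{\pi}x$ on $[0,\pi/2]$ and $n=S/\Delta s$) and $m\le T/\Delta t$, this gives $m\delta_i\le\tfrac{4T}{\Delta t^{2}\omega_i}\le\tfrac{2TS}{(\Delta s)^{1/2}\,i}$. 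Now fix $\eta\in(0,1)$ and set $i_\star:=\lceil 2TS/(\eta(\Delta s)^{1/2})\rceil$. For every $i>i_\star$ we then have $\sin^{2}(m\theta_i)\le\eta^{2}$, while the at most $i_\star$ modes with $i\le i_\star$ contribute at most $i_\star$ to $\Sigma_m$; hence $(\Delta s)\Sigma_m\le(\Delta s)i_\star+(\Delta s)(n-1)\eta^{2}\le\tfrac{2TS(\Delta s)^{1/2}}{\eta}+\Delta s+S\eta^{2}$. Letting $\Delta s\to 0$ gives $\limsup_{\Delta s\to 0}(\Delta s)\Sigma_m\le S\eta^{2}$, and since $\eta$ is arbitrary while $(\Delta s)\Sigma_m\ge 0$, we conclude $(\Delta s)\Sigma_m\to 0$. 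Combined with the first two paragraphs this gives $1-\E(\alpha)\to 0$.

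The main obstacle is the third step: one must quantify the rate at which $\theta_i\uparrow\pi$ for the overwhelming majority of modes and check that the number of integration steps $m=\lfloor T/\Delta t\rfloor$ is small enough that $m(\pi-\theta_i)\to 0$ uniformly over that bulk. This is precisely where the prescribed scaling $\Delta t=\Delta s^{1/4}$ enters, since it keeps $m$ small relative to the reciprocal phase gaps $1/\delta_i$ on the bulk of the spectrum, which is what forces $(\Delta s)\Sigma_m\to 0$; the remaining ingredients are bookkeeping with the closed forms \eqref{eq:cayley_splitting_1D}, \eqref{eq:theta_chi} and Lemmas~\ref{lemma:Cayley_Splitting_Mean_Energy_Error_1D}--\ref{lemma:Cayley_Splitting_Variance_Energy_Error_1D}.
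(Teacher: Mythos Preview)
Your proof is correct and takes a genuinely different, more elementary route than the paper's. The paper invokes Lyapunov's central limit theorem on the triangular array $X_{n,i}=(\Delta s)(\Delta_i-\E(\Delta_i))$ (using Lemma~\ref{lemma:Cayley_Splitting_4th_moment_Energy_Error_1D} for the fourth moments), obtains $\sum_i X_{n,i}/\tau_n\Rightarrow\mathcal{N}(0,1)$, notes that $\tau_n^2\to 0$, and then applies dominated convergence to $\alpha=1\wedge\exp\bigl(-\tau_n\cdot(\sum_i X_{n,i}/\tau_n)-(\Delta s)\sum_i\E(\Delta_i)\bigr)$. You bypass the CLT entirely via the pointwise inequality $1-(1\wedge e^{-x})\le x^{+}$, reducing the problem to a second-moment bound governed by $(\Delta s)\Sigma_m$. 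Both arguments ultimately hinge on $(\Delta s)\Sigma_m\to 0$; the paper asserts $\Sigma_m=O(n^{1/2})$ without details, whereas you supply a complete quantitative argument via the phase-gap estimate $\pi-\theta_i\le 4/(\Delta t\,\omega_i)$ together with $\omega_i\ge 2i/S$ and a splitting of the modes at $i_\star$. Your route is shorter and self-contained for the stated conclusion $\E(\alpha)\to 1$; the paper's CLT machinery, while heavier here, would be the right tool at a critical scaling where one seeks the exact nontrivial limit of $\E(\alpha)$ rather than just $1$.
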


\begin{proof}
Consider the triangular array of random variables \[
\{ X_{n,i} \mid 1 \le i \le n-1 \} \quad \text{where $X_{n,i} = \Delta s (\Delta_{i} - \E( \Delta_i ) )$} \;.
\] Lemmas~\ref{lemma:Cayley_Splitting_Mean_Energy_Error_1D},~\ref{lemma:Cayley_Splitting_Variance_Energy_Error_1D}, and~\ref{lemma:Cayley_Splitting_4th_moment_Energy_Error_1D} with $\beta=\Delta s$ and $\Delta t = \Delta s^{1/4}$ imply that \begin{align*}
\E( X_{n,i} ) &=  0 \\ 
\E( X_{n,i}^2 ) &= \sin^2(m \theta_i) \frac{\Delta s}{(4 - \Delta s^{1/2} )^2} \left( 1 + O(\Delta s^{1/2}) \right)  \\
\E(  X_{n,i}^4  ) &= \sin^4(m \theta_i) \frac{\Delta s^2}{(4 -\Delta s^{1/2} )^4} \left( \frac{18432}{473}  + O(\Delta s)  \right) 
\end{align*}
Let $\tau_n^2 = \sum_{i=1}^n \E( X_{n,i}^2 )$.  Then \begin{align*}
\lim_{n \to \infty} \sum_{k=1}^n \frac{1}{\tau_n^4} \E\left(  X_{n,i}^4  \right)  &=\lim_{n \to \infty} \frac{ \sum_{k=1}^n \sin^4(m \theta_k)  \left( \frac{18432}{473}  + O(\Delta s) \right) }{\left(\sum_{j=1}^n \sin^2(m \theta_j) ( 1+ O(\Delta s^{1/2}) ) \right)^2}   \\
 &\le\lim_{n \to \infty} \frac{ \sum_{k=1}^n \sin^2(m \theta_k)  \left( \frac{18432}{473}  + O(\Delta s) \right) }{\left(\sum_{j=1}^n \sin^2(m \theta_j) ( 1+ O(\Delta s^{1/2}) ) \right)^2}  =0
\end{align*}
since $ \sum_{k=1}^n \sin^2(m \theta_k) \sim O(n^{1/2})$.  Thus, Lyapunov's condition holds with $\delta = 2$ and $\sum_{i=1}^{n-1} X_{n,i}/\tau_n \overset{d}{\to} \mathcal{N}(0,1)$ \cite[Theorem 27.3]{billingsley2012probability}. Moreover, \[
\E( \alpha ) = 1 \wedge \exp\left( - \tau_n \frac{ \sum_{i=1}^{n-1} X_{n,i} }{\tau_n} - \Delta s \sum_{i=1}^{n-1} \E( \Delta_i ) \right)
\] Since the function $u \mapsto 1 \wedge e^u$ is bounded and $\tau_n^2 \sim n^{-1/2}$, the dominated convergence theorem implies the desired result.
\end{proof}

We conclude this section with a link between non-preconditioned MALA\footnote{See \S6.1 of Ref.~\cite{BeRoStVo2008} for a detailed description of this MALA algorithm.} and the HMC Algorithm~\ref{algo:hmc_linear_hamiltonian_system} based on the Cayley splitting.  We prove this Proposition in a more general setting in Prop.~\ref{prop:mala_hmc} below.

\begin{prop} \label{prop:hmc_mala}
For all $S>0$, for any $\Delta s>0$ and for any positive $\Delta t<2$, the HMC Algorithm~\ref{algo:hmc_linear_hamiltonian_system} with $m=1$ is equivalent in law to non-preconditioned MALA with a Crank-Nicolson proposal move and operated at a time step size of $(1/2) \Delta t^2 $, and with non-normalized target density given by the $\boldsymbol{u}$-marginal of $e^{- (\Delta s) H(\boldsymbol{u}, \boldsymbol{p}) }$.
\end{prop}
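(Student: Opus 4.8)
The plan is to show the one-step HMC update with $m=1$ — namely: draw $\boldsymbol{p}^0\sim\mathcal N(0,\boldsymbol I)$, propose $(\boldsymbol{\tilde u}^1,\boldsymbol{\tilde p}^1)=\boldsymbol C(\boldsymbol u^0,\boldsymbol p^0)$, and accept $\boldsymbol u^1=\boldsymbol{\tilde u}^1$ with probability $\alpha$ — induces, after marginalizing out the momentum, exactly the transition kernel of MALA with a Crank--Nicolson proposal at step size $(1/2)\Delta t^2$ and target $\propto$ the $\boldsymbol u$-marginal of $e^{-(\Delta s)H}$. Since the $\boldsymbol u$-marginal of $e^{-(\Delta s)H(\boldsymbol u,\boldsymbol p)}$ is $\propto e^{-(\Delta s)(\frac12\|\boldsymbol u\|^2-\frac12\boldsymbol u^{\mathrm T}\boldsymbol L\boldsymbol u)}$, i.e.\ Gaussian with precision $(\Delta s)(\boldsymbol I-\boldsymbol L)$, the claim is really an identity between two explicit Gaussian-proposal Metropolis chains on $\mathbb R^{n-1}$.

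First I would write out the top row of $\boldsymbol C=\exp((1/2)\Delta t\boldsymbol B)\,\cay(\Delta t\boldsymbol A)\,\exp((1/2)\Delta t\boldsymbol B)$ using the block formula \eqref{eq:cayley_for_computations} for $\cay(\Delta t\boldsymbol A)$ together with the fact that here $\boldsymbol A$ has lower-left block $\boldsymbol L-\boldsymbol I$ (the linear-Langevin $\boldsymbol A$ from \S"Linear Hamiltonian PDE" — careful: in this HMC section the relevant matrix is the one for \eqref{eq:semidiscrete_linear_hamiltonian}, so $\boldsymbol L\mapsto\boldsymbol L-\boldsymbol I$) and $\boldsymbol B$ has lower-left block $-\boldsymbol I$. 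Extracting $\boldsymbol{\tilde u}^1$ gives an expression of the form $\boldsymbol{\tilde u}^1=\boldsymbol M\boldsymbol u^0+\boldsymbol N\boldsymbol p^0$ with $\boldsymbol M,\boldsymbol N$ rational functions of $\Delta t^2(\boldsymbol L-\boldsymbol I)$; the key algebraic observation is that $\boldsymbol M=\big(\boldsymbol I-\tfrac{\Delta t^2}{4}(\boldsymbol L-\boldsymbol I)\big)^{-1}\big(\boldsymbol I+\tfrac{\Delta t^2}{4}(\boldsymbol L-\boldsymbol I)\big)=\cay\!\big(\tfrac{\Delta t^2}{2}(\boldsymbol L-\boldsymbol I)\big)$, which is precisely $f(x)=(2+x)/(2-x)$ evaluated at $x=\tfrac{\Delta t^2}{2}(\boldsymbol L-\boldsymbol I)$ — exactly the coefficient matrix of a $\theta=1/2$ (Crank--Nicolson) semi-implicit proposal at time step $h=(1/2)\Delta t^2$ for $d\boldsymbol u=(\boldsymbol L-\boldsymbol I)\boldsymbol u\,dt$. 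Then I would check that since $\boldsymbol p^0$ is standard Gaussian and independent, $\boldsymbol N\boldsymbol p^0$ is a centered Gaussian with covariance $\boldsymbol N\boldsymbol N^{\mathrm T}$, and verify that $\boldsymbol N\boldsymbol N^{\mathrm T}$ matches the noise covariance of the Crank--Nicolson MALA proposal — this is where one uses the symplecticity/volume-preservation of $\boldsymbol C$ (Lemma~\ref{lemma:cayley_symplectic}) plus the block structure to identify $\boldsymbol N\boldsymbol N^{\mathrm T}$ with $h$ times the appropriate symmetrized operator.

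Next, for the acceptance probability, I would note that $\alpha=1\wedge e^{-(\Delta s)(H(\boldsymbol{\tilde u}^1,\boldsymbol{\tilde p}^1)-H(\boldsymbol u^0,\boldsymbol p^0))}$ and that, because $\boldsymbol C$ is symplectic hence volume-preserving and (by Lemma~\ref{lemma:cayley_reversible}) reversible w.r.t.\ momentum flip, this energy difference can be rewritten — after integrating out momentum — as exactly the Metropolis--Hastings ratio between the $\boldsymbol u$-marginal target and the Crank--Nicolson proposal density; this is the content of the general Metropolized-integrator calculation (Prop.~5.1 of \cite{BoSaActaN2018}, invoked earlier for Algorithm~\ref{algo:metro_cayley_linear_hamiltonian_system}), specialized to $m=1$. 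Putting the proposal identity and the acceptance-ratio identity together yields equality in law of the two chains. I would finish by remarking that the stated range $\Delta t<2$ is exactly what is needed for $\big(\boldsymbol I-\tfrac{\Delta t^2}{4}(\boldsymbol L-\boldsymbol I)\big)^{-1}$ — equivalently $\cay(\Delta t\boldsymbol A)$ — to be well-defined on every spectral mode (cf.\ Lemma~\ref{lemma:Cayley_Splitting_Stability_1D} and the discussion after \eqref{eq:cayley_for_computations}), and that rescaling $\boldsymbol B\mapsto\boldsymbol B$ by the factor relating $h=(1/2)\Delta t^2$ is the time-rescaling already noted after \eqref{eq:cayley_splitting_1D}.

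\textbf{Main obstacle.} The routine but delicate step is the explicit identification of the proposal's \emph{noise covariance} $\boldsymbol N\boldsymbol N^{\mathrm T}$ with the Crank--Nicolson MALA noise covariance: one must be careful that HMC injects an $(n-1)$-dimensional Gaussian $\boldsymbol p^0$ which then gets pushed through the nontrivial off-diagonal block $\boldsymbol N$ of $\boldsymbol C$, and show this equals the (possibly differently parametrized) Gaussian perturbation in the BRSV description of non-preconditioned MALA; getting the factors of $\Delta t$, $\Delta t^2$, and $\Delta s$ to line up — and confirming the $\boldsymbol L\mapsto\boldsymbol L-\boldsymbol I$ shift is handled consistently in drift, noise, and acceptance ratio — is the part most prone to error, though conceptually it is forced by symplecticity and reversibility of $\boldsymbol C$.
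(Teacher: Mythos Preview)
Your overall strategy --- extract the position update $\boldsymbol{\tilde u}^1=\boldsymbol{M}\boldsymbol{u}^0+\boldsymbol{N}\boldsymbol{p}^0$ from the top block-row of $\boldsymbol{C}$, match it to the MALA proposal, then match the acceptance ratios --- is exactly what the paper does (the proof is deferred to the more general Proposition~\ref{prop:mala_hmc}). However, your ``key algebraic observation'' about $\boldsymbol{M}$ is wrong, and the error comes from a misreading of the splitting.

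In the setup of the linear Hamiltonian PDE section the matrix $\boldsymbol{A}$ has lower-left block $\boldsymbol{L}$, \emph{not} $\boldsymbol{L}-\boldsymbol{I}$; the perturbation $-\boldsymbol{I}$ lives in $\boldsymbol{B}$. Using \eqref{eq:cayley_for_computations} for $\cay(\Delta t\boldsymbol{A})$ and the fact that the left factor $\exp((1/2)\Delta t\boldsymbol{B})$ does not alter the top block-row, one obtains
\[
\boldsymbol{M}=\Big(\boldsymbol{I}-\tfrac{\Delta t^2}{4}\boldsymbol{L}\Big)^{-1}\Big(\boldsymbol{I}+\tfrac{\Delta t^2}{4}\boldsymbol{L}-\tfrac{\Delta t^2}{2}\boldsymbol{I}\Big),\qquad
\boldsymbol{N}=\Delta t\,\Big(\boldsymbol{I}-\tfrac{\Delta t^2}{4}\boldsymbol{L}\Big)^{-1},
\]
which you can check against the $(1,1)$ and $(1,2)$ entries of \eqref{eq:cayley_splitting_1D}. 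This is \emph{not} $\cay\!\big(\tfrac{\Delta t^2}{2}(\boldsymbol{L}-\boldsymbol{I})\big)$: already in the scalar case $\omega=0$ your formula gives $(4-\Delta t^2)/(4+\Delta t^2)$ whereas the correct value is $1-\Delta t^2/2$.

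The related conceptual slip is what ``Crank--Nicolson MALA'' means here. In Algorithm~\ref{algo:MALA} the $\theta=1/2$ scheme is implicit \emph{only on $\boldsymbol{L}$} and explicit on $\boldsymbol{F}(\boldsymbol{u})$ (here $\boldsymbol{F}(\boldsymbol{u})=-\boldsymbol{u}$); see \eqref{eq:theta_method}. With MALA step $h=(1/2)\Delta t^2$ its drift coefficient is $(\boldsymbol{I}-\tfrac{h}{2}\boldsymbol{L})^{-1}\big[(\boldsymbol{I}+\tfrac{h}{2}\boldsymbol{L})-h\boldsymbol{I}\big]$, which is precisely the $\boldsymbol{M}$ above, and its noise coefficient $(\boldsymbol{I}-\tfrac{h}{2}\boldsymbol{L})^{-1}$ matches $\boldsymbol{N}$ up to the Gaussian scaling. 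Once you correct which operator gets the implicit treatment, the proposal identification and then the acceptance-ratio identity (which the paper verifies by direct expansion of $\Delta H$ rather than by an abstract ``integrating out momentum'' argument) go through exactly as in the proof of Proposition~\ref{prop:mala_hmc}.
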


\begin{figure}
\begin{center}
\includegraphics[width=0.45\textwidth]{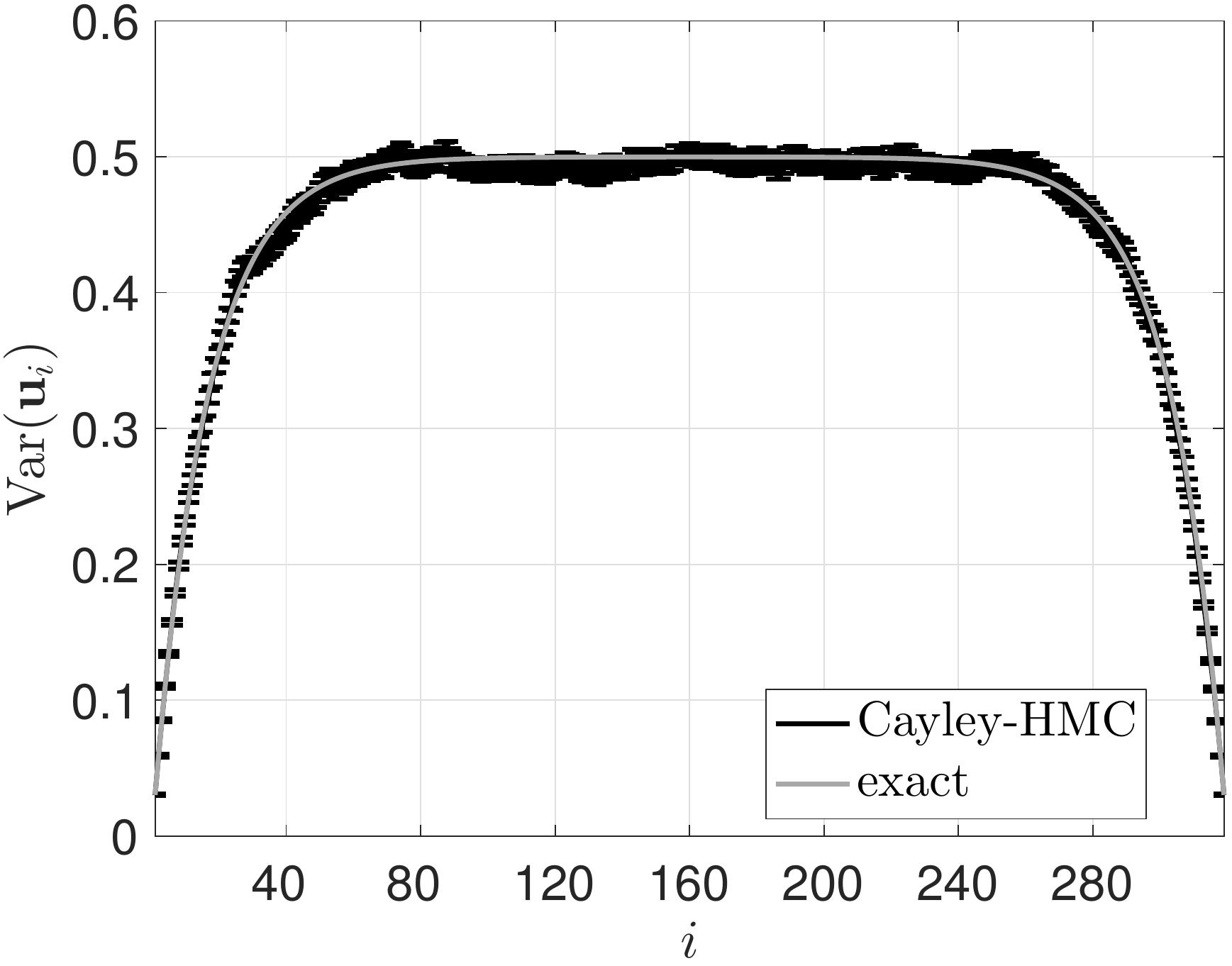}  \hspace{0.1in}
\includegraphics[width=0.45\textwidth]{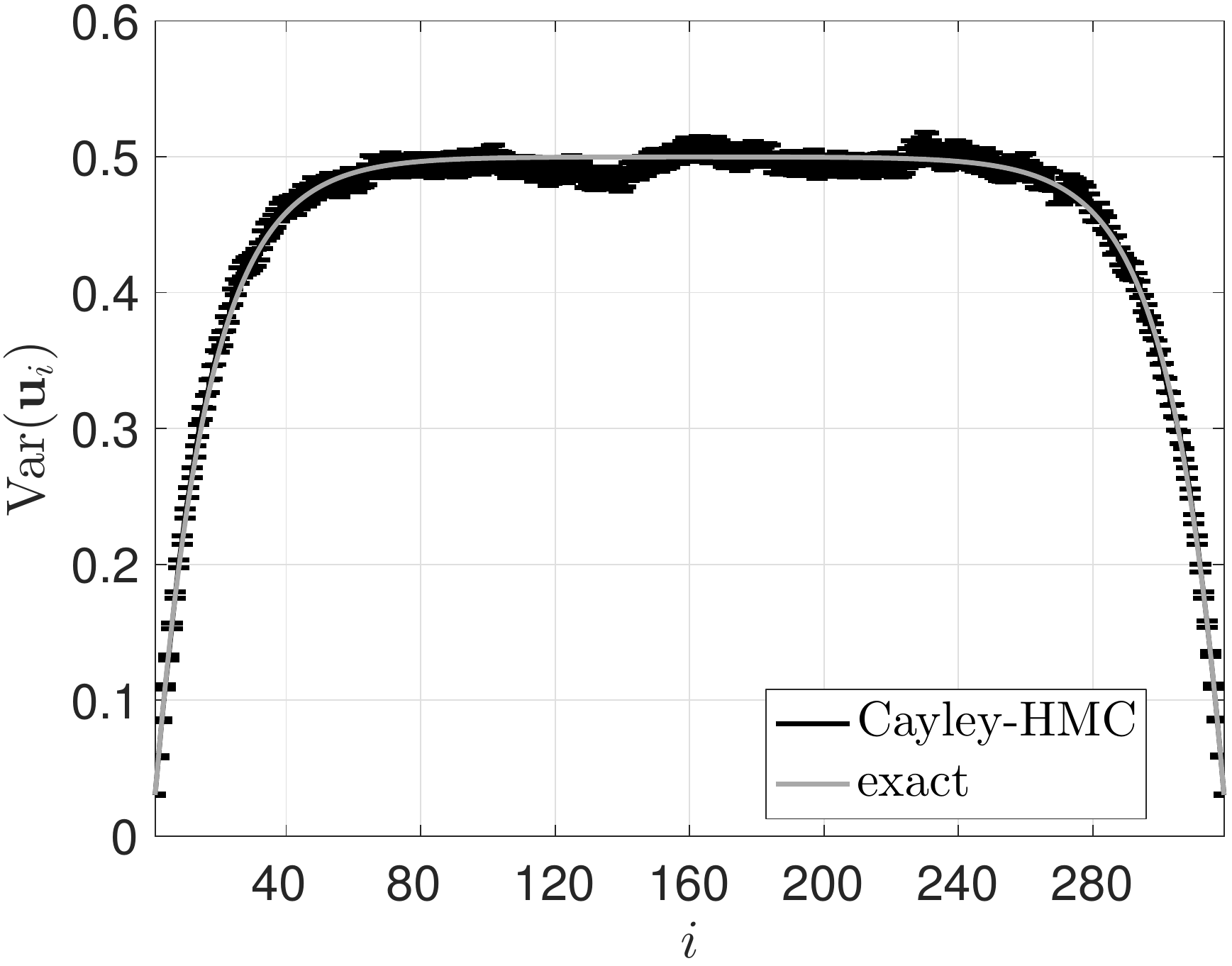}  
\end{center}
\caption{\small  {\bf Cayley-based HMC.}   
This figure verifies the accuracy of Algorithm~\ref{algo:hmc_linear_hamiltonian_system} in computing the variance in each component of the $\boldsymbol{u}$-marginal of $e^{-(\Delta s) H(\boldsymbol{u}, \boldsymbol{p}) }$.  
The $x$-axis labels the components.  The exact variance is also given for comparison.  For both panels, the number of samples is $10^4$, the duration of the Hamiltonian legs is fixed at $T=5$, and the spatial step size is $\Delta s = 0.03125$, and the time step size is $\Delta t = 0.5$ (left panel) and $\Delta t=0.25$ (right panel).  The corresponding average acceptance probabilities are $63\%$ and $91\%$, respectively.  
}
  \label{fig:var_x_hmc_linear}
\end{figure}


\newpage

\section{Application to Diffusion Bridges} \label{sec:diffusion_bridges}

As an application of the Cayley splitting, we turn to a class of second-order Langevin SPDE problems whose marginal invariant measure 
in position is the law of a multidimensional diffusion bridge.

\subsection{Diffusion Bridges} \label{sec:diffusion_bridge_definition}

We fix a time horizon $S>0$ and consider the process $\mathsf{X}: [0, S] \to \mathbb{R}^d$ which solves the SDE \begin{equation} \label{eq:diffusion_bridge}
d \mathsf{X}(s) = - \nabla V( \mathsf{X}(s) ) ds + \sqrt{2 \beta^{-1}} d \mathsf{W}(s)
\end{equation}
conditioned on both initial and final conditions \[
\mathsf{X}(0) = x^- \quad \text{and} \quad \mathsf{X}(S) = x^+ 
\]  where $\beta>0$ is an inverse temperature parameter, $V: \mathbb{R}^d \to \mathbb{R}$ is a potential energy function and $ \mathsf{W}$ is a $d$-dimensional 
standard Brownian motion.  This process is known as a \textit{diffusion bridge} \cite{ReVa2005, BeRoStVo2008, HaStVo2009}. 
The law of this diffusion bridge is a probability measure on paths from $x^-$ to $x^+$ that has a density proportional to \begin{equation} \label{eq:density_of_diffusion_bridge}
\Pi(u) = \exp\left( -\frac{\beta}{2} \int_0^S \left[ \frac{1}{2} | \partial_s u(s) |^2 + G(u(s)) \right] ds \right) \;,
\end{equation}  where $G : \mathbb{R}^d \to \mathbb{R}$ is called the path potential energy function defined by \begin{equation} \label{eq:path_potential}
G(x) = \frac{1}{2} | \nabla V(x)|^2 - \beta^{-1} \Delta V(x) \;, \quad x \in \mathbb{R}^d \;.
\end{equation}

\subsection{First-Order, Semilinear Langevin SPDE} \label{sec:first_order_langevin}

The distribution of a diffusion bridge can be used to define an overdamped Langevin SPDE on the path space of the diffusion \cite[Theorem 1.1]{ReVa2005}.   In the context of a $d$-dimensional diffusion bridge, the simplest way to do this is by defining an energy functional  \begin{equation} \label{eq:energy_functional}
\mathcal{E}(u) = \int_0^S \left[ \frac{1}{2} | \partial_s u(s) |^2 + G(u(s)) \right] ds \;,
\end{equation}  such that $\Pi$ in \eqref{eq:density_of_diffusion_bridge} can be written as  \[
 \Pi(u) = \exp\left( -\frac{\beta}{2} \mathcal{E}(u) \right) \;.
\]
Then an overdamped Langevin SPDE whose invariant distribution is the law of the diffusion bridge $\mathsf{X}$ is given by \begin{equation} \label{eq:overdamped_langevin_spde}
\begin{aligned}
\partial_t u(t,s) &= -  \dfrac{\delta \mathcal{E}}{\delta u}( u(t,s)) ds + 2 \sqrt{\beta^{-1}} \partial_t W(t,s) \;,  \\
u(t,0) &= x^- \;, \quad u(t,S)=x^+ \;, \quad u(0,s) = u_0(s) \;, 
\end{aligned}
\end{equation}  for all $(t,s) \in [0, \infty) \times [0,S]$.  
Here $u_0$ is an initial path with endpoints at $x^-$ and $x^+$, $W$ is a $d$-dimensional space-time, cylindrical Wiener process, and $\delta \mathcal{E}/\delta u$ is the functional derivative of $\mathcal{E}$.  This functional derivative equals \[
 \frac{\delta \mathcal{E}}{\delta u}(u) = - \partial_s^2 u + \nabla G(u)
\]  
since for smooth functions $\delta u : [0,S] \to \mathbb{R}^d$ that vanish at the endpoints
\begin{align*}
\frac{d}{d \epsilon}  \mathcal{E}(u + \epsilon \delta u) \bigg|_{\epsilon=0} &= \int_0^S \left[ \partial_s u \cdot   \partial_s \delta u + \nabla G(u) \cdot \delta u \right] ds \\
&=  \partial_s u \cdot  \delta u  \bigg|_{s=0}^{s=S} + \int_0^S \left( - \partial_s^2 u + \nabla G(u) \right) \cdot \delta u ds  \\
&= \int_0^S \frac{\delta \mathcal{E}}{\delta u} \cdot \delta u ds
\end{align*} where in the last step we used the endpoint conditions $\delta u(0) = \delta u(S)=0$.
Note that the variable $s$, which played the role of a time variable in the diffusion bridge, represents a spatial variable in \eqref{eq:overdamped_langevin_spde}, 
while the variable $t$ represents a time variable in \eqref{eq:overdamped_langevin_spde}.   To be sure, as $t \to \infty$ the law of the solution to \eqref{eq:overdamped_langevin_spde} 
tends to the law of the diffusion bridge $\mathsf{X}$ defined in \S\ref{sec:diffusion_bridge_definition}.

For the purpose of constructing numerical approximations,  it helps to transform \eqref{eq:overdamped_langevin_spde} into an SPDE with homogeneous Dirichlet boundary conditions.
In particular, numerical approximations of these transformed equations are easier to construct and better behaved than directly approximating \eqref{eq:overdamped_langevin_spde} and then imposing the inhomogeneous Dirichlet boundary conditions in \eqref{eq:overdamped_langevin_spde}.  To this end, let  \begin{equation} \label{eq:psi} 
\psi(s)=x^- \frac{S-s}{S} + x^+ \frac{s}{S} \;, \quad s \in [0,S] \;.
\end{equation} Then $u^*(t,s) = u(t,s) - \psi(s)$ satisfies \begin{equation} \label{eq:first_order_spde}
\begin{aligned}
\partial_t u^*(t,s) &=  -  \dfrac{\delta \mathcal{E}}{\delta u}( u^*(t,s) + \psi(s) ) dt + 2 \sqrt{\beta^{-1}} \partial_t W(t,s) \;,   \\
u^*(t,0) &= 0 \;, \quad u^*(t,S)= 0 \;, \quad u^*(0,s) = u_0(s) - \psi(s) \;,
\end{aligned}
\end{equation}  
 for all $(t,s) \in [0, \infty) \times [0,S]$. Note that the Dirichlet boundary conditions on $u^*$ in \eqref{eq:first_order_spde} are homogeneous.

\subsection{Second-Order, Semilinear Langevin SPDE}  \label{sec:second_order_langevin}

Here we present a second-order Langevin SPDE whose marginal invariant measure is the law of this diffusion bridge.  We choose the marginal distribution in the other component to be the law of a spatial Gaussian white noise \cite{NeVa2016}. Specifically, a second-order Langevin SPDE whose invariant distribution is this product distribution is given by \begin{equation} \label{eq:underdamped_langevin_spde}
\begin{aligned}
\partial_t u(t,s) &= p(t,s) \;, \\
\partial_t p(t,s) &= -  \dfrac{\delta \mathcal{E}}{\delta u}( u(t,s)) dt - \gamma p(t,s) dt + 2 \sqrt{\gamma \beta^{-1}} \partial_t W(t,s) \;,  \\
u(t,0) &= x^- \;, \quad u(t,S)=x^+ \;, \\
 u(0,s) &= u_0(s) \;, \quad p(0,s) = p_0(s) \;, 
\end{aligned}
\end{equation}
where $u_0$ and $p_0$ are initial position and momentum paths respectively, $W$ is a $d$-dimensional space-time, cylindrical Wiener process, $\gamma>0$ is a friction coefficient, $\mathcal{E}$ is the (potential) energy functional defined in \eqref{eq:energy_functional}, and as before, $(t,s) \in [0, \infty) \times [0,S]$.  
The dynamics of this SPDE preserves the measure with density proportional to \begin{equation}
 \exp\left( - \frac{\beta}{2} \left[  \frac{1}{2} \int_0^S \abs{p(s)}^2 ds \right] \right)   \Pi( u )
\end{equation}
where $\Pi(u)$ is the density of the diffusion bridge defined in \eqref{eq:density_of_diffusion_bridge}.
Additionally, by using the coordinate transformation $u^*(t,s) = u(t,s) - \psi(s)$ where $\psi$ is given in \eqref{eq:psi} and $p^*(t,s) = p(t,s)$, we can turn this SPDE into one with homogeneous Dirichlet boundary conditions
\begin{equation} \label{eq:second_order_spde}
\begin{aligned}
\partial_t u^*(t,s) &= p^*(t,s) \;, \\
\partial_t p^*(t,s) &= -  \dfrac{\delta \mathcal{E}}{\delta u}( u^*(t,s) + \psi(s) ) dt \\
& \qquad - \gamma p^*(t,s) dt + 2 \sqrt{\gamma \beta^{-1}} \partial_t W(t,s) \;,  \\
u^*(t,0) &=  0 \;, \quad u^*(t,S)=0 \;, \\
 u^*(0,s) &= u_0(s) - \psi(s) \;, \quad p^*(0,s) = p_0(s) \;.
\end{aligned}
\end{equation}
A special case of \eqref{eq:second_order_spde} is when $\gamma=0$.  In this case, the noise and friction vanish, and the second-order Langevin SPDE in \eqref{eq:second_order_spde} reduces to a semilinear, Hamiltonian PDE \begin{equation} \label{eq:hamiltonian_pde}
\begin{aligned}
\partial_t u^*(t,s) &= p^*(t,s) \;, \\
\partial_t p^*(t,s) &= -  \dfrac{\delta \mathcal{E}}{\delta u}( u^*(t,s) + \psi(s) ) \;, \\
u^*(t,0) &=  0 \;, \quad u^*(t,S)=0 \;, \\
 u^*(0,s) &= u_0(s) - \psi(s) \;, \quad p^*(0,s) = p_0(s) \;,
\end{aligned}
\end{equation}
with associated Hamiltonian functional \begin{equation} \label{eq:hamiltonian_functional}
\mathcal{H}(u,p) = \mathcal{E}( u + \psi ) + \frac{1}{2} \int_0^S |p(s)|^2 ds \;.
\end{equation}

\subsection{Semidiscrete Approximations}  \label{sec:semi_discrete}

Here we construct semi-discrete (continuous time, discrete space) approximations of \eqref{eq:first_order_spde} and \eqref{eq:second_order_spde} 
by using finite difference methods.  Other finite-dimensional truncations are possible including pseudospectral, finite-element, Galerkin and finite-volume methods.   For simplicity's sake, we consider only uniform grids.  More precisely, we use the uniform grid shown in Figure~\ref{fig:grid} where the interval $[0,S]$ is discretized using the evenly-spaced grid consisting of $n+1$ grid points defined in \eqref{eq:uniform_grid}  with step size $\Delta s = S/n$.

For each $1 \le \ell \le d$, approximate the $\ell$-th component of $W$ by a truncation \[
\boldsymbol{W}_{\ell}(t,s) = \sum_{k=1}^n \beta^{\ell}_k(t) e_k(s)
\] where $\{ \beta_k^{\ell} \}_{k=1}^n$ are $n$ i.i.d.~standard Brownian motions and $\{ e_k \}_{k=1}^n$ are the leading $n$ orthonormal eigenfunctions of the second derivative endowed with homogeneous Dirichlet boundary conditions \[
e_k(s) = \sqrt{\frac{2}{S}} \sin\left( \frac{k \pi s}{S} \right) \;,  \quad 1 \le k \le n \;.
\]  On the grid \eqref{eq:uniform_grid}, we obtain \[
\mathrm{E} \left\{ \boldsymbol{W}_{\ell}(t,s_i) \boldsymbol{W}_{\ell}(t, s_j) \right\} = \frac{t}{\Delta s} \delta_{ij} 
\] by Lagrange's trigonometric identities.  Thus, $\{ \boldsymbol{W}_{\ell}(t, s_i) \}_{i=1}^n$ are independent Brownian motions each with variance $1/\Delta s$.

Additionally, we discretize the second derivatives in $s$ appearing in \eqref{eq:first_order_spde} and \eqref{eq:second_order_spde} using a central difference method \begin{equation}
 \partial_s^2 f_i \approx \frac{f_{i+1} - 2 f_i + f_{i-1}}{\Delta s^2}   
 \end{equation}
where we use the shorthand notation $f_i = f(s_i)$ for $0 \le i \le n$.

\paragraph{Approximating SDE for First-Order, Semilinear Langevin SPDE}

For any $s \ge 0$ and for any $i \in \{0, \cdots, n \}$, let $\boldsymbol{u}_{i}(t) \approx u^*(t,s_i) $ denote a semi-discrete approximation of the solution to \eqref{eq:first_order_spde} on the grid \eqref{eq:uniform_grid}.    Because of the Dirichlet boundary conditions in \eqref{eq:first_order_spde},  note that \[
\boldsymbol{u}_0(t) = 0 \;, \quad \text{and} \quad \boldsymbol{u}_n(t) = 0 \;, \quad \forall t \ge 0 
 \] and hence, there are only $n-1$ unknown variables: $\boldsymbol{u}_1(t), \cdots, \boldsymbol{u}_{n-1}(t)$.   Using this spatial discretization, we obtain the following approximating SDEs \begin{equation} \label{eq:semidiscrete_first_order}
 d \boldsymbol{u}(t) = ( \boldsymbol{L}  \boldsymbol{u}(t) + F( \boldsymbol{u}(t)) ) dt + \frac{2 \sqrt{\beta^{-1}}}{\sqrt{\Delta s}} d \boldsymbol{W}(t)
 \end{equation}
where we have introduced: 
\begin{itemize}
\item
 $d (n-1)$-dimensional vector of unknown functions \[
\boldsymbol{u} = (\boldsymbol{u}_1, \cdots, \boldsymbol{u}_{n-1}) \;,
\]
obtained by stacking $n-1$, $d$-vectors one above the other,
\item
 $d (n-1)$-dimensional Brownian motion \[
\boldsymbol{W} =  ( \boldsymbol{W}_1, \cdots,  \boldsymbol{W}_{n-1})  \;,
\] 
\item
  $d (n-1) \times d (n-1)$ sparse, symmetric matrix  \begin{equation} \label{eq:matrix_L}
 \boldsymbol{L}_{ij} = \begin{dcases}  \dfrac{-2}{\Delta s^2} & \text{if $|i-j|=0$} \;, \\
 \dfrac{1}{\Delta s^2} & \text{if $|i-j|=d$} \;, \end{dcases} 
 \end{equation}
 \item
and,  $d (n-1)$-dimensional vector field
\begin{equation} \label{eq:vector_field_F}
  \boldsymbol{F}( \boldsymbol{u}) := \left(- \nabla G( \boldsymbol{u}_1 + \psi(s_1)), \cdots, - \nabla G( \boldsymbol{u}_{n-1} + \psi(s_{n-1})) \right) \;.
\end{equation} 
\end{itemize}
 Note that \eqref{eq:semidiscrete_first_order} is first-order Langevin dynamics with invariant density proportional to \begin{equation} \label{eq:approx_stationary_density_u}
\pi( \boldsymbol{u} ) = \exp\left( - \frac{\beta}{2} \Delta s \left( \sum_{ i=1}^{n-1} G(\boldsymbol{u}_i + \psi(s_i)) - \dfrac{1}{2} \boldsymbol{u}^T \boldsymbol{L} \boldsymbol{u} \right) \right) \;.
 \end{equation}
Up to a normalizing constant, $\pi$ can be viewed as a finite-dimensional approximation of the density of the diffusion bridge $\mathsf{X}$  defined in \S\ref{sec:diffusion_bridge_definition}.


\paragraph{Approximating SDE for Second-Order, Semilinear Langevin SPDE}

To construct a finite difference approximation, we again use the uniform grid in \eqref{eq:uniform_grid}.  The analog of \eqref{eq:semidiscrete_first_order} for the second-order equations in \eqref{eq:second_order_spde} is given by the following  approximating SDE \begin{equation} \label{eq:semidiscrete_second_order}
\begin{aligned}
d \boldsymbol{u}(t) &= \boldsymbol{p}(t) dt \;, \\
 d\boldsymbol{p}(t) &= ( \boldsymbol{L} \boldsymbol{u}(t) + \boldsymbol{F}( \boldsymbol{u}(t) ) ) dt - \gamma \boldsymbol{p}(t) dt + \frac{2 \sqrt{\beta^{-1} \gamma}}{\sqrt{\Delta s}} d \boldsymbol{W}(t) \;.
\end{aligned}
\end{equation}
where $\boldsymbol{L}$ is the $d (n-1) \times d (n-1)$ sparse, symmetric matrix defined in \eqref{eq:matrix_L}, 
$ \boldsymbol{F}$ is  the $d (n-1)$-dimensional vector field defined in \eqref{eq:vector_field_F}, $\boldsymbol{W}$ is a $d (n-1)$-dimensional Brownian motion, 
and $(\boldsymbol{u}(s), \boldsymbol{p}(s))$ are the random vector-valued position and momentum processes respectively.  These processes are written in components as 
\begin{align*}
\boldsymbol{u}(t)&=(\boldsymbol{u}_1(t), \cdots, \boldsymbol{u}_{n-1}(t)) \in \mathbb{R}^{d (n-1)} \;,  \\
\boldsymbol{p}(t)&=(\boldsymbol{p}_1(t), \cdots, \boldsymbol{p}_{n-1}(t))\in \mathbb{R}^{d (n-1)} \;.
\end{align*}
 Note that \eqref{eq:semidiscrete_second_order} is second-order Langevin dynamics at temperature $2 \beta^{-1}/ \Delta s$ and with invariant density proportional to \begin{equation} \label{eq:approx_stationary_density}
\pi( \boldsymbol{u}, \boldsymbol{p}) = \exp\left( - \frac{\beta}{2} \Delta s H(\boldsymbol{u}, \boldsymbol{p}) \right) \;,
 \end{equation}
 where we have introduced the Hamiltonian function \begin{equation} \label{eq:hamiltonian_underdamped}
 H(\boldsymbol{u}, \boldsymbol{p}) = \dfrac{1}{2} \| \boldsymbol{p} \|^2 +  \sum_{ i=1}^{n-1} G(\boldsymbol{u}_i + \psi(t_i)) - \dfrac{1}{2} \boldsymbol{u} \cdot \boldsymbol{L} \boldsymbol{u} \;.
 \end{equation}
Up to a normalization constant, the $\boldsymbol{u}$-marginal of $\pi$ is a finite-dimensional approximation of the law of the diffusion bridge $\mathsf{X}$  defined in \S\ref{sec:diffusion_bridge_definition}.



\subsection{Time Discretizations}  \label{sec:cayley_splitting}

In this part, we present time discretizations for \eqref{eq:semidiscrete_first_order} and \eqref{eq:semidiscrete_second_order}.

\paragraph{Crank-Nicolson for First-Order, Semilinear Langevin SPDEs}

Here we give a Metropolized integrator for \eqref{eq:semidiscrete_first_order}.   The idea for this method comes from MCMC and numerical SDE theory \cite{RoTw1996A, RoTw1996B, BeRoStVo2008, BoVa2010,BoVa2012,BoHa2013,BoDoVa2014, Bo2014, Fa2014, FaHoSt2015}, and basically combines an explicit time integrator for this approximating SDE with the Metropolis-Hastings method. The purpose of the latter is to eliminate the bias in the invariant measure introduced by time discretization error.   In other words, Metropolis-Hastings allows one to set the invariant distribution of the integrator to be the one with density given in \eqref{eq:approx_stationary_density}.  More generally, the Metropolis-Hastings method is a general purpose tool for producing samples from an absolutely continuous distribution (known as the target distribution) given a function proportional to its density (correspondingly known as the target density) \cite{MeRoRoTeTe1953,Ha1970}. The method generates a Markov chain from a given proposal Markov chain as follows. A proposal move is computed according to the proposal chain and then accepted with a probability that ensures the Metropolized chain preserves the target distribution.

Here we shall focus on the Metropolis-Hastings method with target density given by \eqref{eq:approx_stationary_density_u} and with proposal move given by discretizing in time the approximating SDE in \eqref{eq:semidiscrete_first_order} using a $\theta$-method \begin{equation} \label{eq:theta_method}
 \boldsymbol{A}_{\theta} \boldsymbol{u}' =  \boldsymbol{B}_{\theta} \boldsymbol{u} +   \Delta t  \boldsymbol{F}(\boldsymbol{u}) + 2 \sqrt{\beta^{-1}} \sqrt{\dfrac{\Delta t}{\Delta s}} \boldsymbol{\xi}^0
\end{equation}
where $\theta \in [0,1]$ is a parameter, $\boldsymbol{\xi}^0 \in \mathcal{N}(0,1)^{d (n-1)}$ is a $d (n-1)$-dimensional standard Gaussian vector, and $\boldsymbol{A}_{\theta}, \boldsymbol{B}_{\theta}$ are the following $d (n-1) \times d (n-1)$ matrices \[
 \boldsymbol{A}_{\theta} = (\boldsymbol{I} - \theta \Delta t \boldsymbol{L})  \qquad \boldsymbol{B}_{\theta} =  (\boldsymbol{I} + (1- \theta) \Delta t \boldsymbol{L})  \;,
\]  
and $\boldsymbol{I}$ is the $d (n-1) \times d (n-1)$ identity matrix.  By change of variables, the transition density of this $\theta$-integrator is given by  \begin{equation} \label{eq:transition_density}
q(\boldsymbol{u},\boldsymbol{u}') = \left( \frac{8 \pi \Delta t}{\beta \Delta s} \right)^{-(n-1) d /2} \exp\left( - \frac{\beta \Delta s}{8 \Delta t} | \boldsymbol{A}_{\theta} y - \boldsymbol{B}_{\theta} \boldsymbol{u} - \Delta t \boldsymbol{F}(\boldsymbol{u})  |^2 \right) 
\end{equation} 
where $\boldsymbol{u},\boldsymbol{u}' \in \mathbb{R}^{d (n-1)}$.

\begin{algorithm}[Non-Preconditioned MALA] \label{algo:MALA}
Given a spatial step size $\Delta s$, a time step size $\Delta t$, a parameter $\theta \in [0,1]$ and the state $\boldsymbol{u}^0$ at time $t$, the algorithm calculates an updated state $\boldsymbol{u}^1$ at time $t+\Delta t$ in two steps: 
\begin{description}
\item[(Step 1)] 
compute a proposal move $\boldsymbol{\tilde u}^1$ using \eqref{eq:theta_method} with input $\boldsymbol{u}=\boldsymbol{u}^0$ and output $\boldsymbol{\tilde u}^1 = \boldsymbol{u}'$; \medskip
\item[(Step 2)] 
accept or reject the proposal move $\boldsymbol{\tilde u}^1$ by 
taking as actual update for the state at time $t+\Delta t$ 
\begin{equation}  \label{actualupdate}
\boldsymbol{u}^1 = \gamma \boldsymbol{\tilde u}^1 + (1-\gamma) \boldsymbol{u}^0
\end{equation}
where $\gamma$ is a Bernoulli random variable with parameter $\alpha(\boldsymbol{u}^0,  \boldsymbol{\tilde u}^1)$ and $\alpha$ is the acceptance probability function defined as \begin{equation} \label{eq:mala_alpha}
\alpha(\boldsymbol{u},\boldsymbol{u}') = 1 \wedge \frac{q(\boldsymbol{u}',\boldsymbol{u}) \pi(\boldsymbol{u}')}{q(\boldsymbol{u},\boldsymbol{u}') \pi(\boldsymbol{u})} \;,  \quad \boldsymbol{u},\boldsymbol{u}' \in \mathbb{R}^{d (n-1)} \;.
\end{equation}
To be sure, $q$ is the transition density of the proposal move given in \eqref{eq:transition_density} and $\pi$ is the (not necessarily normalized) density given in \eqref{eq:approx_stationary_density_u}.
\end{description}
\end{algorithm}

This Metropolized $\theta$ integrator is commonly known as the Metropolis-Adjusted Langevin Algorithm (MALA) \cite{RoTw1996A, RoTw1996B, BeRoStVo2008, BoVa2010}. Since MALA is a Metropolis-Hastings method, it immediately follows that it preserves the invariant density of the approximating SDE in \eqref{eq:semidiscrete_first_order}. The choice $\theta=0$ corresponds to an Euler-Maruyama time discretization of the approximating SDE in \eqref{eq:semidiscrete_first_order}.  The choice $\theta=1/2$ corresponds to a Crank-Nicholson time discretization.  The results in Ref.~\cite{BeRoStVo2008} state that unless one chooses $\theta=1/2$ (a Crank-Nicholson time discretization), the convergence of the algorithm is mesh-dependent in the sense that the acceptance rate of the Metropolized $\theta$-scheme depends strongly on the (spatial) step size $\Delta t$.  For this reason, we will only consider MALA with $\theta=1/2$ for the rest of this paper.

For the special case $\theta=1/2$, a direct calculation using \eqref{eq:approx_stationary_density_u} and \eqref{eq:transition_density} shows that $\alpha$ in \eqref{eq:mala_alpha} is given explicitly by \begin{equation} \label{eq:pmala_alpha}
\begin{aligned}
&\alpha(\boldsymbol{u},\boldsymbol{u}') = 1 \wedge \exp\left( -  \frac{\beta}{2}  \Delta s \left( \mathcal{G}(\boldsymbol{u}') - \mathcal{G}(\boldsymbol{u}) - \frac{1}{2} \langle \boldsymbol{u}' - \boldsymbol{u}, \nabla \mathcal{G}(\boldsymbol{u}) + \nabla \mathcal{G}(\boldsymbol{u}') \rangle \right. \right. \\
& \qquad   \left. \left. + \frac{\Delta t}{4}  \left( | \nabla \mathcal{G}(\boldsymbol{u}') |^2 -  | \nabla \mathcal{G}(\boldsymbol{u})|^2 \right)   - \frac{\Delta t}{4} \langle \boldsymbol{L} (\boldsymbol{u}'+\boldsymbol{u}), \nabla \mathcal{G}(\boldsymbol{u}') - \nabla \mathcal{G}(\boldsymbol{u}) \rangle \right) \right) 
\end{aligned}
\end{equation}
where $\mathcal{G}(\boldsymbol{u}) := \sum_{i=1}^{n-1} G(\boldsymbol{u}_i + \psi(t_i))$ and $\boldsymbol{u} = (\boldsymbol{u}_1, \cdots, \boldsymbol{u}_{n-1}) \in \mathbb{R}^{d (n-1)}$.  Note that if $\boldsymbol{L}=0$ one recovers the acceptance probability function given in, e.g., Lemma~4.7 of Ref.~\cite{BoVa2010}.

\paragraph{Cayley Splitting for Second-Order, Semilinear Langevin SPDEs}

As described in \S\ref{sec:main_results}, we split the dynamics in \eqref{eq:semidiscrete_second_order} into two parts: Hamilton's equations for the Hamiltonian in \eqref{eq:hamiltonian_underdamped}
\[
\tag{H} d \boldsymbol{u}(t) = \boldsymbol{p}(t) dt \;, \quad d  \boldsymbol{p}(t) = \boldsymbol{L} \boldsymbol{u}(t) dt + \boldsymbol{F}( \boldsymbol{u}(t) ) dt \;,
\]
and an Ornstein-Uhlenbeck equation in momentum
\[
\tag{O} d \boldsymbol{u}(t) = 0 \;, \quad d \boldsymbol{p}(t) =  - \gamma \boldsymbol{p}(t) dt + \frac{2 \sqrt{\beta^{-1} \gamma}}{\sqrt{\Delta s}} d \boldsymbol{W}(t)  \;.
\]
To evolve (H) over a time step of length $\Delta t$, we use the Cayley splitting defined in \eqref{eq:cayley_splitting_hamiltonian} and which we denote by $\phi_{\Delta \tau}^{(H)}$. To evolve (O), we use the exact solution (in law) of the Ornstein-Uhlenbeck equations $\varphi_{\Delta \tau}^{(O)}: (\boldsymbol{u}^0, \boldsymbol{v}^0) \mapsto (\boldsymbol{u}^0, \boldsymbol{v}^1)$ where $\boldsymbol{v}^1$ is defined in a distributional sense by \begin{equation} \label{eq:O}
 \boldsymbol{v}^1 \overset{d}{=} e^{-\gamma \Delta \tau}  \boldsymbol{v}^0 + \frac{\sqrt{2 \beta^{-1}}}{\sqrt{\Delta s}}  \sqrt{1-e^{-2 \gamma \Delta \tau}} \boldsymbol{\xi}^0 
\end{equation} and $\boldsymbol{\xi}^0  \sim \mathcal{N}(0,1)^{n (d-1)}$.
To obtain an approximate flow map $\varphi_{\Delta \tau}^{(L)}$ for \eqref{eq:semidiscrete_second_order}, we combine these maps in a palindromic way \begin{equation} \label{eq:L}
\varphi_{\Delta \tau}^{(L)} = \varphi_{(1/2) \Delta \tau}^{(O)} \circ \phi_{\Delta \tau}^{(H)}  \circ \varphi_{(1/2) \Delta \tau}^{(O)} 
\end{equation} Any other palindromic splitting of these maps would have a similar order of accuracy (in a distributional sense), 
but the error constants may differ \cite{LeMaSt2015}.

\subsection{Non-Preconditioned MALA as an HMC Algorithm} \label{sec:cayley_splitting_mala}

Here we link non-preconditioned MALA given in Algorithm~\ref{algo:MALA} to the HMC algorithm, which we introduced in \S\ref{sec:cayley_based_hmc}.  This link generalizes the well-known relationship between MALA and HMC when $\boldsymbol{L}=0$.  First, we briefly recall the HMC method and then make this link in Prop.~\ref{prop:mala_hmc} below.

We set the target density of the HMC method to be the density given in \eqref{eq:approx_stationary_density_u}.    The HMC algorithm is defined in an extended position-velocity space, $\{  (\boldsymbol{u},\boldsymbol{p}) \in \mathbb{R}^{d (n-1)} \times \mathbb{R}^{d (n-1)} \}$, where positions $\boldsymbol{u} \in \mathbb{R}^{d (n-1)}$ belong to the domain of the target density, and velocities $\boldsymbol{p} \in \mathbb{R}^{d (n-1)}$ are an auxiliary variable.   On this extended space, an extended density $\nu_{\text{extended}}$ is introduced, which is a product of the target density and a Gaussian density in velocities, and such that the marginal density in the position component is the target density.  In its simplest form, the algorithm produces a Markov chain on $\mathbb{R}^{d (n-1)}$ with the required invariant density by using a volume-preserving and reversible map $\varphi: \mathbb{R}^{d (n-1)} \times  \mathbb{R}^{d (n-1)}   \to  \mathbb{R}^{d (n-1)} \times  \mathbb{R}^{d (n-1)} $  on the extended space and iterating the following update rule.

\begin{algorithm}[HMC] \label{algo:hmc}
The one-step $\boldsymbol{u}^0 \mapsto \boldsymbol{u}^1$ update is given by.
\begin{description}
\item[(Step 1)] 
Draw $\boldsymbol{p}^0$ from the measure $\nu_{\text{extended}}(\boldsymbol{u}^0, \boldsymbol{p}) d\boldsymbol{p}$.
\medskip
\item[(Step 2)] 
 Set $(\boldsymbol{\tilde u}^1, \boldsymbol{\tilde p}^1) = \varphi(\boldsymbol{u}^0, \boldsymbol{p}^0)$ where $\varphi$ is a volume-preserving and reversible map.
 \medskip
\item[(Step 3)] 
 Take as actual update: \[
\boldsymbol{u}^1 = \gamma \boldsymbol{\tilde u}^1 + (1-\gamma) \boldsymbol{u}^0
\] where $\gamma$ is a Bernoulli random variable with parameter: \[
\alpha(\boldsymbol{u}^0, \boldsymbol{p}^0) = 1 \wedge \frac{\nu_{\text{extended}}(\boldsymbol{\tilde u}^1, \boldsymbol{\tilde p}^1)  }{\nu_{\text{extended}}(\boldsymbol{u}^0, \boldsymbol{p}^0) } \;.
\]
\end{description}
\end{algorithm}

The above description of the HMC algorithm is standard; see, e.g., the description given in \cite[Section 9]{Sa2014} or \cite[Section II]{FaSaSk2014}.   Note that the updated velocity is discarded by the algorithm.

\begin{prop} \label{prop:mala_hmc}
MALA -- Algorithm~\ref{algo:MALA} with $\theta=1/2$ -- is an HMC algorithm with
\begin{description}
\item[(i)] the extended density defined by  \[
\nu_{\text{extended}}(\boldsymbol{u},\boldsymbol{p}) = Z^{-1}  \exp\left( -  \frac{\beta}{2}  \Delta s H(\boldsymbol{u},\boldsymbol{p}) \right)
\]
where $Z$ is a normalization constant, $H$ is a Hamiltonian function defined as \begin{equation} \label{eq:hamiltonian}
H(\boldsymbol{u},\boldsymbol{p}) = \frac{1}{2} \| \boldsymbol{p} \|^2  -\frac{1}{2} \boldsymbol{u} \cdot \boldsymbol{L} \boldsymbol{u}  + \mathcal{G}(\boldsymbol{u}) 
\end{equation} and $\mathcal{G}$ is a potential energy function defined as \[
\mathcal{G}(\boldsymbol{u}) = \sum_{i=1}^{n-1} G(\boldsymbol{u}_i + \psi(s_i)) \;, \quad \boldsymbol{u} = (\boldsymbol{u}_1, \cdots, \boldsymbol{u}_{n-1}) \in \mathbb{R}^{d (n-1)} \;,
\]
where $G$ is the path potential function given in \eqref{eq:path_potential}.
\item[(ii)] the volume-preserving and reversible map $\varphi_{\Delta \tau}: ( \boldsymbol{u}^0,  \boldsymbol{p}^0) \mapsto ( \boldsymbol{u}^1,  \boldsymbol{p}^1)$ defined by \begin{equation} \label{eq:strang_splitting}
\begin{aligned} 
 \boldsymbol{u}^1 &= \left( \boldsymbol{I}- \frac{\Delta \tau^2}{4} \boldsymbol{L} \right)^{-1} \left( \left(\boldsymbol{I} + \frac{\Delta \tau^2}{4} \boldsymbol{L} \right)  \boldsymbol{u}^0 + \Delta \tau  \boldsymbol{p}^0 - \frac{\Delta \tau^2}{2}  \nabla \mathcal{G}( \boldsymbol{u}^0) \right)  \\
 \boldsymbol{p}^1 &=  \left( \boldsymbol{I}- \frac{\Delta \tau^2}{4} \boldsymbol{L} \right)^{-1} \left( \left(\boldsymbol{I} + \frac{\Delta \tau^2}{4} \boldsymbol{L} \right) \left( \boldsymbol{p}^0 - \frac{\Delta \tau}{2}  \nabla \mathcal{G}( \boldsymbol{u}^0) \right) + \Delta \tau \boldsymbol{L} \boldsymbol{u}^0 \right) \\
 & \quad - \frac{\Delta \tau}{2} \nabla \mathcal{G}(\boldsymbol{u}^1)
 \end{aligned}
\end{equation}
with $\Delta \tau = \sqrt{2 \Delta t}$.  
\end{description}
\end{prop}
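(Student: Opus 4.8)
The plan is to verify that Algorithm~\ref{algo:hmc}, run with the stated extended density $\nu_{\text{extended}}$ and the map $\varphi_{\Delta\tau}$ of \eqref{eq:strang_splitting} (and $\Delta\tau=\sqrt{2\Delta t}$), induces — once the updated velocity is discarded — exactly the same Markov transition $\boldsymbol{u}^0\mapsto\boldsymbol{u}^1$ on position space as MALA, i.e.\ Algorithm~\ref{algo:MALA} with $\theta=1/2$. Three things need checking: (i) $\varphi_{\Delta\tau}$ is volume-preserving and reversible, so that Algorithm~\ref{algo:hmc} makes sense; (ii) the HMC proposal has the same conditional law (given $\boldsymbol{u}^0$) as the MALA proposal; and (iii) the two acceptance probabilities coincide. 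For (i) I would first observe that \eqref{eq:strang_splitting} is precisely the Cayley splitting $\varphi^{(B)}_{(1/2)\Delta\tau}\circ\phi^{(A)}_{\Delta\tau}\circ\varphi^{(B)}_{(1/2)\Delta\tau}$ from \eqref{eq:cayley_splitting_hamiltonian} for the Hamiltonian matrix $\boldsymbol{A}$ of \eqref{eq:semidiscrete_intro_H} with the discrete Laplacian \eqref{eq:matrix_L} and force $\boldsymbol{F}=-\nabla\mathcal{G}$: this is a direct matrix computation, inserting the block form \eqref{eq:cayley_for_computations} of $\cay(\Delta\tau\boldsymbol{A})$ and composing on both sides with the half-kicks $(\boldsymbol{u},\boldsymbol{p})\mapsto(\boldsymbol{u},\boldsymbol{p}-\tfrac{\Delta\tau}{2}\nabla\mathcal{G}(\boldsymbol{u}))$. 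Since $\boldsymbol{A}$ is Hamiltonian, Lemma~\ref{lemma:cayley_symplectic} makes $\phi^{(A)}_{\Delta\tau}$ symplectic hence volume-preserving, and the shears $\varphi^{(B)}$ are as well, so $\varphi_{\Delta\tau}$ is volume-preserving; and since $\boldsymbol{A}$ anticommutes with $\boldsymbol{S}=\diag(\boldsymbol{I},-\boldsymbol{I})$, Lemma~\ref{lemma:cayley_reversible} gives $\boldsymbol{S}\,\cay(\Delta\tau\boldsymbol{A})\,\boldsymbol{S}=\cay(\Delta\tau\boldsymbol{A})^{-1}$, while the half-kicks satisfy $\varphi^{(B)}\circ\boldsymbol{S}\circ\varphi^{(B)}=\boldsymbol{S}$, so the palindromic composition $\varphi_{\Delta\tau}$ is reversible with respect to $(\boldsymbol{u},\boldsymbol{p})\mapsto(\boldsymbol{u},-\boldsymbol{p})$.

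For (ii): the velocity-marginal of $\nu_{\text{extended}}$ is $\mathcal{N}(\boldsymbol{0},\tfrac{2}{\beta\Delta s}\boldsymbol{I})$ because $H$ splits as $\tfrac12\|\boldsymbol{p}\|^2$ plus a function of $\boldsymbol{u}$ (this also shows the $\boldsymbol{u}$-marginal of $\nu_{\text{extended}}$ equals the MALA target $\pi$ of \eqref{eq:approx_stationary_density_u}, since $\mathcal{G}(\boldsymbol{u})=\sum_i G(\boldsymbol{u}_i+\psi(s_i))$). Thus (Step 1) of Algorithm~\ref{algo:hmc} draws $\boldsymbol{p}^0\overset{d}{=}\sqrt{2/(\beta\Delta s)}\,\boldsymbol{\xi}^0$. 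Substituting $\Delta\tau^2=2\Delta t$ into the first line of \eqref{eq:strang_splitting} turns it into $(\boldsymbol{I}-\tfrac{\Delta t}{2}\boldsymbol{L})\boldsymbol{u}^1=(\boldsymbol{I}+\tfrac{\Delta t}{2}\boldsymbol{L})\boldsymbol{u}^0+\Delta t\,\boldsymbol{F}(\boldsymbol{u}^0)+\Delta\tau\,\boldsymbol{p}^0$, which is exactly the $\theta=1/2$ proposal equation \eqref{eq:theta_method}, since $\boldsymbol{F}=-\nabla\mathcal{G}$ and $\Delta\tau\,\boldsymbol{p}^0\overset{d}{=}\Delta\tau\sqrt{2/(\beta\Delta s)}\,\boldsymbol{\xi}^0=2\sqrt{\beta^{-1}}\sqrt{\Delta t/\Delta s}\,\boldsymbol{\xi}^0$ (both sides are $\mathcal{N}(\boldsymbol{0},\tfrac{4\Delta t}{\beta\Delta s}\boldsymbol{I})$). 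So, conditionally on $\boldsymbol{u}^0$, the HMC proposal $\boldsymbol{\tilde u}^1$ and the MALA proposal share the same law; moreover that first line is an affine bijection between $\boldsymbol{p}^0$ and $\boldsymbol{u}^1$ given $\boldsymbol{u}^0$, so the HMC acceptance parameter $\alpha(\boldsymbol{u}^0,\boldsymbol{p}^0)$ may legitimately be read as a function $\alpha_{\mathrm{HMC}}(\boldsymbol{u}^0,\boldsymbol{\tilde u}^1)$ of $\boldsymbol{u}^0$ and the proposal alone.

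Step (iii) is the main obstacle and the only genuinely laborious part. I must show $\alpha_{\mathrm{HMC}}$ equals the closed-form MALA acceptance probability \eqref{eq:pmala_alpha}. Writing $H=H_A+\mathcal{G}$ with $H_A(\boldsymbol{u},\boldsymbol{p})=\tfrac12\|\boldsymbol{p}\|^2-\tfrac12\,\boldsymbol{u}\cdot\boldsymbol{L}\boldsymbol{u}$, the increment $\mathcal{G}(\boldsymbol{u}^1)-\mathcal{G}(\boldsymbol{u}^0)$ already reproduces the leading term of the exponent in \eqref{eq:pmala_alpha}, so it remains to establish the identity
\begin{equation*}
\begin{aligned}
H_A(\boldsymbol{u}^1,\boldsymbol{p}^1)-H_A(\boldsymbol{u}^0,\boldsymbol{p}^0) &= -\tfrac12\langle \boldsymbol{u}^1-\boldsymbol{u}^0,\ \nabla\mathcal{G}(\boldsymbol{u}^0)+\nabla\mathcal{G}(\boldsymbol{u}^1)\rangle \\
&\quad + \tfrac{\Delta t}{4}\bigl(|\nabla\mathcal{G}(\boldsymbol{u}^1)|^2-|\nabla\mathcal{G}(\boldsymbol{u}^0)|^2\bigr) \\
&\quad - \tfrac{\Delta t}{4}\langle \boldsymbol{L}(\boldsymbol{u}^1+\boldsymbol{u}^0),\ \nabla\mathcal{G}(\boldsymbol{u}^1)-\nabla\mathcal{G}(\boldsymbol{u}^0)\rangle ,
\end{aligned}
\end{equation*}
with $\Delta t=\Delta\tau^2/2$, where $(\boldsymbol{u}^1,\boldsymbol{p}^1)=\varphi_{\Delta\tau}(\boldsymbol{u}^0,\boldsymbol{p}^0)$. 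I would prove this by direct substitution: use the second line of \eqref{eq:strang_splitting} for $\boldsymbol{p}^1$ and invert the first line to write $\boldsymbol{p}^0=\Delta\tau^{-1}\bigl((\boldsymbol{I}-\tfrac{\Delta\tau^2}{4}\boldsymbol{L})\boldsymbol{u}^1-(\boldsymbol{I}+\tfrac{\Delta\tau^2}{4}\boldsymbol{L})\boldsymbol{u}^0+\tfrac{\Delta\tau^2}{2}\nabla\mathcal{G}(\boldsymbol{u}^0)\bigr)$, so that both $\boldsymbol{p}^0$ and $\boldsymbol{p}^1$ become affine in $\boldsymbol{u}^0,\boldsymbol{u}^1,\nabla\mathcal{G}(\boldsymbol{u}^0),\nabla\mathcal{G}(\boldsymbol{u}^1)$; then plug into $\tfrac12(\|\boldsymbol{p}^1\|^2-\|\boldsymbol{p}^0\|^2)-\tfrac12(\boldsymbol{u}^1\cdot\boldsymbol{L}\boldsymbol{u}^1-\boldsymbol{u}^0\cdot\boldsymbol{L}\boldsymbol{u}^0)$ and collect terms using the symmetry of $\boldsymbol{L}$ (so that $\boldsymbol{I}\pm\tfrac{\Delta\tau^2}{4}\boldsymbol{L}$ are symmetric and commute). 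In this bookkeeping the two half-kicks contribute the trapezoidal work term $-\tfrac12\langle\boldsymbol{u}^1-\boldsymbol{u}^0,\nabla\mathcal{G}(\boldsymbol{u}^0)+\nabla\mathcal{G}(\boldsymbol{u}^1)\rangle$, while the fact that $\cay(\Delta\tau\boldsymbol{A})$ preserves $H_A$ only up to $O(\Delta\tau^2)$ produces the two remaining $\Delta t$-weighted corrections. Once this identity is verified, Algorithms~\ref{algo:hmc} and~\ref{algo:MALA} generate the same proposal and accept it with the same probability, hence define the same Markov kernel on $\boldsymbol{u}$-space, which proves the proposition and recovers Prop.~\ref{prop:hmc_mala} as the special case in which $\mathcal{G}$ is quadratic.
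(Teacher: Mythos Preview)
Your proposal is correct and follows essentially the same route as the paper: identify $\varphi_{\Delta\tau}$ with the Cayley splitting \eqref{eq:cayley_splitting_hamiltonian} and invoke Lemmas~\ref{lemma:cayley_reversible} and~\ref{lemma:cayley_symplectic} for reversibility and symplecticity; match the position proposal in law to \eqref{eq:theta_method} via $\Delta\tau=\sqrt{2\Delta t}$ and the Gaussian velocity draw; and finally expand $H(\boldsymbol{\tilde u}^1,\boldsymbol{\tilde p}^1)-H(\boldsymbol{u}^0,\boldsymbol{p}^0)$ to recover \eqref{eq:pmala_alpha}. The paper simply asserts that last expansion is ``a straightforward calculation,'' whereas you sketch it in more detail; your plan to eliminate $\boldsymbol{p}^0,\boldsymbol{p}^1$ in favor of $\boldsymbol{u}^0,\boldsymbol{u}^1,\nabla\mathcal{G}(\boldsymbol{u}^0),\nabla\mathcal{G}(\boldsymbol{u}^1)$ via the first line of \eqref{eq:strang_splitting} is exactly right.

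One minor correction to your heuristic in step~(iii): $\cay(\Delta\tau\boldsymbol{A})$ in fact preserves $H_A(\boldsymbol{u},\boldsymbol{p})=\tfrac12\|\boldsymbol{p}\|^2-\tfrac12\,\boldsymbol{u}\cdot\boldsymbol{L}\boldsymbol{u}$ \emph{exactly}, not just up to $O(\Delta\tau^2)$ (a short block computation with \eqref{eq:cayley_for_computations} and the symmetry of $\boldsymbol{L}$ confirms this). Thus the $\Delta t$-weighted correction terms in \eqref{eq:pmala_alpha} arise entirely from the two half-kicks $\varphi^{(B)}_{(1/2)\Delta\tau}$ acting before and after the Cayley step, not from any defect in the Cayley step itself. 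This does not affect the validity of your direct-substitution argument, only the informal explanation of where the terms come from.
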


By casting MALA as an HMC algorithm, it immediately follows that it preserves the invariant density \eqref{eq:approx_stationary_density_u}.

\begin{proof}
With $\Delta \tau = \sqrt{2 \Delta t}$, and since the initial momentum satisfies $\boldsymbol{p}^0 \sim \nu_{\text{extended}}(\boldsymbol{u}^0, \boldsymbol{p}) d\boldsymbol{p}$, the position component in \eqref{eq:strang_splitting} is equal in law to the proposal move in non-preconditioned MALA \eqref{eq:theta_method} with $\theta=1/2$.  Moreover, the $\boldsymbol{u}$-marginal of $\nu_{\text{extended}}(\boldsymbol{u},\boldsymbol{p})$ in Prop.~\ref{prop:mala_hmc} is the non-normalized target density of MALA and the HMC acceptance probability is \[
\alpha(\boldsymbol{u}^0, \boldsymbol{p}^0) = 1 \wedge \exp\left(  -  \frac{ \beta }{2}  \Delta s ( H(\boldsymbol{\tilde u}^1, \boldsymbol{\tilde p}^1)  - H(\boldsymbol{u}^0, \boldsymbol{p}^0) ) \right) \;.
\]  
Expanding this out in a straightforward calculation yields the acceptance probability of MALA given in \eqref{eq:pmala_alpha}.  It remains to show that the map $\varphi_{\Delta \tau}$ defined in \eqref{eq:strang_splitting} is volume-preserving and reversible. To prove this, we simply show that  $\varphi_{\Delta \tau}$ is just the Cayley splitting method defined in \eqref{eq:cayley_splitting_hamiltonian}  applied to the Hamiltonian dynamics associated to \eqref{eq:hamiltonian}, i.e.,
\begin{equation} \label{eq:hamiltonian_dynamics}
\begin{cases}
 \boldsymbol{\dot u}(t)  = \boldsymbol{p}(t)  \\
 \boldsymbol{\dot{p}}(t) = \boldsymbol{L} \boldsymbol{u}(t) -\nabla \mathcal{G}( \boldsymbol{u}(t) )
\end{cases} \;.
\end{equation}
Specifically, we split \eqref{eq:hamiltonian_dynamics} into 
\[
\tag{A} \boldsymbol{\dot u}(t) = \boldsymbol{p}(t) \;, \quad \boldsymbol{\dot{p}}(t) = \boldsymbol{L} \boldsymbol{u}(t)  \;,
\]
and
\[
\tag{B}  \boldsymbol{\dot u}(t) = 0 \;, \quad \boldsymbol{\dot{p}}(t) = - \nabla \mathcal{G}( \boldsymbol{u}(t) ) \;.
\]
Instead of using the exact flow for (A), we use an approximation of the matrix exponential given by the  Cayley transform.  More precisely, we approximately evolve (A) over $[0, \Delta \tau]$ using the linear map $\phi_{\Delta \tau}^{(A)}: (\boldsymbol{u}^0, \boldsymbol{p}^0) \mapsto (\boldsymbol{u}^1, \boldsymbol{p}^1)$ defined by \begin{equation} \label{eq:A}
 \begin{pmatrix} \boldsymbol{u}^1 \\ \boldsymbol{p}^1 \end{pmatrix}  = \begin{pmatrix} \boldsymbol{I} & -\frac{\Delta \tau}{2} \boldsymbol{I} \\  - \frac{\Delta \tau}{2} \boldsymbol{L} & \boldsymbol{I} \end{pmatrix}^{-1} 
\begin{pmatrix} \boldsymbol{I} & \frac{\Delta \tau}{2} \boldsymbol{I} \\   \frac{\Delta \tau}{2} \boldsymbol{L} & \boldsymbol{I}  \end{pmatrix}      \begin{pmatrix} \boldsymbol{u}^0 \\ \boldsymbol{p}^0 \end{pmatrix} \;,
\end{equation}  
To evolve (B), we use the exact flow $\varphi_{\Delta \tau}^{(B)}: (\boldsymbol{u}^0, \boldsymbol{p}^0) \mapsto (\boldsymbol{u}^1, \boldsymbol{p}^1)$ for the Hamiltonian vector field in (B), which is explicitly given by \begin{equation} \label{eq:B}
\varphi_{\Delta \tau}^{(B)}(\boldsymbol{u}^0,\boldsymbol{p}^0) =  (\boldsymbol{u}^0, \boldsymbol{p}^0 - \Delta \tau \nabla \mathcal{G}(\boldsymbol{u}^0) ) \;.
\end{equation} 
It is then easy to check that \begin{equation} \label{eq:strang_splitting2}
\varphi_{\Delta \tau} = \varphi_{(1/2) \Delta \tau}^{(B)} \circ \phi_{\Delta \tau}^{(A)} \circ \varphi_{(1/2) \Delta \tau}^{(B)} \;.
\end{equation}
By Lemmas~\ref{lemma:cayley_symplectic} and~\ref{lemma:cayley_reversible}, the Cayley approximation is symplectic and reversible.  Since the composition of symplectic maps is again symplectic and the palindromic composition of reversible maps is reversible, the map $\varphi_{\Delta \tau}$ is a volume-preserving and reversible map.  This completes the proof.
\end{proof}


\subsection{Non-Preconditioned, Randomized HMC on Hilbert Spaces}  \label{sec:cayley_splitting_hmc}

To reduce correlations in the chain and improve convergence, one can integrate the Hamiltonian dynamics in \eqref{eq:hamiltonian_dynamics} for a longer duration than in MALA, which just takes one step of the Hamiltonian flow.  Since the exact flow preserves energy and volume, the numerical flow generated by iterating the Cayley splitting will be nearby an isocontour of the extended density, and hence, the resulting proposal move  is likely to be accepted.

However, increasing the duration of integration can backfire if periodicities (or near periodicities) in the Hamiltonian flow cause the Hamiltonian trajectory to make a U-turn and fold back on itself, thus increasing correlation in the chain.  To prevent this, Mackenzie in 1989 suggested duration randomization in the Hamiltonian flows in HMC \cite{Ma1989}.   More recently, in the exact integration scenario, a randomized HMC (RHMC) algorithm was introduced, and proved to be geometrically ergodic \cite{BoSa2015}.  In this section, we briefly review this algorithm in the context of sampling diffusion bridges.

The RHMC process $\boldsymbol{z}_t = (\boldsymbol{u}_t, \boldsymbol{p}_t)$ is a Piecewise Deterministic Markov Process (PDMP) on the state space $\mathbb{R}^{d (n-1)} \times \mathbb{R}^{d (n-1)}$ \cite{davis1984piecewise, Da1993}.   While Algorithm~\ref{algo:hmc} was formulated in $\mathbb{R}^{d (n-1)} $, the process $\boldsymbol{z}_t $ is defined in the enlarged space to include the possibility of partial randomization of the momentum, as in the generalized Hybrid Monte Carlo of Horowitz \cite{Ho1991, KePe2001, AkRe2008}.  This process $\boldsymbol{z}_t $ can be simulated by iterating the following steps. The mean duration $\lambda>0$ and the Horowitz angle $\varphi \in (0, \pi/2]$ are deterministic parameters.

\begin{algorithm}[Non-preconditioned RHMC] \label{algo:rhmc}
Given the current time $t_0$ and the current state $\boldsymbol{z}_{t_0} = (\boldsymbol{u}_{t_0}, \boldsymbol{p}_{t_0})$, the method computes the next momentum randomization time $t_1>t_0$ and the path of the process $\boldsymbol{z}_{t} = (\boldsymbol{u}_{t},\boldsymbol{p}_{t})$ over $(t_0, t_1]$ as follows.
\begin{description}
\item[(Step 1)] Update time via $t_1 = t_0 + \delta t_0$ where $ \delta t_0 \sim \Exp(1/\lambda)$.
\medskip
\item[(Step 2)] Evolve over $[t_0, t_1]$  Hamilton's equations \eqref{eq:hamiltonian_dynamics} with initial condition $(\boldsymbol{u}(t_0), \boldsymbol{p}(t_0))  = (\boldsymbol{u}_{t_0}, \boldsymbol{p}_{t_0})$.
\medskip
\item[(Step 3)] Set \[
\boldsymbol{z}_t = (\boldsymbol{u}_t, \boldsymbol{p}_t) = (\boldsymbol{u}(t), \boldsymbol{p}(t) ) \quad \text{for $t_0 \le t < t_1$} \;.
\]
\item[(Step 4)] Randomize momentum by setting \[
\boldsymbol{z}_{t_1} = (\boldsymbol{u}_{t_1}, \cos(\varphi) \boldsymbol{p}_{t_1} + \sin(\varphi) \boldsymbol{\xi} )
\] where $\boldsymbol{\xi} \sim \mathcal{N}(0,1)^{ n (d-1)}$.
\end{description}
\end{algorithm}

Note that the Hamiltonian dynamics in (Step 2) is non-preconditioned.

\subsection{Two-dimensional, Three-Well Potential Example}  \label{sec:three_well_example}

Consider a two-dimensional diffusion bridge \eqref{eq:diffusion_bridge} with the three-well potential energy function $V$ illustrated in the left panel of Figure~\ref{fig:three_well_example}. The associated path potential energy function in \eqref{eq:path_potential} is shown in the right panel of Figure~\ref{fig:three_well_example} with $\beta=2$.   The corresponding target density
is given in \eqref{eq:approx_stationary_density_u}. Here we use the HMC and RHMC algorithms using the Cayley and exact splittings given in \eqref{eq:cayley_splitting_hamiltonian} and \eqref{eq:exact_splitting_hamiltonian} respectively, to integrate the Hamiltonian dynamics in \eqref{eq:hamiltonian_dynamics}.  We stress that this dynamics is not preconditioned.  The initial path in position is taken to be a line connecting the bottom two wells located at $x^{\pm}\approx(\pm 1.048, -0.042)$. Figures~\ref{fig:three_well_example_hmc_ap} and~\ref{fig:three_well_example_rhmc_ap} plot the mean acceptance probabilities of these algorithms as a function of $\Delta t$ at three different spatial step sizes $\Delta s$ as indicated in the figure legends.  This figure illustrates how the artifacts we observed in the exact splitting in Figure~\ref{fig:nonlinear_hamiltonian_system_energy} can impair the performance of the HMC and RHMC algorithms based on the exact splitting. Figure~\ref{fig:three_well_example_means} and~\ref{fig:three_well_example_variances} plot the means (shifted by $\psi$ in \eqref{eq:psi}) and variance of each component of the target density $e^{-(\Delta s) \Phi_N(\boldsymbol{u})}$ using a (mean) duration leg of $T=2$, spatial domain size $S=1$, $\Delta s =0.02$, $\Delta t=0.03$, and $10^5$ samples.  The statistical errors are reported in the figure captions.

\begin{figure}
\begin{center}
\includegraphics[width=0.45\textwidth]{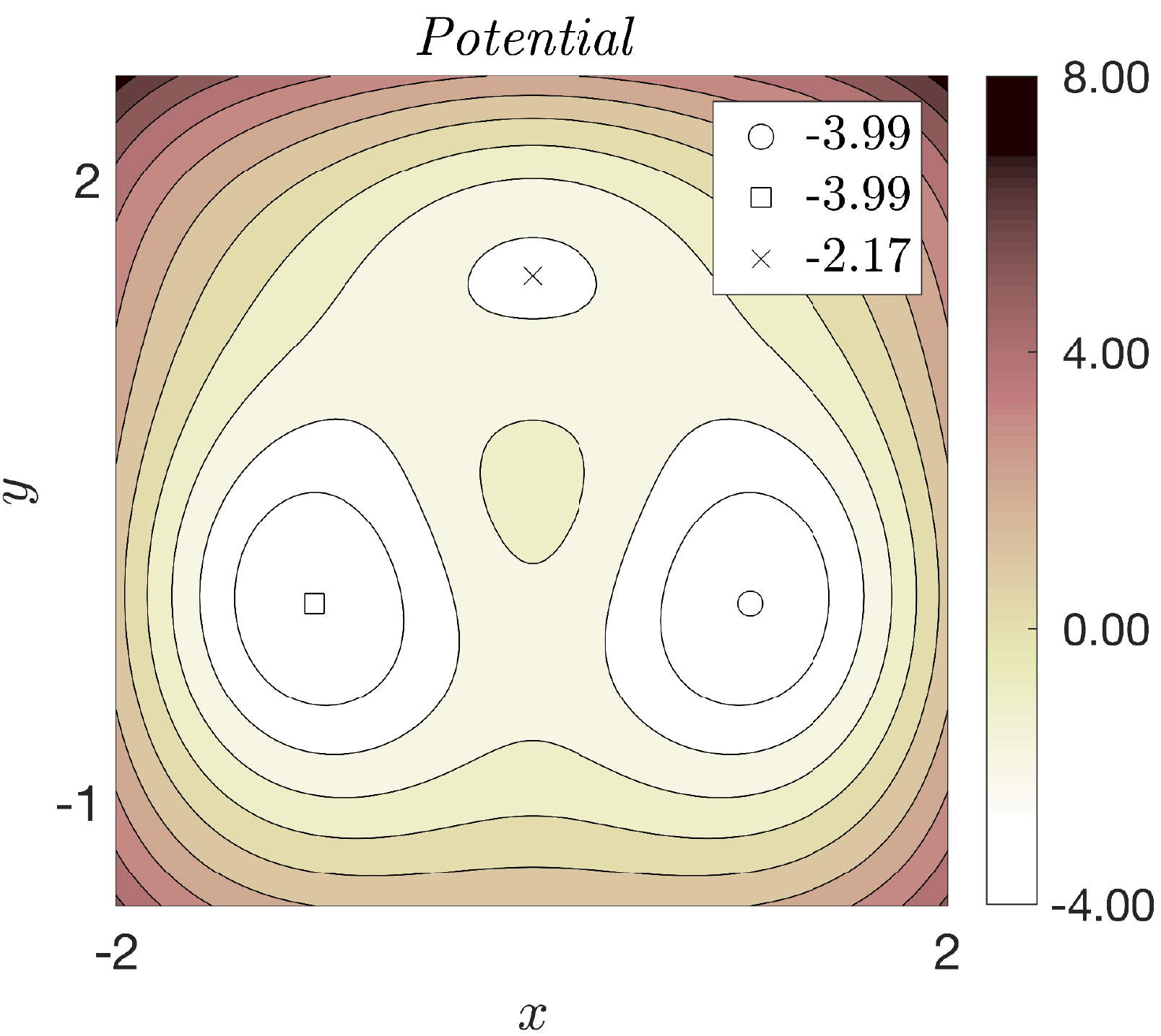}   \hspace{0.1in}
\includegraphics[width=0.45\textwidth]{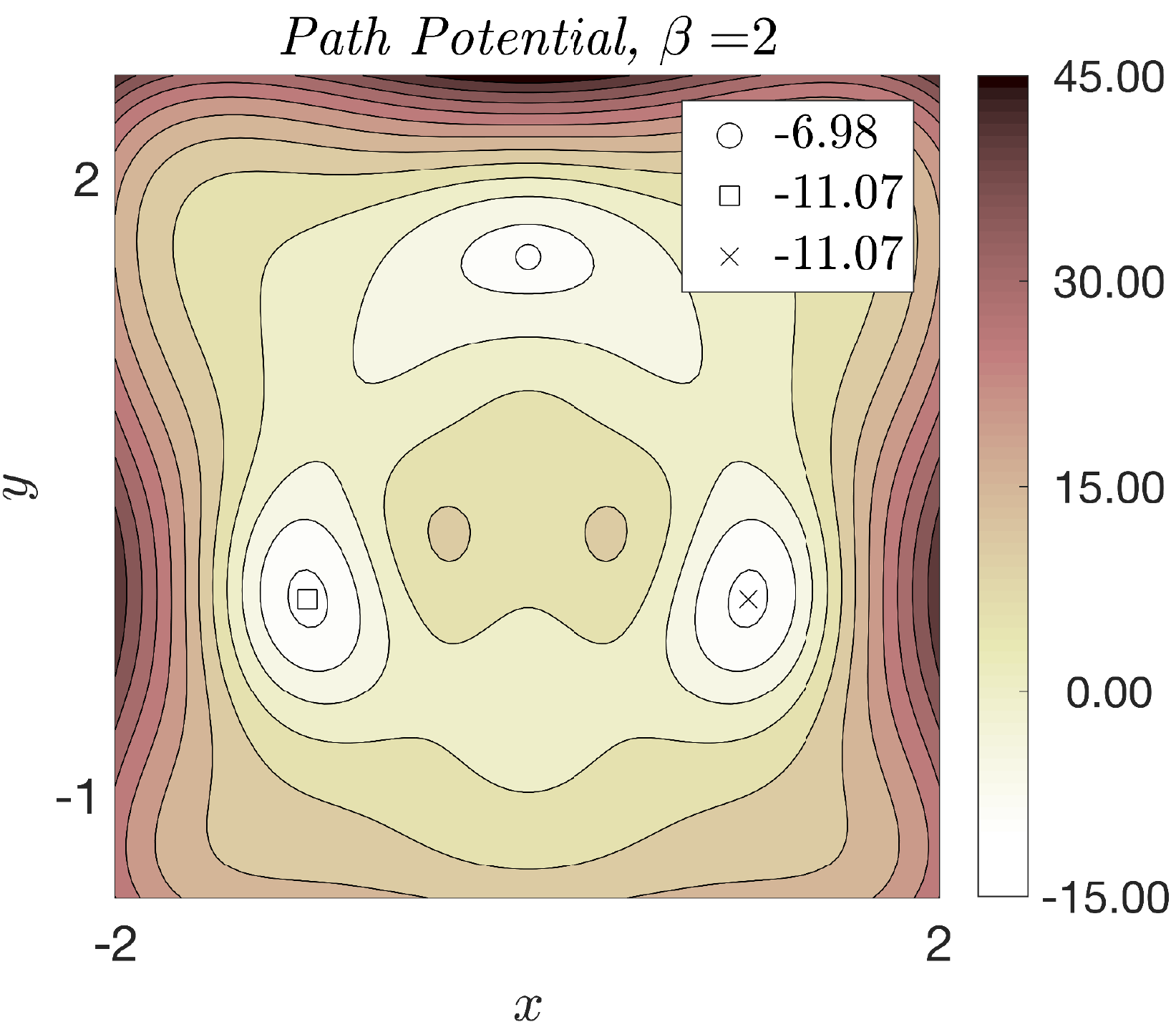} 
\end{center}
\caption{\small  {\bf Three-Well Example.}   
The left panel of this figure shows filled contour lines of a potential energy function $V$ of a diffusion bridge, which is described in \eqref{eq:diffusion_bridge}.
Alongside on the right panel are filled contour lines of its associated path potential energy function $G$, which is given in \eqref{eq:path_potential}.  
The legends give the value of $V$ and $G$ at the three minima of each function.  In our numerical tests we use the bottom two wells shown in the right panel.
}
  \label{fig:three_well_example}
\end{figure}

\begin{figure}
\begin{center}
\includegraphics[width=0.45\textwidth]{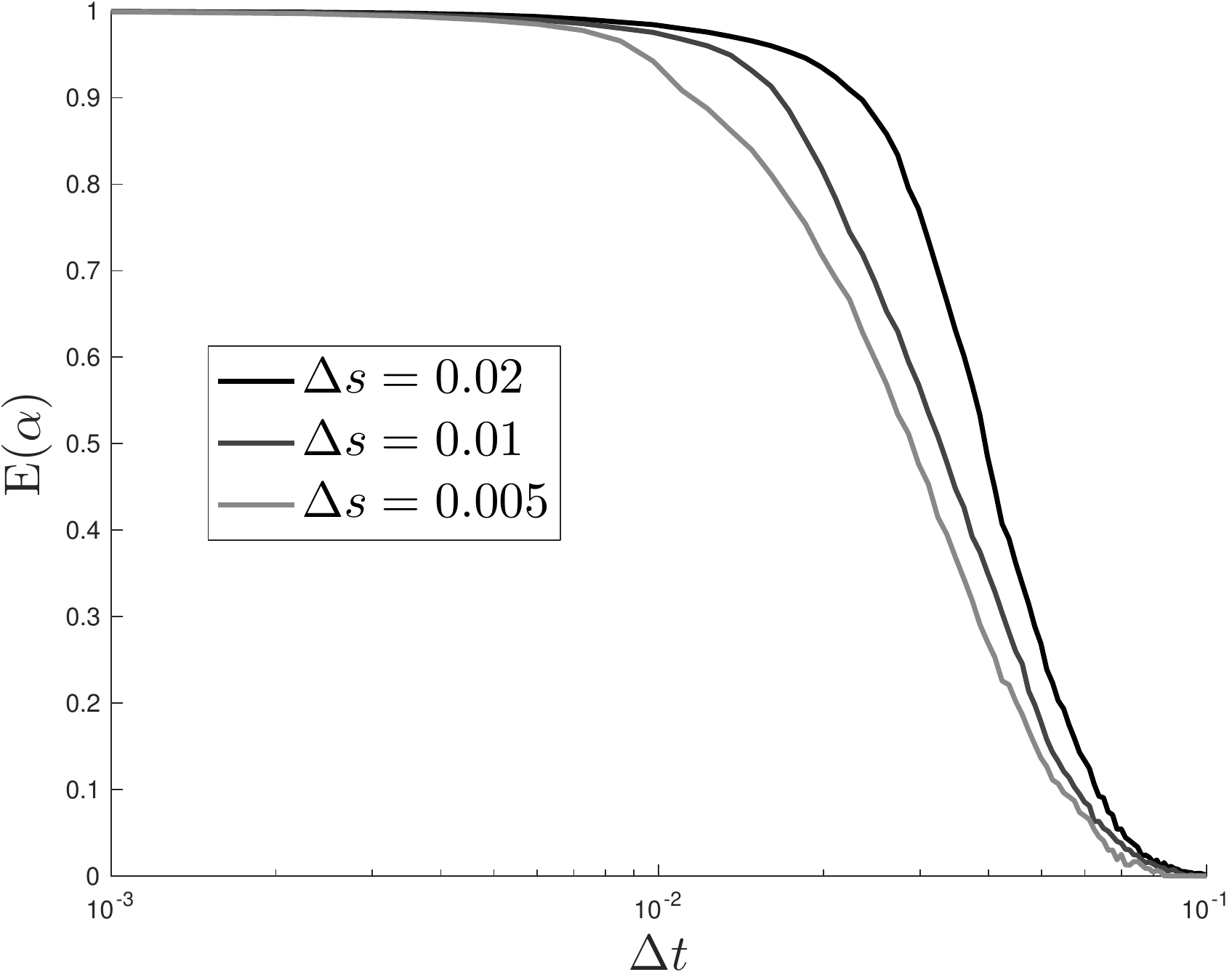}   \hspace{0.1in}
\includegraphics[width=0.45\textwidth]{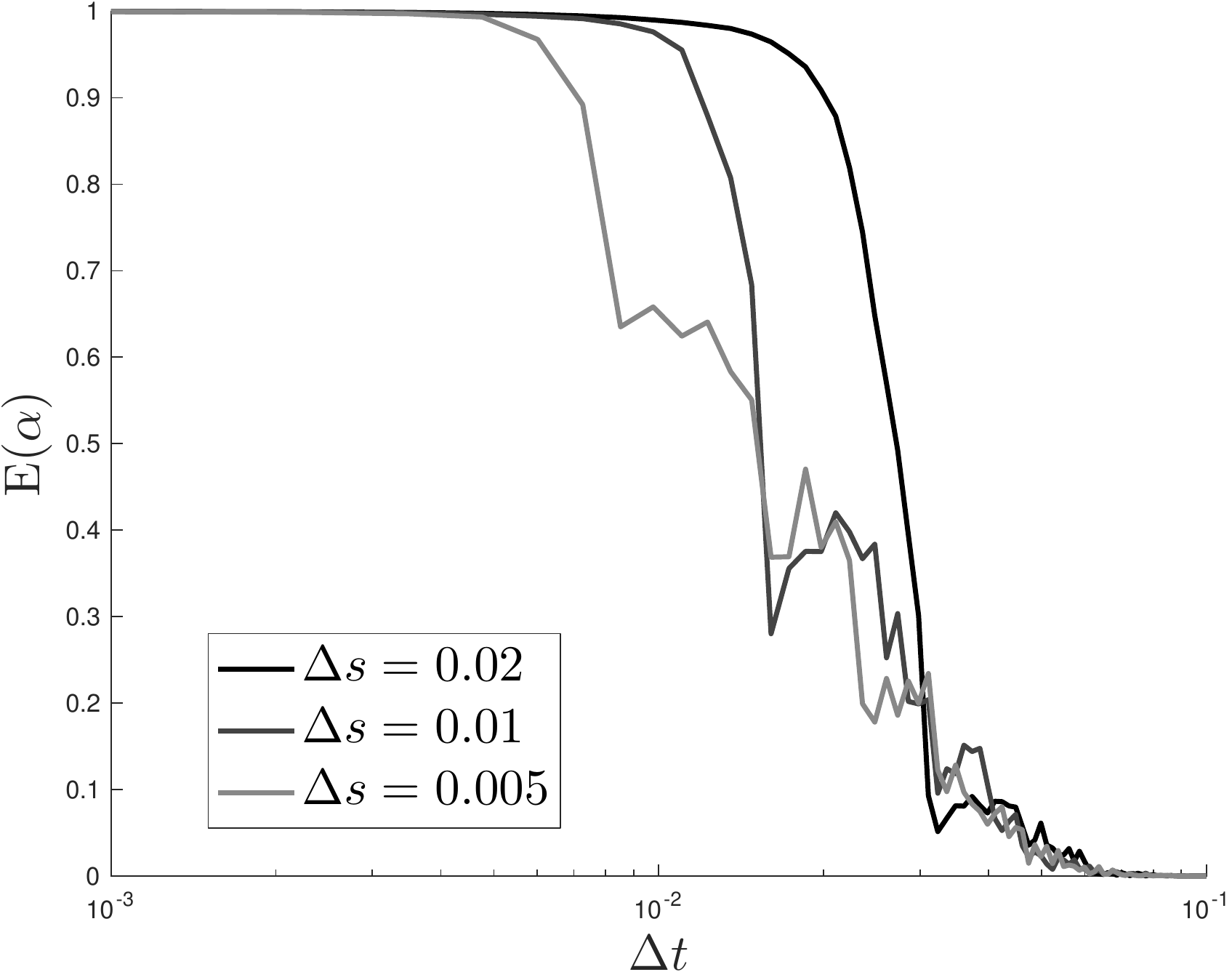} 
\end{center}
\caption{\small  {\bf Three-Well Example: HMC Acceptance Probability.}   This figure shows the mean acceptance probability
of the Cayley-based HMC (left panel) and the exact splitting based HMC (right panel).  The latter is more expensive
computationally (as discussed in \S\ref{sec:main_results}) and also prone to instabilities, which are reflected in the irregular 
behavior in the acceptance probabilities. The parameters used in the simulation are described in \S\ref{sec:three_well_example}.  
}
  \label{fig:three_well_example_hmc_ap}
\end{figure}

\begin{figure}
\begin{center}
\includegraphics[width=0.45\textwidth]{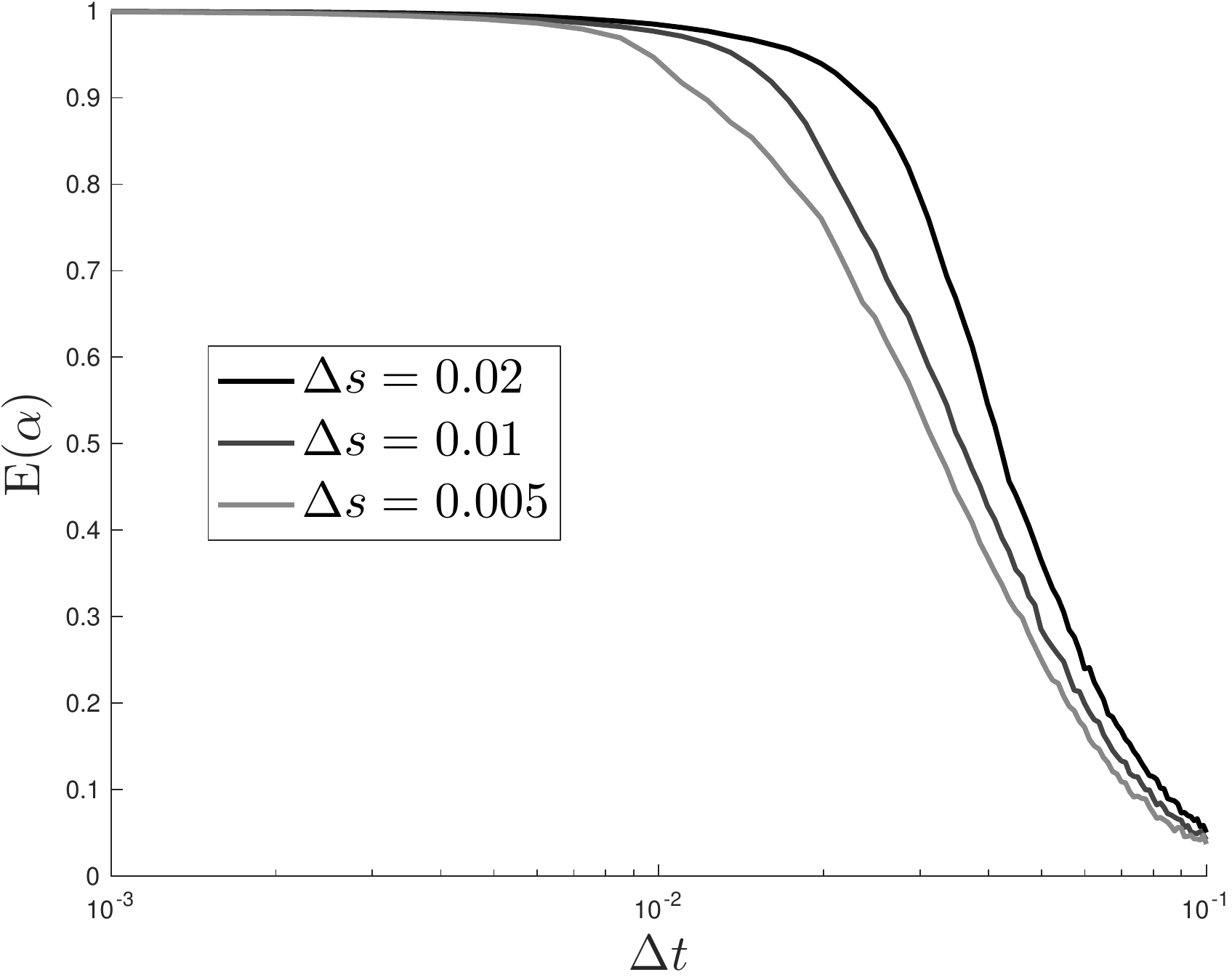}   \hspace{0.1in}
\includegraphics[width=0.45\textwidth]{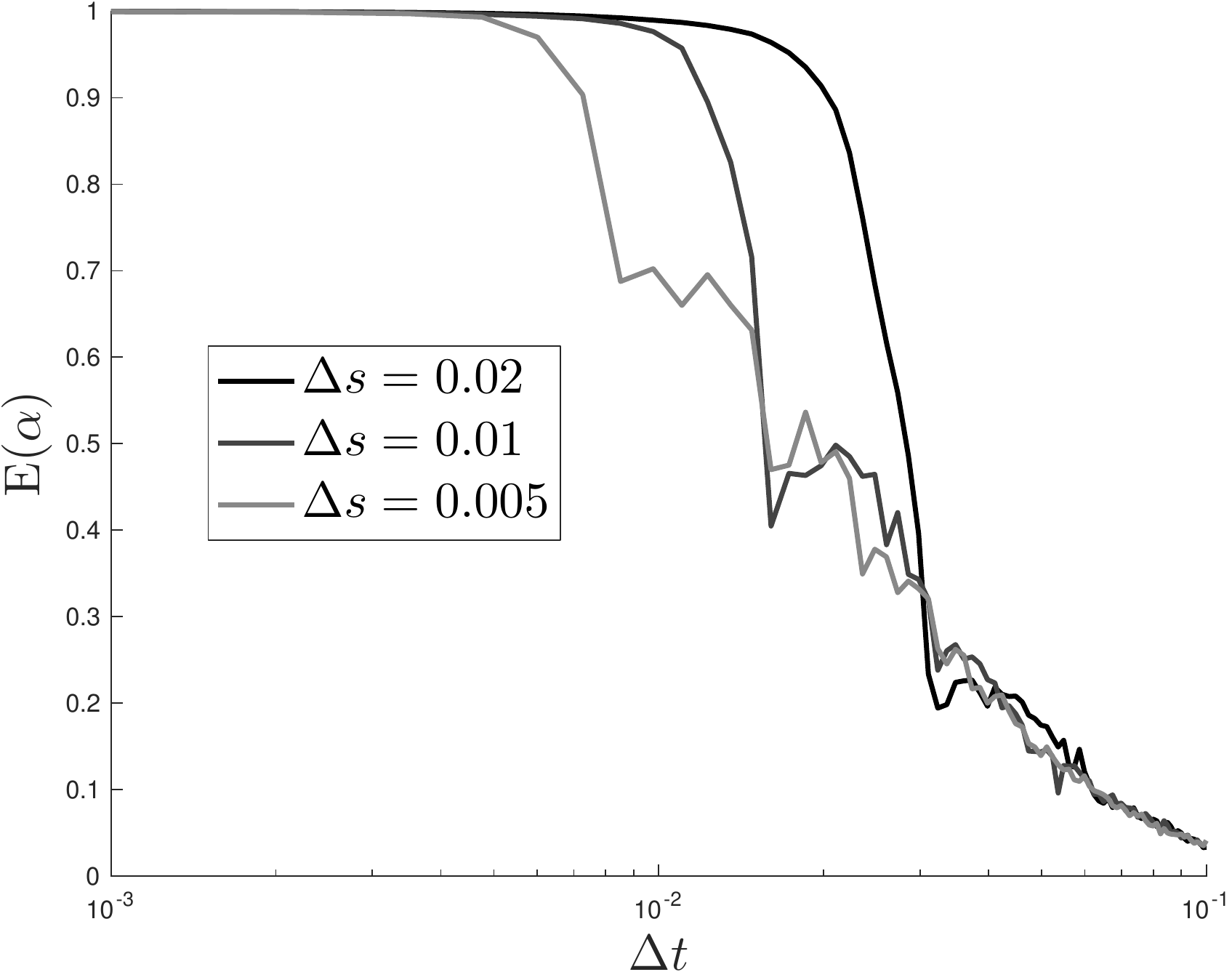} 
\end{center}
\caption{\small  {\bf Three-Well Example: RHMC Acceptance Probability.}   Same algorithms as the previous figure, but with randomized durations.
Specifically, this figure shows the mean acceptance probability of the Cayley-based RHMC (left panel) and the exact splitting based RHMC (right panel).  
The latter is more expensive computationally (as discussed in \S\ref{sec:main_results}) and also prone to instabilities, which are reflected in the irregular 
behavior in the acceptance probabilities.  The parameters used in the simulation are described in \S\ref{sec:three_well_example}.  
}
  \label{fig:three_well_example_rhmc_ap}
\end{figure}

\begin{figure}
\begin{center}
\includegraphics[width=0.45\textwidth]{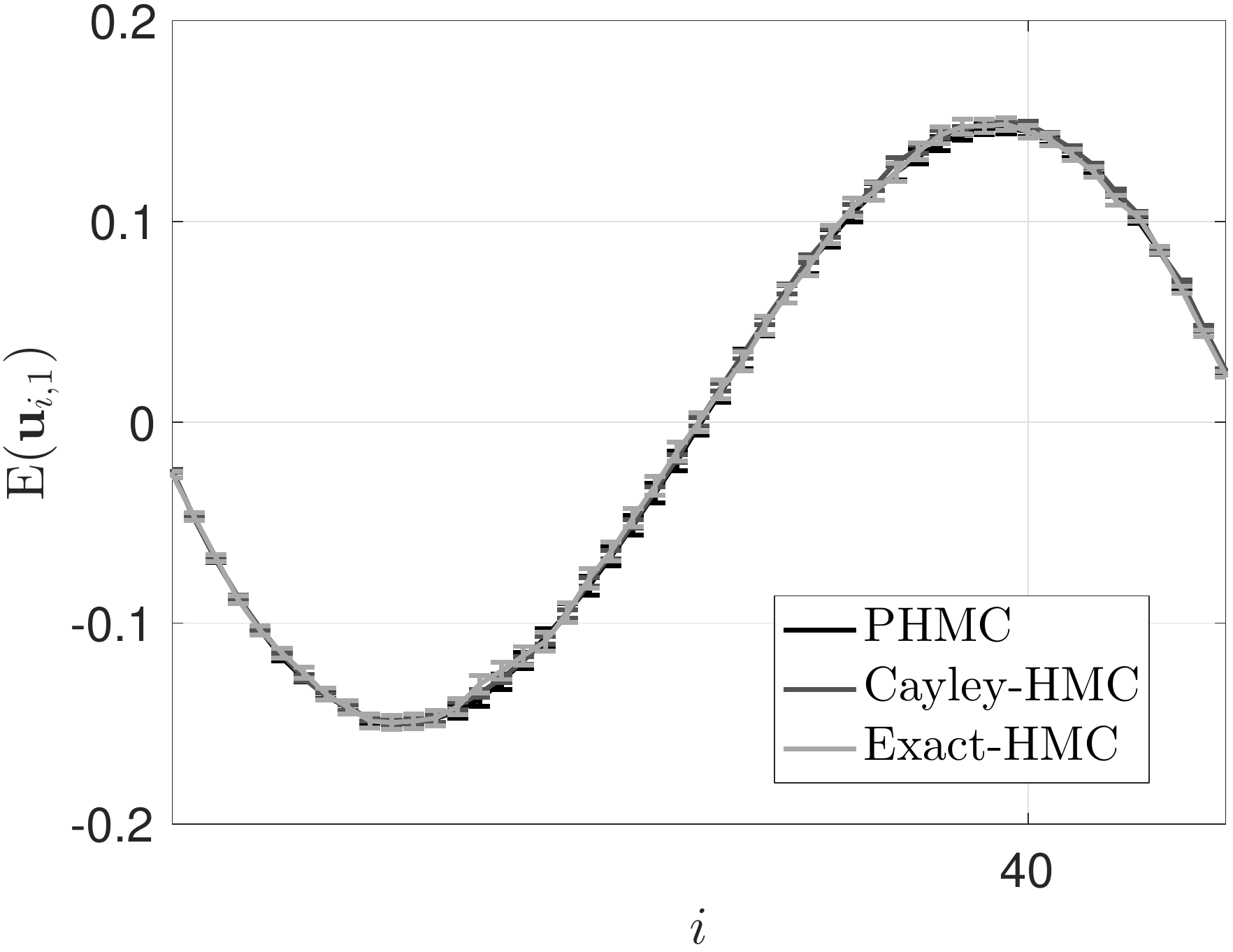}   \hspace{0.1in}
\includegraphics[width=0.45\textwidth]{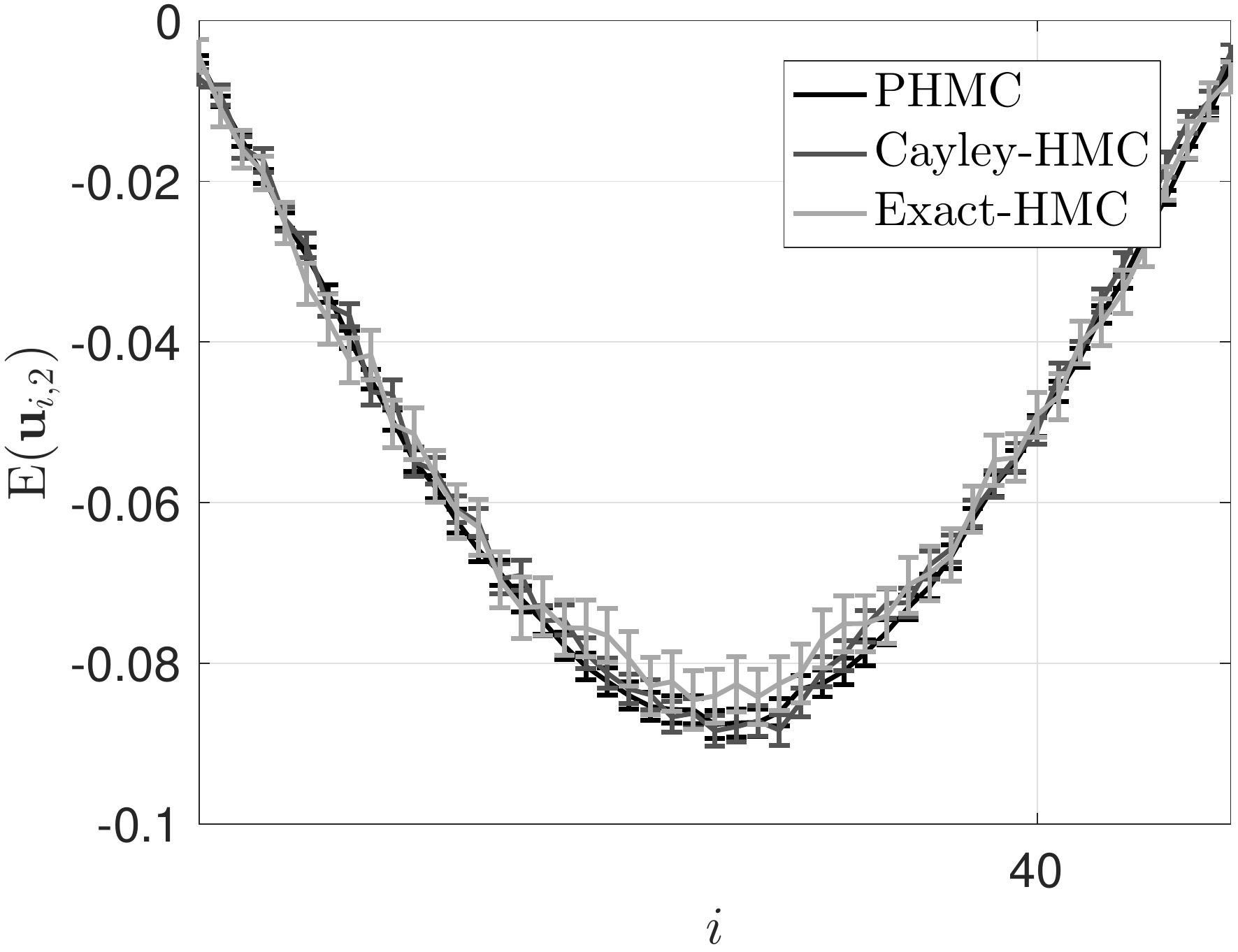} 
\includegraphics[width=0.45\textwidth]{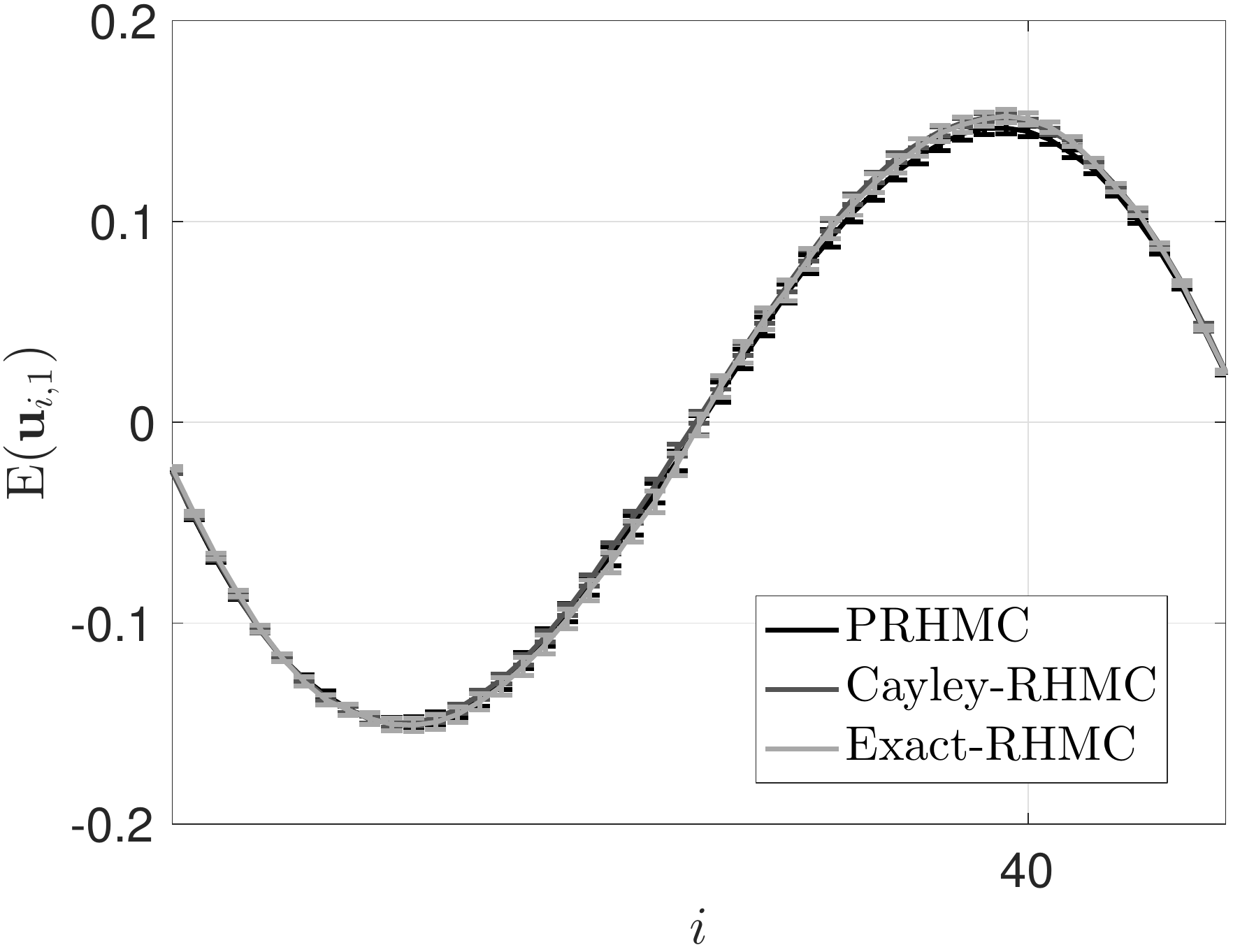}   \hspace{0.1in}
\includegraphics[width=0.45\textwidth]{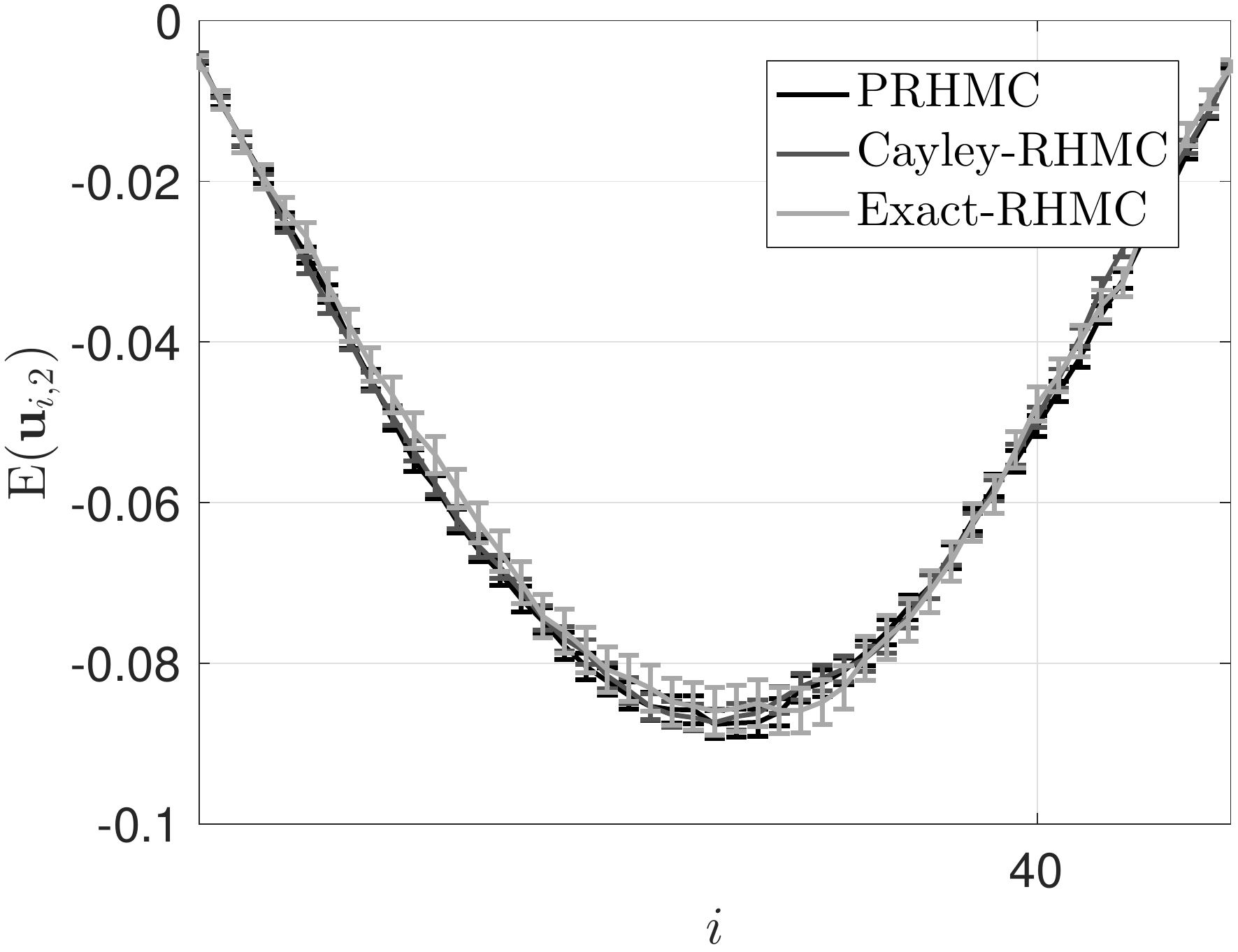} 
\end{center}
\caption{\small  {\bf Three-Well Example: Means of Target.}   This figure verifies the accuracy of HMC and RHMC algorithms in computing the variance in each component of the target density.  The $x$-axis labels the components.  The number of samples is $10^5$, the duration of the Hamiltonian legs is fixed at $T=2$, the spatial step size is $\Delta s=0.02$ (corresponding to $49$ interior grid points), the time step size is $\Delta t=0.03$.  As a benchmark for comparison, we use preconditioned HMC (PHMC) and preconditioned RHMC (PRHMC) algorithms \cite{BePiSaSt2011,BoSaActaN2018}.
 For the algorithms tested -- PHMC, Cayley and Exact -- the mean acceptance probabilities are approximately $99\%$, $78\%$, and $38\%$, respectively.  
  The corresponding sum of statistical errors in the mean of the first component are: $14\%$,  $9.5\%$, and $17\%$.
    The corresponding sum of statistical errors in the mean of the second component are: $6.5\%$,  $6.3\%$, and $11\%$. }
  \label{fig:three_well_example_means}
\end{figure}

\begin{figure}
\begin{center}
\includegraphics[width=0.45\textwidth]{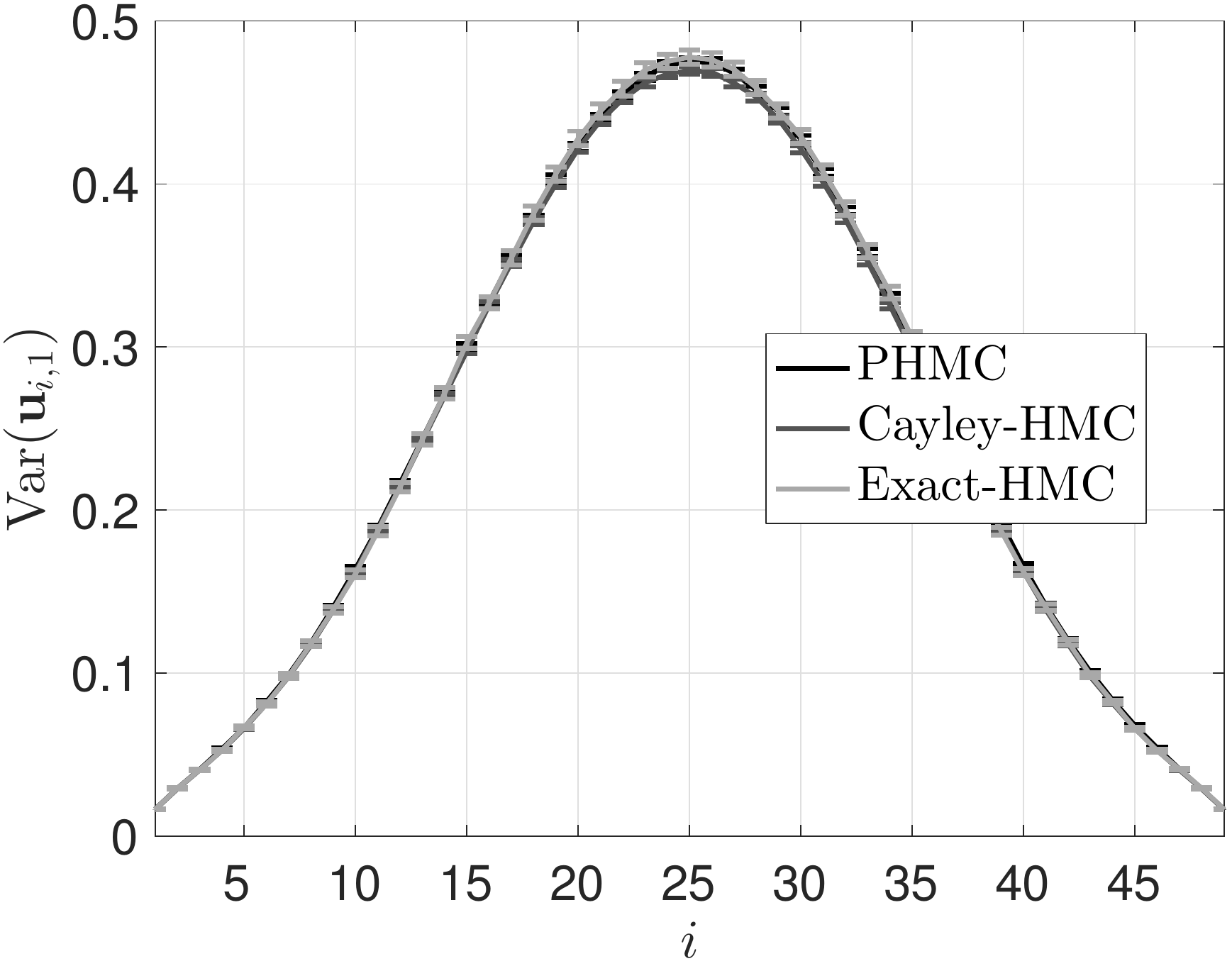}   \hspace{0.1in}
\includegraphics[width=0.45\textwidth]{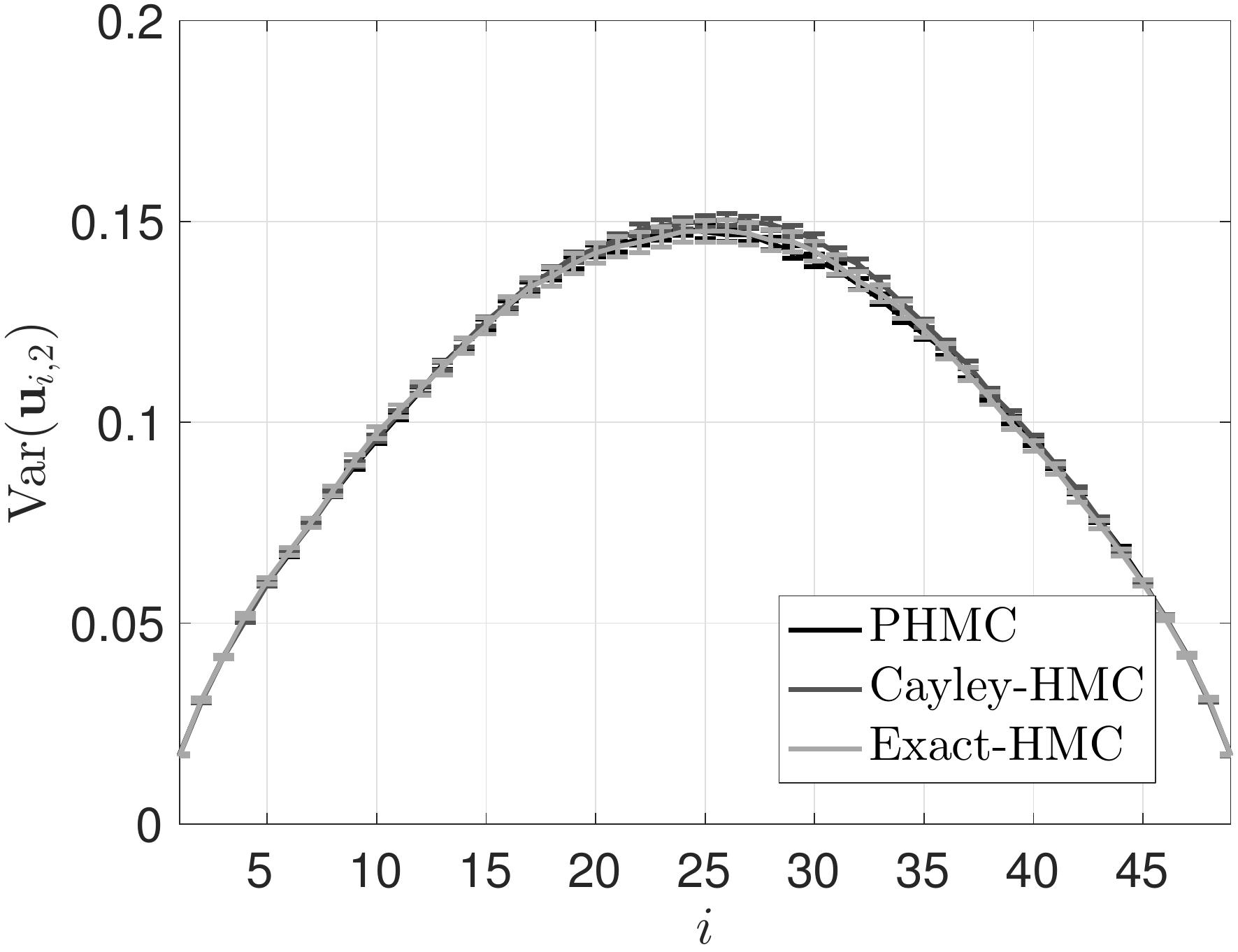} 
\includegraphics[width=0.45\textwidth]{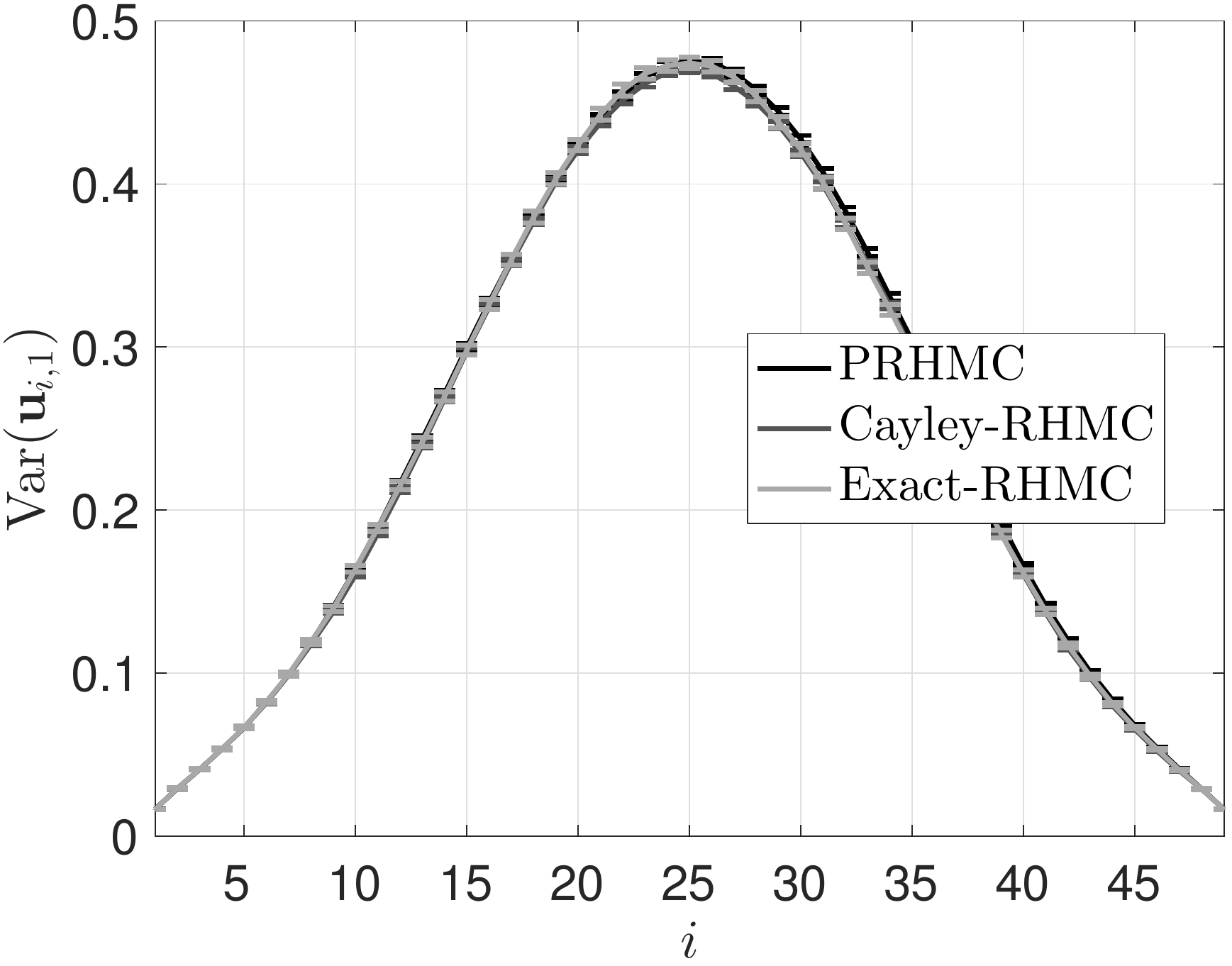}   \hspace{0.1in}
\includegraphics[width=0.45\textwidth]{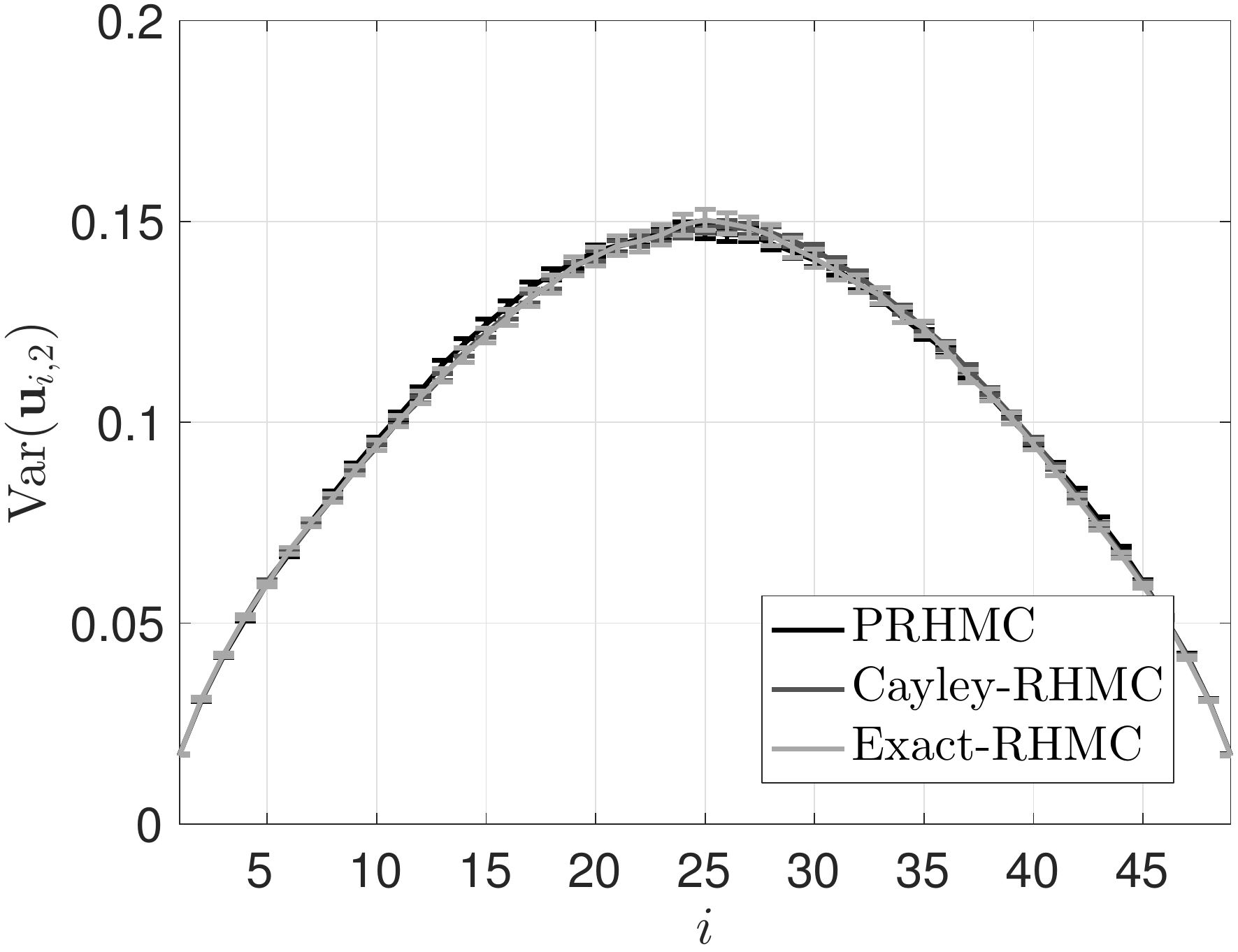} 
\end{center}
\caption{\small   {\bf Three-Well Example: Variances of Target.}   This figure verifies the accuracy of HMC and RHMC algorithms in computing the variance in each component of the target density. The $x$-axis labels the components.  The number of samples is $10^5$, the duration of the Hamiltonian legs is fixed at $T=2$, the spatial step size is $\Delta s=0.02$ (corresponding to $49$ interior grid points), the time step size is $\Delta t=0.03$. 
As a benchmark for comparison, we use preconditioned HMC (PHMC) and preconditioned RHMC (PRHMC) algorithms \cite{BePiSaSt2011,BoSaActaN2018}.
 For the algorithms tested -- PHMC, Cayley and Exact -- the mean acceptance probabilities are approximately $99\%$, $78\%$, and $38\%$, respectively.  
  The corresponding sum of statistical errors in the variance of the first component are: $7.8\%$,  $6.7\%$, and $11\%$.
    The corresponding sum of statistical errors in the variance of the second component are: $5.0\%$,  $4.3\%$, and $7.5\%$. 
}
  \label{fig:three_well_example_variances}
\end{figure}

\clearpage

\section{Conclusion}

The Cayley splitting \eqref{eq:cayley_splitting} generalizes the Crank-Nicolson method for first-order Langevin SPDEs to second-order Langevin SPDEs.   Similarly, the Cayley-based HMC algorithm is a generalization of the non-preconditioned MALA algorithm that uses a Crank-Nicolson proposal move.   This non-preconditioned MALA algorithm has nice properties and good performance; see in particular the middle panel of Figure 1, the top panel of Figure 4, and Proposition 6.1 of Ref.~\cite{BeRoStVo2008}.  Like non-preconditioned MALA, the performance of this Cayley-based non-preconditioned HMC algorithm is not strongly mesh-dependent, in the sense that to obtain an acceptance probability that converges to a nontrivial limit for a global move in state space as the spatial step size $\Delta s$ tends to zero, it is sufficient to select the time step size as $\Delta t \lesssim \Delta s^{1/4}$.

The stability condition for the Cayley splitting applied to a one dimensional, highly oscillatory Hamiltonian problem requires that $\Delta t<2$.  This condition does not change for the Cayley splitting applied to an analogous infinite dimensional linear Hamiltonian PDE with a spatial Gaussian white noise initial condition.  These results reflect the fact that the Cayley approximation offers a strongly stable symplectic map for the highly oscillatory part of the Hamiltonian dynamics.  Of course, in general, this stability requirement depends on the Lipschitz constant of the vector field appearing in \eqref{eq:semidiscrete_intro_B}, but the key point is that the stability requirement is independent of the fast frequencies present in \eqref{eq:semidiscrete_intro_A}.  This criterion for stability needs to be strengthened in the Langevin case, since we require asymptotic stability for the numerical approximation.  Nevertheless, this stability requirement does not involve fast frequencies present in \eqref{eq:semidiscrete_intro_A}.

For a linear Hamiltonian PDE, the Cayley splitting in \eqref{eq:cayley_splitting_hamiltonian} is strongly accurate in representing the position and momentum components, and in representing the energy in a weak sense.  To obtain accurate methods, the time step size has to be adjusted according to the spatial step size as described in these results.  We provided similar results for the Cayley splitting  \eqref{eq:cayley_splitting} applied to a linear Langevin SPDE.   In general, the conditions for accuracy are much more demanding than the conditions for stability.  We also used the Cayley splitting in  \eqref{eq:cayley_splitting_hamiltonian} to integrate non-preconditioned Hamiltonian dynamics in an HMC algorithm.  This Cayley-based non-preconditioned HMC performs comparably to preconditioned HMC using a Verlet integrator, in the sense that the mean energy errors have the same rates of convergence \cite{BoSaActaN2018}.

As an application, we considered Langevin SPDEs whose marginal invariant measure in position is the distribution of a diffusion bridge.  In this context, we showed how to set the invariant distribution of the Cayley splitting \eqref{eq:cayley_splitting} by combining \eqref{eq:cayley_splitting_hamiltonian} with Metropolis-Hastings Monte-Carlo.  We also showed that a step of non-preconditioned MALA using as proposal move a Crank-Nicolson discretization of non-preconditioned first-order Langevin dynamics is equivalent in law to a step of HMC with one integration step of a Cayley splitting for non-preconditioned Hamiltonian dynamics.   This is an infinite-dimensional analog of a well-known link between finite-dimensional MALA and HMC.  

A general takeaway from this paper is that exact splitting methods are prone to linear resonance instabilities in infinite dimensional Hamiltonian systems, and therefore, should be substituted with the Cayley splitting.


\section*{Acknowledgements}
We wish to thank Jes\'{u}s Sanz-Serna, Andreas Eberle, Raphael Zimmer, Katherine Newhall \& Gideon Simpson for helpful discussions.

\bibliographystyle{amsplain}
\bibliography{nawaf}

\end{document}